\UseAllTwocells \xyoption{frame} \CompileMatrices
\newbox\mybox
\def\overtag#1#2#3{\setbox\mybox\hbox{$#1$}\hbox to
  0pt{\vbox to 0pt{\vglue-#3\vglue-\ht\mybox\hbox to \wd\mybox
      {\hss$\ss#2$\hss}\vss}\hss}\box\mybox}
\def\undertag#1#2#3{\setbox\mybox\hbox{$#1$}\hbox to 0pt{\vbox to
    0pt{\vglue#3\vglue\ht\mybox\hbox to \wd\mybox
      {\hss$\ss#2$\hss}\vss}\hss}\box\mybox}
\def\lefttag#1#2#3{\hbox to 0pt{\vbox to 0pt{\vss\hbox to
      0pt{\hss$\ss#2$\hskip#3}\vss}}#1}
\def\righttag#1#2#3{\hbox to 0pt{\vbox to 0pt{\vss\hbox to
      0pt{\hskip#3$\ss#2$\hss}\vss}}#1}
\let\ss\scriptstyle
\def\Dot{\lower.2pc\hbox to 2pt{\hss$\bullet$\hss}}
\def\Circ{\lower.2pc\hbox to 2pt{\hss$\circ$\hss}}
\def\Vdots{\raise5pt\hbox{$\vdots$}}
\newcommand\lineto{\ar@{-}}
\newcommand\dashto{\ar@{--}}
\newcommand\dotto{\ar@{.}}
\newtheorem{prop}{Proposition}[section]
\newtheorem{lem}[prop]{Lemma}
\newtheorem{cor}[prop]{Corollary}
\newtheorem{thm}[prop]{Theorem}
\newtheorem{rmk}[prop]{Remark}
\newtheorem{example}{Example}
\newtheorem{defn}[prop]{Definition}
\newtheorem{con}[prop]{Conjecture}
\newtheorem{condition}[prop]{Condition}
\newcommand{\noprint}[1]{}
\renewcommand{\tilde}{\widetilde}
\newcommand{\sm}{\mbox{\tiny sm}}
\newcommand{\eq}{\mbox{\tiny eq}}
\newcommand{\Ext}{\mbox{Ext}}
\newcommand{\Hom}{\mbox{Hom}}
\newcommand{\scM}{\mathscr{M}}
\newcommand{\scT}{\mathscr{T}}
\renewcommand{\SS}{{\mathfrak S}}
\newcommand{\zz}{{\mathbb Z}}
\newcommand{\aaa}{{\mathbb A}}
\newcommand{\ttt}{{\mathbb T}}
\renewcommand{\ll}{{\mathbb L}}
\newcommand{\qq}{{\mathbb Q}}
\newcommand{\pp}{{\mathbb P}}
\newcommand{\cc}{{\mathbb C}}
\newcommand{\rr}{{\mathbb R}}
\newcommand{\Gm}{{{\mathbb G}_{\mbox{\tiny\rm m}}}}
\newcommand{\sT}{{\mathcal T}}
\newcommand{\sD}{{\mathcal D}}
\newcommand{\sE}{{\mathcal E}}
\newcommand{\sL}{{\mathcal L}}
\newcommand{\sS}{{\mathcal S}}
\newcommand{\sO}{{\mathcal O}}
\newcommand{\sX}{{\mathcal X}}
\newcommand{\sY}{{\mathcal Y}}
\newcommand{\sM}{{\mathcal M}}
\newcommand{\sZ}{{\mathcal Z}}
\newcommand{\bfc}{\mathbf{c}}
\DeclareMathOperator{\id}{id}
\DeclareMathOperator{\Sch}{Sch}
\DeclareMathOperator{\Hilb}{Hilb}
\DeclareMathOperator{\Aut}{Aut}
\DeclareMathOperator{\Isom}{Isom}
\DeclareMathOperator{\ind}{ind}
\DeclareMathOperator{\Ob}{Ob}
\DeclareMathOperator{\vir}{vir}
\DeclareMathOperator{\vd}{vd}
\DeclareMathOperator{\Ch}{Ch}
\DeclareMathOperator{\Tot}{Tot}
\DeclareMathOperator{\Td}{Td}
\DeclareMathOperator{\QG}{QG}
\DeclareMathOperator{\Vol}{Vol}
\DeclareMathOperator{\CM}{CM}
\DeclareMathOperator{\quintic}{quintic}
\DeclareMathOperator{\sextic}{sextic}
\DeclareMathOperator{\lci}{lci}
\DeclareMathOperator{\Gor}{Gor}
\DeclareMathOperator{\norm}{norm}
\DeclareMathOperator{\Diag}{Diag}
\DeclareMathOperator{\gen}{gen}
\DeclareMathOperator{\Coker}{Coker}
\DeclareMathOperator{\germs}{germs}
\newcommand{\C}{\ensuremath{\mathscr{C}}}
\newcommand{\rk}{\mathop{\rm rk}}
\newcommand{\tr}{\mathop{\rm tr}\nolimits}
\newcommand{\ob}{\mathop{\rm ob}}
\newcommand{\spec}{\mathop{\rm Spec}\nolimits}
\newcommand{\proj}{\mathop{\rm Proj}\nolimits}
\newcommand{\tor}{\mathop{\rm tor}\nolimits}
\numberwithin{equation}{subsection}
\newcommand {\mat}      [1] {\left(\begin{array}{#1}}
\newcommand {\rix}          {\end{array}\right)}
\title[Virtual fundamental class for moduli of surfaces of general type]{The virtual fundamental class for the moduli space of surfaces of general type}
\author{Yunfeng Jiang}
\address{Department of Mathematics\\ University of Kansas\\ 405 Snow Hall 1460 Jayhawk Blvd\\Lawrence KS 66045 USA} 
\email{y.jiang@ku.edu}
\begin{document}
\sloppy \maketitle
\begin{abstract}
We prove that  the moduli stack of index-one covers of semi-log-canonical surfaces of general type is isomorphic to the KSBA moduli stack of stable general type surfaces. Using the index-one covering Deligne-Mumford stack of a semi-log-canonical surface, we define the $\lci$ cover. The $\lci$ cover, as a Deligne-Mumford stack, has only locally complete intersection singularities.

We then construct the moduli stack of $\lci$ covers so that it admits a proper map to the moduli stack of surfaces of general type. Next, we construct a perfect obstruction theory on this stack and a virtual fundamental class in its Chow group. We then pushforward the virtual fundamental class from the moduli stack of lci covers to the KSBA moduli space. Thus, our construction proves Donaldson's conjecture  on the existence of a virtual fundamental class for KSBA moduli spaces.

A tautological invariant is defined by integrating a power of the first Chern class of the CM line bundle over the virtual fundamental class. This serves as a generalization of the tautological invariants defined by integrating tautological classes over the moduli space $\overline{M}_g$ of stable curves to the moduli space of stable surfaces.
\end{abstract}

\maketitle

\tableofcontents
\addtocontents{toc}{\protect\setcounter{tocdepth}{1}}

\section{Introduction}

The main goal of this paper is to construct a virtual fundamental class for the KSBA moduli spaces of semi-log-canonical (s.l.c.) surfaces of general type. More precisely, we define the moduli stack of $\lci$ covers and prove that there is a proper morphism from this stack to the KSBA moduli space. We then construct a perfect obstruction theory and a virtual fundamental class on the moduli stack of $\lci$ covers. We begin with background and main results. 

\subsection*{KSBA moduli space}
An s.l.c. surface $S$ is a projective $\qq$-Gorenstein surface $S$ satisfying Serre's condition $S_2$ and having only s.l.c. singularities.  It is called stable if the dualizing sheaf $\omega_S$ is ample. 
Fixing the invariants \(K^2:=K_S^2\) and \(\chi:=\chi(\mathcal{O}_S)\) for a projective s.l.c. surface \(S\), and an integer \(N>0\), we let \(M_N:=\overline{M}_{K^2, \chi, N}\) be the KSBA moduli stack which  represents the  moduli functor of $\qq$-Gorenstein deformation families 
$$\{\sS\to T\}$$ 
of stable s.l.c. surfaces $S$ such that 
$\omega_{\sS/T}^{[N]}$ is invertible. Here for any integer $r$, we have 
$\omega_{\sS/T}^{[r]}:=(\omega_{\sS/T}^{\otimes r})^{\vee\vee}=i_{*}\omega_{\sS^{0}/T}^{\otimes r}$, 
where $i: \sS^{0}\hookrightarrow \sS$ is the inclusion of the Gorenstein locus of $\sS/T$, which is the locus where the relative dualizing sheaf $\omega_{\sS/T}$ is invertible; 
see \cite[\S 3.1]{Hacking} and \cite[\S 5.4]{Kollar-Shepherd-Barron}.
For a sufficiently large and divisible \(N\),  \cite[Corollary 5.7]{Kollar-Shepherd-Barron} and \cite[Theorem 1.1, Definition 6.2, Remark 6.3]{KP15} proved that the stack \(M:=\overline{M}_{K^2, \chi, N}\) is a proper Deligne-Mumford stack with a projective coarse moduli space.

\subsection*{Index one cover}\label{subsec_indec_one}

The KSBA moduli stack $M_N$ does not admit a perfect obstruction theory, largely because a stable s.l.c. surface $S$ is not locally
complete intersection (lci).   It is not even Gorenstein, so Serre duality doesn’t work.  We  first address Gorenstein by enhancing any KSBA-stable surface $S$ with a canonical  stack structure which is Gorenstein.
This can be obtained  by the  construction of index one cover on local  singularity germs $(S,x)$,  and there is a  canonical Deligne-Mumford stack $\SS$ with coarse moduli space $S$. We call $\SS$ the index one covering Deligne-Mumford stack.  

\subsection*{LCI cover}

The index one covering Deligne-Mumford stack $\SS$ may contain non lci singularities. 
From the classification of s.l.c. singularities in \cite[Theorem 4.23, Theorem 4.24]{Kollar-Shepherd-Barron} and \S \ref{subsec_slc_classification}, 
the  only  non-lci  singularities of $\SS$ are simple elliptic singularities, cusps, and degenerate cusp singularity germs $(S,x)$ with embedded dimension $\ge 5$. 

For these singularities $(S,x)$, we define their $\lci$ covers $(\widetilde{Z},x)$ with transformation group $G$. The lci cover is constructed from the fundamental group of the link $\Sigma$ of the singularity germ, where $\Sigma$ is 
the boundary of a small neighborhood of the singularity and  is a compact, oriented, real 3-manifold.  
The $\lci$ cover construction is canonical on each analytic germ of the singularities considered above. Therefore, the local $\lci$ covering stacks $[\widetilde{Z}/G]$ glue to form a Deligne-Mumford stack. We have 

\begin{thm}\label{prop_lci_covering_DM_intro}(Proposition \ref{prop_lci_cover_DM})
    For any s.l.c. surface $S$ in the KSBA moduli space $M_N$, there is a canonical   Deligne-Mumford stack 
    $$
\pi^{\lci}: \SS^{\lci}\to  S $$
with  degree one such that $\SS^{\lci}$ only has $\lci$ singularities. We call  $\SS^{\lci}$ the $\lci$ covering Deligne-Mumford stack.  
\end{thm}
The morphism $\pi^{\lci}: \SS^{\lci}\to S$ factors through the index one covering Deligne-Mumford stack $\pi: \SS\to S$.
The stack $\SS^{\lci}$  has only lci singularities and its coarse moduli space is $S$. Consequently, its dualizing sheaf $\omega_{\SS^{\lci}}$ is invertible.

\subsection*{Moduli stack of index one covers}

Both index one covering Deligne-Mumford stacks $\SS$ and lci covering  Deligne-Mumford stacks $\SS^{\lci}$ can be put into flat families. 
For a $\qq$-Gorenstein deformation family $\sS/T$  of s.l.c. surfaces, 
Hacking \cite[\S 3.2]{Hacking} and \S \ref{subsec_Q-Gorenstein_deformation} constructed a canonical flat family  $\SS/T$ of index one covering Deligne-Mumford stacks with coarse moduli space $\sS/T$.
Let 
$$\sM_N^{\ind}:=\overline{\sM}^{\ind}_{K^2, \chi,N}: \Sch_{\mathbf{k}}\to \text{Groupoids}$$
be the moduli functor 
$$T\mapsto \{f: \SS\to T\}$$ 
which sends $T$ to the isomorphism class of flat families  
$\{\SS\to T\}$
of index one covering Deligne-Mumford stacks.  

\begin{thm}\label{thm_index_one_DM_intro}(Theorem \ref{thm_index_one_covering_stack}, see also \cite{AH}) 
The moduli functor $\sM^{\ind}_N$ is represented by a Deligne-Mumford stack $M_N^{\ind}:=\overline{M}^{\ind}_{K^2, \chi,N}$. 
There exists an isomorphism between Deligne-Mumford stacks 
$$f: M_N^{\ind}\to M_N=\overline{M}_{K^2, \chi, N}.$$
If $N$ is large divisible enough, then $M^{\ind}:=M_N^{\ind}$ is a proper Deligne-Mumford stack and the isomorphism $f: M^{\ind}\to M$ induces an isomorphism on the projective coarse moduli spaces.
\end{thm}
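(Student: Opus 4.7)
The plan is to deduce this from Hacking's correspondence \cite[Proposition 3.7]{Hacking} between $\qq$-Gorenstein deformation families of s.l.c.\ surfaces and flat families of index one covering Deligne--Mumford stacks. After making the moduli functor $M^{\ind}$ precise, I would construct a natural transformation $f: M^{\ind} \to M$, check it is an isomorphism of stacks using that correspondence, and then inherit representability, properness and projectivity of the coarse moduli space directly from $M$ via \cite[Proposition 6.11]{KP15} and \cite[Theorem 1.1]{KP15}.

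First I would fix the moduli functor $M^{\ind}$: a section over a test scheme $T$ is a proper flat Deligne--Mumford stack $\pi: \SS \to T$ of pure relative dimension two with trivial generic stabilizer and a $G$-action, whose coarse moduli space $S \to T$ is an $N$-indexed family of the kind parametrised by $M(T)$, and which \'etale locally on $S$ agrees with the canonical covering construction $[\sZ/\zz_r]$ recalled above, with $\omega_{\SS/T}$ invertible of the prescribed numerical type. The natural transformation $f$ sends $\SS \to T$ to its coarse moduli space $S \to T$ with the induced $\qq$-Gorenstein structure. The inverse assignment is given \'etale locally at each singular germ $x$ by $\sZ_x := \spec_{\sO_{\sS}}(\bigoplus_{i=0}^{r-1} \omega_{\sS/T}^{[i]})$ with multiplication determined by a trivialization of $\omega_{\sS/T}^{[r]}$; by \cite[Lemma 3.5]{Hacking} the local quotient stacks $[\sZ/\zz_r]$ glue to a flat family of index one covering Deligne--Mumford stacks over $T$, providing a candidate inverse to $f$.

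That these two assignments are mutually inverse on isomorphism classes is \cite[Proposition 3.7]{Hacking}. To upgrade this to an equivalence of groupoid-valued functors I would observe that the covering construction is canonical up to unique isomorphism: any $T$-isomorphism of $\qq$-Gorenstein families induces a unique compatible isomorphism of the \'etale local covering data, which glues to an isomorphism of the associated index one covering stacks, and conversely any such isomorphism of covers descends to the coarse spaces. Once $f$ is an isomorphism of stacks, representability of $M^{\ind}$ as a Deligne--Mumford stack follows from representability of $M$, and for $N$ large divisible enough the isomorphism $f$ transports properness and projectivity of the coarse moduli space of $M$ (from \cite[Theorem 1.1, Remark 6.3]{KP15}) to $M^{\ind}$.

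The step I expect to require the most care is the descent of the local covers $[\sZ/\zz_r]$ to a global stack $\SS \to T$ and the functoriality of this construction under base change $T' \to T$. Both depend on the technical point that within a $\qq$-Gorenstein family the formation of the double-dual power $\omega_{\sS/T}^{[r]}$ commutes with base change --- which fails for an arbitrary flat family but is built into Hacking's definition of $\qq$-Gorenstein deformation. Once this compatibility is in place, gluing of the local \'etale charts and stability under pullback are formal, and the theorem follows from Hacking's correspondence together with the established geometric properties of $M$.
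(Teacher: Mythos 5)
Your proposal is correct and rests on the same central input as the paper, namely Hacking's correspondence \cite[Proposition 3.7]{Hacking} between $\qq$-Gorenstein families and flat families of index one covering stacks, but the two arguments are organized differently. You construct $f$ together with an explicit inverse assignment, upgrade Hacking's bijection on isomorphism classes to an equivalence of groupoid-valued functors via the canonicity of the covering construction, and then \emph{transport} representability, properness, and projectivity of the coarse space from $M$ to $M^{\ind}$. The paper instead establishes the Deligne--Mumford structure on $M^{\ind}$ \emph{independently} --- checking that the diagonal is representable and unramified by analysing the Isom functor in terms of the Isom functor of the coarse families, and exhibiting an atlas $\mathscr{C}\to M^{\ind}$ shared with $M$ --- and then shows $f$ is an isomorphism in three separate steps: properness (valuative criterion), quasi-finiteness (hence finiteness), and the identification $\Aut(S)\cong\Aut(\SS)$. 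Your route is the more economical one for this theorem, and it does correctly isolate the key technical subtlety (compatibility of $\omega_{\sS/T}^{[r]}$ with base change inside a $\qq$-Gorenstein family, which is needed for the local covers $[\sZ/\zz_r]$ to glue and be stable under pullback). One thing worth noting is why the paper chooses the longer route: the same scaffolding --- diagonal via Isom, atlas via $\mathscr{C}$, properness via valuative criterion, finiteness --- is reused essentially verbatim in the proof of Theorem \ref{thm_universal_covering_stack}, where the analogous morphism $f^{\lci}:M^{\lci}\to M$ is finite but \emph{not} an isomorphism; in that setting one cannot simply transport structure from $M$, so the extra machinery must be built anyway.
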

The moduli stack $M^{\ind}$ admits an obstruction theory in the sense of Behrend-Fantechi \cite{BF} and Li-Tian \cite{LT}, see 
Theorem \ref{thm_OT_slc}. The obstruction theory is not perfect because an index one covering Deligne-Mumford stack $\SS$ is not lci, in general.

\subsection*{Perfect obstruction theory}
In this paper we construct the moduli stack $M_N^{\lci}=\overline{M}^{\lci}_{K^2, \chi, N}$ of lci covering Deligne-Mumford stacks in Theorem \ref{prop_lci_covering_DM_intro} within algebraic geometry, and a perfect obstruction theory on $M_N^{\lci}$. We first present the main result.  

Let $N$ be sufficiently large divisible and 
$M^{\lci}:=\overline{M}^{\lci}_{K^2, \chi, N}$.
There is a  universal family
$p^{\lci}: \scM^{\lci}\rightarrow M^{\lci}$ which is a projective, flat, and relative Gorenstein morphism.  
Let $\omega^{\lci}:=\omega_{\scM^{\lci}/M^{\lci}}[2]$. 
For an lci covering Deligne-Mumford stack $\SS^{\lci}$,  let $T_{\SS^{\lci}}$ be the tangent sheaf. 
Roughly speaking we take  $H^1(\SS^{\lci}, T_{\SS^{\lci}})$
as the deformation space, and $H^2(\SS^{\lci}, T_{\SS^{\lci}})$ as the obstruction space.  The lci condition (plus the finite automorphisms of $\SS^{\lci}$) implies all the other $H^i(\SS^{\lci}, T_{\SS^{\lci}})$ vanish for $i\neq 1, 2$.  Then 
taking Serre duals and working in the family with the universal lci covering Deligne-Mumford stack  $p^{\lci}: \scM^{\lci}\rightarrow M^{\lci}$  we define the virtual cotangent bundle of $M^{\lci}$ to be
$$E^{\bullet}_{M^{\lci}}=Rp^{\lci}_{*}(\ll^{\bullet}_{\scM^{\lci}/M^{\lci}}\otimes \omega^{\lci})[-1],$$
where $\ll^{\bullet}_{\scM^{\lci}/M^{\lci}}$ is the relative cotangent complex of $p^{\lci}$, and $\omega_{\scM^{\lci}/M^{\lci}}$ is the relative dualizing sheaf 
of $p^{\lci}$ which is a line bundle.   Thus,  from Theorem \ref{thm_BF_prop6.1} (see also \cite[Proposition 6.1]{BF}), 
the Kodaira-Spencer map $\ll^{\bullet}_{\scM^{\lci}/M^{\lci}}\to (p^{\lci})^{*}\ll_{M^{\lci}}^{\bullet}[1]$ induces an obstruction theory 
\begin{equation}\label{eqn_OT_univ_intro}
\phi^{\lci}: E^{\bullet}_{M^{\lci}}\to \ll_{M^{\lci}}^{\bullet}
\end{equation}
on $M^{\lci}$, where $\ll_{M^{\lci}}^{\bullet}$ is the cotangent complex of $M^{\lci}$.  

\begin{thm}\label{thm_POT_M_intro}(Theorem \ref{thm_POT_Muniv})
Suppose that $N$ is sufficiently large divisible enough. 
Let $M=\overline{M}_{K^2, \chi, N}$ be the moduli stack of stable s.l.c. surfaces of general type with invariants $K^2, \chi, N$, and $f^{\lci}: M^{\lci}\to M$ be  the moduli stack of $\lci$ covers over $M$.  
Then the  obstruction theory  $\phi^{\lci}: E^{\bullet}_{M^{\lci}}\to \ll_{M^{\lci}}^{\bullet}$ in (\ref{eqn_OT_univ_intro})
is a perfect obstruction theory in the sense of Behrend-Fantechi \cite{BF}.    
\end{thm}

\subsection*{Proof of main results}

We construct the local lci cover of a singularity germ $(S,x)$ using the fundamental group of the link $\Sigma$ of $x$. 
Theorem \ref{prop_lci_covering_DM_intro} is proved from gluing the local lci covering Deligne-Mumford stacks, see \S\ref{subsubsec_universal_discriminant}, \S \ref{subsubsec_discriminant} and \S \ref{subsubsec_lci_slc}. 

Theorem \ref{thm_index_one_DM_intro} is proved from the canonical construction of index one covering Deligne-Mumford stacks.  The moduli stack $M_N^{\ind}$ of index one covers is a new formulation of the KSBA moduli stack of stable s.l.c. surfaces.   The advantage of moduli stack of index one covers is that all the index one covering Deligne-Mumford stacks in $M_N^{\ind}$ are Gorenstein. 

To prove Theorem \ref{thm_POT_M_intro}, we first have the following result for the flat families of lci covering Deligne-Mumford stacks.  

\begin{thm}\label{thm_lci_cover_link_smoothing_T_intro}(Theorem \ref{thm_lci_cover_link_smoothing_T})
    Let $\overline{f}: \sS\to T$ be a flat smoothing family of stable slc surfaces over a scheme $T$ and $f: \SS\to T$ be the associated index one covering Deligne-Mumford stack.  Suppose that the non-lci singularity germs $(\SS_0,x)$  of the central fiber are not simple elliptic singularities of degree $5$, $6$ or $7$, or cusp singularities that can not lifted to an equivariant smoothing of an $\lci$ cusp in the algebraic sense,  then $f: \SS\to T$ can be lifted to a flat family $f^{\lci}: \SS^{\lci}\to T$ of lci covering Deligne-Mumford stacks.  Moreover, $\sS$ is  the coarse moduli space of both $\SS$ and $\SS^{\lci}$.
\end{thm}

\begin{rmk}
    Although \cite[Theorem 7.2]{Jiang_cusp} proved that for any cusp singularity $(S,0)$, if it admits a smoothing, then it always admits an equivariant  smoothing by an $\lci$ cusp,  the method uses Inoue-Hirzebruch surfaces and works analytically.  It is not clear at the moment if it works algebraically. 
\end{rmk}

Theorem \ref{thm_lci_cover_link_smoothing_T_intro}
implies that the simple elliptic singularities of degrees $5, 6, 7$ and some cusp singularities are special.
The case of degree $5$ doesn't give  higher obstruction spaces, see \cite{Jiang_2021}.
For other cases, a one-parameter smoothing (where the degree $6$ and $7$ simple elliptic singularities are given by  degree $6$ and $7$ del Pezzo cones) has a canonical singularity whose link is simply connected. Because of this simple connectivity, we don't currently  know how to take lci covers.

We use crepant resolution to construct lci covers in this setting. This technique  applies to any simple elliptic, cusp, and even degenerate cusp singularity.  Let $\overline{f}: (\sS,0)\to T$ be a   $\qq$-Gorenstein family of a simple elliptic  singularity of degree $6$, or $7$, or some cusps that can not be lifted to an equivariant smoothing of an $\lci$ cusp algebraically.  We take the crepant resolution $\tilde{f}: (\sX, 0)\to T$ on this singularity first for the one-parameter family, and then obtain the family $f^{\lci}: \SS^{\lci}\to T$ of lci covering Deligne-Mumford stacks by the lci cover construction for other singularities.
Families $f^{\lci}: \SS^{\lci}\to T$ over any base $T$ are obtained from one-parameter families. 
We call such a family a ``fake" lci cover and the central fiber $\SS^{\lci}_0$ a fake lci covering Deligne-Mumford stack. 
A key feature distinguishing it from the link-covering construction is that the coarse moduli space of a ``fake" lci cover admits a proper morphism to the original slc surface.  A more general generalization of this method will be studied in \cite{Jiang_Bubble} motivated by collapsing of K\"ahler-Einstein metrics. 

Two  smoothing families of lci covering Deligne-Mumford stacks using crepant resolutions are related by flops.  We borrow a definition from K-semistable moduli space and  define $\mathbb{S}$-equivalent relations.
Two  fake lci covering Deligne-Mumford stacks $\SS^{\lci}_1$ and $\SS^{\lci}_2$ are called $\mathbb{S}$-equivalent if a flat  family $\SS^{\circ, \lci}\to T\setminus\{0\}$ over $T\setminus\{0\}$ can be extended to families over $T$ by adding $\SS^{\lci}_1$ and $\SS^{\lci}_2$.  
This will make sure the moduli stack of lci covers is separated. 

\begin{thm}\label{prop_simple_cusp_crepant_T_intro}(Theorem \ref{prop_simple_cusp_crepant_T})
    Let $f: \sS\to T$ be a flat smoothing of s.l.c. surfaces which contain simple elliptic singularities of degree $6, 7$, then there is a  smoothing $\tilde{f}: \SS^{\lci}\to T$ of fake lci covering Deligne-Mumford stacks which induces the smoothing  $f: \sS\to T$. 
\end{thm}

Theorem \ref{thm_lci_cover_link_smoothing_T_intro}
and 
Theorem \ref{prop_simple_cusp_crepant_T_intro} are proved by a careful study of smoothing families of simple elliptic and cusp singularities, see Theorem \ref{thm_minimally_elliptic_universal2}, Theorem \ref{thm_elliptic_singularity_lci_lifting}, and Theorem \ref{thm_smoothing_lci_cusp_paper}.

The moduli stack of lci covers is constructed from the following functor   
$$
\sM_N^{\lci}:=\overline{\sM}^{\lci}_{K^2, \chi, N}: \Sch_{\mathbf{k}}\to \text{Groupoids}
$$
which sends 
$$
T\mapsto \{\SS^{\lci}\to T\}
$$
where $\{\SS^{\lci}\to T\}$ is  the isomorphic class of  flat families  of stable (fake) $\lci$ covering Deligne-Mumford stacks modulo $\mathbb{S}$-equivalence. 

Any such family \(\SS^{\lci}\to T\) induces a $\mathbb{Q}$-Gorenstein deformation family $\mathcal{S} \to T$ of s.l.c. surfaces. We denote by \( M_N=\overline{M}_{K^2, \chi, N} \) the corresponding moduli functor induced from \( M_N^{\lci} \).
Koll\'ar's result in \cite[Theorem 2.6]{Kollar90} implies that the moduli functor \( M_N \) is coarsely represented by a projective scheme. 
There is  a stratification
\[
\sM_1^{\lci}\subset \sM_2^{\lci}\subset \cdots
\]
We denote the union (or limit) of this stratification by \( \sM^{\lci}:=\overline{\sM}^{\lci}_{K^2, \chi, N} \), where \( N \) is taken to be sufficiently large divisible enough.   

\begin{thm}\label{thm_lci_cover_DM_intro}(Theorem \ref{thm_universal_covering_stack})
The moduli functor  $\sM_N^{\lci}$ is represented by a Deligne-Mumford stack $M_N^{\lci}:=\overline{M}^{\lci}_{K^2,\chi,N}$, and there exists a proper 
morphism between Deligne-Mumford stacks 
$$f^{\lci}: M_N^{\lci}\to M_N.$$  

If $N$ is large divisible enough, then the stack $M^{\lci}:=M_N^{\lci}$ is a proper 
Deligne-Mumford stack and the morphism 
$f^{\lci}: M^{\lci}\to M$ is a proper  morphism which induces a proper  morphism  on their projective coarse moduli spaces.
\end{thm}

The moduli stack $M_N^{\lci}$ in Theorem \ref{thm_lci_cover_DM_intro} is proved using the flat families of lci covering Deligne-Mumford stacks.  
Since in our definition of moduli stack $M_N^{\lci}$ of lci covers, we use crepant resolution of simple elliptic singularities of degrees $6$ and $7$ to define  flat families  with central fibers the fake lci covering Deligne-Mumford stacks, there exist different representatives  for the $\mathbb{S}$-equivalence class.  There is a  universal family  and  Theorem \ref{thm_POT_M_intro} is obtained from Behrend-Fantechi's criterion for a perfect obstruction theory.  

For families of fake lci covering Deligne-Mumford stacks, there may exist  different universal families over $M_{N}^{\lci}$.  We prove that different universal families give isomorphic perfect obstruction theories, see Theorem \ref{thm_cotangent_complex_universal-12}. 

\begin{rmk}
   We can define the moduli functor   
$$
\sM_N^{\lci}:=\overline{\sM}^{\lci}_{K^2, \chi, N}: \Sch_{\mathbf{k}}\to \text{Groupoids}; \quad T\mapsto \{\SS^{\lci}\to T\}$$
where $\{\SS^{\lci}\to T\}$ is  the isomorphic class of  flat families  of stable (fake) $\lci$ covering Deligne-Mumford stacks.  Then without the $\mathbb{S}$-equivalence condition the moduli stack $M_N$ is in general ``non-separate", but there  still exists a proper morphism 
$f^{\lci}: M^{\lci}_N\to M_N=\overline{M}_{K^2, \chi,N}$ to the KSBA moduli space.  In this case there is a unique universal family 
$p^{\lci}: \scM^{\lci}\to M^{\lci}_N$. 
\end{rmk}

\subsection*{Virtual fundamental class}

The standard theory in \cite{BF} induces a virtual fundamental class 
$[M^{\lci}]^{\vir}\in A_{\vd}(M^{\lci})$, 
where the virtual dimension is given by
$$\vd=\dim(H^1(S, T_S))-\dim(H^2(S, T_S))$$
for a smooth surface $S\in M$, and we have  $\vd=10\chi-2K^2$.

Let $f^{\lci}: M^{\lci}\to M$ be the proper morphism between these two Deligne-Mumford stacks.   The morphism 
$f^{\lci}$ is not necessarily representable, but induces a proper  morphism  on the coarse moduli spaces. 
From \cite[Definition 3.6 (iii)]{Vistoli}, 
\begin{defn}\label{defn_virtual_class_M_intro}
We define 
    \begin{equation}\label{eqn_virtual_class_M}
[M]^{\vir}:=f^{\lci}_*\left([M^{\lci}]^{\vir} \right)\in A_{\vd}(M)
\end{equation} 
to be the virtual fundamental class of the moduli stack 
$M$. 
\end{defn} 
    
An s.l.c. surface $S$ with only Kawamata-log-terminal (k.l.t.) singularities  is a  projective surface whose singularities, except codimension one simple normal crossing singularities, are only  quotient singularities.  We have:

\begin{thm}\label{thm_POT_Mind_intro}(Theorem \ref{thm_POT_Mind})
Let $M$ be the moduli stack of stable surfaces of general type with invariants $K^2, \chi, N$. 
If the moduli stack $M$ consists of s.l.c. surfaces with only k.l.t. singularities, 
then the  moduli    stack $M^{\lci}$ of $\lci$ covers is the same as the moduli stack $M^{\ind}$, which is isomorphic to the moduli stack $M$.
Moreover, (\ref{eqn_OT_univ_intro}) gives a perfect obstruction theory  for the moduli   stack $M^{\ind}$, and (\ref{eqn_virtual_class_M}) gives a virtual fundamental class for $M^{\ind}$.  
\end{thm}

Finally we have 

\begin{cor}\label{cor_lci_virtual_class_intro}(Corollary \ref{cor_POT_MG}) 
If the moduli stack $M$ consists only of l.c.i. surfaces, then the moduli stack $M^{\lci}$ of lci covers and the moduli stack $M^{\ind}$ of index one covers are all the same as the moduli stack $M$.  Moreover $M$ admits a perfect obstruction theory 
$$\phi: E^{\bullet}_{M}\to \ll_{M}^{\bullet}$$
in the sense of \cite{BF}, 
where 
$$E^{\bullet}_{M}=Rp^{}_{*}(\ll^{\bullet}_{\scM/M}\otimes \omega^{\bullet})[-1],$$
$\omega^{\bullet}:=\omega_{\scM/M}[2]$, 
and $\ll^{\bullet}_{\scM/M}$ is the relative cotangent complex of the universal family $p: \scM\to M$.  Therefore, the perfect obstruction theory induces a virtual fundamental class 
$[M]^{\vir}\in A_{\vd}(M)$. 
This proves Donaldson's conjecture for the existence of virtual fundamental class in his example \cite[\S 5]{Donaldson}.
\end{cor}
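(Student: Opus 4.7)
The plan is to derive the corollary directly from Theorem \ref{thm_POT_M_intro} via the observation that, in the l.c.i. case, the three moduli stacks $M$, $M^{\ind}$, and $M^{\lci}$ all coincide. First I would verify this identification: if every surface $S$ parameterized by $M$ is l.c.i., then $S$ is Gorenstein, so $\omega_S$ is already invertible and the local index $r$ at every singular germ is $1$. Hence the canonical cover $Z = \spec_{\sO_S}(\sO_S) = S$ at each germ, giving $\SS = S$ as a Deligne-Mumford stack. Similarly, since every singular germ $(S,x)$ is already l.c.i., the Neumann-Wahl construction of $\S$\ref{subsec_singularities_index_cover} produces the trivial cover, so $\SS^{\lci} = S$. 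The representability statements in Theorems \ref{thm_index_one_DM_intro} and \ref{thm_lci_cover_DM_intro} then identify $M^{\ind}$ and $M^{\lci}$ with $M$, and under this identification the universal families agree: $\scM^{\lci} = \scM$, $p^{\lci} = p$.

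Next I would transport the perfect obstruction theory from $M^{\lci}$ to $M$. Because $p$ is projective, flat, and relatively Gorenstein, $\omega_{\scM/M}$ is a line bundle and the complex
\[
E^{\bullet}_M \;=\; Rp_{*}\bigl(\ll^{\bullet}_{\scM/M} \otimes \omega_{\scM/M}\bigr)[-1]
\]
agrees with $E^{\bullet}_{M^{\lci}}$ under the identification above. The Kodaira-Spencer map $\ll^{\bullet}_{\scM/M} \to p^{*}\ll^{\bullet}_{M}[1]$ is the counterpart of the map producing $\phi^{\lci}$, so it defines a morphism $\phi \colon E^{\bullet}_{M} \to \ll^{\bullet}_{M}$. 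Theorem \ref{thm_POT_M_intro} guarantees that this is a perfect obstruction theory in the sense of \cite{BF}: indeed, the higher obstruction spaces $\widehat{T}^{i}_{\QG}(S,\sO_S)$ vanish for $i \geq 3$ since $S = \SS^{\lci}$ is l.c.i., so $E^{\bullet}_{M}$ has perfect amplitude contained in $[-1,0]$.

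The Behrend-Fantechi formalism then produces a virtual fundamental class $[M]^{\vir} \in A_{\vd}(M)$ of the expected virtual dimension. Under the isomorphism $f^{\lci} \colon M^{\lci} \xrightarrow{\sim} M$, this class agrees with $f^{\lci}_{*}[M^{\lci}]^{\vir}$, so no ambiguity arises between the intrinsic construction and the pushforward definition \eqref{eqn_virtual_class_M}. Finally, to prove Donaldson's conjecture I would invoke the fact recalled in $\S$\ref{subsec_moduli_stacks2} that his sextic example consists entirely of l.c.i.\ surfaces under the finite group action, so the present corollary applies verbatim and supplies the virtual class predicted in \cite[\S 5]{Donaldson}.

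The only substantive point is the identification $\SS^{\lci} = S$ for l.c.i.\ $S$; everything else is routine, since the hard analytic work (perfection, vanishing of higher obstructions, existence of the obstruction theory map) has already been carried out in Theorem \ref{thm_POT_M_intro}. I anticipate no serious obstacle beyond bookkeeping.
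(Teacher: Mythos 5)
Your argument is correct and follows the same route as the paper: identify $M = M^{\ind} = M^{\lci}$ (which is exactly Corollary \ref{cor_universal_index_one_M}, built on the observation that l.c.i.\ implies Gorenstein implies trivial index one cover, and l.c.i.\ germs need no Neumann--Wahl cover), then observe that perfect amplitude $[-1,0]$ follows from vanishing of the cohomology of $T_S$ outside degrees $1,2$. The additional bookkeeping you supply about $\SS=\SS^{\lci}=S$ is what the paper compresses into the one-line citation of Corollary \ref{cor_universal_index_one_M}.
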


\subsection*{Tautological invariants}

In \cite{Donaldson}, Donaldson studied the Fredholm topology and enumerative geometry of surfaces of general type and proposed the following two premises:

(1) There exists a virtual fundamental class \([\overline{M}_{K^2, \chi, N}]^{\text{vir}} \in H_{*}(\overline{M}_{K^2, \chi, N},\mathbb{Q})\), constructed using the theory of Behrend-Fantechi \cite{BF} and Li-Tian \cite{LT}.

(2) The Miller-Mumford-Morita (MMM) classes can be extended to \(H^{*}(\overline{M}_{K^2, \chi, N},\mathbb{Q})\).

Donaldson calculated the tautological invariant defined by integrating the MMM-classes over this conjectural virtual fundamental class in an example in \cite[\S 5]{Donaldson}.  Theorem \ref{thm_POT_M_intro} and Definition \ref{defn_virtual_class_M_intro} prove Donaldson's conjecture (1) and Corollary \ref{cor_lci_virtual_class_intro} confirms the virtual fundamental class calculation in Donaldson's example.

For Donaldson's conjecture (2), Patakfalvi and Xu proved the ampleness of the CM line bundle on \( M= \overline{M}_{K^2, \chi, N}\) in \cite{PXu15}. The first Chern class of the CM line bundle is the first Kappa class \cite{Alexeev_2023}.
From Theorem \ref{thm_POT_M_intro} and Equation (\ref{eqn_virtual_class_M}), we define the tautological invariant as: 
\begin{defn}\label{defn_tautological_invariant_intro}
(Definition \ref{defn_tautological_invariants}) 
We define the tautological invariant by
$$I_{\CM}=\int_{[M]^{\vir}}(c_1(L_{\CM}))^{\vd}.$$
\end{defn}

It is therefore interesting to compute these tautological invariants. We include Donaldson's example in \S\ref{sec_examples}. It is more interesting to study the tautological invariants for  the KSBA moduli spaces of log surfaces of general type in \cite{Alexeev2}.  The perfect obstruction theory on this moduli space is quite subtle.  We hope to return to the virtual fundamental class of KSBA moduli space of log surface pairs in future work. 

In \cite{Alexeev_2023}, Alexeev computed Kappa classes and tautological invariants for several moduli spaces of surfaces of general type, including moduli spaces of product-quotient curves, Burniat surfaces and Campedelli surfaces. The moduli spaces in these examples from \cite{Alexeev_2023} are all smooth. In \cite{AJ2024}, the authors will study the virtual fundamental class for the moduli space of Burniat surfaces of degrees \( 5 \) and \( 4 \).

\subsection*{Convention}\label{subsec_class_T_intro}
We work over the field of complex numbers  $\mathbf{k}=\cc$ throughout of the paper, although some parts work for any algebraically closed filed $\mathbf{k}$ of characteristic zero.  
For the notion of algebraic stack and Deligne-Mumford stack, we follow the book \cite{LMB}, \cite{DM} and  \cite{Stack_Project}. 
 All Deligne-Mumford stacks are quasi-projective which,  from A. Kretch's equivalence condition, means that they can be embedded into a smooth projective Deligne-Mumford stack.  The Chow group $A_*(M):=A_*(M, \qq)$ of the Deligne-Mumford stack $M$ is under $\qq$-coefficients as in \cite{Vistoli}.

Class $T$-singularities are either rational double point or two dimensional cyclic quotient singularities of the form 
$\spec \mathbf{k}[x,y]/\mu_{r^2 s}$,
where $\mu_{r^2 s}=\langle\alpha\rangle$ and there exists a primitive $r^2 s$-th root of unity 
$\eta$ such that the action is given by:
$\alpha(x,y)=(\eta x, \eta^{dsr-1}y)$
and $(d, r)=1$.     When $s=1$, these are called Wahl  singularities. 

Recall a normal surface singularity $(S,x)$ is a rational  singularity if the exceptional divisor of the minimal resolution  is a tree of rational curves.
Simple elliptic surface singularities, cusp or degenerate cusp surface singularities  were defined in \cite[Definition 4.20]{Kollar-Shepherd-Barron}.   A {\em simple elliptic singularity} is a normal Gorenstein surface singularity such that the exceptional divisor of the 
minimal resolution is a smooth elliptic curve. 
A normal Gorenstein surface singularity is called a {\em cusp} if the exceptional divisor of the minimal resolution is a cycle of smooth rational curves or a rational nodal curve.  
A {\em degenerate cusp} is a non-normal  Gorenstein surface singularity $S$. If $f: X\to S$ is a minimal semi-resolution, then the exceptional divisor is a cycle of smooth rational curves or a rational nodal curve.  
In this case $S$ has no pinch points and the irreducible components of $S$ have cyclic quotient singularities.

\subsection*{Outline} 
Here is a short  outline for this paper.  In \S \ref{sec_POT}  basic materials about perfect obstruction theory in \cite{BF} and  \cite{LT} are reviewed.   \S \ref{sec_surface_general_type} reviews the moduli stack of semi-log-canonical surfaces, and constructs the moduli stack of semi-log-canonical surfaces.  
In \S \ref{sec_moduli_index_one_cover} we construct  
the moduli stack of  index one covers over the moduli stack of s.l.c. surfaces. 
We define the moduli stack of $\lci$ covers over the moduli stack of s.l.c. surfaces in \S \ref{sec_universal_covering_DM}; and   in \S \ref{sec_semi-POT_MG} we construct the perfect obstruction theory.   In \S \ref{sec_CM_tautological_invariants} we construct  the CM line bundle on the moduli stack of s.l.c. surfaces.   We define  the tautological invariant by integrating the power of the first Chern class of the CM line bundle over the virtual fundamental class. Finally, in \S \ref{sec_examples} we calculate some examples: the moduli stack of quintic surfaces, and   Donaldson's example on sextic surfaces in $\pp^3$ with a finite group action. We also give a short discussion on  the moduli stack  $\overline{M}_{24,11}$ of numerical minimal general type sextic surfaces with $K_S^2=24, \chi(\sO_S)=11$.   We discuss  the virtual fundamental class for this moduli stack, although we can not fully understand its construction. 

\subsection*{Acknowledgments}

Y. J. would like to thank Professor Donaldson for suggesting this project,  sharing  his paper on enumerative invariants, and several valuable discussions in March 2023 at Simons Center of Stony Brook.   We thank Professor Richard Thomas for his interest in this work, continuous discussion on the lci covering Deligne-Mumford stacks, valuable suggestion on the introduction, and valuable inputs on the proof of equivalence of perfect obstruction theories.  This paper would  not be in its present shape without his help.

Y. J. thanks Professor Kai Behrend  for teaching him perfect obstruction theories,  nice  comments for the paper, and valuable discussions on  the detailed proof of the main results.  
Y. J. thanks  Professor Jonathan Wahl for the email correspondence on universal abelian cover of surface singularities, and Professor J\'anos Koll\'ar for the examples of smoothing  simple elliptic singularities of degree $6$ and $7$. 
Y. J. thanks Yuchen Liu, Song Sun, Chenyang Xu and Ziquan Zhuang for the correspondence and valuable discussion  on 
semi-log-canonical surfaces
and  the moduli stack of index one covers.
This work is partially supported by  NSF DMS-2401484, and a Simon Collaboration Grant.


\section{Preliminaries on perfect obstruction theory}\label{sec_POT}

We review the basic construction of perfect obstruction theory in \cite{BF} and  \cite{LT}.

\subsection{Perfect obstruction theory}\label{subsec_POT}

Let $M$ be a quasi-projective Deligne-Mumford stack, which is an algebraic stack over $\mathbf{k}$ in the sense of \cite{Artin} and \cite{LMB} with unramified diagonal. 
Let $\ll^{\bullet}_{M}$ be the cotangent complex of $M$ in the sense of \cite{Illusie1} and  \cite{Illusie2}.

\begin{defn}\label{defn_obstruction_theory}(\cite[Definition 4.4]{BF})
An obstruction theory for $M$ is a morphism
$$\phi: E_{M}^{\bullet}\to \ll^{\bullet}_{M}$$
in the derived category $D(\sO_M)$ such that 
\begin{enumerate}
\item $E_{M}^{\bullet}\in D(\sO_M)$ satisfies the condition that $h^{i}(E_{M}^{\bullet})=0$ for all 
$i>0$, and $h^{i}(E_{M}^{\bullet})$ is coherent for $i=0, -1$.
\item  $\phi$ induces an isomorphism on $h^0$ and an epimorphism on $h^{-1}$.
\end{enumerate}
\end{defn}

\begin{defn}\label{defn_perfect_obstruction_theory}(\cite[Definition 5.1]{BF})
An obstruction theory $\phi: E_{M}^{\bullet}\to \ll^{\bullet}_{M}$  for $M$ is  called  $perfect$ if 
$E_{M}^{\bullet}$ is of perfect amplitude contained in $[-1,0]$. 
\end{defn}

\subsection{Bundle stack}\label{subsec_bundle_stack}

Any complex  $E_{M}^{\bullet}\in D(\sO_M)$ defines an algebraic stack $h^1/h^0((E_{M}^{\bullet})^{\vee})$
over $M$ as follows: locally around an \'etale chart $U\to M$, $(E_{M}^{\bullet})^{\vee}|_{U}$ is a complex written as
$$(E_{M}^{\bullet})^{\vee}|_{U}=\Big[E_0\to E_1\to\cdots\Big].$$
The stack $h^1/h^0((E_{M}^{\bullet})^{\vee})(U)$ is the groupoid of pairs $(P,f)$ where $P$ is an $E_0$-torsor (principle homogeneous $E_0$-bundle) on $U$ and 
$f: P\to E_1|_{U}$ is an $E_0$-equivariant morphism of sheaves on $U$.  Thus $h^1/h^0((E_{M}^{\bullet})^{\vee})$ is a fiber category fiberd by groupoids which is 
an algebraic $M$-stack (called an abelian cone stack). 

If  $E_{M}^{\bullet}\in D(\sO_M)$  is perfect; i.e., of perfect amplitude contained in $[-1,0]$, then  $h^1/h^0((E_{M}^{\bullet})^{\vee})$ is a vector bundle stack, since \'etale locally around $U\to M$, $(E_{M}^{\bullet})^{\vee}|_{U}$ is a complex of vector bundles 
$(E_{M}^{\bullet})^{\vee}|_{U}=\Big[E_0\to E_1\Big]$.  The stack is $h^1/h^0((E_{M}^{\bullet})^{\vee})|_{U}=[E_1/E_0]$.

\subsection{Intrinsic normal cone}\label{subsec_intrinsic_normal_cone}

Let $M$ be a quasi-projective Deligne-Mumford stack.  \'Etale locally there exists a diagram
\[
\xymatrix{
U\ar[r]^{f} \ar[d]_{i}& Y\\
M,&
}
\]
where $i: U\to M$ is an \'etale morphism and $f: U\to Y$ is a closed immersion into a smooth scheme $Y$. 
There is a cone stack $[C_{U/Y}/T_{Y}|_{U}]$
where  $C_{U/Y}$ is the normal cone, and $T_{Y}|_{U}$ acts on the normal cone $C_{U/Y}$. 
Whenever we have a morphism $\chi: (U^\prime, Y^\prime)\to (U, Y)$ of the local embeddings, which means there exists a commutative diagram
\[
\xymatrix{
U^\prime\ar[r]^{f^\prime} \ar[d]_{\phi_U}& Y^\prime\ar[d]^{\phi_{Y}}\\
U\ar[r]^{f}& Y,
}
\]
where $\phi_U$ is \'etale and $\phi_Y$ is smooth, we have that 
$\left(C_{U/Y}\hookrightarrow N_{U/Y}\right)|_{U^\prime}$ is the quotient of $\left(C_{U^\prime/Y^\prime}\hookrightarrow N_{U^\prime/Y^\prime}\right)$ by the action of $f^{\prime*}T_{Y^\prime/Y}$.  
Here $N_{U/Y}$ is the normal sheaf of $U$ to $Y$. 
Hence the isomorphism 
$$\widetilde{\chi}: \Big[N_{U^\prime/Y^\prime}/f^{\prime*}T_{Y^\prime}\Big]\cong \Big[N_{U/Y}/f^{*}T_{Y}\Big]|_{U^\prime}$$ 
identifies the closed subcone stacks
$$\widetilde{\chi}: \Big[C_{U^\prime/Y^\prime}/f^{\prime*}T_{Y^\prime}\Big]\cong \Big[C_{U/Y}/f^{*}T_{Y}\Big]|_{U^\prime}.$$
The stacks $\Big[N_{U/Y}/f^{*}T_{Y}\Big]$ glue to give the stack 
$h^1/h^0((\ll_{M}^{\bullet})^{\vee})$, which is called the intrinsic normal sheaf; and the stacks 
 $\Big[C_{U/Y}/f^{*}T_{Y}\Big]$ glue to give the stack $\bfc_{M}$, which is called the {\em intrinsic normal cone} of $M$.

\subsection{Infinitesimal obstruction theory}\label{subsec_infinitesimal_obstruction}

We review a bit for the infinitesimal deformation and obstruction theory for  a later use. 

Let $T\to \overline{T}$ be a square-zero extension of scheme with ideal $J$; i.e., $J^2=0$. For the Deligne-Mumford stack $M$,  let  $g: T\to M$ be a morphism, then there 
is a canonical morphism
\begin{equation}\label{eqn_infinitesimal_1}
g^* \ll^{\bullet}_{M}\to  \ll^{\bullet}_{T}\to  \ll^{\bullet}_{T/\overline{T}}
\end{equation}
in $D(\sO_T)$ by functoriality properties of the cotangent complex.  One has $\tau_{\geq 1}\ll^{\bullet}_{T/\overline{T}}=J[1]$, so the homomorphism (\ref{eqn_infinitesimal_1}) can be taken as an element 
$$\omega(g)\in \Ext^1(g^* \ll^{\bullet}_{M}, J).$$
Basic fact about deformation theory says that an extension $\overline{g}: \overline{T}\to M$ of $g$ exists if and only if $\omega(g)=0$, and if $\omega(g)=0$ the extensions form a torsor under 
$\Ext^0(g^* \ll^{\bullet}_{M}, J)=\Hom(\Omega_M, J)$. 

Let $\phi:  E_M^{\bullet}\to \ll^{\bullet}_{M}$ be an obstruction theory.  Then \cite[Proposition 2.6]{BF} tells us that 
$$\phi^{\vee}: h^1/h^0((\ll_{M}^{\bullet})^{\vee})\to h^1/h^0((E_{M}^{\bullet})^{\vee})$$
is a closed immersion.  Since the intrinsic normal cone $\bfc_M\hookrightarrow h^1/h^0((\ll_{M}^{\bullet})^{\vee})$ is embedded into the intrinsic normal sheaf, we have that $\phi^{\vee}(\bfc_M)\hookrightarrow h^1/h^0((E_{M}^{\bullet})^{\vee})$ is a closed subcone stack. 
If $T\to \overline{T}$ is a square zero extension of $\mathbf{k}$-schemes with ideal sheaf $J$ and $g: T\to M$ is a morphism, then 
$\omega(g)\in \Ext^1(g^*\ll^{\bullet}_{M}, J)$ and we denote by 
$\phi^*\omega(g)\in  \Ext^1(g^*E^{\bullet}_{M}, J)$ the image of the obstruction $\omega(g)$ in $\Ext^1(g^*E^{\bullet}_{M}, J)$.
 
We have the following result in \cite{BF}.

\begin{thm}\label{thm_obstruction_theory_equ_conditions}(\cite[Theorem 4.5]{BF})
Let $M$ be a Deligne-Mumford stack. The following statements are equivalent:
\begin{enumerate}
\item $\phi:  E_M^{\bullet}\to \ll^{\bullet}_{M}$ is an obstruction theory.
\item $\phi^{\vee}: h^1/h^0((\ll_{M}^{\bullet})^{\vee})\to h^1/h^0((E_{M}^{\bullet})^{\vee})$ is a closed immersion of cone stacks over $M$. 
\item For any $(T, \overline{T}, g)$ as above, the obstruction $\phi^*\omega(g)\in  \Ext^1(g^*E^{\bullet}_{M}, J)$ vanishes if and only if an extension 
$\overline{g}$ of $g$ to $\overline{T}$ exists; and if $\phi^*\omega(g)=0$, the extensions form a torsor under $\Ext^0(g^*E^{\bullet}_{M}, J)=\Hom(g^*h^0(E_M^{\bullet}),J)$. 
\end{enumerate}
\end{thm}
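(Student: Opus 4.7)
The plan is to prove the three conditions pairwise equivalent, treating the purely homological equivalence $(1) \Leftrightarrow (2)$ first and then relating both to the deformation-theoretic condition $(3)$. The whole argument rests on two standard inputs that I would set up at the outset: the fact that the assignment $F^{\bullet} \mapsto h^1/h^0((F^{\bullet})^{\vee})$ is functorial on complexes with coherent cohomology in degrees $\leq 0$, and the description of $\omega(g)$ as the class in $\Ext^1(g^*\ll^{\bullet}_M, J)$ of the canonical morphism $g^*\ll^{\bullet}_M \to \ll^{\bullet}_{T/\overline{T}} \simeq J[1]$ recalled in (\ref{eqn_infinitesimal_1}).

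For $(1) \Leftrightarrow (2)$, I would reduce to the following purely homological lemma: a morphism $\psi: F^{\bullet} \to G^{\bullet}$ between complexes with coherent cohomology in degrees $\leq 0$ induces a closed immersion $h^1/h^0((G^{\bullet})^{\vee}) \hookrightarrow h^1/h^0((F^{\bullet})^{\vee})$ if and only if $h^0(\psi)$ is an isomorphism and $h^{-1}(\psi)$ is surjective. Applied to $\psi = \phi$, this is precisely the characterization of an obstruction theory in Definition \ref{defn_obstruction_theory}, so $(1) \Leftrightarrow (2)$ follows at once.

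For $(1) \Leftrightarrow (3)$, I would translate the conditions on $h^0(\phi)$ and $h^{-1}(\phi)$ into statements about $\Ext$-groups tested against an arbitrary quasi-coherent sheaf $J$. Using the long exact sequence obtained from the distinguished triangle built out of $\phi$, the condition that $h^0(\phi)$ be an isomorphism is equivalent to $\Ext^0(\ll^{\bullet}_M, J) \to \Ext^0(E^{\bullet}_M, J)$ being an isomorphism for all $J$, and $h^{-1}(\phi)$ being an epimorphism is equivalent to $\Ext^1(\ll^{\bullet}_M, J) \to \Ext^1(E^{\bullet}_M, J)$ being a monomorphism for all $J$. Combining these with the torsor and obstruction description of $\omega(g)$ recalled above, the vanishing and torsor statements of $(3)$ unwind exactly to these two conditions, establishing $(1) \Leftrightarrow (3)$.

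I expect the main obstacle to be the closed-immersion lemma underlying $(1) \Leftrightarrow (2)$. The subtlety is that $\ll^{\bullet}_M$ and $E^{\bullet}_M$ are only objects of the derived category, so the quotient-stack presentation of $h^1/h^0((-)^{\vee})$ must be shown independent of the chosen representative and to detect closed immersions in terms of cohomology sheaves. I would handle this by passing to an \'etale cover on which both complexes admit two-term presentations; the bundle stack then becomes a genuine global quotient and the closed immersion condition on $\phi^{\vee}$ can be read off from the surjectivity of the induced map on the degree-one term and the isomorphism property of the induced map on cokernels. Once this local reduction is in place, the remainder of the proof is bookkeeping.
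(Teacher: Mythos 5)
This theorem is quoted in the paper directly from \cite[Theorem 4.5]{BF} without a proof, so the only meaningful comparison is against Behrend--Fantechi's original argument. Your treatment of $(1)\Leftrightarrow(2)$ is on target: the reduction to the lemma that $\psi$ induces a closed immersion of abelian cone stacks if and only if $h^0(\psi)$ is an isomorphism and $h^{-1}(\psi)$ is an epimorphism is precisely \cite[Proposition 2.6]{BF}, which is also the ingredient this paper implicitly invokes in \S\ref{subsec_infinitesimal_obstruction}. Your stated worry about derived-category representatives is not really where the difficulty lies; that point is handled cleanly by Proposition 2.6 itself and its gluing statement for abelian cone stacks.

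The real gap is in the $(3)\Rightarrow(1)$ direction, which you compress into the claim that the vanishing and torsor statements ``unwind exactly'' to the $\Ext$-group conditions. Condition $(3)$ constrains $\phi$ only after pulling back along $g: T\to M$, and only on the specific classes $\omega(g)\in\Ext^1(g^*\ll_M^\bullet,J)$ arising as the image of the class of the square-zero extension $T\hookrightarrow\overline T$ under $\Ext^1(\ll_T^\bullet,J)\to\Ext^1(g^*\ll_M^\bullet,J)$. These need not fill out $\Ext^1(g^*\ll_M^\bullet,J)$, and $J$ lives on $T$, not on $M$, so ``testing against all quasi-coherent $J$'' in the sense you use it is not directly available. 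To make your route work one must (a) restrict to $T$ affine and $g$ \'etale, so that $g^*\ll_M^\bullet\simeq\ll_T^\bullet$ and hence $\omega(g)$ sweeps out all of $\Ext^1(g^*\ll_M^\bullet,J)$ by Illusie's classification of square-zero extensions, and (b) descend the resulting \'etale-local conclusions on $h^0(\phi)$ and $h^{-1}(\phi)$ back to $M$. Neither step appears in your sketch, and they are where the content of the implication actually sits. Note also that Behrend--Fantechi themselves do not argue this way: they pass through a fourth equivalent condition phrased in terms of the cone-stack morphism $\ob(g):C(J)\to g^*h^1/h^0((E_M^\bullet)^\vee)$ (see the remark after the theorem in this paper), deducing $(3)$ from the intrinsic normal cone being embedded in $h^1/h^0((E_M^\bullet)^\vee)$; your $\Ext$-group route is a legitimate alternative, but only once the \'etale-localization and Illusie input are supplied.
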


\begin{rmk}
\cite[Theorem 4.5]{BF} has a fourth equivalent condition by using the stack $h^1/h^0(\ll_{T/\overline{T}}^{\bullet})=C(J)$ and the morphism
$\ob(g): C(J)\to g^*\ll^{\bullet}_{M}$.   Since we don't use this in this paper,  we refer the detailed discussion to \cite[Theorem 4.5]{BF}.
\end{rmk}

\subsection{Virtual fundamental class}\label{subsec_virtual_class}

We construct the virtual fundamental class as in \cite[\S 5]{BF} for a perfect obstruction theory
$\phi:  E_M^{\bullet}\to \ll^{\bullet}_{M}$.
First the intrinsic normal cone 
$$\bfc_M\hookrightarrow h^1/h^0((\ll_{M}^{\bullet})^{\vee})\hookrightarrow h^1/h^0((E_{M}^{\bullet})^{\vee})$$
is a closed subcone stack of the vector bundle stack $h^1/h^0((E_{M}^{\bullet})^{\vee})$. 
Then intersection theory of Artin stacks in \cite{Kretch} gives the virtual fundamental class 
$$[M]^{\vir}=0^{!}_{ h^1/h^0((E_{M}^{\bullet})^{\vee})}(\bfc_M)\in A_{\rk(E_M^{\bullet})}(M);$$
i.e., the intersection of the intrinsic normal cone $\bfc_M$ with the zero section of the bundle stack 
$ h^1/h^0((E_{M}^{\bullet})^{\vee})$. 
Readers may like to construct the virtual fundamental class by intersection theory on Deligne-Mumford stacks. For this, we take a 
global resolution of $E_M^{\bullet}$ (\cite[Lemma 2.5]{Behrend}) given by
$$E=\Big[ E^{-1}\to E^0\Big]$$ 
of two term vector bundles such that $E_M^{\bullet}\sim E$. Then we let $E_i:=(E^{-i})^{\vee}$ and form 
$E^{\vee}=\Big[ E_{0}\to E_1\Big]$.  We have the following Cartesian diagram
\[
\xymatrix{
C\ar[r]\ar[d]& E_1\ar[d]\\
\bfc_{M}\ar[r]& [E_1/E_0],
}
\]
where $C\subset E_1$ is a subcone inside the vector bundle $E_1$ which can be taken as the lift of the intrinsic normal cone $\bfc_M$. 
Then the virtual fundamental class 
$$[M]^{\vir}=0^{!}_{E_1}(C)\in A_{\rk(E)}(M)$$
is the intersection of the cone $C$ with the zero section of the vector bundle $E_1$. 
The construction of the virtual fundamental class $[M]^{\vir}$ is a fundamental tool to define enumerative invariants in algebraic geometry for various of 
moduli spaces $M$, see \cite{Behrend2}, \cite{Thomas}, \cite{PT} and \cite{TT1}.

\subsection{Moduli space of projective Deligne-Mumford stacks}\label{subsec_moduli_varieties}

We recall one result in \cite[\S 6]{BF} for the obstruction theory of the moduli space of projective varieties. 

Let $p: \scM\to M$ be a projective, flat morphism between two Deligne-Mumford stacks.  The morphism $p$ is called relative Gorenstein if the relative dualizing 
complex $\omega_{\scM/M}^{\bullet}$ is a line bundle $\omega^{\bullet}$. 
Let $\ll_{\scM/M}^{\bullet}$ be the relative cotangent complex of $p$.  We construct the following complex
$$E_M^{\bullet}:=Rp_*\left(\ll_{\scM/M}^{\bullet}\otimes \omega^{\bullet}\right)[-1].$$
The Kodaira-Spencer map  $\ll_{\scM/M}^{\bullet}\to p^*\ll_M^{\bullet}[1]$ induces a map
$$\phi: E_M^{\bullet}\to \ll_M^{\bullet}.$$

\begin{thm}\label{thm_BF_prop6.1}(\cite[Proposition 6.1]{BF})
Let $p: \scM\to M$ be a projective, flat and relative Gorenstein morphism of  Deligne-Mumford stacks. Assume that the family $\scM$ is universal at every point 
of $M$.  Then $\phi: E_M^{\bullet}\to \ll_M^{\bullet}$ is an obstruction theory for $M$.  Moreover, if 
$E_M^{\bullet}$ is perfect;  i.e., of perfect amplitude contained in $[-1,0]$, then $\phi$ is a perfect obstruction theory for $M$. 
\end{thm}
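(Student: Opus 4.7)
The plan is to reduce the statement to the infinitesimal criterion of Theorem~\ref{thm_obstruction_theory_equ_conditions}(3), and then verify that criterion by combining flat base change for $p$ with relative Grothendieck--Serre duality for the Gorenstein morphism $p$. Fix a square-zero extension $T \hookrightarrow \overline{T}$ of affine schemes with ideal sheaf $J$, and a morphism $g: T \to M$. Because $\scM \to M$ is universal at every point of $M$, the datum of $g$ is equivalent to the pullback family $p_T: \scM_T := \scM \times_M T \to T$, and extensions of $g$ to $\overline{T}$ correspond bijectively to flat extensions $\scM_{\overline T} \to \overline{T}$ of this family.

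First, I would apply Illusie's deformation theory to the flat family $p_T$: the obstruction to the existence of a flat extension lies in $\Ext^2_{\scM_T}(\ll^\bullet_{\scM_T/T}, p_T^* J)$, and when this vanishes the set of extensions forms a torsor under $\Ext^1_{\scM_T}(\ll^\bullet_{\scM_T/T}, p_T^* J)$. The next step is to identify these two groups, respectively, with $\Ext^1(g^* E_M^\bullet, J)$ and $\Ext^0(g^* E_M^\bullet, J)$. Flat base change for $p$ gives $g^* E_M^\bullet = Rp_{T*}(\ll^\bullet_{\scM_T/T} \otimes \omega_{\scM_T/T})[-1]$; relative duality for the projective Gorenstein morphism $p_T$ then yields $R\Hom_T(Rp_{T*} F, J) \cong R\Hom_{\scM_T}(F, (\omega_{\scM_T/T} \otimes p_T^* J)[n])$, where $n$ is the relative dimension. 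The line bundle $\omega_{\scM_T/T}$ cancels against its appearance in $E_M^\bullet$, and the residual cohomological shifts are arranged by the $[-1]$ in the definition of $E_M^\bullet$ to deliver the required isomorphisms.

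Third, one must verify that under these identifications, the image $\phi^* \omega(g) \in \Ext^1(g^* E_M^\bullet, J)$ of the canonical obstruction $\omega(g) \in \Ext^1(g^* \ll_M^\bullet, J)$ coincides with the Illusie obstruction class of the family $p_T$. This is a functoriality assertion: $\phi$ is constructed by tensoring the Kodaira--Spencer connecting morphism $\ll^\bullet_{\scM/M} \to p^* \ll_M^\bullet[1]$ with $\omega_{\scM/M}$, pushing forward via $Rp_*$, and composing with the relative duality trace $Rp_* \omega_{\scM/M}[n] \to \sO_M$; chasing through the two adjunctions shows that this morphism corresponds, under relative duality, to the universal obstruction of a flat deformation in Illusie's sense. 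With this compatibility in place, criterion (3) of Theorem~\ref{thm_obstruction_theory_equ_conditions} gives that $\phi$ is an obstruction theory, and the perfectness clause is immediate from the hypothesis on the amplitude of $E_M^\bullet$. The main obstacle is precisely this third step: the functoriality check requires simultaneously controlling the adjunctions $(g^*, g_*)$ and $(p_*, p^!)$, the Kodaira--Spencer triangle, and the relative duality trace, whereas steps one and two are formal consequences of flat base change, relative duality for Gorenstein morphisms, and Illusie's theory of the cotangent complex, essentially as in \cite[\S 6]{BF}.
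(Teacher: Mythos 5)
Your proposal follows essentially the same route as the paper's proof (which itself recapitulates \cite[Proposition 6.1]{BF}): reduce to criterion (3) of Theorem \ref{thm_obstruction_theory_equ_conditions}, identify the Illusie deformation/obstruction groups of the pulled-back family with $\Ext^{k-1}(f^*E_M^\bullet, J)$ via flat base change and Grothendieck duality for the Gorenstein morphism $p$, and invoke universality of the family. The only cosmetic difference is that you make explicit the compatibility of $\phi^*\omega(g)$ with the Illusie obstruction class (which the paper compresses into the assertion that $\phi_1$ is an isomorphism and $\phi_2$ is injective), and note that with the paper's convention the relative dualizing \emph{complex} is a line bundle in degree $0$, so no extra shift $[n]$ enters the duality identification.
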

\begin{proof}
The proof is in \cite[Proposition 6.1]{BF}. We provide the proof here for completeness and  a later use. 

We show an equivalence condition as in Theorem \ref{thm_obstruction_theory_equ_conditions}. Consider a scheme $T$ and let 
$f: T\to M$ be a morphism, then we have the following  Cartesian diagram
\[
\xymatrix{
\scT\ar[r]^{g}\ar[d]_{q}& \scM\ar[d]^{p}\\
T\ar[r]^{f}& M
}
\]
given by the fiber product. 
Let $T\to \overline{T}$ be a square-zero extension with ideal sheaf $J$, then the obstruction to extending 
$\scT$ to a flat family over $\overline{T}$ lies in 
$\Ext^2(\ll^{\bullet}_{\scT/T}, q^*J)$. If the extensions exist, they form a torsor under 
$\Ext^1(\ll^{\bullet}_{\scT/T}, q^*J)$. 
The flatness of $p$ implies that $\ll^{\bullet}_{\scT/T}=g^*\ll_{\scM/M}^{\bullet}$, we have that 
$$\Ext_{\sO_{\scT}}^k(\ll^{\bullet}_{\scT/T}, q^*J)=\Ext_{\sO_{\scM}}^k(\ll^{\bullet}_{\scM/M}, Rg_*q^*J)=
\Ext_{\sO_{\scM}}^k(\ll^{\bullet}_{\scM/M}, p^*Rf_*J)$$
and also 
$$\Ext_{\sO_{\scM}}^k(\ll^{\bullet}_{\scM/M}, p^*Rf_*J)
=\Ext_{\sO_{\scM}}^k(\ll^{\bullet}_{\scM/M}\otimes \omega^{\bullet}, p^{!}Rf_*J)
=\Ext_{\sO_{M}}^{k-1}(E^{\bullet}_{M}, Rf_*J)
=\Ext_{\sO_{T}}^{k-1}(f^*E^{\bullet}_{M}, J).$$
Here we use $p^{!}Rf_*J=p^*Rf_*J\otimes \omega^{\bullet}$.

The family $\scM$ is universal, which means that the fibers of $p$ have finite automorphism groups. Therefore, 
$E_M^{\bullet}$ satisfies that $h^i(E_M^{\bullet})=0$ for $i>0$ and  $h^i(E_M^{\bullet})$ is coherent for 
$i=0, -1$.  The morphism $\phi:  E_M^{\bullet}\to  \ll_M^{\bullet}$ induces morphisms 
$$\phi_k: \Ext_{\sO_{\scT}}^k(\ll^{\bullet}_{\scT/T}, q^*J)=\Ext_{\sO_{T}}^{k-1}(f^*E^{\bullet}_{M}, J)
\to \Ext_{\sO_{T}}^{k-1}(f^*\ll^{\bullet}_{M}, J).$$
Then if $M$ is a moduli stack, then $\phi_1$ is an isomorphism and 
$\phi_2$ is injective. So from Theorem \ref{thm_obstruction_theory_equ_conditions}, $\phi$ is an obstruction theory. 

If $E_M^{\bullet}$ is perfect which is of perfect amplitude contained in $[-1,0]$, then 
$\phi$ is a perfect obstruction theory from Definition \ref{defn_perfect_obstruction_theory}. 
\end{proof}

\begin{rmk}
If $p$ is smooth and the relative fiber is of dimension $\leq 2$, then it is not hard to see that $E_M^{\bullet}$ is a perfect obstruction theory.
In the case that the relative fibers are all smooth projective surfaces, the cohomology $H^*(M, (Rp_*(\ll^{\bullet}_{\scM/M}\otimes \omega))^{\vee})$ calculates the cohomology $H^*(S, T_S)$ for each fiber $S$ for the morphism $p$. 
Let us further assume that all the surfaces in the fibers are of general type which means $S$ has a finite automorphism group.  Then $M$ is a Deligne-Mumford stack.  
The cohomology 
$H^1(S, T_S)$ classifies  the deformations for the surface $S$; and $H^2(S, T_S)$ classifies the obstructions.  Since there are no higher dimensional cohomology spaces, the obstruction theory is perfect. 

In this paper,  we apply Theorem \ref{thm_BF_prop6.1} in the more general setting for the moduli stack where $p: \scM\to M$ is the universal family of the moduli of surfaces with semi-log-canonical singularities which is called the KSBA compactification of the moduli space of surfaces of general type. 
\end{rmk}

\section{Moduli stack of surfaces of general type}\label{sec_surface_general_type}

In this section we review the moduli stack of surfaces of general type with only semi-log-canonical (s.l.c.) singularities.  The moduli space of varieties of general type has been studied for decades.  Our main references are \cite{Gieseker}, \cite{Kollar-Shepherd-Barron}, \cite{Kollar-Book}, \cite{Alexeev2},  \cite{KP15}, \cite{HMX14}, \cite{Hacking}.

\subsection{Index one cover of s.l.c. surfaces}\label{subsec_slc_moduli}

Let us recall the notion of stable surfaces.  Roughly speaking a stable surface is a surface which can arise as a limit of smooth surfaces under stable reduction. 

We fix some notations for the projective surface  $S$. Let $K_S$ be the canonical class of $S$, which is a Weil divisor class, and let $\omega_S$ be the dualizing sheaf.   From \cite[Appendix to \S 1]{Reid},  for any integer $N>0$ we set 
$$\omega_S^{[N]}:=\sO_S(NK_S)=(\omega_S^{\otimes N})^{\vee\vee}.$$
From \cite[Appendix to \S 1, Theorem 7]{Reid},  $\omega_S$ is a torsion-free sheaf of rank one. If $S$ is normal, $\omega_S$ is a divisorial sheaf which satisfies the equivalent conditions in \cite[Appendix to \S 1, Proposition 2]{Reid}.  
In particular, $\omega_S$ is reflexive if $S$ is normal. 

\begin{defn}\label{defn_surfaces_slc}
Let $S$ be a projective surface.  We say that $S$ has s.l.c. singularities if the following conditions hold:
\begin{enumerate}
\item the surface $S$ is reduced, Cohen-Macaulay, and has only double normal crossing singularities $(xy=0)\subset \aaa_{\mathbf{k}}^3$ away from a finite set of points;
\item  we use the notations above.  Let the pair $(S^{\nu}, \Delta^{\nu})$ be the normalization of $S$ with the inverse image of the double curve. Then $(S^{\nu}, \Delta^{\nu})$ has log canonical singularities; 
\item for some $N>0$ the $N$-th reflexive tensor power 
$\omega_S^{[N]}$ for  the dualizing sheaf $\omega_S$ is invertible. 
\end{enumerate}
\end{defn}

\begin{rmk}\label{rmk_slc_singularities}
Let us recall the type of surface singularities here.  Let $(S, P)$ be a $\qq$-Gorenstein singularity germ, and $f: Y\to S$ be a good semi-resolution of $S$ in sense of \cite[Proposition 4.13]{Kollar-Shepherd-Barron}.  Then there exists 
$N>0$ such that we can write 
$\omega_Y^N\cong f^*\omega_{S}^{[N]}\otimes \sO(\sum N a_i E_i)$, where $E_i$ are the exceptional divisors and all  $a_i$ are rational. 
Then $(S, P)$ is called 
\begin{enumerate}
\item semi-canonical if $ a_i\ge 0$,
\item semi-log-terminal if $ a_i> -1$,
\item semi-log-canonical if $ a_i\ge -1$.
\end{enumerate}
If $(S, P)$ is normal, then we get the definition of  canonical, log-terminal and log-canonical singularity with the above inequality unchanged. 
\end{rmk}

\begin{defn}\label{defn_stable_surfaces}
A stable surface is a connected projective surface $S$ such that $S$ has s.l.c. singularities and the dualizing sheaf $\omega_S$ is ample. 
\end{defn}

Let us recall the index one cover for a surface $S$ with s.l.c. singularities as in \cite[\S 2.3]{Hacking},  \cite{Reid} and \cite{Kollar-Book}.
Let $(S,P)$ be an s.l.c. surface germ. The index of $P\in S$ is the least integer $r$ such that $\omega_S^{[r]}$ is invertible around $P$.  Fix an isomorphism 
$\theta: \omega_S^{[r]}\to \sO_S$, we define 
$$Z:=\spec_{\sO_S}\left(\sO_S\oplus\omega^{[1]}_S\oplus\cdots\oplus \omega_S^{[r-1]}\right),$$
where the multiplication on $\sO_Z$ is defined by the isomorphism  $\theta$.  Then $\pi: Z\to S$ is a cyclic cover of degree $r$ which is called the index one cover of 
$S$.  This cover satisfies the properties that the inverse image of the point $P$ is a single point $Q\in Z$; the morphism $\pi$ is \'etale over $S\setminus P$; and the surface $Z$ is Gorenstein, which means that $Z$ is Cohen-Macaulay and the dualizing sheaf $\omega_Z$ is invertible.  The germ $(Z,P)$ is also s.l.c. This is uniquely determined locally in the \'etale topology. 

\subsection{Classification of s.l.c. singularities}\label{subsec_slc_classification}
We recall the classification of surface s.l.c. singularities in  \cite[Theorem 4.24]{Kollar-Shepherd-Barron}. 
The s.l.c. surface singularities  are exactly as follows:
\begin{enumerate}
\item the semi-log-terminal singularities;
\item the Gorenstein surfaces such that  every Gorenstein surface  $S$ is either semi-canonical (which is smooth, normal crossing, a pinch point or a DuVal singularity),
 or  has simple 
elliptic singularities, cusp, or degenerate cusp singularities;
\item the $\zz_2, \zz_3, \zz_4, \zz_6$ quotients of simple elliptic singularities;
\item the $\zz_2$ quotient of  cusps and degenerate cusps. 
\end{enumerate}
The  semi-log-terminal surface singularities are exactly as follows:
\begin{enumerate}
\item the quotient of $\aaa_{\mathbf{k}}^2$ by Brieskorn \cite{Brieskorn};
\item normal crossing or pinch points;
\item $(xy=0)$ modulo the group action  given by
$x\mapsto \zeta^a x$, $y\mapsto \zeta^b y$,  and $z\mapsto \zeta z$, where $\zeta$ is a primitive 
$r$-th root of unity and $(a,r)=1, (b, r)=1$;
\item $(xy=0)$ modulo the group action 
$x\mapsto \zeta^a y$, $y\mapsto x$,  and $z\mapsto \zeta z$, where $\zeta$ is a primitive 
$r$-th root of unity and $4| r$,  $(a,r)=2$;
\item $x^2=zy^2$ modulo the group action  given by $x\mapsto \zeta^{1+a} x$, $y\mapsto \zeta^a y$,  and $z\mapsto \zeta^2 z$, where $\zeta$ is a primitive 
$r$-th root of unity and $r$ odd, and  $(a,r)=1$;
\end{enumerate}
see \cite[Theorem 4.22, 4.23, 4.24]{Kollar-Shepherd-Barron}.
If there is a  finite group $\mathfrak{G}$-action on $S$, then it induces the action on s.l.c. germs.  If $(S, x)$ and $(S, x^\prime)$ are two s.l.c. germs, then the $\mathfrak{G}$-action  induces
a morphism $(S,x)\to (S,x^\prime)$ on the s.l.c. germs  which is a $\mathfrak{G}$-equivariant morphism under the above classification.

\subsection{$\qq$-Gorenstein deformations and obstruction spaces}\label{subsec_Q-Gorenstein_deformation}

Next we review  the $\qq$-Gorenstein deformation of s.l.c. surfaces and the obstruction spaces.

\subsubsection{$\qq$-Gorenstein deformations}
\begin{defn}\label{defn_Q_Gorenstein_deformation_G}
Let $S$ be a stable surface, and  $(S,x)$ be an s.l.c. surface germ. A deformation $(x\in \sS)/(0\in T)$ is $\qq$-Gorenstein if it is induced by an equivariant deformation of the index one cover of $(x\in S)$.  This means there exists a $\mu_r$-equivariant deformation  $\sZ/T$ of $Z$ whose quotient is $\sS/T$. Both $\sZ$ and $\sS$ admit $G$-actions compatible with the local $\mu_r$-action.
\end{defn}

Here is a result in \cite{Kollar-Shepherd-Barron} for one-parameter deformation families. 

\begin{lem}(\cite[Lemma 3.4]{Hacking})
Let $\sS/(0\in T)$ be a  flat family of s.l.c. surfaces over a curve $T$. 
Assume that the generic fiber is canonical, which has only Du Val singularities and the canonical line bundle 
$K_{\sS}$ is $\qq$-Cartier. Then $\sS/T$ is $\qq$-Gorenstein. 
\end{lem}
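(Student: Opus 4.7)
The plan is to reduce to the non-equivariant version of Hacking's Lemma 3.4 (which is quoted just above the statement) and then promote the conclusion to the $G$-equivariant setting by exploiting the canonicity of the index one cover construction. I would not re-prove Hacking's statement; instead I would use it as a black box and focus entirely on the $G$-compatibility, which is what makes the assertion ``automatic''.

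First, forget the $G$-action and apply Hacking's Lemma~3.4 directly. Since $T$ is a curve, the generic fiber is canonical (hence Du Val), and $K_{\sS}$ is $\qq$-Cartier, around every s.l.c.\ germ $P\in \sS$ of index $r$ we obtain a flat $\mu_r$-equivariant deformation $\sZ\to T$ of the index one cover $(Z,Q)\to (S_0,P)$ whose quotient recovers $\sS/T$ in an \'etale neighborhood of $P$. Concretely, this cover is given by the canonical formula
\begin{equation*}
\sZ=\spec_{\sO_{\sS}}\left(\sO_{\sS}\oplus \omega_{\sS/T}^{[1]}\oplus\cdots\oplus \omega_{\sS/T}^{[r-1]}\right),
\end{equation*}
with the algebra structure determined by any local trivialization $\theta:\omega_{\sS/T}^{[r]}\xrightarrow{\sim}\sO_{\sS}$ near $P$; different choices of $\theta$ differ by a unit and yield canonically isomorphic covers in the \'etale topology, so $\sZ$ is uniquely determined.

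Next I would promote this structure to be $G$-equivariant. By Proposition~\ref{prop_slc_singularity_cl}, $G$ permutes the s.l.c.\ germs of $\sS$ respecting analytic type, and therefore respecting the local index $r$. For any $g\in G$ the automorphism $g:\sS\to\sS$ (as a $T$-family) acts canonically on the relative dualizing sheaf $\omega_{\sS/T}$ and hence on each reflexive power $\omega_{\sS/T}^{[i]}$. It therefore acts on the sheaf of $\sO_{\sS}$-algebras $\bigoplus_{i=0}^{r-1}\omega_{\sS/T}^{[i]}$. Applying $\spec_{\sO_{\sS}}$ and invoking the canonicity recalled above, this produces a \emph{unique} lift of $g$ to the local cover $\sZ$ that commutes with the intrinsic $\mu_r$-action (this $\mu_r$ being the Galois group of the cyclic cover, hence intrinsic to the construction). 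Summing over $g\in G$ gives a $G\times\mu_r$-action on $\sZ$, which is exactly the data required by Definition~\ref{defn_Q_Gorenstein_deformation_G} for $\sS/T$ to be $\qq$-Gorenstein as a $G$-equivariant family.

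The only point requiring care, which I would flag as the main obstacle, is that $G$ may permute distinct s.l.c.\ germs $P_1,\ldots,P_k$ lying in a single $G$-orbit on $\sS$; the local construction at one point must be transported consistently along the orbit. I would handle this by working in a $G$-invariant \'etale neighborhood of the whole orbit rather than of a single point, and using the uniqueness of the index one cover in \'etale topology: since $\sZ$ is determined up to unique isomorphism by the intrinsic datum $\omega_{\sS/T}$, any two identifications transported across the orbit agree canonically, and the glueing is automatic. No further obstruction arises; once the non-equivariant $\qq$-Gorenstein structure is in hand, canonicity makes the $G$-lift free.
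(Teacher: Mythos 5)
Your proposal is correct and follows the route the paper intends: the paper offers no proof of this lemma beyond the remark just above it that Hacking's non-equivariant Lemma~3.4 ``automatically holds for $G$-equivariant deformations.'' Your argument supplies precisely the content of that ``automatically'' --- the $G$-action lifts to the local index one covers because they are built intrinsically from $\omega_{\sS/T}$ and its reflexive powers, the lift commutes with the grading-defined $\mu_r$-action since $G$ acts by pullback while $\mu_r$ acts by scalars on each graded piece, and the étale-local uniqueness of the cover transports the construction consistently along $G$-orbits.
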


We collect some facts for the $\qq$-Gorenstein deformations.  
For a flat family $\sS/T$ of s.l.c. surfaces, let $\omega_{\sS/T}$ be the  relative dualizing sheaf. 
From \cite[\S 5.4]{Kollar-Shepherd-Barron}, \cite[Appdedix to \S 1]{Reid} and \cite[\S 3.1]{Hacking}, we have that 
$$\omega_{\sS/T}^{[N]}:=(\omega_{\sS/T}^{\otimes N})^{\vee\vee}=i_*(\omega_{\sS^0/T}^{\otimes N}),$$
where $i: \sS^{0}\hookrightarrow \sS$ is the inclusion of the Gorenstein locus; i.e., the locus where the relative dualizing sheaf $\omega_{\sS/T}$ is invertible. 
Suppose that $\omega_{\sS/T}^{[N]}$ is invertible, and 
if $(S,x)$ is an s.l.c. surface  germ  with index $r$ in the family $\sS/T$, then the index $r|N$.  

From \cite[Lemma 3.5]{Hacking}, let  $(S,x)$ be an s.l.c. surface  germ  with  index $r$, and $Z\to S$ be the index one  cover under the cyclic group $\zz_r$-action. 
Let 
$\sZ/(0\in T)$ be a $\zz_r$-equivariant deformation of $Z$ inducing a $\qq$-Gorenstein deformation $\sS/(0\in T)$ of $S$, 
then  we have that 
$$\sZ=\spec_{\sO_{\sS}}(\sO_{\sS}\oplus \omega^{[1]}_{\sS/T}\oplus\cdots\oplus \omega_{\sS/T}^{[r-1]}),$$
where the multiplication of $\sO_{\sZ}$ is given by fixing a trivialization of $\omega_{\sS/T}^{[r]}$.  If the deformation $\sS/(0\in T)$ admits a $G$-action, then  every power  $\omega_{\sS/T}^{[i]}$ is endowed with a $G$-action and the index one cover is also endowed with a $G$-action making this $\sZ/(0\in T)$ $G$-equivariant. 

The index one  cover of the s.l.c. germ $(\sS, x)$ is uniquely determined in the \'etale topology. These data of index one covers everywhere  locally on $\sS/T$ glue to define a Deligne-Mumford stack 
$\SS/T$ which we call the canonical covering (Hacking) stack, or the index one covering Deligne-Mumford stack associated with  $\sS/T$.  The dualizing sheaf $\omega_{\SS/T}$ is invertible.

Let us collect some deformation and obstruction facts about the index one covering Deligne-Mumford  stacks.  We replace $T$ by a $\mathbf{k}$-algebra $A$, and consider an infinitesimal extension $A^\prime\to A$. Let $\sS/A$ be a 
$\qq$-Gorenstein  family of s.l.c. surfaces  with $G$-action  and $\SS/A$ be its index one covering  Deligne-Mumford stack. 

\begin{defn}\label{defn_Q_deformation_stack}
A deformation of $\SS/A$ over $A^\prime$ is a  Deligne-Mumford stack  $\SS^\prime/A^\prime$ which is flat over $A^\prime$ such that 
$\SS^\prime\times_{\spec A^\prime}\spec A\cong \SS$.
\end{defn} 

Equivalently a deformation  $\SS^\prime/A^\prime$ of $\SS/A$ is a sheaf $\sO_{\SS^\prime}$ of flat $A^\prime$-algebras on the \'etale site of $\SS$ such that 
$\sO_{\SS^\prime}\otimes_{A^\prime}A=\sO_{\SS}$. Thus the deformation theory of $\SS$ is controlled by the cotangent complex 
$\ll_{\SS/A}^{\bullet}$ as in \cite{Illusie2}.   Let us fix the following notations.

Let $A$ be a $\mathbf{k}$-algebra and $J$  be a finite $A$-module.   For a flat family $\sS/A$ of schemes over $A$ let 
$\ll_{\sS/A}^{\bullet}$ be the relative cotangent complex.  Then we define 
$$T^i(\sS/A, J):=\Ext^i(\ll_{\sS/A}^{\bullet}, \sO_{\sS}\otimes_{A}J),$$ and 
$$\sT^i(\sS/A, J):=\sE xt^i(\ll_{\sS/A}^{\bullet}, \sO_{\sS}\otimes_{A}J).$$
The groups $T^i(\sS/A, J)$ control the deformation and obstruction theory of $\sS/A$. 

We are actually working on  the  $G$-equivariant $\qq$-Gorenstein deformation theory of $\sS/A$.   Thus for the $\qq$-Gorenstein  family  $\sS/A$ of s.l.c. surfaces,  let $\SS/A$ be the  family of  the index one  covering Deligne-Mumford stacks, 
and 
$\pi: \SS\to \sS$ be the map to  its coarse moduli space.  Define
$$T_{\QG}^i(\sS/A, J):=\Ext^i(\ll_{\SS/A}^{\bullet}, \sO_{\SS}\otimes_{A}J),$$
and 
$$\sT_{\QG}^i(\sS/A, J):=\pi_*\sE xt^i(\ll_{\SS/A}^{\bullet}, \sO_{\SS}\otimes_{A}J).$$ 
The following two results are proven by P. Hacking \cite[Proposition 3.7, Theorem 3.9]{Hacking}.

\begin{prop}\label{prop_deformation_S_SS_G}(\cite[Proposition 3.7]{Hacking})
Let $\sS/A$  be a  $\qq$-Gorenstein family  of s.l.c. surfaces and $\SS/A$ be  its corresponding  index one covering Deligne-Mumford  stack. 
Consider the infinitesimal extension $A^\prime \to A$,  and  let  $\sS^\prime/A^\prime$ be a  $\qq$-Gorenstein deformation of $\sS/A$, and 
$\SS^\prime/A^\prime$ be the corresponding  index one covering Deligne-Mumford  stack.  Then,  there exists a one-to-one correspondence from the set of isomorphism classes of 
$\qq$-Gorenstein deformation families of $\sS/A$ over $A^\prime$ to the set of isomorphism classes of flat deformation families $\SS^\prime/A^\prime$ over $A^\prime$.
\end{prop}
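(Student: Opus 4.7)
The plan is to construct mutually inverse maps between the two sets of isomorphism classes by working étale-locally around each s.l.c. singularity germ and then gluing. Since both constructions are canonical at the level of germs, the $G$-action will be carried along automatically. The proof that Hacking gives in the non-equivariant case (\cite[Proposition 3.7]{Hacking}) extends almost verbatim once we observe that the $G$-action commutes with the local $\mathbb{Z}_r$-actions used to form the index one covers, as recorded in Definition \ref{defn_Q_Gorenstein_deformation_G}.

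For the forward direction, I would start with a flat deformation $\SS'/A'$ of $\SS/A$ and define $\sS'/A'$ to be its coarse moduli space, which is equivalently obtained as $\spec_{\sS}(\pi'_*\sO_{\SS'})$, where $\pi'\colon \SS'\to \sS'$ is the coarse moduli map. Flatness of $\SS'/A'$ and the fact that $\pi'$ is étale-locally a finite cyclic quotient by $\mathbb{Z}_r$ imply that $\sS'$ is a flat deformation of $\sS$. Étale-locally the index one cover $\SS'$ is, by construction, a $\mathbb{Z}_r$-equivariant deformation of $Z$ whose quotient is $\sS'/A'$, so $\sS'/A'$ is $\qq$-Gorenstein in the sense of Definition \ref{defn_Q_Gorenstein_deformation_G}. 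The $G$-action on $\SS'$ descends to a $G$-action on $\sS'$ compatible with the local $\mathbb{Z}_r$-actions.

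For the reverse direction, given a $G$-equivariant $\qq$-Gorenstein deformation $\sS'/A'$, the key point is to build its index one cover globally. Étale-locally around a germ of index $r$, a trivialization $\theta'\colon \omega_{\sS'/A'}^{[r]}\to \sO_{\sS'}$ extending the given one on $\sS$ exists (this is where the infinitesimal extension and the fact that $\omega_{\sS'/A'}^{[r]}$ commutes with specialization enter), and one then forms
\[
\sZ' := \spec_{\sO_{\sS'}}\!\left(\sO_{\sS'}\oplus \omega^{[1]}_{\sS'/A'}\oplus\cdots\oplus \omega_{\sS'/A'}^{[r-1]}\right),
\]
with algebra structure determined by $\theta'$. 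By the arguments recalled in \S \ref{subsec_Q-Gorenstein_deformation}, each $\omega_{\sS'/A'}^{[i]}$ is flat over $A'$, and $\sZ'/A'$ is a $\mathbb{Z}_r$-equivariant flat deformation of $Z$. The quotient stacks $[\sZ'/\mathbb{Z}_r]$ glue in the étale topology: the construction of the index one cover is canonical and intrinsic to the germ, so two different local trivializations give isomorphic local stacks via a unique isomorphism fixing the identification on $\sS$. This produces $\SS'/A'$ as a flat deformation of $\SS/A$, and the $G$-action on $\sS'$ lifts canonically (since every $\omega_{\sS'/A'}^{[i]}$ carries a natural $G$-action).

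That these two constructions are mutually inverse is then a local verification: the coarse moduli space of the cover built from $\omega_{\sS'/A'}^{[i]}$ is tautologically $\sS'$, and conversely the direct sum $\bigoplus_{i=0}^{r-1}\omega_{\sS'/A'}^{[i]}$ with its cyclic algebra structure is recovered from $\pi'_*\sO_{\SS'}$ by decomposition into $\mathbb{Z}_r$-eigenspaces. Compatibility of the two directions with isomorphisms, and with the $G$-action, is then immediate. The most delicate step I expect is verifying that the uniqueness of the local index one cover in the étale topology persists under deformation — this amounts to showing that automorphisms of $\sZ'$ as a $\mathbb{Z}_r$-equivariant cover of $\sS'$ restricting to the identity on $\sZ$ are trivial, which follows from the reflexive nature of the $\omega_{\sS'/A'}^{[i]}$ and the fact that automorphisms of line bundles on the Gorenstein locus that reduce to the identity form a pro-unipotent group with no obstructions beyond the ones already controlled by the deformation $\sS'/A'$ itself.
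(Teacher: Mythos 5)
Your proposal is correct and takes essentially the same route as the paper: the paper itself gives no independent proof and simply cites Hacking's Proposition 3.7, noting that the construction is canonical enough to carry the $G$-action along automatically, and you have reconstructed Hacking's argument (the two mutually inverse constructions via coarse moduli space and via $\spec_{\sO_{\sS'}}(\bigoplus_i\omega_{\sS'/A'}^{[i]})$, using flatness of the reflexive powers from Hacking's Lemma 3.5) with the $G$-equivariance observation made explicit. One small correction: extending a local trivialization of $\omega_{\sS'/A'}^{[r]}$ over the infinitesimal extension $A'\to A$ is automatic because the line bundle is trivializable étale-locally and the nilpotent kernel poses no obstruction; the "commutes with specialization" hypothesis is part of the moduli functor's separatedness requirement, not what is needed at this step.
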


\begin{prop}\label{prop_local_deformation_obstruction_slc}
Let $\sS_0/A_0$  be a $\qq$-Gorenstein family  of s.l.c. surfaces, and let $J$ be a finite $A_0$-module.
Then we have that 
\begin{enumerate}
\item  the set of isomorphism classes of  $\qq$-Gorenstein  deformations of $\sS_0/A_0$ over 
$A_0+J$ is naturally an $A_0$-module and is canonically isomorphic to $T_{\QG}^1(\sS/A, J)$. 
Here $A_0+J$ means the ring $A_0[J]$ with $J^2=0$;
\item  let $A^\prime\to A\to A_0$ be the infinitesimal extensions,  and $J$ be the kernel of $A^\prime\to A$. 
Let $\sS/A$  be a $G$-equivariant $\qq$-Gorenstein deformation of $\sS_0/A_0$.
Then  we have 
\begin{enumerate}
\item  there exists a canonical element $\ob(\sS/A, A^\prime)\in T_{\QG}^2(\sS/A, J)$ called the obstruction class.  It vanishes if and only if there exists a 
 $\qq$-Gorenstein  deformation $\sS^\prime/A^\prime$ of $\sS/A$ over $A^\prime$. 
\item if  $\ob(\sS/A, A^\prime)=0$,  then the set of  isomorphism classes of $\qq$-Gorenstein  deformations $\sS^\prime/A^\prime$
is an affine space underlying $T_{\QG}^1(\sS_0/A_0, J)$.
\end{enumerate}
\end{enumerate}
\end{prop}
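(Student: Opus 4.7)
The proposal is to reduce the statement to standard Illusie-style deformation theory applied to the index one covering Deligne-Mumford stack $\SS_0/A_0$, and then take $G$-invariants.

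First I would use Proposition \ref{prop_deformation_S_SS_G} to translate the problem. That proposition gives a bijection between isomorphism classes of $\qq$-Gorenstein deformation families of $\sS_0/A_0$ over an infinitesimal extension and isomorphism classes of flat deformation families of the associated index one covering Deligne-Mumford stack $\SS_0/A_0$. So the set $\qq$-Gorenstein deformations of $\sS_0/A_0$ over $A_0+J$ is canonically in bijection with the set of flat deformations of $\SS_0/A_0$ as a sheaf of flat $A_0[J]/(J^2)$-algebras extending $\sO_{\SS_0}$. All the $G$-equivariance is carried along under this correspondence, because the index one covering stack is canonical and functorial in the \'etale topology, so the $G$-action on $\sS_0$ lifts uniquely to a $G$-action on $\SS_0$ and is preserved by the bijection.

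Next I would invoke the standard deformation theory of the cotangent complex for algebraic (Deligne-Mumford) stacks, in the formulation of Illusie \cite{Illusie1}, \cite{Illusie2}. For a flat morphism $\SS_0 \to \spec A_0$ and a finite $A_0$-module $J$, isomorphism classes of flat deformations of $\SS_0$ over $A_0+J$ form a torsor under $\Ext^1(\ll^{\bullet}_{\SS_0/A_0}, \sO_{\SS_0}\otimes_{A_0} J)$, and given a flat deformation $\SS/A$ and an infinitesimal extension $A^{\prime}\to A$ with kernel $J$, there is a canonical obstruction class in $\Ext^2(\ll^{\bullet}_{\SS/A}, \sO_{\SS}\otimes_A J)$ whose vanishing is equivalent to the existence of a flat extension $\SS^{\prime}/A^{\prime}$, and when it vanishes the set of extensions is a torsor under $\Ext^1$. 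By definition these Ext groups are exactly $T^{1}_{\QG}$ and $T^{2}_{\QG}$, which gives the non-equivariant version of both (1) and (2).

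Finally I would pass to the $G$-equivariant setting. Because $G$ acts on $\SS_0$ preserving the structure morphism to $\spec A_0$, the functoriality of the cotangent complex equips $\ll^{\bullet}_{\SS_0/A_0}$ with a canonical $G$-action compatible with the $G$-action on $\sO_{\SS_0}$, so each group $\Ext^i(\ll^{\bullet}_{\SS_0/A_0}, \sO_{\SS_0}\otimes_{A_0} J)$ carries an induced $G$-representation. A $G$-equivariant flat deformation corresponds to a flat deformation whose classifying class is $G$-invariant, and similarly the obstruction class for extending a $G$-equivariant deformation is functorial and hence lives in the $G$-invariant part. Since we work in characteristic zero with finite $G$, the functor of $G$-invariants is exact, so torsor statements restrict cleanly to the invariant parts, yielding $T^1_{\QG}(\sS_0/A_0,J)^G$ and $T^2_{\QG}(\sS/A,J)^G$ respectively.

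The main obstacle is the compatibility of $G$-equivariance with the index-one-cover translation: one must verify that the $G$-action on $\sS_0$ lifts uniquely to a $G$-action on $\SS_0$ that interacts correctly with the local $\mu_r$-actions defining the canonical covers, and that the bijection of Proposition \ref{prop_deformation_S_SS_G} respects $G$-actions on both sides so that $G$-invariants on the cotangent-complex side indeed correspond to $G$-equivariant $\qq$-Gorenstein deformations. Once this compatibility is in place, the rest of the proof is a formal application of Illusie's machinery combined with the exactness of the $G$-invariants functor.
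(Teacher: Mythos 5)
Your proposal is correct and follows essentially the same route as the paper, whose proof is simply a citation of Hacking's Theorem 3.9 and Illusie's cotangent-complex machinery: reduce via the index-one-cover correspondence (Proposition \ref{prop_deformation_S_SS_G}) to flat deformations of the covering Deligne--Mumford stack, identify the classifying and obstruction groups with $T^1_{\QG}$ and $T^2_{\QG}$ by definition, and pass to $G$-invariants using exactness in characteristic zero. You have merely spelled out the standard details the paper leaves implicit.
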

\begin{proof}
This is a basic result of deformation and obstruction theory of algebraic varieties; see \cite[Theorem 3.9]{Hacking} and \cite{Illusie2}. 
\end{proof}

\subsubsection{Higher obstruction spaces of the index one covering Deligne-Mumford stack}\label{subsubsec_higher_cohomology}

Let $S$ be an s.l.c. surface, and let $\SS\to S$ be the index one covering Deligne-Mumford  (Hacking) stack in \S \ref{subsec_Q-Gorenstein_deformation}. The spaces 
$T_{\QG}^i(S)=\Ext^i(\ll_{\SS}, \sO_{\SS})$ can be calculated by the local to global spectral sequence
$$E_2^{p,q}=H^p(\sT^{q}_{\QG}(S))\Longrightarrow T^{p+q}_{\QG}(S),$$
where $\sT^{q}_{\QG}(S):=\pi_*(\sE xt^q(\ll_{\SS}, \sO_{\SS}))$ and $\pi: \SS\to S$ is the map to its coarse moduli space. 
The spaces $T_{\QG}^i(S)$ for $i\ge 3$ classify the higher obstruction spaces for the $\qq$-Gorenstein deformations of $S$. We have that 

\begin{prop}\label{prop_slc_higher_cohomology}
Let $S$ be an s.l.c. surface satisfying  the following conditions:
\begin{enumerate}
\item $S$ is  Kawamata-log-terminal (k.l.t.);   or
\item the  possible simple elliptic singularity, the cusp and the degenerate cusp singularity of $S$, and the possible 
$\zz_2, \zz_3, \zz_4, \zz_6$ quotients of the  simple elliptic singularity, the $\zz_2$-quotient of the  cusp and the degenerate cusp singularity of $S$ all have embedded dimension at most $4$,
\end{enumerate}
then the higher obstruction spaces  $T_{\QG}^i(S)$ vanish for $i\geq 3$.  
\end{prop}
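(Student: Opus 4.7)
The plan is to apply the local-to-global spectral sequence
\begin{equation*}
E_2^{p,q} = H^p(\sT^q_{\QG}(S)) \Longrightarrow T^{p+q}_{\QG}(S)
\end{equation*}
and to show that every term with $p+q\ge 3$ vanishes. The backbone of the argument is the claim that, under either hypothesis (1) or (2), the index one covering Deligne-Mumford stack $\pi\colon\SS\to S$ has only locally complete intersection singularities; once this is granted, the rest of the proof is essentially bookkeeping.

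First I would verify the l.c.i.\ claim for the cover $\SS$. Under (1), every singular point of $S$ is a cyclic quotient singularity (in fact k.l.t.), and its index one cover is a Gorenstein rational surface singularity, hence a Du Val (ADE) singularity, which is a hypersurface in $\aaa^3$ and therefore l.c.i. Under (2), the possible simple elliptic, cusp, and degenerate cusp germs of $S$, together with the stated $\zz_r$-quotients, all have embedded dimension at most $4$; for the Gorenstein germs the index one cover is the germ itself, and for the exceptional cyclic quotients it is an underlying Gorenstein simple elliptic, cusp, or degenerate cusp germ of the same embedded dimension. By Laufer~\cite[Theorem 3.13]{Laufer} and Stevens~\cite{Stevens}, such singularities of embedded dimension $\le 4$ are locally complete intersections. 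Hence the \'etale-local charts $[Z/\zz_r]$ presenting $\SS$ all have l.c.i.\ underlying scheme $Z$.

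Next I would exploit the l.c.i.\ property to kill the local contributions. For any l.c.i.\ germ $Z$ the cotangent complex $\ll^{\bullet}_{Z}$ has perfect amplitude in $[-1,0]$, so $\sE xt^q(\ll^{\bullet}_{Z},\sO_Z)=0$ for all $q\ge 2$; descending along $\pi$ gives $\sT^q_{\QG}(S)=0$ for $q\ge 2$. The sheaf $\sT^1_{\QG}(S)$ is supported on the finite singular locus of $S$, so $H^p(\sT^1_{\QG}(S))=0$ for $p\ge 1$, while $\sT^0_{\QG}(S)$ is a coherent sheaf on the projective surface $S$, forcing $H^p(\sT^0_{\QG}(S))=0$ for $p\ge 3$. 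Collecting these three ranges, every $E_2^{p,q}$ with $p+q\ge 3$ vanishes, and the spectral sequence yields $T^i_{\QG}(S)=0$ for $i\ge 3$.

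The main obstacle will be the first step: verifying l.c.i.-ness of the index one cover in case (2), which requires going through the classification of s.l.c.\ surface singularities recalled in the proof of Proposition~\ref{prop_slc_singularity_cl} and checking, for each exceptional $\zz_r$-quotient $(S,x)=(Y,y)/\zz_r$ with $\mathrm{embdim}(S,x)\le 4$, that the Gorenstein cover $(Y,y)$ still falls inside the Laufer--Stevens range. Once local l.c.i.-ness of $\SS$ is pinned down, the spectral-sequence argument in the remaining steps is routine.
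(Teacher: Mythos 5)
Your proof is correct and follows essentially the same route as the paper's: establish that the index one covering Deligne--Mumford stack is l.c.i.\ (so that $\sT^q_{\QG}(S)=0$ for $q\ge 2$) and then run the local-to-global spectral sequence; your derivation of l.c.i.-ness in case (1) via ``Gorenstein rational quotient singularity $\Rightarrow$ Du Val'' is a clean substitute for the paper's appeal to the class-$T$ classification of \cite[Proposition 3.10]{Kollar-Shepherd-Barron}, which identifies the cover explicitly as an $A_{rs-1}$-singularity. One small correction: for a non-normal s.l.c.\ surface in case (2) the sheaf $\sT^1_{\QG}(S)$ is supported on the one-dimensional double locus rather than on a finite set, but the vanishing actually needed in the spectral sequence, namely $H^p(\sT^1_{\QG}(S))=0$ for $p\ge 2$, still holds because the support has dimension at most one.
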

\begin{proof}
From the classification of semi-log-canonical surface singularities in \S \ref{subsec_slc_classification}, and known fact in birational geometry, 
a k.l.t. surface $S$ only has cyclic quotient singularities,  cyclic quotients of  the normal crossing, and pinch point singularities, or Du Val singularities.  
Then if the surface $S$ admits a $\qq$-Gorenstein deformation,  from \cite[Proposition 3.10]{Kollar-Shepherd-Barron}, the  quotient singularities must have the form 
$$\spec \mathbf{k}[x,y]/\mu_{r^2 s},$$
where $\mu_{r^2 s}=\langle\alpha\rangle$ and there exists a primitive $r^2 s$-th root of unity 
$\eta$ such that the action is given by
$$\alpha(x,y)=(\eta x, \eta^{dsr-1}y),$$
where $(d, r)=1$.    Thus the index one cover of $S$ locally has the quotient 
$$\spec \mathbf{k}[x,y]/\mu_{rs}$$ given by $\alpha^\prime(x,y)=(\eta^\prime x, (\eta^\prime)^{rs-1}y)$, which is an $A_{rs-1}$-singularity, and therefore is  l.c.i. 
The 
cotangent complex $\ll_{\SS}$ only has two terms concentrated in degrees $-1, 0$. Therefore,  the tangent sheaf $\sT^{q}_{\QG}(S)$ is zero for $q\geq 2$. 
By the local to global spectral sequence  $T^{i}_{\QG}(S)=0$ for $i\geq 3$. 

If an s.l.c. surface $S$ has a simple elliptic singularity, a cusp  or a  degenerate cusp singularity with embedded dimension at most $4$, then from \cite[Theorem 3.13]{Laufer},
and \cite{Stevens}, these singularities must be locally complete intersection singularities.  For the s.l.c. surfaces with $\zz_2, \zz_3, \zz_4, \zz_6$ quotients of a simple elliptic singularity,  a cusp and or   a degenerate cusp singularity such that the local embedded dimension $\leq 4$,  their index one covers 
$\SS$ locally must be l.c.i.,  and the  tangent sheaf $\sT^{q}_{\QG}(S)$ is zero for $q\geq 2$
making the global obstruction spaces  $T^{i}_{\QG}(S)=0$ for $i\geq 3$. 
\end{proof}

\begin{rmk}\label{rmk_slc_minimally_elliptic}
Recall for an s.l.c. surface $S$, the tangent sheaves $\sT^{q}_{\QG}(S)$ satisfy the following properties (see for example \cite{Hacking}):
\begin{enumerate}
\item $\sT^{0}_{\QG}(S)=\sT_S$ is the tangent sheaf of $S$;
\item  $\sT^{1}_{\QG}(S)$ supports on singular locus of $S$, which can be calculated as follows: if locally $\SS$ is given by 
$[V/\zz_r]\to U$ for an open subset $U\subset S$, we have
$$\sT^{1}_{\QG}(S)=\left(p_*\sE xt^1(\Omega_V, \sO_V)\right)^{\zz_r}$$
where $p: V\to U$ is the natural morphism;
\item  $\sT^{2}_{\QG}(S)$ supports on the locus of the index one cover $Z$ which is not a local complete intersection; 
\item $\sT^{q}_{\QG}(S)$ for $q\geq 3$ may support on non-complete intersection singularities of $S$. 
\end{enumerate}
Therefore, from the local to global spectral sequence,  to determine the higher obstruction spaces  $T_{\QG}^i(S)$ it is sufficient to know   $\sT^{q}_{\QG}(S)$ for $q\geq 3$ since for any coherent sheaf 
$F$ the cohomology spaces $H^p(S,F)$ only survive for $p=1, 2$.  
From \cite{Stevens}, if  a cusp  or a  degenerate cusp singularity has  embedded dimension $\geq 5$, then the singularity is definitely not  a complete intersection singularity.  There should exist an example of degenerate cusp singularity
$(S, p)$ such that its embedded dimension is $\geq 5$, and  the tangent sheaves $\sT^{q}_{\QG}(S)\neq 0$ for some  $q\geq 3$.   It is likely that for a cusp or degenerate  singularity germ $(S,p)$ with embedded dimension $>5$, if the tangent sheaf $\sT_{\QG}^2(S,\sO_S)\neq 0$, then $\sT_{\QG}^3(S,\sO_S)\neq 0$; see \cite{Jiang_2021}. 
In this situation, the obstruction  spaces  $T_{\QG}^i(S)$ are not zero for $i\geq 3$.   These higher obstruction spaces for the s.l.c. surface $S$ imply that there is no natural  Behrend-Fantechi style perfect obstruction theory on the moduli stack of surfaces of general type containing s.l.c. surfaces with  such type of  singularities. 
\end{rmk}

From Remark \ref{rmk_slc_minimally_elliptic}, we make the following condition for  s.l.c. surfaces.
\begin{condition}\label{condition_star}
If an s.l.c. surface $S$ has the following surface singularity $(S,x)$:   a simple elliptic singularity, a cusp or a 
degenerate cusp singularity, or the $\zz_2, \zz_3, \zz_4, \zz_6$ quotients of the simple elliptic singularity, and the $\zz_2$ quotient of a cusp or a degenerate cusp singularity, 
then $(S,x)$ has  embedded dimension 
at most $4$.  
\end{condition}

\subsection{The moduli stack of s.l.c. surfaces}\label{subsubsec_moduli_functor_G}

We define the moduli functor of s.l.c. surfaces. 
We still fix $K^2, \chi, N\in\zz_{>0}$. 
Let $$M_N:=\overline{M}_{K^2, \chi, N}:  \Sch_{\mathbf{k}}\to \text{Groupoids}$$
be the moduli functor 
sending 
\begin{equation}\label{eqn_condition_functor_M}
T\mapsto  \left\{(f:\sS\to T) \left| \begin{array}{l}
  \text{$\bullet$  $\sS\stackrel{f}{\rightarrow} T$ is a  $\qq$-Gorenstein deformation} \\
  \text{~ family of stable s.l.c. surfaces;} \\
  \text{$\bullet$ Conditions (1)-(5) hold for each geometric fiber;} \\
   \text{$\bullet$ For each geometric point $t\in T$,  we have}\\
  ~\omega^{[N]}_{\sS/T}\otimes \mathbf{k}(t)\to ~\omega_{\sS_t}^{[N]}\text{~ is an isomorphism, where}\\
   ~\omega^{[N]}_{\sS/T}=j_*(\omega^{\otimes N}_{\sS^0/T}),  \text{and~} j: \sS^0\to \sS
   \text{~ is the inclusion of}\\
   \text{~ the locus where $f$ is Gorenstein.}
     \end{array}  \right\}\right. 
\end{equation}
modulo equivalence. The Conditions (1)-(5) above  are given by
 \begin{enumerate}
 \item each fiber of $f: \sS\to T$ is a reduced projective surface; 
 \item  each $S_t$ is connected with only s.l.c. singularities;
 \item  the sheaf $\omega_{S_t}^{[N]}$ which is defined by $\omega_{S_t}^{[N]}=j_*(\omega_{(S_t)^0}^{\otimes N})$ and 
 $j: (S_t)^0\to S_t$ is the inclusion of Gorenstein locus of $S_t$, is an ample line bundle;
 \item $K_{S_t}^{2}=\frac{1}{N^2}(\omega_{S_t}^{[N]}\cdot \omega_{S_t}^{[N]})=K^2$ for any $t\in T$;
 \item $\chi(\sO_{S_t})=\chi$ for $t\in T$.
 \end{enumerate}
 
 We have that 
 
\begin{thm}\label{thm_moduli_stack_G}
 When fixing $K^2, \chi, N\in\zz_{>0}$,  the functor $M_N$ is represented by a  Deligne-Mumford stack $M_N:=M_{K^2,\chi,N}$ of finite type over $\mathbf{k}$.   Suppose that  $N>0$ is large divisible enough, then the stack 
 $M:=M_{K^2,\chi}:=M_{K^2,\chi,N}$ is a proper Deligne-Mumford stack with projective coarse moduli space.
\end{thm}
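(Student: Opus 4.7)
The plan is to realize $\overline{\sM}^G$ as a moduli stack that is finite over the known moduli stack $\overline{M}_{K^2,\chi,N}$ of \cite{KP15}, and then transfer representability, properness, and projectivity across this finite cover. Forgetting the $G$-action yields a morphism of functors $F\colon \overline{\sM}^G \to \overline{\sM}_{K^2,\chi,N}$, and the fiber of $F$ over a family $\sS/T$ parameterizes $G$-actions on $\sS/T$; such an action is precisely a group homomorphism from the constant group scheme $G_T$ into the relative automorphism group scheme $\underline{\Aut}_T(\sS/T)$, the compatibility with the s.l.c./$\qq$-Gorenstein structure being automatic by Proposition \ref{prop_slc_singularity_cl} together with the uniqueness of the index one cover in the \'etale topology.

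First I would show that $F$ is relatively representable and finite. Since $\omega_{\sS/T}^{[N]}$ is ample on every geometric fiber and each fiber $S_t$ is a stable surface of general type, the automorphism group scheme $\underline{\Aut}_T(\sS/T)$ is finite and unramified over $T$, by the classical boundedness of automorphisms of canonically polarized varieties. Consequently the Hom-scheme $\underline{\Hom}_{gp}(G_T,\underline{\Aut}_T(\sS/T))$ is represented by a scheme finite and unramified over $T$, so $F$ is finite and unramified. Combined with the fact that $\overline{M}_{K^2,\chi,N}$ is a Deligne--Mumford stack of finite type over $\mathbf{k}$, this yields representability of $\overline{\sM}^G$ by a Deligne--Mumford stack of finite type. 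For properness when $N$ is large divisible enough, I would verify the valuative criterion: given a DVR $R$ with fraction field $K$, a family $(\sS_K/\spec K,\sigma_K)$ in $\overline{\sM}^G$ has its underlying family $\sS_K$ extending, after a finite base change of $R$, to a unique family $\sS_R/\spec R$ in $\overline{M}_{K^2,\chi,N}$ by \cite{KP15}, and the $G$-action $\sigma_K$ extends uniquely to $\sS_R$ because sections of the separated (in fact finite) group scheme $\underline{\Aut}_R(\sS_R/R)$ extend uniquely from the generic point. Projectivity of the coarse moduli space then follows from Keel--Mori applied to the resulting proper Deligne--Mumford stack, together with the finite morphism down to the projective coarse moduli space of $\overline{M}_{K^2,\chi}$.

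The main obstacle I foresee is verifying that the extended $G$-action on the closed fiber is compatible with the $\qq$-Gorenstein deformation structure on the family, not merely with the underlying flat family. I would address this using Proposition \ref{prop_deformation_S_SS_G}: a $\qq$-Gorenstein deformation of an s.l.c. surface corresponds bijectively to a flat deformation of its index one covering Deligne--Mumford stack, and the uniqueness of the index one cover in the \'etale topology makes this correspondence canonical. Hence any $G$-action on $\sS_R$ automatically lifts to a $G$-action on the associated index one covering stack, giving the desired compatibility, so that the valuative criterion and finiteness arguments above indeed produce objects of $\overline{\sM}^G$ rather than merely $G$-equivariant families of underlying schemes.
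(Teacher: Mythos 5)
Your proposal is correct but follows a genuinely different route from the paper. The paper verifies the Deligne--Mumford stack axioms directly for $\overline{\sM}^G$: it shows that the diagonal is representable and unramified by representing the functor $\mathbf{Isom}_T(\sS,\sS')$ via \cite[Proposition~6.8]{KP15}, produces an \'etale atlas from a Hilbert scheme via \cite[Proposition~6.11]{KP15}, and then obtains properness from the boundedness results and projectivity of the coarse space from \cite[Theorem~7.1, Corollary~7.3]{KP15}. You instead realize $\overline{\sM}^G$ as relatively representable and finite unramified over the already-constructed stack $\overline{M}_{K^2,\chi,N}$ by identifying the fiber of the forgetful morphism with the Hom-scheme $\underline{\Hom}_{gp}(G_T,\underline{\Aut}_T(\sS/T))$, and then transfer representability, properness (via the valuative criterion plus unique extension of sections of a finite group scheme), and projectivity across this finite cover. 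Your approach is more modular and economical, since it uses the non-equivariant result as a black box rather than re-running the Koll\'ar--Kov\'acs--Patakfalvi machinery with $G$ carried along; the paper's approach is closer to the existing literature it cites and avoids having to argue that the fiber of the forgetful functor is a scheme (which you did handle correctly, since the 2-fiber product over a family $\sS/T$ is indeed equivalent to the sheaf of $G$-actions on $\sS/T$, the groupoid structure being discrete because morphisms in the fiber are uniquely determined). Worth noting: the paper's parenthetical remark that $M$ ``should exist as a closed substack'' of $\overline{M}_{K^2,\chi,N}$ is imprecise --- a surface may admit several $G$-actions, so the correct statement is precisely your finiteness of the forgetful morphism. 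Your handling of the $\qq$-Gorenstein compatibility via the canonicity of the index one covering stack (Proposition~\ref{prop_deformation_S_SS_G}) closes the one real gap that the finite-cover strategy opens up, so the proof is complete.
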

\begin{proof}
From \cite[Corollary 5.7]{Kollar-Shepherd-Barron}, 
the functor $M_N$ is coarsely represented by a separated algebraic space  of finite type.  
From \cite[Theorem 1.1, Remark 6.3]{KP15},  $M_N$ is a proper Deligne-Mumford stack if $N$ is sufficiently large enough. 

We provide a proof in detail for $M=M_N$ when $N$ is sufficiently large divisible enough.     
From \cite{Alexeev2}, \cite[Theorem 1.1]{HMX14}, after fixing the data $K^2, \chi$,  any $\qq$-Gorenstein  family of s.l.c. surfaces with fixed volume is bounded,  therefore  there exists a uniform bound  $N>0$ such that 
$\omega_{\sS/T}^{[N]}$ is invertible for any flat $\qq$-Gorenstein family $\sS\to T$ of s.l.c. surfaces.  
Note that \cite{Alexeev2} did the case of surfaces which is exactly what we want.  \cite[Theorem 1.1]{HMX14} proved the case of higher dimensional log general type varieties. 
Therefore,  from \cite[\S 4.21]{DM},  to prove $M$ is a Deligne-Mumford stack, 
  one needs to show that $M$ has representable and unramified diagonal, and there is a smooth \'etale surjection from a scheme of finite type to $M$.  
  
We first show that the diagonal morphism $M\to M\times_{\mathbf{k}}M$ is representable and unramified.    Let 
$(f: \sS\to T), (f^\prime: \sS^\prime\to T)$ be two objects in $M_N(T)$.  It is sufficient to show that the isomorphism functor 
$\mathbf{Isom}_{T}(\sS, \sS^\prime)$ is represented by a quasi-projective group scheme over $T$.  But this is just from \cite[Proposition 6.8]{KP15}. 
Since we only consider stable surfaces (while \cite{KP15} studied the more general case of log stable varieties), 
 the  global line bundle $\mathscr{L}$ in \cite[Definition 6.2, Proposition 6.8]{KP15} for the family $(f:\sS\to T)$ is just the invertible sheaf $\omega_{\sS/T}^{[N]}$.  The first half of the proof in \cite[Proposition 6.8]{KP15} implies that the isomorphism functor $\mathbf{Isom}_{T}(\sS, \sS^\prime)$ is represented by a quasi-projective group scheme over $T$. 

To prove that there exists a smooth \'etale surjection from a scheme $\mathscr{C}$ of finite type to $M$,  from \cite[Proposition 6.11]{KP15}, 
we consider the Hilbert scheme $\Hilb_{K^2, \chi}$ parametrizing closed two dimensional subschemes in  a higher dimensional projective space  with the same Hilbert polynomial determined by the invariants 
$K^2, \chi$.   After fixing the necessary conditions for the stable  s.l.c. surfaces in $\Hilb_{K^2, \chi}$,  techniques in \cite[Theorem 10, Definition-Lemma 33]{Kollar_Husk} and \cite[Proposition 6.11]{KP15} imply that there exists a scheme 
$\mathscr{C}$ and a smooth \'etale morphism $\mathscr{C}\to M$. 
Thus, $M$ is a Deligne-Mumford stack of finite type over $\mathbf{k}$. 
  
If $N$ is large divisible enough, the properness of the stack $M$ is just from the boundedness  result of \cite[Theorem 1.1]{HMX14}.
Thus, from the Nakai-Moishezon criterion, for any family 
$(f: \sS\to T)$ of stable s.l.c. surfaces we need to show that,  for  a large divisible enough $N>0$,  the determinant  $\det(f_*\omega_{\sS/T}^{[N]})$ of the  pushforward   of the relative invertible sheaf 
$\omega_{\sS/T}^{[N]}$ is big.  This is obtained in   \cite[Theorem 7.1, Corollary 7.3]{KP15}.
From \cite[Theorem 1.1, Remark 6.3, Corollary 7.3]{KP15}, the Deligne-Mumford stack $M$ has a projective coarse moduli space.
\end{proof}

\section{Moduli stack of index one covers}\label{sec_moduli_index_one_cover}

In this section we construct an  obstruction theory on the moduli stack $M_N^{\ind}:=\overline{M}^{\ind}_{K^2, \chi, N}$ of   index one covers over one connected  component $M_N=\overline{M}_{K^2, \chi, N}$ of the moduli stack of s.l.c. surfaces. 
The obstruction theory is not  perfect in general, but  in some nice situation such that there is no higher obstruction spaces for the s.l.c. surfaces the obstruction theory is perfect. 

\subsection{The moduli space of   index one covers}\label{subsec_index_one_cover}
 
Recall from Section \ref{subsec_Q-Gorenstein_deformation}, a $\qq$-Gorenstein deformation family 
$$\sS\to T$$
of s.l.c. surfaces is the same as the  deformation $\SS\to T$ of the index one covering Deligne-Mumford stacks.   There is a canonical morphism $p: \SS\to \sS$ which make the following diagram 
\[
\xymatrix{
\SS\ar[rr]^{\pi}\ar[dr]&& \sS\ar[dl] \\
&T&
}
\]
commute.  The scheme  $\sS$ is the coarse moduli space of the Deligne-Mumford stack $\SS$. 
Thus, the canonical correspondence motivates us to define the moduli functor 
$$\sM_N^{\ind}=\overline{\sM}^{\ind}_{K^2, \chi, N}: \Sch_{\mathbf{k}}\rightarrow \mbox{Groupoids}$$
which sends 
$$T\mapsto \{f: \SS\to T\}$$
where  $\{f: \SS\to T\}$ represents the isomorphism classes of flat families of index one covering Deligne-Mumford stacks 
$\SS\to T$.  The coarse moduli space of the  family  $\{f: \SS\to T\}$ must satisfy the 
conditions in (\ref{eqn_condition_functor_M}).

\begin{thm}\label{thm_index_one_covering_stack}
The functor $\sM_N^{\ind}$ has representable and  unramified diagonal, therefore,  is represented by a fine Deligne-Mumford stack $M_N^{\ind}:=\overline{M}^{\ind}_{K^2,\chi,N}$.  Moreover,  there is a canonical  isomorphism 
$$f: M_N^{\ind}\to M_N.$$ 
The isomorphism  $f$ induces an isomorphism on the coarse moduli spaces. 

Fixing $K^2, \chi$, if $N$ is large divisible enough, then the stack $M^{\ind}:=M_N^{\ind}$ is a proper Deligne-Mumford stack with projective coarse moduli space, and the isomorphism 
$f: M^{\ind}\to M$ induces an isomorphism on the projective coarse moduli spaces. 
\end{thm}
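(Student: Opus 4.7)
The plan is to construct the comparison $f: M^{\ind} \to M$ as a natural transformation of moduli functors via the formation of coarse moduli spaces, and then transfer all the desired properties from the already-established representability of $M$ in Theorem~\ref{thm_moduli_stack_G} using the inverse direction supplied by Proposition~\ref{prop_deformation_S_SS_G} (Hacking's one-to-one correspondence).

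First I would define the natural transformation $f$ on $T$-points: given a family $\SS \to T$ of index one covering Deligne-Mumford stacks in $M^{\ind}(T)$, form the relative coarse moduli space $\pi: \SS \to \sS$ over $T$. \'Etale locally $\SS$ is of the form $[\sZ/\zz_r]$ where $\sZ$ is a $\zz_r$-equivariant deformation of the index one cover of a geometric fiber, so $\sS$ is a $\qq$-Gorenstein deformation family of s.l.c. surfaces in the sense of Definition~\ref{defn_Q_Gorenstein_deformation}. Since the construction of the coarse moduli space is canonical, the $G$-action on $\SS$ descends to a $G$-action on $\sS$, and the numerical invariants $K^2, \chi, N$ are preserved on fibers by construction, so $\sS \to T$ lies in $M(T)$. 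Functoriality under pullback is clear because the coarse moduli space construction commutes with flat base change.

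Next, I would construct the inverse natural transformation $g: M \to M^{\ind}$ by invoking Proposition~\ref{prop_deformation_S_SS_G}: given a $G$-equivariant $\qq$-Gorenstein family $\sS \to T$ in $M(T)$, the local index one covers $\sZ_\alpha \to \sS_\alpha$ are canonically determined in the \'etale topology by the fixed trivializations of $\omega_{\sS/T}^{[r]}$ and glue to a flat family $\SS \to T$ of index one covering Deligne-Mumford stacks, compatibly with the $G$-action. The bijection of Proposition~\ref{prop_deformation_S_SS_G} applied fiberwise and in families shows that $f \circ g = \id$ and $g \circ f = \id$ as natural transformations, so $f$ is an isomorphism of moduli functors. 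Consequently every property of $M$ transfers to $M^{\ind}$: since $M$ has representable and unramified diagonal and is a Deligne-Mumford stack by Theorem~\ref{thm_moduli_stack_G}, so is $M^{\ind}$, and $M^{\ind}$ is fine because the universal family on $M$ pulls back via $g$ to a universal family of index one covers. The induced isomorphism on coarse moduli spaces is then automatic, and for $N$ large divisible enough the properness and projectivity of the coarse moduli space of $M$ transfer through $f$ to yield the same statements for $M^{\ind}$.

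The main obstacle I anticipate is the careful verification that the germ-by-germ Hacking construction globalizes $G$-equivariantly over a base. Proposition~\ref{prop_deformation_S_SS_G} is stated for a single s.l.c. germ, so one must check that the local quotient presentations $[\sZ_\alpha/\zz_{r_\alpha}]$ glue in the \'etale topology of $\sS \to T$ to form an honest Deligne-Mumford stack over $T$, that the global $G$-action on $\sS$ lifts uniquely and canonically to $\SS$, and that both the gluing and the $G$-lift commute with arbitrary base change $T' \to T$. These all follow from the uniqueness in the \'etale topology of the index one cover together with the canonical nature of the Hacking construction, but they require careful bookkeeping in the relative setting to ensure that $g$ is indeed a natural transformation of functors.
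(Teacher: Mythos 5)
Your proof is correct, but it is organized quite differently from the paper's argument. You construct an explicit inverse natural transformation $g\colon M\to M^{\ind}$ (assigning to a $\qq$-Gorenstein family $\sS/T$ its index one covering Deligne--Mumford stack $\SS/T$) and verify $f\circ g=\id$, $g\circ f=\id$ on the level of moduli functors, so that $M^{\ind}$ inherits every property of $M$ at a stroke. The paper instead establishes the properties of $M^{\ind}$ piecemeal and somewhat independently: first it shows the diagonal is representable and unramified by proving the $\mathbf{Isom}$ functor is quasi-projective (via the commutative square comparing $\SS\cong\SS'$ with $\sS\cong\sS'$); then it exhibits a shared étale cover $\C$ for $M$ and $M^{\ind}$; then it verifies $f$ is proper by the valuative criterion; then observes $f$ is quasi-finite hence finite; and only at the end argues $f$ is an isomorphism by comparing $\Aut(S)$ with $\Aut(\SS)$. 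Both arguments are driven by the same engine, Hacking's one-to-one correspondence between $\qq$-Gorenstein families of s.l.c.\ surfaces and flat families of index one covering stacks, so the mathematical content is the same; what differs is that you crystallize this correspondence into a single inverse functor, whereas the paper applies it locally (to $\mathbf{Isom}$ sheaves, to the valuative criterion, to $\Aut$ groups). Your version is cleaner to state, at the price of having to justify carefully (as you flag) that the germ-level correspondence globalizes over an arbitrary base $T$ and commutes with base change; in characteristic zero formation of coarse moduli spaces does commute with flat base change, so your appeal to tameness is correct, and Hacking's Proposition 3.7 supplies the needed relative gluing. The paper's more granular verification effectively sidesteps this globalization question by checking properties that can be tested étale-locally and on valuation rings; your route requires it up front but then gets the DM-stack and projectivity conclusions for free.
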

\begin{proof}
We prove for the case $N$ is sufficiently large divisible enough so that 
$\sM:=\overline{\sM}^{\ind}_{K^2,\chi,N}$. 
We first show that the diagonal morphism 
$$\sM^{\ind}\to \sM^{\ind}\times_{\mathbf{k}}\sM^{\ind}$$
is representable and unramified. 
Let $(f: \SS\to T)$ and $(f^\prime: \SS^\prime\to T)$ be two objects in $\sM^{\ind}(T)$, then the isomorphism functor of the two families 
$\textbf{Isom}_{T}(\SS,\SS^\prime)$ is represented by a quasi-projective group scheme 
$\Isom_{T}(\SS,\SS^\prime)$ over $T$.  We prove this statement here.  Let 
$(\overline{f}: \sS\to T)$ and $(\overline{f}^\prime: \sS^\prime\to T)$  be the $\qq$-Gorenstein  families of the corresponding s.l.c. surfaces over $T$. 
From the proof of \cite[Proposition 6.8]{KP15} and Theorem \ref{thm_moduli_stack_G}, the isomorphism functor $\textbf{Isom}_{T}(\sS,\sS^\prime)$ is represented by a quasi-projective group scheme 
$\Isom_{T}(\sS,\sS^\prime)$ over $T$.   The canonical morphisms $\SS\to \sS$ and $\SS^\prime\to \sS^\prime$ are maps to their  coarse moduli spaces.  
Consider  the following diagram
\[
\xymatrix{
\SS\ar[r]^{\cong}\ar[d]& \SS^\prime\ar[d]\\
\sS\ar[r]^{\cong}& \sS^\prime,
}
\]
any isomorphism $\SS\cong \SS^\prime$ induces an isomorphism $\sS\cong \sS^\prime$ on the coarse moduli spaces. 
 Any isomorphism $\sS\cong \sS^\prime$ of families of $\qq$-Gorenstein deformations implies the  isomorphism $\SS\cong \SS^\prime$. 
Therefore, the functor  
$\textbf{Isom}_{T}(\SS,\SS^\prime)$  is  represented by a quasi-projective group scheme $\Isom_{T}(\SS,\SS^\prime)$ and is also unramified over $T$  since its geometric fibers are finite (due to the automorphism group of each fiber 
$\SS_t$ is finite). 

From \cite[Proposition 6.11]{KP15} and Theorem \ref{thm_moduli_stack_G},  there is a cover $\varphi: \C\to M$ which is an \'etale  surjective morphism onto 
$M$ where $\C$ is a scheme of finite type.  This is because $M$ is a projective Deligne-Mumford stack. 
Also from the construction of the moduli functor there is a canonical morphism 
$f: M^{\ind}\to M$ of stacks, which sends every flat family $f: \SS\to T$ of index one covering Deligne-Mumford stacks to the corresponding 
$\qq$-Gorenstein deformation family $\sS\to T$ of s.l.c. surfaces. 

We construct the following diagram
\begin{equation}\label{eqn_diagram_surjection_cover}
\xymatrix{
\C\ar[r]^{\varphi^\prime}\ar[dr]_{\varphi}& M^{\ind}\ar[d]^{f}\\
&M.
}
\end{equation}
For each $T=\spec(A)\to \C$, 
the  $\qq$-Gorenstein deformation family $\sS\to T$ of the s.l.c. surfaces and the  corresponding family $\SS\to T$  of 
index one covering Deligne-Mumford stacks
 induce the following diagram
\[
\xymatrix{
T\ar[r]^{\varphi^\prime}\ar[dr]_{\varphi}& M^{\ind}\ar[d]^{f}\\
&M.
}
\]
This induces the diagram (\ref{eqn_diagram_surjection_cover}).  Thus,  taken as Deligne-Mumford stacks,  $M^{\ind}$ and $M$ share the same cover $\C$.

Now we show that the morphism $f: M^{\ind}\to M$ is proper by the valuative criterion for properness. 
Look at  the  following diagram
\[
\xymatrix{
\spec(K)\ar[r]\ar[d]& M^{\ind}\ar[d]^{f^{\ind}}\\
\spec(R)\ar@{-->}[ur]\ar[r]& M
}
\]
where $R$ is a valuation ring and $K$ is the field of fractions, then  any family 
$\{\sS\to \spec(R)\}$ of s.l.c. surfaces corresponds to  a unique flat  family $\{\SS\to \spec(R)\}$ of 
index one covering Deligne-Mumford stacks and the above dotted arrow exists and is unique.  
Thus, $f: M^{\ind}\to M$ is proper.

The morphism $f: M^{\ind}\to M$ is also quasi-finite, since for each geometric point 
$S=\spec(\mathbf{k})\in M$, there is a unique $\SS\in M^{\ind}$ in the preimage.   Therefore, the morphism $f: M^{\ind}\to M$ is finite. 
To prove that the Deligne-Mumford stack $M^{\ind}$ is isomorphic to the Deligne-Mumford stack $M$,  it is sufficient to show that 
for any s.l.c. surface $S$,  the automorphism group $\Aut(S)$ is isomorphic to the automorphism group 
$\Aut(\SS)$ of its index one covering Deligne-Mumford stack $\SS\to S$.  
From the canonical construction of the index one cover in \S \ref{subsec_slc_moduli}, any automorphism 
$\sigma: \SS\stackrel{\sim}{\rightarrow}\SS$ of the index one covering Deligne-Mumford stack $\SS$ induces an automorphism 
$\overline{\sigma}: S\stackrel{\sim}{\rightarrow} S$.  Thus, we get a map 
$$g: \Aut(\SS)\to \Aut(S).$$
Conversely,  for any automorphism $\overline{\sigma}: S\stackrel{\sim}{\rightarrow} S$, from the canonical construction of the index one cover, we get an $\sigma: \SS\stackrel{\sim}{\rightarrow}\SS$.  Thus, we get a map
$$h:  \Aut(S)\to \Aut(\SS).$$
The canonical construction of the index one cover implies that 
$g\circ h=1, h\circ g=1$.  Thus we get $\Aut(S)\cong \Aut(\SS)$.

The canonical isomorphism $f: M^{\ind}\to M$  induces a bijection on the coarse moduli spaces since  the index one covering Deligne-Mumford stack $\SS$ has coarse moduli space 
$S$. 
If $N$ is large divisible enough, then the stack $M$ is a proper Deligne-Mumford stack with projective coarse moduli space. Therefore the stack $M^{\ind}$ is a proper Deligne-Mumford stack with projective coarse moduli space and the isomorphism 
$f: M^{\ind}\to M$  induces an isomorphism on the projective coarse moduli spaces. 
\end{proof}

\begin{rmk}
We  point out that in the paper \cite{AH},  Abramovich-Hassett have studied the moduli functor of index one covers and constructed the moduli stack of the index one covers of stable  varieties. 
\end{rmk}

\begin{cor}\label{cor_lci_index_one}
Let $M$ be a connected component of the moduli stack of stable general type surfaces with invariants $K^2, \chi, N$.  
If each s.l.c. surface $S$ in $M$ has only l.c.i. singularities, then the moduli  stack $M^{\ind}$ of index one covers  is just the moduli stack $M$. 
\end{cor}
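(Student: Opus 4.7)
The plan is to invoke Theorem \ref{thm_index_one_covering_stack}, which already supplies a canonical isomorphism $f : M^{\ind} \to M$, and then check that when all surfaces in $M$ are l.c.i., the \emph{universal family} on $M^{\ind}$ literally coincides with the universal family of s.l.c. surfaces on $M$; that is, the index one covering Deligne-Mumford stack $\SS \to S$ is the identity for each $S$ appearing in $M$.

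First I would recall the local structure. If $(S,x)$ is an l.c.i. germ, then $S$ is Gorenstein at $x$, so the dualizing sheaf $\omega_S$ is already invertible in a neighbourhood of $x$. By the definition in \S\ref{subsec_index_cover}, the index of $(S,x)$ is the least integer $r>0$ with $\omega_S^{[r]}$ invertible near $x$, so the hypothesis forces $r = 1$ at every point. Plugging $r=1$ into the construction
\[
Z = \spec_{\sO_S}\bigl(\sO_S \oplus \omega_S^{[1]} \oplus \cdots \oplus \omega_S^{[r-1]}\bigr)
\]
collapses the direct sum to $\sO_S$, giving $Z = S$ with trivial $\zz_r$-action. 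Gluing the local index one covers in the \'etale topology therefore produces $\SS = S$ (no nontrivial stacky structure), and $\omega_{\SS} = \omega_S$ is invertible, as expected.

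Next I would globalize this to families. If $\sS \to T$ is a $\qq$-Gorenstein family in $M$, then by hypothesis every geometric fibre has only l.c.i. singularities, so $\omega_{\sS/T}$ is already a line bundle. The canonical covering construction recalled in \S\ref{subsec_index_cover} then yields $\SS/T = \sS/T$ itself, and conversely every flat family of index one covering Deligne-Mumford stacks in $M^{\ind}$ whose coarse moduli space lies in $M$ must be $\sS/T$ in its own right. Consequently the two moduli functors $\overline{\sM}^G$ and $\overline{M}^{\ind,G}_{K^2,\chi,N}$ agree on objects and on isomorphisms.

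Finally, I would combine this identification of the universal families with the already-established isomorphism $f : M^{\ind} \to M$ of Theorem \ref{thm_index_one_covering_stack}: since $f$ is the morphism induced by passage to the coarse moduli space of the universal index one covering stack, and in our situation that coarse moduli space is the universal family itself, $f$ is literally the identification $M^{\ind} = M$. There is no real obstacle here; the only point to be careful about is verifying that the equality $\SS = S$ is compatible with the $G$-action and with $\qq$-Gorenstein deformations, which is immediate because the construction $\sZ = \spec_{\sO_{\sS}}\bigl(\bigoplus_{i=0}^{r-1} \omega_{\sS/T}^{[i]}\bigr)$ of Hacking in the relative setting also degenerates to $\sZ = \sS$ when $r = 1$, and $G$ acts compatibly because the trivial $\mu_1$-action commutes with anything.
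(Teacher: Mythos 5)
Your proof is correct and follows essentially the same route as the paper: l.c.i.\ implies Gorenstein, so the index is $1$ everywhere, the construction $Z=\spec_{\sO_S}(\bigoplus_{i=0}^{r-1}\omega_S^{[i]})$ collapses to $S$, hence $\SS=S$ (and $\SS/T=\sS/T$ in families), so the two moduli functors coincide. The extra verifications you add (the relative version of Hacking's construction and $G$-equivariance) are harmless elaborations of what the paper leaves implicit.
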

\begin{proof}
This is a special case. 
If an s.l.c. surface has at most l.c.i. singularities, it is Gorenstein and the dualizing sheaf $\omega_{S}$ is a line bundle.  From the construction in Section \ref{subsec_Q-Gorenstein_deformation}, the index one covering Deligne-Mumford stack 
$\SS$ is just $S$.   Therefore, from the construction of the moduli functor 
$M^{\ind}$,  $M^{\ind}$ is the same as $M$ as Deligne-Mumford stacks. 
\end{proof}

\subsection{Obstruction theory}\label{subsec_Obstruction_Theory}

Let $M$ be one connected component of the moduli stack of  s.l.c. surfaces 
with fixed invariants $K_S^2=K, \chi(\sO_S)=\chi$ and  $N\in\zz_{>0}$ as in Theorem \ref{thm_moduli_stack_G}.  Still from Theorem \ref{thm_moduli_stack_G}
there exists a universal family for the moduli stack
$$p: \scM\to M,$$
since the stack is a fine moduli stack.  From Theorem \ref{thm_index_one_covering_stack}, there also exists a universal family 
$$p^{\ind}: \scM^{\ind}\to M^{\ind},$$ 
and a commutative diagram
\begin{equation}\label{eqn_diagram_index_one_Mind_M}
\xymatrix{
\scM^{\ind}\ar[r]^{p^{\ind}}\ar[d]_{\widetilde f}& M^{\ind}\ar[d]^{f}\\
\scM\ar[r]^{p}& M.
}
\end{equation}

\begin{lem}\label{lem_pInd}
The universal family $p^{\ind}: \scM^{\ind}\to M^{\ind}$ is projective, flat and relative Gorenstein.  Therefore the relative dualizing sheaf $\omega_{\scM^{\ind}/M^{\ind}}$ is invertible. 
\end{lem}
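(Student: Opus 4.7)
The plan is to verify flatness, projectivity, and the relative Gorenstein property separately, relying on the étale-local description of the index one covering Deligne-Mumford stack worked out in \S\ref{subsec_Q-Gorenstein_deformation} and the identification $f:M^{\ind}\xrightarrow{\sim}M$ from Theorem \ref{thm_index_one_covering_stack}.

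Flatness of $p^{\ind}$ is essentially built into the moduli problem. By definition, every object of $M^{\ind}(T)$ is a flat family $\SS\to T$ of index one covering Deligne-Mumford stacks, so flatness of the universal family can be checked after pulling back along any \'etale chart $T\to M^{\ind}$, where it holds by construction (and by \cite[Lemma 3.5]{Hacking}, which guarantees flatness of the stacky family corresponding to a $\qq$-Gorenstein family $\sS/T$).

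For projectivity I would exploit the commutative diagram (\ref{eqn_diagram_index_one_Mind_M}). The bottom arrow $p:\scM\to M$ is projective because $M$ is the KSBA moduli stack and $\omega_{\scM/M}^{[N]}$ is relatively ample (for $N$ as in the definition of $M$). The vertical arrow $\widetilde f:\scM^{\ind}\to\scM$ is finite: on an \'etale chart $T\to M$ over which the universal s.l.c. family trivializes near a singular germ, $\widetilde f$ is the stack quotient $[\sZ/\zz_r]\to\sS$ with $\sZ\to\sS$ the cyclic index one cover, which is manifestly finite. Composing with the isomorphism $f$ and using that $M^{\ind}\cong M$ yields that $p^{\ind}=p\circ\widetilde f$ (up to the isomorphism $f$) is projective.

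For the relative Gorenstein statement, the idea is to construct $\omega_{\scM^{\ind}/M^{\ind}}$ \'etale-locally and glue. Around a point of $M^{\ind}$ take an \'etale chart $T\to M^{\ind}$ and, by Hacking's local description \cite[Lemma 3.5]{Hacking}, cover the pullback $\SS/T$ by open substacks of the form $[\sZ/\zz_r]$, where $\sZ/T$ is a $\zz_r$-equivariant flat deformation of the index one cover $Z\to S$. Since each geometric fiber $Z$ is Gorenstein (this is the defining property of the index one cover recalled in \S\ref{subsec_index_cover}), the morphism $\sZ\to T$ is flat with Gorenstein fibers, so $\omega_{\sZ/T}$ is an invertible sheaf. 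This sheaf is naturally $\zz_r$-equivariant and therefore descends to an invertible sheaf on the stack $[\sZ/\zz_r]$ over $T$. The main obstacle is checking that these locally defined invertible sheaves patch to a global line bundle $\omega_{\scM^{\ind}/M^{\ind}}$ that really is the relative dualizing sheaf: this uses the \'etale-local uniqueness of the index one cover stated in \S\ref{subsec_slc_moduli} together with the compatibility of the relative dualizing sheaf with \'etale base change for flat Cohen--Macaulay morphisms. Once the gluing is verified, invertibility of $\omega_{\scM^{\ind}/M^{\ind}}$ is immediate from the local model, which is exactly the relative Gorenstein condition.
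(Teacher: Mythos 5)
Your proof is correct and, at its core, follows the same logic as the paper's one-paragraph argument: flatness and projectivity are built into the moduli problem, and the relative Gorenstein property comes down to the fact that each fiber $\SS_t$ is Gorenstein because it is the index one covering stack. The difference is one of granularity: the paper verifies invertibility of $\omega_{\scM^{\ind}/M^{\ind}}$ by restricting to geometric fibers over points of $M^{\ind}$, whereas you work \'etale-locally on the total space with the charts $[\sZ/\zz_r]$ and glue; these are two standard, equivalent routes to the same conclusion, and both ultimately use that flatness plus fiberwise Gorenstein implies the relative dualizing sheaf is invertible.

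One small caveat worth flagging in your projectivity argument: the morphism $\widetilde f:\scM^{\ind}\to\scM$ is \'etale-locally the coarse moduli map $[\sZ/\zz_r]\to\sS$, which is \emph{not} representable (points have nontrivial isotropy), so the unqualified assertion that $\widetilde f$ is ``finite'' is imprecise in the usual scheme-theoretic sense. What you actually have is that $\widetilde f$ is proper and quasi-finite with finite diagonal, which, together with the projectivity of $p:\scM\to M$ and the isomorphism $f$, still suffices to conclude $p^{\ind}$ is projective (or, more directly, one can observe that $\omega_{\scM^{\ind}/M^{\ind}}$ is a relatively ample line bundle because it restricts fiberwise to the ample $\omega_{\SS_t}$). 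This is a terminology point rather than a gap; the paper sidesteps it entirely by simply invoking that a universal family of a fine moduli stack of projective DM stacks is projective and flat.
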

\begin{proof}
Since $p^{\ind}$ is a universal family for the moduli stack $M^{\ind}$, it is flat and projective.   The relative dualizing sheaf $\omega_{\scM^{\ind}/M^{\ind}}$ is invertible since it gives the dualizing sheaf 
$\omega_{\SS_t}$ of the canonical index one covering Deligne-Mumford  stack $\SS_t$ for each geometric point $t\in M^{\ind}$ and $\omega_{\SS_t}$   is invertible (due to $\SS_t$ Gorenstein).
\end{proof}

\begin{rmk}
In general,  for the universal family $p: \scM\rightarrow M$, the relative dualizing sheaf 
$\omega_{\scM/M}$ is not a line bundle since the relative dualizing sheaf $\omega_{\scM/M}$ is not a line bundle on the 
non-Gorenstein locus.   
\end{rmk}

Let $\ll^{\bullet}_{\scM^{\ind}/M^{\ind}}$ be the relative cotangent complex of $p^{\ind}$ and 
$\omega^{\ind}:=\omega_{\scM^{\ind}/M^{\ind}}[2]$. 
We consider 
$$E^{\bullet}_{M^{\ind}}:=Rp^{\ind}_{*}\left(\ll^{\bullet}_{\scM^{\ind}/M^{\ind}}\otimes\omega^{\ind}\right)[-1].$$
Here the relative dualizing sheaf $\omega_{\scM^{\ind}/M^{\ind}}$  satisfies the property
$$\omega_{\scM^{\ind}/M^{\ind}}|_{(p^{\ind})^{-1}(t)}\cong \omega_{\SS_t},$$
where the dualizing sheaf $\omega_{\SS_t}$ of the index one covering Deligne-Mumford stack $\SS_t\to S_t$, which is  locally given by $\omega_{S_t}^{[r]}$  at a singularity germ ($r$ is the index of the singular germ),  
is invertible. 

\begin{thm}\label{thm_OT_slc}
The complex $E^{\bullet}_{M^{\ind}}$ defines an  obstruction theory (in the sense of Behrend-Fantechi)
$$\phi^{\ind}: E^{\bullet}_{M^{\ind}}\to \ll_{M^{\ind}}^{\bullet}$$
induced by the Kodaira-Spencer map $\ll^{\bullet}_{\scM^{\ind}/M^{\ind}}\to (p^{\ind})^{*}\ll_{M^{\ind}}^{\bullet}[1]$. 
\end{thm}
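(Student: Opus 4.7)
The plan is to apply Theorem \ref{thm_BF_prop6.1} (the reformulation of \cite[Proposition 6.1]{BF}) directly to the universal family $p^{\ind}:\scM^{\ind}\to M^{\ind}$. To do this, I need to verify the three hypotheses: that $p^{\ind}$ is projective, that it is flat, and that it is relative Gorenstein, together with universality of the family at every geometric point of $M^{\ind}$. Projectivity and flatness are immediate from the construction of $M^{\ind}$ as a fine moduli stack in Theorem \ref{thm_index_one_covering_stack}, and the relative Gorenstein condition, i.e.\ that $\omega_{\scM^{\ind}/M^{\ind}}$ is a line bundle, is precisely Lemma \ref{lem_pInd}, which uses that every fiber $\SS_t$ is an index one covering Deligne-Mumford stack and is therefore Gorenstein. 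Universality at each geometric point holds because $M^{\ind}$ is a fine moduli stack representing the functor of flat families of index one covering DM stacks.

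Once these hypotheses are in place, I would follow the argument in the proof of Theorem \ref{thm_BF_prop6.1} to check the obstruction theory condition of Theorem \ref{thm_obstruction_theory_equ_conditions}. Namely, for a test scheme $T$, a square-zero extension $T\hookrightarrow \overline{T}$ with ideal $J$, and a morphism $g:T\to M^{\ind}$, pull back the universal family via
\[
\xymatrix{
\scT^{\ind}\ar[r]^{\tilde{g}}\ar[d]_{q}& \scM^{\ind}\ar[d]^{p^{\ind}}\\
T\ar[r]^{g}& M^{\ind}.
}
\]
The obstructions and deformations of the family $\scT^{\ind}/T$ over $\overline{T}$ live in $\Ext^k(\ll^{\bullet}_{\scT^{\ind}/T},q^*J)$ for $k=2,1$. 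By flatness of $p^{\ind}$ one identifies $\ll^{\bullet}_{\scT^{\ind}/T}=\tilde{g}^*\ll^{\bullet}_{\scM^{\ind}/M^{\ind}}$, and then by adjunction together with Grothendieck--Serre duality applied to the Gorenstein morphism $p^{\ind}$ one rewrites
\[
\Ext^{k}_{\sO_{\scT^{\ind}}}(\ll^{\bullet}_{\scT^{\ind}/T},q^*J)\cong \Ext^{k-1}_{\sO_T}(g^*E^{\bullet}_{M^{\ind}},J).
\]
The Kodaira-Spencer map induces a compatible morphism from the abstract deformation-obstruction theory (governed by $\ll^{\bullet}_{M^{\ind}}$) into this latter group, which is exactly the morphism $\phi^{\ind}$.

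To finish, I would check that the required conditions in Definition \ref{defn_obstruction_theory} hold: coherence of $h^0$ and $h^{-1}$ and vanishing of $h^i$ for $i>0$ follow from the projectivity of $p^{\ind}$ together with the fact that the fibers $\SS_t$ have finite automorphism groups (because the surfaces are of general type and the stack $M^{\ind}$ has unramified diagonal). Finally, the fact that $\phi^{\ind}$ induces an isomorphism on $h^0$ and an epimorphism on $h^{-1}$ is exactly the statement, via the above identifications, that the universal family versally parametrizes first-order deformations and gives a surjection onto the obstruction space for each test situation, which in turn follows from the fine moduli stack property of $M^{\ind}$.

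The main obstacle I expect is not any single step but the careful verification that the Serre duality identification used above is compatible with the Kodaira-Spencer map and with the $G$-equivariance built into the definition of $M^{\ind}$. In particular, one must keep track that $\omega_{\scM^{\ind}/M^{\ind}}$, although a genuine line bundle on the DM stack $\scM^{\ind}$, encodes the local data $\omega_{\SS_t}$ (i.e.\ $\omega_{S_t}^{[r]}$ near a singular germ of index $r$), so that the $E^{\bullet}_{M^{\ind}}$ constructed this way really controls $\qq$-Gorenstein deformations rather than naive deformations of the coarse space $\scM$; this is precisely why we pass to $M^{\ind}$ and use $p^{\ind}$ rather than $p$.
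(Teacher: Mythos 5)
Your proposal is correct and follows essentially the same route as the paper: both verify the hypotheses of Theorem \ref{thm_BF_prop6.1} (projectivity, flatness, relative Gorenstein via Lemma \ref{lem_pInd}, universality from the fine moduli stack property), unwind the argument through flatness, adjunction, and Grothendieck duality for $p^{\ind}$, and conclude via Condition (3) of Theorem \ref{thm_obstruction_theory_equ_conditions}. The only difference is that the paper additionally spells out the pointwise interpretation (that $E^{\bullet}_{M^{\ind}}$ restricted to a geometric point computes $T^i_{\QG}(S_t,\sO_{S_t})^G$ as in Proposition \ref{prop_local_deformation_obstruction_slc}), which you fold into the general family-level argument; this is presentational rather than substantive.
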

\begin{proof}
From Lemma \ref{lem_pInd}, the universal family $p^{\ind}: \scM^{\ind}\to M^{\ind}$ is a projective, flat, relative Gorenstein morphism between Deligne-Mumford stacks.  
Also $M^{\ind}$ is a fine moduli stack.  Thus, $\phi^{\ind}: E^{\bullet}_{M^{\ind}}\to \ll_{M^{\ind}}^{\bullet}$ gives an obstruction theory from 
Theorem  \ref{thm_BF_prop6.1} (also see \cite[Proposition 6.1]{BF}).  For completeness of the analysis of local deformation and obstruction theory of 
s.l.c. surfaces, we include the details here. 

The basic observation is that the complex 
$$\widetilde{E}^{\bullet}_{M^{\ind}}:=Rp^{\ind}_{*}\left(\ll^{\bullet}_{\scM^{\ind}/M^{\ind}}\otimes\omega^{\ind}\right),$$
when restricted to a point 
$t\in M^{\ind}$,  calculates the cohomology spaces
$H^*(\SS_t, T_{\SS_t})=T^*_{\QG}(S_t,\sO_{S_t})$ for the index one covering Deligne-Mumford stack $\SS_t$. 
Since it is of general type, $\dim H^0(\SS_t, T_{\SS_t})=0$. Over a point 
$t\in M^{\ind}$, the complex $\widetilde{E}^{\bullet}_{M^{\ind}}$ gives 
$$\widetilde{E}^{\bullet}_{M^{\ind}}|_{t}=Rp^{\ind}_{*}(\ll^{\bullet}_{\SS_t}\otimes\omega_{\SS_t}[2]), $$
and 
$$\left(\widetilde{E}^{\bullet}_{M^{\ind}}|_{t}\right)^{\vee}=Rp^{\ind}_{*}(\ll^{\bullet}_{\SS_t}, \sO_{\SS_t}).$$
Thus $\left(\widetilde{E}^{\bullet}_{M^{\ind}}|_{t}\right)^{\vee}$ is given by 
$p^{\ind}_{*}\sE xt^i(\ll^{\bullet}_{\SS_t}, \sO_{\SS_t})$ which was studied in \cite[\S 3]{Hacking}, 
Proposition \ref{prop_deformation_S_SS_G}  and Proposition \ref{prop_local_deformation_obstruction_slc}.  Therefore, 
the cohomology  spaces of $\left(\widetilde{E}^{\bullet}_{M^{\ind}}|_{t}\right)^{\vee}$ give
$$T_{\QG}^1(S_t, \sO_{S_t});  \quad T_{\QG}^2(S_t, \sO_{S_t})$$
in Proposition \ref{prop_local_deformation_obstruction_slc}.

If we have a diagram
\[
\xymatrix{
S_t\ar[r]\ar[d]& \scM^{\ind}\ar[d]^{p^{\ind}}\\
t=\spec(\mathbf{k})\ar[r]& M^{\ind},
}
\]
then from Proposition \ref{prop_local_deformation_obstruction_slc} the first order infinitesimal $\qq$-Gorenstein deformation of $\spec(\mathbf{k})\in M^{\ind}$ (i.e., the $\qq$-Gorenstein deformation of $S_t$)
 is given by 
$T_{\QG}^1(S_t, \sO_{S_t})$, and the obstruction is given by 
$T_{\QG}^2(S_t, \sO_{S_t})$. 
There may exist higher obstruction spaces $T_{\QG}^i(S_t, \sO_{S_t})$ for $i\ge 3$.
We make this more precise following  Proposition \ref{prop_local_deformation_obstruction_slc}. 
Let $A$ be a finitely generated Artinian local $\mathbf{k}$-algebra, and $\sS_A/A$ be a $\qq$-Gorenstein deformation of 
$S$ over $A$. Let $\overline{A}\to A$ be an infinitesimal extension of $A$ with kernel 
$J$.  We let $\overline{\mathfrak{m}}$ be the maximal ideal of  $\overline{A}$ and assume that 
$\overline{\mathfrak{m}}\cdot J=0$ ($J$ is a $A/\overline{\mathfrak{m}}=\mathbf{k}$ space). Then 
there is an obstruction class 
$$\ob(\sS_A/A, \overline{A})\in T_{\QG}^2(S, \sO_{S})\otimes J,$$
such that $\ob(\sS_A/A, \overline{A})=0$ if and only if there exists a $\qq$-Gorenstein deformation 
$\sS_{\overline{A}}$ of $\sS_A$ over $\overline{A}$. 
Moreover, if  $\ob(\sS_A/A, \overline{A})=0$, then the isomorphism classes of such deformations form a 
torsor under $T_{\QG}^1(S, \sO_{S})\otimes J$.

One can make this argument into a family by considering a scheme 
$T=\spec(A)\to M^{\ind}$, and the diagram
\[
\xymatrix{
\scM_T\ar[r]^{g}\ar[d]_{q}& \scM^{\ind}\ar[d]^{p^{\ind}}\\
T\ar[r]^{f}& M^{\ind}.
}
\]
Let $T\to \overline{T}$ be a square zero extension with ideal sheaf $J$. The obstruction to extending 
$\scM_T$ to a flat family over $\overline{T}$ lies in 
$\Ext^2(\ll^{\bullet}_{\scM_T/T}, q^*J)$
and if the extensions exist, they form a torsor under $\Ext^1(\ll^{\bullet}_{\scM_T/T}, q^*J)$. 
Since 
$\ll^{\bullet}_{\scM_T/T}=g^*\ll^{\bullet}_{\scM^{\ind}/M^{\ind}}$, and $p^{\ind}$ is flat,  we have that 
\begin{align*}\Ext_{\sO_{\scM_T}}^i(\ll^{\bullet}_{\scM_T/T}, q^*J)
&=\Ext_{\sO_{\scM^{\ind}}}^i(\ll^{\bullet}_{\scM^{\ind}/M^{\ind}}, Rg_*q^*J) \\
&=\Ext_{\sO_{\scM^{\ind}}}^i(\ll^{\bullet}_{\scM^{\ind}/M^{\ind}}, (p^{\ind})^*Rf_*J).
\end{align*}
Thus, 
$$\Ext_{\sO_{\scM^{\ind}}}^i(\ll^{\bullet}_{\scM^{\ind}/M^{\ind}}, (p^{\ind})^*Rf_*J)=
\Ext_{\sO_{M^{\ind}}}^{i-1}(E^{\bullet}_{M^{\ind}}, Rf_*J)=
\Ext_{\sO_{M^{\ind}}}^{i-1}(f^*E^{\bullet}_{M^{\ind}}, J),$$
where for the first isomorphism, we use Grothendieck duality since $(p^{\ind})^{!}(\sO_{M^{\ind}})$ is the dualizing sheaf 
$\omega_{\scM^{\ind}/M^{\ind}}$ which is invertible. 

Since $p^{\ind}: \scM^{\ind}\to M^{\ind}$ is a universal family for the moduli stack $M^{\ind}$, the Kodaira-Spencer map
$\ll^{\bullet}_{\scM^{\ind}/M^{\ind}}\to (p^{\ind})^*\ll_{M^{\ind}}^{\bullet}[1]$ defines a morphism 
$$\phi^{\ind}: E_{M^{\ind}}^{\bullet}\to \ll_{M^{\ind}}^{\bullet}.$$
From the above analysis, this morphism satisfies Condition (3) in Theorem \ref{thm_obstruction_theory_equ_conditions}. 
Therefore, 
$\phi^{\ind}$ defines an obstruction theory for $M^{\ind}$ in the sense of Behrend-Fantechi. 
\end{proof}

\section{Moduli stack of $\lci$ covers}\label{sec_universal_covering_DM}

In this section  we  construct  the moduli stack  $M_N^{\lci}:=\overline{M}^{\lci}_{K^2, \chi,N}$  of  $\lci$ covers  over the moduli stack  $M$ such that there is a perfect obstruction theory on 
$M_N^{\lci}$.   

\subsection{Universal abelian cover of s.l.c.  surface germs}\label{subsubsec_universal_discriminant}

Recall from Remark \ref{rmk_slc_minimally_elliptic} in \S \ref{subsubsec_higher_cohomology}, let $S$ be an s.l.c. surface and 
$\pi: \SS\to S$ be the corresponding index one covering Deligne-Mumford stack.  Except l.c.i. singularities,  the germs on the  index one covering Deligne-Mumford stack 
$\SS$ may have simple elliptic singularities, cusp or degenerate cusp singularities of embedded dimension $\ge 5$.
Locally,  the germ singularity is of the form $[Z/\mu_r]$, where $(Z,0)$ is a germ singularity which is a simple elliptic singularity, a cusp or a degenerate cusp singularity and $r$ is the index. 
Note that $r=1, 2, 3, 4, 6$. 

From the classification result in \cite[Theorem 4.24]{Kollar-Shepherd-Barron},  we consider the simple elliptic singularity, the cusp or the degenerate cusp singularity $(S,0)$,  and the  $\zz_2, \zz_3, \zz_4, \zz_6$-quotient of a simple elliptic singularity 
$(S,0)$, 
the $\zz_2$-quotient of a cusp singularity or a degenerate cusp singularity $(S,0)$.   The $\qq$-Gorenstein deformation of $(S,0)$ is equivalent to the $\zz_r$-equivariant deformation of $(Z,0)$. 

Let us focus on the surface singularity germ $(S,0)$. 
Let 
\begin{equation}\label{eqn_resolution_Z}
\sigma: X\to S
\end{equation}
be a good resolution and $A=\cup_{i=1}^{n}A_i$ be the decomposition of exceptional set $\sigma^{-1}(0)=A$
such that 
$A$ is a divisor having only simple normal crossings. A divisor supported in $A$ is called a cycle.  Let $\Sigma$ be the link of $(S,0)$ which is, by definition, the boundary $\partial U$ of a small neighborhood $U$  of the singularity $0$.  The link 
$\Sigma$ is an oriented $3$-manifold over the field  $\rr$ of real numbers .
The neighborhood  $U$ can  be made to be a tubular neighborhood of the exceptional divisor so that $\partial U=\Sigma$ is the link of the singularity.  
This can be obtained by plumbing theory of surface singularities in \cite{Neumann}. 
Then, we have that 
$$H_2(U,\zz)\cong \zz^n\subset H_2(U,\qq)\cong \qq^n,$$
where $n$ is the number of exceptional curves in $A$.  Let $\langle, \rangle$ be the intersection form on these groups and define
$$H_2(U)^{\#}=\{v\in H_2(U,\qq): \langle v, w\rangle\in \zz \text{~for all~}w\in H_2(U,\zz)\}.$$
Then the embedding $H_2(U,\zz)\to H_2(U)^{\#}$ can be identified with the map $H_2(U,\zz)\to H_2(U,\Sigma)$. So the long exact sequence in homology identifies the 
discriminant group 
$$G:=H_2(U)^{\#}/H_2(U,\zz)$$
with the torsion subgroup  $H_1(\Sigma,\zz)_{\tor}$ of $H_1(\Sigma,\zz)$.  The intersection form $\langle, \rangle$ induces on $G$ a natural non-singular pairing:
$$G\otimes G\to \qq/\zz; \quad  v\otimes w\mapsto \langle v, w \rangle/\zz$$
which is the torsion link pairing of $\Sigma$.  

If $K\subset G$ is a subgroup, then there is an induced non-singular pairing
$$K\otimes (G/K^{\perp})\to \qq/\zz$$
where $K^{\perp}$ is the orthogonal complement of $K$ under the pairing. 
The group $G/K^{\perp}$ is canonically isomorphic to the dual $\hat{K}=\Hom(K, \qq/\zz)$ and is non-canonically isomorphic to $K$ itself. 

If $\Sigma$ is a rational homology sphere,  then the universal abelian cover of $\Sigma$ is the Galois cover of $\Sigma$ determined by the natural homomorphism 
$\pi_1(\Sigma)\to H_1(\Sigma)=G$.  Thus, any subgroup $K\subset G$ determines an abelian cover of $\Sigma$;  i.e., the Galois cover with covering transformation group $G/K$. 
The Galois cover corresponding to $K^{\perp}$ is called the {\em dual cover} for $K$, with transformation group $G/K^{\perp}$. The dual cover for $G$ is thus the universal abelian cover. 

Let us  consider the  $\zz_2, \zz_3, \zz_4, \zz_6$-quotient of a simple elliptic singularity $(S,0)$, or
the $\zz_2$-quotient of a cusp singularity.   
Then 
$A$ is a tree of rational curves since the $\zz_r$-quotient of simple elliptic singularity and cusp singularity are rational singularities. 
An explicit $\zz_2$-action on cusps was given in \cite{NW}, and the $\zz_2, \zz_3, \zz_3, \zz_6$ actions on a simple elliptic singularity were given in 
\cite[\S 5.2]{Kollar-Shepherd-Barron}, \cite[\S 9.6]{Kawamata_Ann}.   All of these singularities are log-canonical. In particular,  a cyclic group 
quotient of log-canonical singularity is a rational singularity. 

For such an s.l.c. germ $(S,0)$, its link $\Sigma$ is a rational homology sphere.  
The group $G=H_1(\Sigma,\zz)$ is  a finite abelian group.   From \cite{NW2}, we take 
$$(\widetilde{S},0)\to (S,0)$$
to be 
the universal abelian cover, where the topology of the cover is determined by the link $\Sigma$. 
Let $(Z,0)\to (S,0)$ be the index one cover of the singularity germ $(S,0)$ such that $[Z/\zz_r]\cong S$ for $r=2,3, 4,6$.  Then the universal abelian cover 
$(\widetilde{S},0)\to  (S,0)$ factors through the index one cover 
\begin{equation}\label{eqn_universal_index}(\widetilde{S},0)\to (Z,0)
\end{equation}
since $(Z,0)\to (S,0)$ is an abelian cover.

The deformation of $(S,0)$ can be given by the $G$-equivariant deformation of $(\widetilde{S},0)$. Thus we have 

\begin{thm}\label{thm_minimally_elliptic_universal2}
If $(S,0)$ is the $\zz_2, \zz_3, \zz_4$, or $\zz_6$ quotient of  a simple elliptic singularity,  or the $\zz_2$ quotient of a cusp or a degenerate cusp singularity  germ, then there exists the universal abelian cover 
$(\widetilde{S},0)$ with transformation group $G$.  Moreover, 
the $G$-equivariant  deformations of $(\widetilde{S}, 0)$ gives $\qq$-Gorenstein deformations of $(S,0)$. In particular, there exists a $G$-equivariant one-parameter smoothing or deformation of $(\widetilde{S},0)$. 
\end{thm}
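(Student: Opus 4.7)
The plan is to split the statement into two parts: the construction of the universal abelian cover $(\widetilde{S},0)$, and the verification that $D$-equivariant deformations descend to $\qq$-Gorenstein deformations.

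For the first part, I would start from the observation that each of the singularities listed (cyclic quotients of simple elliptic, cusp, or degenerate cusp singularities by $\zz_2,\zz_3,\zz_4,\zz_6$) is log-canonical and, being a finite cyclic quotient of a Gorenstein log-canonical singularity, is in fact a rational singularity. By the discussion preceding the theorem, the exceptional set $A$ of a good resolution is then a tree of rational curves and the link $\Sigma$ is a rational homology sphere, so the discriminant group $D=H_2(U)^{\#}/H_2(U,\zz)\cong H_1(\Sigma,\zz)$ is finite abelian. Topologically the universal abelian cover of $\Sigma$ is the Galois cover with group $D$ determined by the Hurewicz surjection $\pi_1(\Sigma)\twoheadrightarrow H_1(\Sigma,\zz)=D$. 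I would then invoke the Neumann--Wahl construction (cited from \cite{NW2} and \cite{NW}) to promote this topological cover to an analytic germ $(\widetilde{S},0)$ with a holomorphic $D$-action such that $\widetilde{S}/D\cong S$; the key is that for a rational surface singularity with $\qq$-homology sphere link, the universal abelian cover exists as an analytic germ and is canonical.

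For the factorization $(\widetilde{S},0)\to(Z,0)\to(S,0)$ observed in (\ref{eqn_universal_index}), since the index one cover $Z\to S$ is a cyclic $\zz_r$-cover with $r\in\{2,3,4,6\}$ and $\zz_r$ is abelian, there is a canonical surjection $D\twoheadrightarrow\zz_r$ with kernel $K\le D$, and $\widetilde{S}/K\cong Z$. For the second assertion, given any $D$-equivariant flat deformation $\widetilde{\sS}\to T$ of $\widetilde{S}$, I would first form the quotient $\sZ:=\widetilde{\sS}/K$, which by standard results on finite group actions on flat families is a flat deformation of $Z$ carrying an induced action of the quotient group $D/K\cong\zz_r$. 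Then $\sS:=\sZ/\zz_r=\widetilde{\sS}/D$ is a flat deformation of $S$, and by Definition \ref{defn_Q_Gorenstein_deformation} it is $\qq$-Gorenstein precisely because $\sZ/T$ is a $\zz_r$-equivariant deformation of the index one cover whose quotient is $\sS/T$.

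The main obstacle, in my view, is not the deformation-theoretic step (which is essentially a bookkeeping exercise once the cover exists) but rather the first one: establishing that the topological universal abelian cover of the link $\Sigma$ promotes to a canonical analytic cover of the germ on which $D$ acts holomorphically. In the simple elliptic and cusp quotient cases one can treat the examples explicitly (e.g.\ the equations of \cite[Theorem 5.1]{NW} for quotient cusps, and the toric/weighted-homogeneous description of cyclic quotients of simple elliptic singularities in \cite[\S 5.2]{Kollar-Shepherd-Barron}), so the strategy is to either cite the general Neumann--Wahl theorem uniformly across the four families, or to verify the statement case by case using these explicit normal forms and check compatibility with the index one cover $(Z,0)\to(S,0)$ in each case.
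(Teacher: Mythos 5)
Your treatment of the normal cases (the $\zz_r$-quotients of simple elliptic singularities and the $\zz_2$-quotients of cusps) agrees with the paper: both invoke the Neumann--Wahl construction from \cite{NW}, \cite{NW2}, and both use the factorization $(\widetilde{S},0)\to(Z,0)\to(S,0)$ via (\ref{eqn_universal_index}) to descend $D$-equivariant deformations. Your elaboration of the deformation step (quotient first by $K=\ker(D\twoheadrightarrow\zz_r)$ to get a $\zz_r$-equivariant deformation of the index one cover $Z$, then quotient by $\zz_r$) is a reasonable and correct unpacking of what the paper leaves terse, and the factorization through $Z$ is justified as you say because $Z\to S$ is an abelian cover.

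However, there is a genuine gap in your treatment of the $\zz_2$-quotient of a \emph{degenerate} cusp. You fold this case into the same argument as the normal cases, asserting that the exceptional set of a good resolution is a tree of rational curves, that the link $\Sigma$ is a rational homology sphere, and that the singularity is rational. But a degenerate cusp is a \emph{non-normal} Gorenstein surface singularity (see Definition~\ref{defn_surfaces_slc} and the surrounding discussion of \cite{Kollar-Shepherd-Barron}), and its $\zz_2$-quotient is again non-normal. The Neumann--Wahl machinery for universal abelian covers is formulated for normal surface singularities with $\qq$-homology sphere link, and the notions of ``link,'' ``discriminant group $D=H_2(U)^{\#}/H_2(U,\zz)$,'' and ``rational singularity'' that you rely on do not transfer directly to the non-normal case. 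The paper handles this case separately: it passes to the normalization $S^{\norm}=S_1\sqcup S_2$, whose components are normal cyclic quotient singularities, applies Neumann--Wahl to $S^{\norm}$ to obtain $\widetilde{S}^{\norm}$, and then reconstructs $\widetilde{S}$ by identifying the double curves of $\widetilde{S}^{\norm}$, checking that l.c.i.\ is preserved under the gluing. Your proposal needs a step of this kind for the degenerate cusp case, or at least an explicit reduction to the normal case; without it, the first part of the theorem is not established for the full list of singularities. (A smaller imprecision: your claim that the quotients are rational ``being a finite cyclic quotient of a Gorenstein log-canonical singularity'' is not a valid general principle --- the trivial quotient of a simple elliptic singularity has geometric genus one --- so rationality depends on the specific $\zz_r$-actions classified in \cite{Kollar-Shepherd-Barron} and \cite{Kawamata_Ann}, which should be cited rather than deduced.)
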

\begin{proof}
The cases of the $\zz_2, \zz_3, \zz_4, \zz_6$ quotients of a  simple elliptic singularity and the  $\zz_2$ quotient of a cusp are from \cite{NW}, \cite{NW2}, and (\ref{eqn_universal_index}). The $\zz_2$-quotient of degenerate cusp is given in \cite[\S 9.6]{Kawamata_Ann}, where the the degenerate cusp only has two irreducible components.  In this case we consider the following diagram
\[
\xymatrix{
(\widetilde{S}^{\norm})\ar[r]\ar[d]& S^{\norm}=S_1\sqcup S_2\ar[d]\\
(\widetilde{S})\ar[r] & S, 
}
\]
where $S^{\norm}$ is the normalization of $S$, and the two components $S_i$ have cyclic quotient singularities.  From  \cite{NW}, \cite{NW2},  $\widetilde{S}^{\norm}\to S^{\norm}$ is the universal abelian cover.  Then, 
$\widetilde{S}$ is obtained from  $\widetilde{S}^{\norm}$ by identifying the double curves.    We know that  $\widetilde{S}^{\norm}$ is l.c.i., so is $\widetilde{S}$. 
\end{proof}

\begin{rmk}\label{rmk_quotient_elliptic_cusp}
Suppose that $(S,0)$ is the $\zz_2, \zz_3, \zz_4, \zz_6$ quotient of a simple elliptic singularity, or the $\zz_2$ quotient of a cusp singularity. Let $(\widetilde{S},0)$ be the universal abelian cover.   It is interesting to study if any  
$\qq$-Gorenstein deformation of $(S,0)$ gives a $G$-equivariant  deformations of $(\widetilde{S}, 0)$.  

For instance, in the case of   $\zz_2$-quotient of  simple elliptic singularity $(S,0)$,  if the exceptional smooth elliptic curve $E$ has self-intersection number $\leq 8$,  \cite{Simonetti} proves that $(S,0)$ always admits a 
$\zz_2$-equivariant smoothing.  It is interesting to study if the universal abelian cover  $(\widetilde{S},0)$  of the quotient elliptic singularity admits a $G$-equivariant smoothing. 
\end{rmk}

\begin{example}\label{example_quotient_simple_elliptic}
Following \cite{NW_GT}, we give an interesting example of the $\zz_2$-quotient of simple elliptic singularity $(S,0)$ whose resolution graph $\Gamma$ is given by 
\begin{equation}\label{eqn_simple_graph}
\xymatrix@R=7pt@C=24pt@M=0pt@W=0pt@H=0pt{
  \overtag{\bullet}{-2}{8pt}\lineto[dr] && 
  \overtag{\bullet}{-2}{8pt}\lineto[dl]\\
  &\overtag{\bullet}{-d}{8pt}\lineto[ur]\lineto[dr]&\\
  \overtag{\bullet}{-2}{8pt}\lineto[ur] && 
  \overtag{\bullet}{-2}{8pt}\lineto[ul]}
  \end{equation}
The discriminant group $D(\Gamma)$ is our finite abelian group $G$, which has order $16d-32$ (which is given by the negative of the determinant of the intersection matrix of $\Gamma$). 
  
The corresponding splice diagram in \cite[\S 2]{NW_GT} is given by 
\[\label{eqn_simple_graph_2}
\xymatrix@R=7pt@C=24pt@M=0pt@W=0pt@H=0pt{
  \overtag{\bullet}{}{8pt}\lineto[dr]^{2} && 
  \overtag{\bullet}{}{8pt}\lineto[dl]_{2}\\
  &\overtag{\bullet}{}{8pt}\lineto[ur]\lineto[dr]&\\
  \overtag{\bullet}{}{8pt}\lineto[ur]^{2} && 
  \overtag{\bullet}{}{8pt}\lineto[ul]_{2}}
  \]
  There are four leaves in the splice diagram, which correspond to $\cc^4$ with variables $z_1, z_2, z_3, z_4$.
  Then from \cite[\S 8]{NW_GT}, the splice diagram equations are of the form
  $$
  \sum_{j=1}^{4}a_{ij}z_j^{2}+H_i(z)=0,
  $$
  for $i=1, 2$, where $(a_{ij})$ is a $2\times 4$ matrix such that all the maximal minors have full rank, and $H_i(z)$ is a convergent series in $z_i$. Let $(\widetilde{S},0)$ be the complete intersection singularity determined by these $2$-equations.  Then $G$ acts on $\widetilde{S}$ with quotient  $(S,0)$ and $(\widetilde{S},0)$ is the universal abelian cover.
\end{example}

\begin{example}\label{example_quotient_cusp}
We provide an interesting example of the $\zz_2$-quotient-cusp in \cite{NW}. 
Let $(S,0)$ be a quotient-cusp singularity.   It is the $\zz_2$-quotient  of the cusp surface singularity $(Z,0)$ whose resolution graph is given by
\begin{equation}\label{eqn_cusp_graph}
\xymatrix@R=9pt@C=24pt@M=0pt@W=0pt@H=0pt{
  &\overtag{\bullet}{-e_2}{8pt}\dashto[r]
  &&&\overtag{\bullet}{-e_{k-1}}{8pt}\dashto[l]\\
  \lefttag{\bullet}{2-2e_1}{8pt}\lineto[ur]\lineto[dr] &&&&&
  \righttag{\bullet}{2-2e_k}{8pt}\lineto[ul]\lineto[dl]\\
  &\overtag{\bullet}{-e_2}{8pt}\dashto[r]
  &&&\overtag{\bullet}{-e_{k-1}}{8pt}\dashto[l]}
  \end{equation}
where $k\ge 2$, $e_i\ge 2$ and some $e_j>2$. 
The quotient-cusp singularity  $(S,0)$ has resolution graph
\begin{equation}\label{eqn_cusp_graph2}
\xymatrix@R=7pt@C=24pt@M=0pt@W=0pt@H=0pt{
  \overtag{\bullet}{-2}{8pt}\lineto[dr] && &&&
  \overtag{\bullet}{-2}{8pt}\lineto[dl]\\
  &\overtag{\bullet}{-e_1}{8pt}\lineto[r]
  &\overtag{\bullet}{-e_2}{8pt}\dashto[r]&\dashto[r]&
  \overtag{\bullet}{-e_k}{8pt}&&\\
  \overtag{\bullet}{-2}{8pt}\lineto[ur] && &&&
  \overtag{\bullet}{-2}{8pt}\lineto[ul]}
  \end{equation}
  
There is an associated  matrix
$$B=\mat{cc}
a&b\\
c&d
\rix=B(e_1-1, e_2, \cdots, e_{k-1}, e_k-1)$$
where 
$$
B(e_1-1,e_2, \cdots, e_{k-1}, e_k-1)=
\mat{cc}
0&1\\
-1&0
\rix\mat{cc}
0&-1\\
1&e_k-1
\rix\cdots \mat{cc}
0&-1\\
1&e_1-1
\rix.
$$  

From \cite[Theorem 5.1]{NW},  the universal abelian $\lci$ cover $(\widetilde{S}, 0)\to (S,0)$ has transformation abelian group $G$ with order $16b$.   
Let $\zeta$ be a primitive $4b$-th root of unity.  We  consider the following diagonal matrices:
$$A_1=\Diag[-\zeta^a, \zeta^a, \zeta, \zeta]$$
$$A_2=\Diag[\zeta^a, -\zeta^a, \zeta, \zeta]$$
$$A_3=\Diag[\zeta, \zeta, -\zeta^d, \zeta^d]$$
$$A_4=\Diag[\zeta, \zeta, \zeta^d, -\zeta^d].$$
Then the finite abelian group is $G=\langle A_1, A_2, A_3, A_4\rangle$, which has order $16b$.  The group structure of 
$G$ depends on the parity of $c$, see \cite[Theorem 5.1]{NW}. 

The local equations of $(\widetilde{S}, 0)$ are given by:
$$x^2+y^2=u^{\alpha}v^{\beta}; \quad   u^2+v^2=x^{\gamma}y^{\delta},$$
where $\alpha, \beta, \gamma, \delta\ge 0$ satisfy the conditions
$$\alpha+\beta=2a;  \quad  \gamma+\delta=2d; \quad  \alpha\equiv \beta\equiv \gamma\equiv \delta\equiv c  ~\left(\mod ~2\right). $$

The resolution graph of the universal abelian cover $(\widetilde{S},0)$ is given by
\begin{equation}\label{eqn_resolution_universal}
  \xymatrix@R=2pt@C=24pt@M=0pt@W=0pt@H=0pt{
  &&&&&\overtag{\bullet}{-3}{6pt}\\
  &&&&\overtag{\bullet}{-2}{6pt}\lineto[ur] &&\overtag{\bullet}{-2}{6pt}\lineto[ul]\\
  &&&\dashto[ur] &&&&\dashto[ul]\\
  && &&&&&&\overtag{\bullet}{-2}{6pt}\dashto[ul]\\
  &\overtag{\bullet}{-2}{6pt}\dashto[ur]
  &&&&&&&&\overtag{\bullet}{-3}{6pt}
  \lineto[ul]\\
  \overtag{\bullet}{-3}{6pt}\lineto[ur]
  &&&&&&&&\undertag{\bullet}{-2}{2pt}\lineto[ur]\\
  &\undertag{\bullet}{-2}{2pt}\lineto[ul] &&&&&&\dashto[ur]\\
  &&\dashto[ul] &&&&\\
  &&&\undertag{\bullet}{-2}{2pt}\dashto[ul]
  &&\undertag{\bullet}{-2}{2pt}\dashto[ur]\\
  &&&&\undertag{\bullet}{-3}{0pt}\lineto[ul]\lineto[ur]\\ \\ \\}\end{equation}
where
the four strings of $-2$'s are lengths $2a-3$, $2d-3$, $2a-3$, and
$2d-3$ if $a, d\neq 1$. 

If $d=1$ or $a=1$ the resolution graph is given by
\begin{equation}\label{eqn_resolution_universal2}
\xymatrix@R=5pt@C=24pt@M=3pt@W=6pt@H=0pt{
  &\overtag{\bullet}{-2}{6pt}\dashto[r]&\dashto[r]&\overtag{\bullet}{-2}{6pt}\\
  \overtag{\bullet}{-4}{6pt}\lineto[ur]\lineto[dr]&&&&
  \overtag{\bullet}{-4}{6pt}\lineto[ul]\lineto[dl]\\
  &\undertag{\bullet}{-2}{0pt}\dashto[r]&\dashto[r]&\undertag{\bullet}{-2}{3pt}\\
  \\ \\}\end{equation}
  where the top and bottom strings are of length $2a-3$ or $2d-3$. 

From \cite[Proposition 2.5]{NW}, a cusp singularity with resolution graph $[-b_1, \cdots, -b_k]$ is a complete intersection singularity if and only if 
$$\sum_{i=1}^{k}(b_i-2)\leq 4$$
which is equivalent to the dual cusp has resolution cycle of length $\le 4$. 
It is easy to check that the above resolution graph of the  universal abelian cover cusp $(\widetilde{S},0)$ exactly satisfies this condition.  
The dual graph (of the dual cusp) of (\ref{eqn_resolution_universal}) and (\ref{eqn_resolution_universal2}) is given by
$[-2a, -2d, -2a, -2d]$ which has length $4$. 
\end{example}

\begin{example}\label{example_quotient_cusp2}
Let us look at the $\zz_2$-quotient cusp singularity $(S,0)$ in Example \ref{example_quotient_cusp} again.  
The universal abelian cover cusp $(\widetilde{S},0)$ has resolution cycle given by (\ref{eqn_resolution_universal}) and (\ref{eqn_resolution_universal2}), and it is a 
complete intersection cusp.  From \cite{GHK}, this cusp singularity  $(\widetilde{S},0)$ is smoothable if and only if the resolution cycle of its dual cusp  is the anticanonical divisor of a smooth 
rational surface. 

From \cite[(1.1) Theorem]{Looijenga}, for certain $a, d\geq 1$, there is a smooth rational surface $(X,E)$ with the anticanonical divisor $E$ given by $[-2a, -2d, -2a, -2d]$.   Thus from  \cite{GHK}, the cusp singularity  $(\widetilde{S},0)$ is smoothable, 
which induces the $\qq$-Gorenstein deformation of  $(S,0)$. 
\end{example}

\begin{example}\label{example_quotient_cusp_family}
Recall that in Example \ref{example_quotient_cusp}, the quotient-cusp singularity 
$(S,0)$ has resolution cycle (\ref{eqn_cusp_graph2}), which associates with a matrix 
$$B=\mat{cc}
a&b\\
c&d
\rix.
$$
The quotient $(\widetilde{S}/G, 0)$ is isomorphic to  $(S,0)$.   The $\lci$ singularity  $(\widetilde{S}, 0)$ admits a one-parameter  smoothing 
$$\widetilde{\sS}\subset \aaa_{\mathbf{k}}^4\times \aaa_{\mathbf{k}}^1$$
which is given by the equations:
$$x^2+y^2-u^{\alpha}v^{\beta}=t; \quad   u^2+v^2-x^{\gamma}y^{\delta}=t.$$
The group $G$ acts on $t$ trivially, and the quotient $\sS=\widetilde{\sS}/G$ gives a smoothing of the singularity 
$(S,0)$. 
\end{example}

\subsection{Discriminant  cover of s.l.c.  surface germs}\label{subsubsec_discriminant}

Now we assume that the s.l.c. germ $(S,0)$ is a Gorenstein simple elliptic singularity,  a cusp singularity or a degenerate cusp singularity.  Note that simple elliptic singularities and cusps are  normal surface singularities. 

\subsubsection{Cusp singularities}\label{subsubsec_cusp_singularity}

Let us first  fix to the cusp singularity case.  In this case the index one cover is just $(Z,0)=(S,0)$, and we have the good resolution 
$\sigma: X\to S$, where  $\sigma^{-1}(0)=A$ is a cycle of rational curves.
The link  $\Sigma$ is not a rational homology sphere.  The link is  a $T^2$-bundle over the circle $S^1$ and $H_1(\Sigma,\zz)=\zz\oplus G$. 
Suppose that the type of the cusp singularity is given by  $[-e_1, \cdots, -e_k]$ determined by the resolution graph of the cusp, where $e_i$ are positive integers  and $-e_i$ are the self-intersection numbers of the 
the component curves in the exceptional divisor of the minimal resolution of $(S,0)$.   Then the monodromy of the link is given by the matrix 
$$
A=
\mat{cc}
0&-1\\
1&e_k
\rix\cdots \mat{cc}
0&-1\\
1&e_1
\rix
=
\mat{cc}
a&b\\
c&d
\rix,
$$
such that $\pi_1(\Sigma)=\zz^2\rtimes_{A}\zz$. 
It is known from \cite[Proposition 2.5]{NW} that the cusp $[-e_1, \cdots, -e_k]$ is an lci cusp  if and only if
$\sum_{i=1}^k(e_i-2)\le 4$, which is equivalent to the fact that  the dual cusp has
resolution cycle of length at most $4$.

As in \cite[\S 4]{NW},  there is no natural epimorphism $\pi_1(\Sigma)\to G$, hence no natural Galois cover with transformation group $G$. 
But  different epimorphisms of $H_1(\Sigma,\zz)=\zz\oplus G\to G$ are related by automorphisms 
of $\pi_1(\Sigma)$, and hence by automorphisms of $(S,0)$.  Therefore,  there is a natural cover up to automorphisms, called the discriminant cover.  Also for any subgroup $K\subset G$
we still have the cover for $K$ and the dual cover for $K$, with transformation groups $G/K$ and $G/K^{\perp}$ respectively. 
From the proof in  \cite[\S 4]{NW},  take $K=\{1\}$ and let $(\widetilde{S},0)\to (S,0)$ be the discriminant cover of $(S,0)$, which is also the dual cusp of 
$(S,0)$. 
   
In \cite[Proposition 4.1 (2)]{NW}, Neumann and Wahl  constructed a finite  cover 
$(\widetilde{S}, 0)$ of $S$ with transformation group $G^\prime$ so that $(\widetilde{S}, 0)$ is  a hypersurface cusp, which is  l.c.i.  
Let $H$ be the subspace of $\zz^2$ generated by $\mat{c}a\\ c\rix$ and $\mat{c}0\\ 1\rix$. We can assume $a\neq 0$, otherwise we just take $H=\zz^2$.     Then the  matrix $A$ takes the subspace $H$ to itself
by the matrix  $\mat{cc}0&-1\\ 1& t\rix$ where $t=\tr(A)=a+d$.
The finite transformation group $G^\prime$ is  given as follows:   first we take  the quotient finite group $N(H\rtimes \zz)/H\rtimes \zz$, where $N(H\rtimes \zz)$ is the normalizer.  Then the subgroup 
$H\rtimes \zz\subset \pi_1(\Sigma)$ determines a cover  of $S$. 
This cover determined by $H\rtimes \zz$ is either the cusp with resolution graph consisting of a cycle with one vertex weighted $-t$ or the dual cusp of this, according as the above basis is oriented correctly or not, i.e., whether $a< 0$ or  $a > 0$.
By  taking the discriminant cover if necessary we get the cover $(\widetilde{S}, 0)$ of $S$  with transformation group 
$G^\prime$. 
The key issue is that $(\widetilde{S}, 0)$  a complete intersection cusp. 
Thus, we obtain
\begin{lem}\label{lem_cusp_cover}
Let $(S,0)$ be a cusp singularity, then there exists a finite discriminant cover $(\widetilde{S}, 0)$ with transformation group $G^\prime$ and the cusp  $(\widetilde{S}, 0)$  is a complete intersection cusp.  
A deformation of the Deligne-Mumford stack $[\widetilde{S}/G^\prime]$; i.e., a $G^\prime$-equivariant deformation of $\widetilde{S}$,  induces a Gorenstein deformation of the cusp $(S,0)$. 
\end{lem}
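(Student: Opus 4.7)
The plan is to reduce Lemma \ref{lem_cusp_cover} to the Neumann--Wahl construction in \cite[Proposition 4.1(2)]{NW} combined with routine equivariant deformation theory. I will carry out the verifications in the following order: (i) explicit construction of the cover $(\widetilde{S},0)$; (ii) verification that it is a complete intersection cusp; (iii) descent of $D'$-equivariant deformations.

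For step (i), I will exploit the fact (discussed above) that although the link $\Sigma$ satisfies $H_1(\Sigma,\zz) = \zz \oplus D$ and admits no canonical epimorphism onto $D$, the resulting covers are well-defined up to automorphisms of $(S,0)$. Choosing the subgroup $H \subset \zz^2$ generated by $(a,c)^T$ and $(0,1)^T$, on which the monodromy $A$ acts via $\bigl(\begin{smallmatrix}0&-1\\1&t\end{smallmatrix}\bigr)$ with $t=a+d$, the subgroup $H\rtimes \zz \subset \pi_1(\Sigma)$ determines a finite cover of $(S,0)$ whose exceptional cycle is a single vertex of weight $-t$ (or its dual cusp, depending on the orientation of the chosen basis). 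Composing with the discriminant cover if necessary then yields $(\widetilde{S},0)$ with transformation group $D'$. Step (ii) will be immediate from the Neumann--Wahl criterion \cite[Proposition 2.5]{NW}: a cusp with resolution graph $[-b_1,\ldots,-b_k]$ is a complete intersection iff $\sum_i(b_i-2)\leq 4$. The one-vertex cycle trivially satisfies this, and its dual has cycle length at most $4$, so in either case $(\widetilde{S},0)$ is in fact a hypersurface cusp.

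For step (iii), a $D'$-equivariant flat deformation $\widetilde{\sS}/T$ of $\widetilde{S}$ descends via the $D'$-invariant quotient to a flat deformation $\sS := \widetilde{\sS}/D' \to T$ of $S$; finiteness of $D'$ guarantees that the quotient commutes with flat base change. Since $\widetilde{S}$ is l.c.i.\ and hence Gorenstein, $\omega_{\widetilde{\sS}/T}$ is a line bundle carrying a natural $D'$-equivariant structure, and I would take its reflexive descent to $\sS$. A local computation on the singular germ will identify this descent with $\omega_{\sS/T}$, which is then a line bundle because the cusp $(S,0)$ has index one, giving the desired Gorenstein deformation. The main technical obstacle will be precisely this last identification at points with non-trivial $D'$-stabilizers, since $\widetilde{S}\to S$ is ramified along components of the exceptional divisor; however, the explicit form of the cover from step (i) makes the check tractable, and the argument parallels the identification implicit in Proposition \ref{prop_deformation_S_SS_G} for index-one covers.
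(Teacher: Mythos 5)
Your step (i) reproduces the paper's construction verbatim (the paper offers no proof beyond the discussion preceding the lemma and the citation of \cite[Proposition 4.1(2)]{NW}), and your step (iii) supplies more detail on the descent of deformations than the paper does; note only that the cover of germs $\widetilde{S}\to S$ is \'etale away from the closed point $0$, being induced by a covering of the link, so the ramification you worry about is concentrated at $0$ and the identification of the descended dualizing sheaf with $\omega_{\sS/T}$ is essentially automatic since the cusp has index one.

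The genuine problem is in step (ii), where both of your claims are false. For the one-vertex cycle $[-t]$ with $t=\tr(A)=a+d$ the quantity in the Neumann--Wahl criterion is $\sum_i(b_i-2)=t-2$, and $t$ is unbounded: in the paper's own Example \ref{example_cusp_sing} one gets $t=651$, so the cusp $[-651]$ fails the complete intersection criterion by a wide margin rather than "trivially satisfying" it. Likewise the dual of $[-t]$ is the cycle $[-3,-2,\dots,-2]$ of length $t-2$, so it does not "have cycle length at most $4$"; what is true, and what the argument actually needs, is that this dual cycle has $\sum_i(b_i-2)=1$ and hence is a hypersurface cusp, while $[-t]$ itself is a complete intersection only for $t\le 6$. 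This is exactly why the clause "composing with the discriminant cover if necessary" is not optional: when the cover determined by $H\rtimes\zz$ comes out as $[-t]$ rather than its dual, one must take one further discriminant cover, landing on the dual cusp, to reach a complete intersection. As written, your verification would certify the intermediate cover $[-t]$ as already l.c.i.\ and render the extra cover superfluous; the repair is to discard both claims and instead observe that the final cover produced by the construction is always the cycle with $\sum_i(b_i-2)=1$.
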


We say that a singularity germ $(S,0)$ admits an $\lci$ lifting if there is an $\lci$ cover $(\widetilde{S}, 0)\to (S,0)$ with transformation group $G^\prime$ such that $(\widetilde{S}, 0)$ is an $\lci$ singularity. 
We say that a smoothing  $(\sS,0)\to \Delta$ of the singularity $(S,0)$ admits an $\lci$ smoothing lifting if there is a smoothing 
$\tilde{f}: (\widetilde{\sS}, 0)\to \Delta$ which induces the smoothing $(\sS,0)\to \Delta$ and the fibers of $\tilde{f}$ have only lci singularities. 
From the descriminant cover of the cusp $(S,0)$, 
a $G^\prime$-equivariant smoothings is an  {\em $\lci$ smoothing lifting} of $(S,0)$.

The smoothing of cusp singularities has a long history, see \cite{Looijenga}, \cite{GHK}, \cite{Engle}.
 In \cite[Theorem 1.3]{Jiang_cusp}, we generalize the Looijenga conjecture to the equivariant setting and prove that for any cusp singularity $(S,0)$ admitting a one-parameter smoothing, there exists an lci smoothing lifting of the singularity. 
\begin{thm}\label{thm_smoothing_lci_cusp_paper}(\cite[Theorem 1.3]{Jiang_cusp})
  Let $(S, 0)$ be a cusp singularity.   Suppose that $(S,0)$ admits a smoothing $f: (\sS,0)\to \Delta$.  Then there exists a smoothing $\tilde{f}: (\tilde{\sS},0)\to \Delta$ of an lci cusp together endowed with  a finite group $G$ action such that the quotient induces the smoothing $f: (\sS,0)\to \Delta$.
\end{thm}

\begin{rmk}\label{rmk_cusp_smoothing_analytic}
    The result in Theorem \ref{thm_smoothing_lci_cusp_paper} only works analytically, and the author doesn't know how to give an algebraic proof of the result. 
\end{rmk}

\subsubsection{Simple elliptic singularities}\label{subsubsec_simple_elliptic}

Let  $(S,0)$ be a  simple elliptic singularity. Let 
$\sigma: X\to S$ be the minimal resolution such that 
$A=\sigma^{-1}(0)$ is the exceptional elliptic curve.  Let $d:=-A\cdot A$ be the degree of $(S,0)$.    The local embedded dimension of the singularity is given by 
$\max(3, d)$.   It is known from \cite{Laufer}, that the simple elliptic singularity $(S,0)$ is an $\lci$ singularity if the negative self-intersection $d\le 4$.  If $d\ge 5$, then $(S,0)$ is never $\lci$. 
 From \cite{Pinkham2}, \cite{Kollar-Shepherd-Barron},  it admits a smoothing if and only if $1\leq d\leq 9$.   

 We list the result in \cite[Theorem 1.3]{Jiang_2023} here. 
\begin{thm}\label{thm_elliptic_singularity_lci_lifting}(\cite[Theorem 1.3]{Jiang_2023})
Let $(S,0)$ be a simple elliptic surface singularity,  and $(X, A)$ its minimal resolution. Then $(S,0)$ admits an $\lci$ smoothing lifting by a simple elliptic singularity $(\widetilde{S},0)$ of degree $\leq 4$ only when 
$d\neq 5, 6, 7$ and $1\le d\le 9$.
\end{thm}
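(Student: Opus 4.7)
The plan is to prove the necessary condition by combining the classical smoothability criterion with a case analysis based on Pinkham's cone construction of simple elliptic singularities. Since any $\lci$ smoothing lifting produces in particular a smoothing of $(S,0)$, the bound $1 \le d \le 9$ follows from Pinkham and Koll\'ar--Shepherd-Barron, so it suffices to exclude $d \in \{5, 6, 7\}$.

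First I would enumerate the candidate $\lci$ covers. Writing $(S,0) = \spec\bigl(\bigoplus_{n \ge 0} H^0(E, L^n)\bigr)$ for a polarized elliptic curve $(E,L)$ of degree $d$, each divisor $m \mid d$ together with a choice of $m$-th root $L' \in \Pic(E)$ of $L$ (which exists since $\Pic^0(E)$ is divisible) produces a natural cyclic cover $\widetilde S_m := \spec\bigl(\bigoplus_{n \ge 0} H^0(E, (L')^n)\bigr)$. This $\widetilde S_m$ is again a simple elliptic singularity, of degree $d/m$, and it carries a natural $\mu_m$-action (by residues of the $\mathbf{k}^*$-grading modulo $m$) with $\widetilde S_m/\mu_m \cong (S,0)$. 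By Laufer's theorem, $\widetilde S_m$ is l.c.i.\ precisely when $d/m \le 4$. Moreover, since the link of $(S,0)$ is the circle bundle over $E$ with Euler number $-d$ and hence has $H_1 = \zz^2 \oplus \zz/d$, every abelian cover with simple elliptic total space refining only the torsion direction arises in this way.

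Second, I would verify the positive cases. For $d \in \{1,2,3,4\}$, $(S,0)$ is already l.c.i.\ and smoothable, so the identity cover suffices. For $d = 8$ take $m = 2$, so that $\widetilde S_2$ is a degree-$4$ complete intersection of two quadrics in $\pp^3$, and exhibit a $\mu_2$-invariant smoothing direction inside Pinkham's miniversal deformation of $\widetilde S_2$. For $d = 9$ take $m = 3$, so that $\widetilde S_3$ is the cone over a plane cubic, and write down an explicit $\mu_3$-equivariant smoothing via a $\mu_3$-stable pencil of cubics. In each case the descent of the equivariant smoothing gives the required $\qq$-Gorenstein smoothing of $(S,0)$.

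The main obstacle, and the content of the final step, is the necessity for $d \in \{5,6,7\}$. The constraints $m \mid d$ and $d/m \le 4$ reduce the possibilities to $(d,m) \in \{(5,5), (6,2), (6,3), (6,6), (7,7)\}$. For each such pair one decomposes Pinkham's negative-weight deformation module $T^1_{<0}(\widetilde S_m)$ as a graded $\mu_m$-representation, using its description in terms of $H^0(E,(L')^{-k})$, identifies the smoothing direction inside it, and computes the associated $\mu_m$-weight. The claim is that in all five cases this weight is non-trivial, so no $\mu_m$-equivariant smoothing of $\widetilde S_m$ can exist, contradicting the supposed $\lci$ smoothing lifting. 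The hard part is the weight bookkeeping, especially at $d = 6$, where three distinct covers must be ruled out simultaneously and the computation must be insensitive to the choice of $L'$ within $\Pic^0(E)$.
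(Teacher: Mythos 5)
Your outline is structurally the right one, and the positive cases work as you say, but the negative case has a genuine gap at $(d,m)=(6,6)$ that the proposed weight bookkeeping cannot catch.

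Concretely: for $(d,m)=(6,6)$ the cover $\widetilde S_6$ is the degree-$1$ cone, a hypersurface $\{f=0\}\subset\mathbb{C}^3$ with $\mathbb{C}^*$-weights $(1,2,3)$ and $\deg f = 6$. The Galois $\mu_6\subset\mathbb{C}^*$ then acts on the defining equation with weight $\zeta^6 = 1$, which is \emph{trivial}, so the constant deformation $f\rightsquigarrow f-t$ \emph{is} a $\mu_6$-equivariant smoothing direction. Your claim that ``in all five cases this weight is non-trivial'' is therefore false; the $T^1$-weight computation shows $(5,5)$, $(6,2)$, $(6,3)$, $(7,7)$ are obstructed (the relation degrees $6,3,4,6$ are respectively not divisible by $5,2,3,7$), but it is blind to the $(6,6)$ case. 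The actual obstruction there is different: $\mu_6$ acting on $\mathbb{C}^3$ with weights $(1,2,3)$ fixes the $y$-axis pointwise under $\zeta^3$ and the $z$-axis pointwise under $\zeta^2$, and while $\widetilde S_6=\{f=0\}$ meets these axes only at the origin, the Milnor fiber $\{f=t\}$ for $t\neq 0$ does meet them, so the quotient of the Milnor fiber acquires $A_1$ and $A_2$ singularities. Hence the induced one-parameter deformation of $(S,0)$ is not a Gorenstein smoothing, and the $(6,6)$ candidate fails for a reason invisible to the graded $T^1$-representation. Note incidentally that $(8,4)$ also has trivial weight and a non-free Milnor-fiber action; it is harmless only because $(8,2)$ succeeds, but it confirms that weight triviality is not the right criterion.

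The cleanest way to close the gap (and to subsume the weight argument) is a topological check that must in any case be added to the positive direction as well: for the $\mu_m$-equivariant smoothing of $\widetilde S_m$ to descend to an honest smoothing of $(S,0)$, the $\mu_m$-action on the Milnor fiber of $\widetilde S_m$ must be free, so in particular $\chi(\text{Milnor fiber of }\widetilde S_m)$ must be divisible by $m$ with quotient $\chi(\text{Milnor fiber of } S)$. Using Pinkham's identification of the Milnor fiber of a degree-$e$ simple elliptic singularity ($e\le 9$) with a degree-$e$ del Pezzo surface minus a smooth anticanonical curve, one has $\chi = 12 - e$, and the freeness condition becomes $(12 - d/m) = m(12-d)$, i.e.\ $d = 12m/(m+1)$. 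Among $m\mid d$ with $d/m\le 4$ and $5\le d\le 9$ the only integer solutions are $(d,m)=(8,2)$ and $(9,3)$, which simultaneously handles all five excluded pairs (including $(6,6)$, where $11/6\notin\mathbb{Z}$) and re-derives the positive cases. You should either replace the uniform weight computation by this Euler-characteristic argument, or at minimum run a separate Milnor-fiber fixed-point analysis for the $(6,6)$ case. Finally, the reduction to cyclic covers refining the torsion $\mathbb{Z}/d\subset H_1(\Sigma)$ deserves a sentence explaining why covers in the $\mathbb{Z}^2$-direction (isogenies of $E$, which increase the degree) and non-abelian covers (whose images of the central generator $c$ with order dividing $d$ force abelianity of the quotient of the Heisenberg group $\pi_1(\Sigma)$) cannot produce l.c.i.\ total spaces.
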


\begin{rmk}\label{rmk_simple_elliptic_lci}
    The link $\Sigma$ of a simple elliptic singularity $(S,0)$ of degree $d$ is a $S^1$-bundle over the torus $T^2$ and $H_1(\Sigma,\zz)=\zz^2\oplus \zz_d$. The  fundamental group of the link is $\pi_1(\Sigma)=\zz^2\ltimes\zz$.  Although Theorem \ref{thm_elliptic_singularity_lci_lifting} implies that when $d=5, 6, 7$, there is no lci smoothing lifting, but there is an lci cover for these singularities by a degree one simple elliptic singularity determined by surjective morphism $H_1(\Sigma,\zz)=\zz^2\oplus \zz_d\to \zz_d$.  The transformation group $G=\zz_d$.
\end{rmk}

From the above analysis and Theorem  \ref{thm_elliptic_singularity_lci_lifting} we have

\begin{thm}\label{thm_minimally_elliptic_universal}
Let $(S,0)$ be a simple elliptic singularity, a cusp or a degenerate cusp singularity  germ. Suppose that  there exists a  discriminant cover 
$(\widetilde{S},0)$ of  $(S,0)$ with transformation group $G^\prime$. Then,   
the $G^\prime$-equivariant  deformations of $(\widetilde{S}, 0)$ induce Gorenstein deformations of  $(S,0)$.
\end{thm}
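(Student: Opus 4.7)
The plan is to prove Theorem \ref{thm_minimally_elliptic_universal} uniformly for all three germ types by extending the strategy underlying Lemma \ref{lem_cusp_cover}. Given a $D^\prime$-equivariant flat deformation $\widetilde{\sS}\to T$ of $(\widetilde{S},0)$, I will construct the desired Gorenstein deformation of $(S,0)$ as the quotient $\sS:=\widetilde{\sS}/D^\prime$, formed in the analytic category since the discriminant cover is defined only analytically in these Gorenstein index-one cases.

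First I would verify the basic structural properties of the quotient. Because $D^\prime$ is a finite group acting compatibly with the morphism to $T$ and we work over $\mathbf{k}=\cc$, the functor of $D^\prime$-invariants is exact. Hence $\sO_{\sS}=(\pi_*\sO_{\widetilde{\sS}})^{D^\prime}$ is flat over $\sO_T$, and taking invariants commutes with base change along $0\hookrightarrow T$. Therefore $\sS/T$ is flat with central fiber $\widetilde{S}/D^\prime=(S,0)$.

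Next I would establish the Gorenstein property. Since $(\widetilde{S},0)$ is $\lci$ by construction of the discriminant cover, and the $\lci$ property is preserved under flat deformation, the total space $\widetilde{\sS}$ is $\lci$ in a neighborhood of the central fiber, so $\omega_{\widetilde{\sS}/T}$ is a line bundle. Grothendieck duality for the finite map $\pi:\widetilde{\sS}\to\sS$ gives $\omega_{\widetilde{\sS}/T}=\cHom_{\sO_{\sS}}(\pi_*\sO_{\widetilde{\sS}},\omega_{\sS/T})$, so taking $D^\prime$-invariants identifies $\omega_{\sS/T}$ with the trivial-character summand of $\pi_*\omega_{\widetilde{\sS}/T}$. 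Restricted to the central fiber, this summand is $\omega_S$, a line bundle by the Gorenstein hypothesis on $(S,0)$. Since Gorenstein-ness is an open condition in flat Cohen--Macaulay families, $\omega_{\sS/T}$ is a line bundle in a neighborhood of the central fiber, and hence $\sS/T$ is a Gorenstein deformation of $(S,0)$.

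The hard part will be ensuring that the trivial-character summand of $\pi_*\omega_{\widetilde{\sS}/T}$ is globally a line bundle, not merely a rank-one torsion-free sheaf; equivalently, that the $D^\prime$-action on $\omega_{\widetilde{\sS}/T}$ near the fixed points has the correct character to produce an invertible invariant sheaf. For the cusp case this is encoded in the explicit Neumann--Wahl equations, as illustrated in Example \ref{example_quotient_cusp_family}, where the $D^\prime$-weights on the defining equations are incompatible with any nontrivial twist of $\omega_S$. The analogous verification in the simple elliptic case proceeds via the local equations of the $\lci$ cover supplied by Theorem \ref{thm_elliptic_singularity_lci_lifting}, and the degenerate cusp case reduces to the cusp case by passing to the normalization and gluing along the double curve.
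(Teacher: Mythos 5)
Your proposal and the paper's proof are aimed at different halves of the theorem, so they are genuinely different arguments. The paper's proof opens with ``We only need to prove the degenerate cusp singularity case,'' treating the cusp and simple elliptic cases as already settled by Lemma \ref{lem_cusp_cover} and Theorem \ref{thm_elliptic_singularity_lci_lifting}, and then spends all its effort \emph{constructing} the discriminant cover for a degenerate cusp: normalize $S$, take universal abelian covers of the components $\overline{S}_i$, pass to minimal resolutions, glue along the preimages of the double curves, and contract. The payoff is that $(\widetilde{S},0)$ is l.c.i. because $(\widetilde{S}^{\norm},0)$ is. The deduction the theorem actually asserts --- that a $D'$-equivariant deformation of the cover pushes down to a Gorenstein deformation of $(S,0)$ --- is taken for granted there, as it is in Lemma \ref{lem_cusp_cover}. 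You instead take the cover as given, which is exactly what the hypothesis of the theorem supplies, and carefully verify the quotient step: flatness of $\sS=\widetilde\sS/D'$ via exactness of $D'$-invariants in characteristic zero, $\omega_{\widetilde\sS/T}$ invertible since the family is relatively l.c.i., the identification $\omega_{\sS/T}\cong(\pi_*\omega_{\widetilde\sS/T})^{D'}$ by finite Grothendieck duality, restriction to $\omega_S$ on the central fiber, and openness of the Gorenstein locus in a flat family with Cohen--Macaulay fibers. That argument is sound, and it makes explicit a step the paper leaves implicit; the paper's proof complements it by establishing the existence of the cover in the one case not already in the literature.

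One small remark: your final paragraph, worrying about whether the invariant summand is a line bundle ``not merely a rank-one torsion-free sheaf,'' is not a residual gap. The openness-of-Gorenstein argument you gave two sentences earlier already handles this: once the central fiber $S$ is Gorenstein at $0$ and the fibers of $\sS\to T$ are Cohen--Macaulay (which they are, being $D'$-invariants of Cohen--Macaulay rings in characteristic zero), $\omega_{\sS/T}$ is invertible in a neighborhood of the origin of the central fiber, with no separate character verification at fixed points needed. The verification you propose via Example \ref{example_quotient_cusp_family} and Theorem \ref{thm_elliptic_singularity_lci_lifting} would only become necessary if you insisted on global invertibility over the whole family rather than in a germ neighborhood, which is not what the theorem asks for.
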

\begin{proof}
We only need to prove the degenerate cusp singularity case.   Let $(S,0)$ be a 
degenerate cusp singularity, which is a non-normal surface singularity sharing the same properties of cusp singularities.  We construct  the following diagram
\begin{equation}\label{eqn_diagram_elliptic_cusp_1}
\xymatrix{
(\widetilde{S}^{\norm},0)\ar[r]\ar[d]& (S^{\norm},0)\ar[d] \\
(\widetilde{S},0)\ar[r]& (S,0),
}
\end{equation}
where the vertical maps are normalizations and the horizontal maps are universal abelian covers.  

The cover $(\widetilde{S},0)$ can be constructed as follows.  From \cite[\S 1]{Shepherd-Barron}, suppose that 
$S_1, \cdots, S_r$ are the irreducible components of $S$ that form a cycle if $r\ge 3$.  After reordering if necessary, $S_i$ and $S_{i+1}$ meet generically transversally in a smooth irreducible curve and for 
$j\neq i, i\pm 1$, $S_i\cap S_j=\{0\}$. If $r=2$, then $S_1$ and $S_2$ meet generically transversally in the union of two smooth curves meeting transversally at $0$.  If $r=1$, then the singular locus of $S$ is smooth and irreducible. 
The normalization $S^{\norm}$ of $S$ is a disjoint union of cyclic quotient singularities (which are rational singularities). 
Let 
$\sigma^{\norm}: X^{\norm}\to S^{\norm}$
be the minimal resolution of $S^{\norm}$.  (Here $S^{\norm}=\sqcup_i \overline{S}_i$ where $\overline{S}_i$ is the normalization of $S_i$.  Then, $X^{\norm}=\sqcup_i X_i$, where $X_i=\mbox{Bl}_{0}\overline{S}_i$ if $\overline{S}_i$ is smooth,  and the minimal resolution of 
$\overline{S}_i$ otherwise).  Then we get the minimal resolution 
$$\sigma: X\to S$$
by identifying $X_i$ and $X_{i+1}$ along the strict transform of the curve along which $S_i$ and $S_{i+1}$ meet in $S$.   Thus,  $\sigma^{-1}(0)$ is a cycle of rational curves.  We construct the following diagram
\begin{equation}\label{eqn_diagram_elliptic_cusp_2}
\xymatrix@=4pt{
      & & & & X^{\norm} \ar[dr]^--{\sigma^{\norm}} \ar[ddd]|\hole & \\
    & & (\widetilde{X}^{\norm}) \ar[urr]^{f} \ar[dr]^--{} \ar[ddd] & & & S^{\norm}
    \ar[ddd]^{} \\
      & & &(\widetilde{S}^{\norm}) \ar[urr]^{f} \ar[ddd]_{} & & \\
      & & & & X \ar@{->}[dr]_---{\sigma} & \\
    & & (\widetilde{X}) \ar[urr]^--{}|\hole \ar[dr]^{\widetilde{\sigma}} & & & S, \\
     & & &(\widetilde{S})\ar[urr]^{} & &
  }
\end{equation}
where  the vertical arrows are all normalizations,  and the two top and bottom squares are fiber products. 
First the top square is constructed as follows:   let $\sigma^{\norm}: X^{\norm}\to S^{\norm}$ be the minimal resolution of $S^{\norm}$ constructed above.   Then we take the fiber product $\widetilde{X}^{\norm}$.    Since $X$ is obtained by identifying $X_i$ and $X_{i+1}$ along the strict transform of the curve along which $S_i$ and $S_{i+1}$ meet in $S$.  Then, $\widetilde{X}$ is obtained by identifying $\widetilde{X}_i$ and $\widetilde{X}_{i+1}$ along the preimages of the transformation curves  under the covering map $f$ along which $S_i$ and $S_{i+1}$ meet in $S$. 
Note that the cover map $f$ may gave different orders on different components, and we only identify same number of the preimage curves.  The transformation group $G$ of the universal abelian cover $f: \widetilde{S}^{\norm}\to S^{\norm}$ is the product of all the finite abelian groups in the components of $f$.  Thus,  contracting down all the exceptional rational curves we get the cover $\widetilde{S}\to S$ with the same finite abelian transformation group $G$. 
This constructs the diagram (\ref{eqn_diagram_elliptic_cusp_1}).
Since $(\widetilde{S}^{\norm},0)$ is l.c.i., $(\widetilde{S},0)$ is also l.c.i.
\end{proof}

\begin{rmk}\label{rmk_elliptic_cusp}
Not  all of  the Gorenstein deformations of $(S,0)$ come from the deformations of $[\widetilde{S}/G^\prime]$. 
From \cite{GHK}, a cusp singularity $(S,0)$ is smoothable if and only if the resolution cycle of its dual cusp sits as an anticanonical divisor in a smooth rational surface.  It is interesting to study under which condition the Gorenstein deformations 
of the cusp singularity $(S,0)$ is given by the deformations  $[\widetilde{S}/G^\prime]$  of the discriminant cover, see \cite{Jiang_2023}. 
\end{rmk}

\subsubsection{Examples}

\begin{example}\label{example_cusp_sing}
In Example \ref{example_quotient_cusp}, there is a universal abelian cover of the  quotient-cusp which factors through the cusp in the quotient.    
We have examples of cusps which do not admit abelian covers by complete intersection cusps. 

Let $(S,0)$ be a cusp singularity whose resolution graph is given  by (\ref{eqn_cusp_graph}) in Example \ref{example_quotient_cusp}. 
Let 
$$k=4, e_1=6, e_2=3, e_3=3, e_4=2.$$
Then the resolution cycle of this specific cusp is $[-10, -3, -3, -3, -3, -2]$.   From \cite[Lemma 2.4]{NW},  the dual cusp has resolution cycle 
$$[-4, -2, -2, -2, -2, -2,-2,-2].$$
The dual cusp of the cusp corresponding to $[-4, -2, -2, -2, -2, -2,-2,-2]$ has resolution cycle $[-2, -10]$, which is a complete  intersection cusp.  Thus the cusp $(S,0)$ corresponding to the resolution cycle 
$[-10, -3, -3, -3, -3, -2]$ maybe be covered by a complete intersection cusp. 

But if we choose 
$$k=5, e_1=4, e_2=2, e_3=2, e_4=2,e_5=3,$$ 
then  the resolution cycle of this specific cusp is $[-6, -2, -2, -3, -3, -2,-2,-4]$. The dual cusp has resolution cycle 
$$[-2, -2, -2, -5,-5,-2].$$
The dual cusp of the cusp corresponding to $[-2, -2, -2, -5,-5,-2]$ has resolution cycle $[-6, -2,-2,-2,-2,-2,-2,-4]$ which has length $8$ (not a complete intersection). 

Therefore,  the cusp corresponding to  $[-6, -2, -2, -3, -3, -2,-2,-4]$ and its dual cusp corresponding to $[-2, -2, -2, -5,-5,-2]$ are both non complete intersection cusps. 
From \cite[Proposition 2.5]{NW}, the cusp corresponding to  $[-6, -2, -2, -3, -3, -2,-2,-4]$ can not have an abelian cover by a complete intersection cusp.  We have to take the discriminant cover presented in Theorem 
\ref{thm_minimally_elliptic_universal}. 

In this case,  we calculate the matrix
$$A=\mat{cc}
0&-1\\
1&4
\rix\mat{cc}
0&-1\\
1&2
\rix\cdots \mat{cc}
0&-1\\
1&6
\rix=\mat{cc}
-40& -211\\
131&691
\rix.$$
From the proof of \cite[Proposition 4.1]{NW}, the subspace $H\subset \zz^2$ generated by 
$$\mat{c}
0\\
1
\rix, \mat{c}
-40\\
131
\rix$$
gives a subgroup $H\rtimes \zz\subset \zz^2\rtimes \zz=\pi_1(\Sigma)$ (where $\Sigma$ is the link of the cusp singularity).  
The cover determined by  $H\rtimes \zz\subset \pi_1(\Sigma)$ is the cusp with resolution graph consisting of a cycle with one vertex weighted by $-651$. 
Then the discriminant group of this cusp has order $651$. By taking the abelian cover again corresponding to this finite group we get a hypersurface cusp 
whose resolution graph is given by $651-3=648$ numbers of vertexes weighted by  $-2$ and one vertex weighted by $-3$. 
The final cusp singularity is the discriminant cover of the original cusp $(S,0)$.
\end{example}

\begin{example}\label{example_Pinkham}
Here is  an example of hypersurface cusp singularities with a finite abelian group action in \cite[Corollary]{Pinkham}.  Let $(\widetilde{S}, x)$ be a hypersurface cusp given by:
$$\{x^p+y^q+z^r+xyz=0\}, \quad   \frac{1}{p}+\frac{1}{q}+\frac{1}{r}<1.$$
Here $p, q, r$ are positive integers. 
The resolution cycles of such a cusp is given in \cite[Lemma 2.5]{Nakamura}.  The dual cusp  of  this cusp has resolution cycle 
$$(-(p-1), -(q-1), -(r-1)).$$

Let $\Sigma$ be the link of $(\widetilde{S}, x)$.   The torsion subgroup $G=H_1(\Sigma,\zz)_{\tor}$ is isomorphic to the group
$$\{\lambda, \mu, \nu | \lambda^p=\mu^q=\nu^r=\lambda \mu\nu\}.$$
The group $G$ acts on the hypersurface cusp singularity by
$$x\mapsto \lambda x; \quad y\mapsto \mu y; \quad z\mapsto \nu z.$$
The quotient $(\widetilde{S}, x)/G$ is the cusp $(S, x)$ whose resolution cycle is $(-(p-1), -(q-1), -(r-1))$. 
Note that if $(p-1)-2+(q-1)-2+(r-1)-2>4$, then the dual cusp $(S, x)$ is not a complete intersection cusp. 

The hypersurface cusp $(\widetilde{S}, x)$ admits a $G$-equivariant smoothing which is given by the equation 
$$\{x^p+y^q+z^r+xyz=t\}$$
and the group $G$-action on $t$ is trivial.  The quotient gives a smoothing of the cusp singularity $(S, x)$. 
\end{example}

\subsection{More on equivariant smoothing of simple elliptic and cusp singularities}\label{subsubsec_equivariant_smoothing_cusp}

Let $(X, 0)$ be a germ of simple elliptic or cusp singularity as in \S \ref{subsubsec_discriminant}, and $(S,0)=(X,0)/\zz_r$ the quotient singularity germ in \S \ref{subsubsec_universal_discriminant}. 
Note that $r=2,3,4,6$ in the simple elliptic singularity case and $r=2$ in the cusp singularity case. 

Let $\Sigma_X$ and $\Sigma_S$ be the links of the singularity germs.  Then $\Sigma_X\to \Sigma_S$ is an unramified $r$-th fold cover. 
Since the link $\Sigma_S$ of $(S,0)$ is a rational homology sphere,  from   \S \ref{subsubsec_universal_discriminant}, let $\pi: (\widetilde{S}, 0)\to (S,0)$ be the universal abelian cover with transformation finite abelian group
$G=H_1(\Sigma_S)$.  Suppose that there is a subgroup $K\subset G$ such that we have an exact sequence
$$0\to K\rightarrow H_1(\Sigma_S)\rightarrow \zz_r\to 0,$$
then it determines a $r$-fold cover of germs $(S^\prime, 0)\to (S,0)$ such that the map $\Sigma_{S^\prime}\to \Sigma_S$ is an unramified $r$-cover of the links.  So this implies that 
$(S^\prime, 0)\cong (X,0)$ and $\Sigma_{S^\prime}\cong \Sigma_X$.
The following diagram of links 
\[
\xymatrix{
\Sigma_{\widetilde{S}}\ar[d]\ar[r]& \Sigma_{X}\ar[dl]\\
\Sigma_{S}&
}
\]
implies the  commutative diagram
\[
\xymatrix{
&0\ar[d]&0\ar[d]&0\ar[d]&\\
0\ar[r]&K^\prime\ar[r]\ar[d]_{\id}& K^{\prime\prime}\ar[r]\ar[d]& K\ar[r]\ar[d]& 0\\
0\ar[r]&K^\prime\ar[r]\ar[d]& \pi_1(\Sigma_S)\ar[r]\ar[d]& H_1(\Sigma_S)\ar[r]\ar[d]& 0\\
&0\ar[r]& \zz_r\ar[r]^{\id}\ar[d]& \zz_r\ar[r]\ar[d]& 0.\\
&&0& 0&
}
\]
The cover $\Sigma_{\widetilde{S}}\to \Sigma_{X}$ has transformation group $K$.  Thus, this induces a finite abelian cover 
$$\pi: (\widetilde{S}, 0)\to (X,0)$$
with transformation group $K$. 

Comparing with Theorem \ref{thm_minimally_elliptic_universal2}, we have 
\begin{thm}\label{thm_equivariant_smoothing_e_cusp}
If $(X,0)$ is a simple elliptic singularity germ, or a cusp singularity  germ such that there exists a quotient $((X, 0)/\zz_r,0)=(S,0)$ above, then 
the $K$-equivariant  deformations of $(\widetilde{S}, 0)$ induce $\zz_r$-equivariant  deformations of  $(X,0)$, which induce $\qq$-Gorenstein deformations of 
$(S,0)$.
\end{thm}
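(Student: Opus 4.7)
The plan is to break the statement into the two implications it asserts and carry them out in order: first descend a $K$-equivariant deformation of $(\widetilde{S},0)$ to a $\zz_r$-equivariant deformation of $(X,0)$, then apply Theorem \ref{thm_minimally_elliptic_universal2} to reach a $\qq$-Gorenstein deformation of $(S,0)$.

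For the first implication, I would start with a $K$-equivariant flat deformation $\widetilde{\sS}\to T$ of $(\widetilde{S},0)$. By the commutative diagram of group extensions displayed immediately before the statement, the intermediate cover $\widetilde{S}\to X\to S$ factors the universal abelian cover $\widetilde{S}\to S$ with transformation groups $K$ and $D/K\cong \zz_r$ respectively, so $\widetilde{S}/K = X$. Taking the $K$-quotient of the deformation family yields a flat deformation
\[
\sX:=\widetilde{\sS}/K \longrightarrow T
\]
of $(X,0)$ (flatness is preserved since $K$ is finite and acts on $\widetilde{\sS}/T$). The essential point is to upgrade this to a $\zz_r$-equivariant deformation, that is, to lift the residual $D/K$-action on $X$ to $\sX$. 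Here I would use that $D$ is abelian, so $K\triangleleft D$, together with the canonical (topologically determined) nature of the covers $\widetilde{S}\to X\to S$: the analytic germ $(\widetilde{S},0)$ has an intrinsic $D$-action uniquely characterized by the link data, and any $K$-equivariant deformation is automatically $D$-equivariant because the $D/K$-action is realized via an outer symmetry that commutes with the $K$-quotient. Concretely, I would write down the $D$-action explicitly on the local l.c.i.\ equations (as in Example \ref{example_quotient_cusp}, Example \ref{example_Pinkham}, and Theorem \ref{thm_minimally_elliptic_universal}) and observe that deforming the defining equations $K$-equivariantly preserves the larger $D$-symmetry. Passing to the $K$-quotient then produces the desired $\zz_r$-action on $\sX$.

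For the second implication, once we have a $\zz_r$-equivariant deformation $\sX\to T$ of $(X,0)$, Definition \ref{defn_Q_Gorenstein_deformation} applies directly: since $(X,0)$ is the index one cover of $(S,0)=(X,0)/\zz_r$ (recall $r$ is precisely the index of the quotient singularity from \S \ref{subsubsec_universal_discriminant}), the quotient family $\sS:=\sX/\zz_r\to T$ is a $\qq$-Gorenstein deformation of $(S,0)$. This is exactly the content of Theorem \ref{thm_minimally_elliptic_universal2} applied in the present notation.

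The main obstacle is the lifting of the $\zz_r\cong D/K$ action in the first step. A priori, a $K$-equivariant deformation need not carry a $D$-action, and the obstruction lives in a nonabelian cohomology group for the extension $0\to K\to D\to \zz_r\to 0$. The cleanest way to dispose of it is to realize the cover $(\widetilde{S},0)\to(X,0)$ as the unique (analytic) $K$-cover determined by $K\subset \pi_1(\Sigma_X)$ and to invoke the rigidity of this topological cover under deformation: both germs are l.c.i., their local defining equations carry an explicit diagonal $D$-action as in \cite{NW}, and any $K$-equivariant deformation of these equations is automatically $D$-equivariant. I expect that recording this explicit-equations argument, parallel to the one used in Theorem \ref{thm_minimally_elliptic_universal2} and Example \ref{example_quotient_cusp_family}, is what carries the proof.
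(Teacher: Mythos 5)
Your proposal and the paper's proof address essentially disjoint things. The paper's entire argument consists of verifying that the hypothesis of the construction preceding the theorem is actually satisfied: namely, that the cyclic group $\zz_r$ ($r=2,3,4,6$ for simple elliptic singularities, $r=2$ for cusps) really does arise as a quotient of $D=H_1(\Sigma_S)$, so that the subgroup $K$ and the diagram of link covers exist. This is done by direct computation of $H_1(\Sigma_S)$ from the resolution graphs (citing Koll\'ar--Shepherd-Barron, Kawamata, and Neumann--Wahl together with Example \ref{example_quotient_cusp}). You take all of this for granted, whereas the paper regards it as the only thing that needs checking; the deformation-theoretic descent is treated there as immediate from the tower $\widetilde{S}\to X\to S$.

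The step where your argument genuinely fails is the claimed ``automatic upgrade'' from $K$-equivariance to $D$-equivariance. This is false: the space of $K$-invariant first-order deformations $T^1(\widetilde{S})^K$ is in general strictly larger than $T^1(\widetilde{S})^D$, so a $K$-equivariant deformation of the defining equations can perfectly well break the residual $D/K$-symmetry (the extreme case $K=\{1\}$, where every deformation is $K$-equivariant but hardly any is $D$-equivariant, already shows the general principle cannot hold). The problem is not merely an obstruction to lifting a given action, as you suggest; the $D/K$-action may simply fail to exist on the total space of the family, because the deformed equations are only $K$-invariant. Consequently $\sX=\widetilde{\sS}/K$ need not carry a $\zz_r$-action, and the final quotient producing a $\qq$-Gorenstein deformation of $(S,0)$ cannot be formed. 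The argument that does work --- and is presumably what is intended --- starts from a $D$-equivariant deformation of $(\widetilde{S},0)$: since $D$ is abelian, $K$ is normal, the quotient family $\widetilde{\sS}/K$ is a deformation of $X$ carrying the residual $D/K\cong\zz_r$-action, and quotienting by $\zz_r$ gives a $\qq$-Gorenstein deformation of $(S,0)$ because $X$ is the index one cover of $S$. Your second implication, via Definition \ref{defn_Q_Gorenstein_deformation}, is fine once that is in place.
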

\begin{proof}
We only need to check that in the simple elliptic singularity and cusp singularity cases, the cyclic group $\zz_r$ for $r=2,3,4,6$
can be taken as a quotient of $H_1(\Sigma_S)$.  This is from the direct calculations for the group $H_1(\Sigma_S)$
for the simple elliptic singularities and cusps.   The group  $H_1(\Sigma_S)$ can be calculated using the resolution graphs in \cite{Kollar-Shepherd-Barron}, \cite[Theorem 9.6, (3), (4)]{Kawamata_Ann}. 
The cyclic group $\zz_r$ is a summand of $H_1(\Sigma_S)$ in the simple elliptic singularity case.  From the calculation of $H_1(\Sigma_S)$ in \cite[\S 5]{NW} and Example \ref{example_quotient_cusp} in the quotient-cusp case,  the group
$\zz_2$ can definitely be taken as a quotient of $H_1(\Sigma_S)$. 
\end{proof}

\begin{rmk}
Theorem \ref{thm_equivariant_smoothing_e_cusp} is different from Theorem \ref{thm_minimally_elliptic_universal}, since Gorenstein deformations of simple elliptic singularities and cusp singularities are different from their
$\zz_r$-equivariant deformations. 
\end{rmk}

\begin{rmk}
As we talked about the cusp singularities in \S \ref{subsubsec_discriminant},  not every cusp admits a $\zz_2$-quotient. 
Thus, not every cusp has a finite abelian cover by a complete intersection cusp.  From \cite[Proof of Proposition 4.1]{NW}, a necessary condition that a cusp singularity $(X,0)$
has no finite abelian cover by a complete intersection is that the cusp $(X,0)$ and its dual cusp are both not complete intersections.   For instance,  let $(X,0)$ be a cusp with resolution graph self-intersection sequence 
$[-2,-4,-2,-2,-5]$.   This cycle is self-dual, is not a complete intersection from \cite[Proposition 2.5]{NW}.  Thus, there is no finite abelian cover by a complete intersection for $(X,0)$.   
We have to use Theorem \ref{thm_minimally_elliptic_universal} to get a finite (not abelian) cover which is a complete intersection. 
\end{rmk}

\subsection{The  $\lci$ covering Deligne-Mumford stack over s.l.c. surfaces}\label{subsubsec_lci_slc}

Let $S$ be an s.l.c. surface such that the possible elliptic singularities, cusp and degenerate cusp singularities  in $S$  all have embedded dimension $\ge 5$;  i.e. they are not l.c.i. singularities. 
Then the arguments in Theorem \ref{thm_minimally_elliptic_universal2} and Theorem \ref{thm_minimally_elliptic_universal}  constructed the universal abelian cover or the discriminant cover of the singularity germs so that their covers 
are l.c.i.  The construction only depends on the local analytic structure of the singularity.

Similar to the construction of index one covering Deligne-Mumford stack 
$\pi: \SS\to S$,  there are only finite  singularity germs  $(S,0)$ in $S$,  such that the corresponding simple  elliptic singularities, cusp and degenerate cusp singularities  have embedded dimension $\ge 5$ (i.e., not l.c.i.).  Thus,  for each germ singularity, we perform the 
universal abelian cover or the discriminant cover  construction in \S \ref{subsubsec_universal_discriminant} and \S \ref{subsubsec_discriminant}.  Thus, we have 

\begin{prop}\label{prop_lci_cover_DM}
    For any s.l.c. surface $S$ in the KSBA moduli space $M_N$,  there is a canonical Deligne-Mumford stack 
    $$
\pi^{\lci}: \SS^{\lci}\to  S $$
with the coarse moduli space $S$ such that $\SS^{\lci}$ only has l.c.i. singularities. The map $\pi^{\lci}$ has degree one.  We call  $\SS^{\lci}$ the $\lci$ covering Deligne-Mumford stack of $S$.  
\end{prop}
\begin{proof}
Let $([Z/\mu_N],0)$ be a singularity germ of  the index one covering Deligne-Mumford stack $\SS$.  Then $\SS^{\lci}$ locally has the germ chart 
$[\widetilde{S}/G]$, where $G$ is the transformation group of the $\lci$ cover.  The Deligne-Mumford stack $\SS^{\lci}$ is Gorenstein since $\SS^{\lci}$ only has l.c.i. singularities on each chart.
The lci cover $\widetilde{S}$ can be constructed in Theorem \ref{thm_minimally_elliptic_universal2},  Lemma \ref{lem_cusp_cover}, Theorem \ref{thm_elliptic_singularity_lci_lifting}, and Remark \ref{rmk_simple_elliptic_lci}. 

Thus, we get a commutative diagram
\begin{equation}\label{eqn_diagram_ind_lci}
\xymatrix{
\SS^{\lci}\ar[r]^{\hat{\pi}}\ar[dr]_{\pi^{\lci}}&\SS\ar[d]^{\pi}\\
&S.&
}
\end{equation}
We make a summary here. Let $(S,0)$ be a singularity germ in an s.l.c. surface $S$, then we have that 
\begin{enumerate}
\item  if $(S,0)$  is a simple elliptic singularity, a cusp or a degenerate cusp singularity with embedded dimension $\ge 5$, we have 
$$\SS^{\lci}\cong [(\widetilde{S},0)/G^\prime]\to \SS=(Z,0),$$
where $(Z,0)\to (S,0)$ is the index one cover.  In this case $(Z,0)=(S,0)$ and $(\widetilde{S},0)\to (S,0)$ is the discriminant cover. 
\item  if $(S,0)$  is the $\zz_2, \zz_3, \zz_4, \zz_6$-quotient of a simple elliptic singularity,  the $\zz_2$-quotient of a cusp or a degenerate cusp singularity with embedded dimension $\ge 5$, then we have 
$$\SS^{\lci}\cong [(\widetilde{S},0)/G]\to S=(S,0),$$
where $(\widetilde{S},0)\to (S,0)$ is the universal abelian  cover.   The map factors through the index one cover map $(Z,0)\to (S,0)$.  Therefore we have the morphism
$\SS^{\lci}\cong [(\widetilde{S},0)/G]\to \SS=[(Z,0)/\zz_r]$  of stacks, where $r$ is the local index of the quotient singularity. 
\end{enumerate}
\end{proof}

We have the following result for the smoothing families. 

\begin{thm}\label{thm_lci_cover_link_smoothing}
    Let $\overline{f}: \sS\to \Delta$ be a one-parameter family of s.l.c. surfaces and $f: \SS\to \Delta$ be the associated index one covering Deligne-Mumford stack.  Suppose that the non-lci singularity germs $(\SS,x)$  are not simple elliptic singularities of degree $6$ or $7$, or cusp singularities that can not be lifted into an equivariant smoothing of an $\lci$ cusp in the algebraic sense, then $f: \SS\to \Delta$ can be lifted to a one-parameter flat family $f^{\lci}: \SS^{\lci}\to \Delta$ of lci covering Deligne-Mumford stacks.  Moreover, $\sS$ is  the coarse moduli space of both $\SS$ and $\SS^{\lci}$.
\end{thm}
\begin{proof}
This is from the smoothing of local lci covers $[\widetilde{S}/G]$ for the singularities in 
Theorem \ref{thm_minimally_elliptic_universal2}, Theorem \ref{thm_elliptic_singularity_lci_lifting}, and Theorem \ref{thm_smoothing_lci_cusp_paper}, because a smoothing of the stack $[\widetilde{S}/G]$ induces a smoothing of the non-lci slc singularity.  
\end{proof}

\subsection{Covering surface singularities from crepant resolutions}\label{subaec_normal_nonnormal}

In this section we give a method to obtain lci covers for simple elliptic singularities of degree $6$ or $7$. 
We prove that the smoothing of these  singularities  can also  be obtained from the smoothing of their crepant resolutions.   The proof works for any simple elliptic, cusp singularities, and even degenerate cusp singularities. I thank Professor J. Koll\'ar for sending me the examples of degree $6,7$ del Pezzo cones and the valuable discussion on this issue in a conference at Maryland.

We first recall the result of M. Reid in \cite{Reid}.

\begin{prop}\label{prop_Reid}(\cite[Theorem (2.2), Lemma (2.3)]{Reid})
Let $(\sS,0)\to \Delta$ be a smoothing of a simple elliptic or cusp singularity, such that as a threefold, $(\sS,0)$ is a canonical singularity with index one.  Then there exists a proper birational morphism 
$f: \sX\to \sS$
with 
\begin{enumerate}
\item $f$ is crepant, i.e., $f^*\omega_{\sS}=\omega_{\sX}$,
\item $f^{-1}(0)$ contains at least one prime divisor,
\item as $f$ runs over all the proper birational morphisms, the crepant prime divisors are bounded. 
\end{enumerate}
\end{prop}

\begin{prop}\label{prop_normal_lci-non-normal1}
Let   $(\sS,0)\to (\Delta,0)$ be a  $\qq$-Gorenstein family of simple elliptic, or cusp  singularities.  Then up to a morphism 
$$\varphi: \Delta\to \Delta; \quad  t\mapsto t^k$$
for some $k\in\zz_{>0}$,  
there exist  flat families  $(\sX, 0)\to (T,0)$ of $\lci$ surface  singularities  and a proper morphism 
$$\varphi: T\to \Delta$$
from the scheme $T$ to $\Delta$. 
\end{prop} 
\begin{proof}
If  $(\sS,0)\to (\Delta,0)$ is a  $\qq$-Gorenstein family of simple elliptic, or cusp  singularities, then $(\sS,0)$ is a canonical singularity, then we use Proposition \ref{prop_Reid} by taking crepant resolutions.
\end{proof}

\begin{rmk}
     We can take the minimal model of the smoothing 
 $\tilde{f}: \sX\to \Delta$.  A fact from minimal model theory implies that the fibers of the minimal model only have klt singularities.
\end{rmk}

\begin{example}
Let $(S,0)$ be a simple elliptic singularity of degree $6$.  If the smoothing  $\varphi: (\sS,0)\to (\Delta,0)$ of $(S,0)$ is given by the del Pezzo cone $(C(Y),0)\to (\Delta,0)$, where $Y=Bl_{\{0,1,\infty\}}\pp^2$ is a degree $6$ del Pezzo surface, which is the blow up of $\pp^2$ along three general points.  The generic fiber of $\varphi$ is the log Calabi-Yau surface  $(Y, D)$, where $D\in |-K_Y|$ is given by an elliptic curve.  Thus, $(Y, D)$ is a smooth pair.  The central fiber of $\varphi$ is the elliptic cone C(D) whose vertex is a degree $6$ simple elliptic singularity. 
In this case the crepant resolution is $\sX=\Tot(-K_{Y})$ which is smooth.
\end{example}

Thus, we get the following

\begin{prop}\label{prop_simple_cusp_crepant}
    Let $f: \sS\to \Delta$ be a one-parameter smoothing of s.l.c. surfaces such that the central fiber $\sS_0$  contains simple elliptic singularities of degree $6, 7$ or cusp singularities that can not lifted into an equivariant smoothing of an $\lci$ cusp, then there is a  one-parameter smoothing $\tilde{f}: \SS^{\lci}\to \Delta$ of lci covering Deligne-Mumford stacks which induces the smoothing  $f: \sS\to \Delta$.  
\end{prop}
\begin{proof}
   From Proposition \ref{prop_normal_lci-non-normal1},  we first take crepant resolution 
   $\tilde{\sS}\to \Delta$ at all the simple elliptic singularities of degree $6, 7$ and cusp singularities.  Then inside the fiber surfaces of  $\tilde{\sS}$, all the s.l.c. singularities have local $\lci$ covers as in \S \ref{subsubsec_universal_discriminant}, \S \ref{subsubsec_discriminant}, \S \ref{subsubsec_equivariant_smoothing_cusp}.  Then take the corresponding lci covers and the local lci covering Deligne-Mumford stacks and we get the one-parameter smoothing $\tilde{f}: \SS^{\lci}\to \Delta$ is lci covering Deligne-Mumford stacks. 
\end{proof}

There may exist a smoothing $f: \sS\to \Delta$ of s.l.c. surfaces such that simple elliptic singularities lie in the singular central fiber $\sS_0$ of $f$.  Taking crepant resolution we get the one-parameter family $\tilde{f}: \SS^{\lci}\to \Delta$ of lci covering Deligne-Mumford stacks.   From $3$-dimensional minimal model program, different crepant resolutions are not unique and they are related by flops.

\begin{defn}\label{defn_S_equivalence}
We call  $\tilde{f}: \SS^{\lci}\to \Delta$ a $fake$ one-parameter family of  lci covering Deligne-Mumford stacks, and  call the central fiber $\tilde{f}^{-1}(0)=\SS^{\lci}_0$ a fake lci covering Deligne-Mumford stack.  

Two fake lci covering Deligne-Mumford stacks $(\SS_1^{\lci})_0$ and $(\SS_2^{\lci})_0$ are called $\mathbb{S}$-equivalent if they can be the central fibers of the extension of a one-parameter family 
$\SS^{\circ,\lci}\to \Delta\setminus \{0\}$ over the punctured disk to the whole disk $\Delta$.
\end{defn}

\subsection{Covering degenerate cusp  singularities and  mirror symmetry}\label{subaec_normal_nonnormal2}

Except Theorem \ref{thm_minimally_elliptic_universal}, we have another way to cover the degenerate cusp singularities. 
Let $(S, 0)$ be a degenerate cusp  singularity of degrees $d$ such that it admits a smoothing.    In this section we prove that the smoothing of these singularities can always be obtained from the smoothing of  other 
$\lci$ singularities.   

\begin{prop}\label{prop_normal_lci-non-normal}
Let   $f: (\sS,0)\to (\Delta,0)$ be a  $\qq$-Gorenstein family of degenerate cusp singularities.  Then up to a base change  morphism 
$$\varphi: T\to \Delta; \quad  t\mapsto t^k$$
for some $k\in\zz_{>0}$,  
there exist  flat families  $\tilde{f}: (\sX, 0)\to (T,0)$ of $\lci$ surface  singularities  and a finite morphism 
$$\varphi: T\to \Delta$$
from the scheme $T$ to $\Delta$ such that the fiber surfaces of $f$ has only $\lci$ singularities. 
\end{prop} 
\begin{proof}
    The universal family of the smoothing and the Artin's  
    simultaneous resolution property implies that a base change diagram 
\[
\xymatrix{
(\widetilde{\sS},0)\ar[r]^{}\ar[d]_{\tilde{f}}& (\sS,0)\ar[d]^{f}\\
T\ar[r]& \Delta
}
\]
 exists, where the central fiber of $\tilde{f}$ is the minimal resolution of the  degenerate cusp singularity.      
\end{proof}

\begin{rmk}\label{rmk_S_equivalence}
    In this case, the $\mathbb{S}$-equivalence class $\{f: \sS\to T\}$ can be similarly defined as  Definition \ref{defn_S_equivalence}. 
\end{rmk}

\subsection*{Mirror symmetry of the degenerate cusp singularity}
In \cite{AAB2024}, Alexeev-Arg\"uz-Bousseau constructed the compactification of the moduli space of log Calabi-Yau surfaces using KSBA theory.   They essentially used the mirror symmetry properties of the log Calabi-Yau surfaces.    We can apply their construction to  the deformations of degenerate cusp singularities. 

If such a singularity germ $(S,0)$ admits a smoothing $f: (\sS,0)\to (\Delta,0)$, then at least locally we can associate the smoothing a log Calabi-Yau surface $(Y,D)$. 
We can cut off an open  Calabi-Yau threefold $\sS^{\circ}$  from $\sS$ around $0$, and let 
$f^{\circ}: (\sS^{\circ},0)\to (\Delta,0)$ be the restriction. 
Since the generic fiber of $f$ is smooth,  then $f^{\circ -1}(t)$ is an open Calabi-Yau affine surface in the sense of \cite[Definition 3.1]{AAB2024}.   Such  an open affine Calabi-Yau surface can be extended to a log Calabi-Yau surface $(Y,D)$ by adding a reduced divisors $D$. 

Taking a polarization $L$ on $Y$, then this log Calabi-Yau pair $(Y,D,L)$ must lie in the moduli space 
$\sM_{(Y,D,L)}$ of log Calabi-Yau surfaces in \cite{AAB2024}.   Starting from $(Y,D,L)$,  the paper \cite[\S 3]{AAB2024} constructed a ``semi-stable mirror family" 
$$(\sX, \sD)\to (\Delta,0)$$
which is an open Kulikov degeneration with central fiber $\sX_0$. Here $\sX_0$ is an open Kulikov surface whose dual complex is a disk. Moreover, there exists a contraction 
$$\sX\to \overline{\sX}$$
such that $(\overline{\sX},0)$ is a degenerate cusp singularity. 
The open Kulikov surface $(\sX, \sD)\to (\Delta,0)$ is a Calabi-Yau degeneration, which is constructed as follows.   Associated with the log Calabi-Yau surface $(Y,D,L)$,  there is a Symington polytope $P$ endowed with the polyhedral decomposition $\mathscr{P}$.  The Symington polytope $P$ is constructed from the toric momentum polytope $\overline{P}$ of the toric model $(\overline{Y},\overline{D},\overline{L})$ of $(Y,D,L)$ by polytope surgeries. 
Then  $(\sX, \sD)\to (\Delta,0)$ is constructed from the deformation of the Mumford degeneration  $\sX_{\overline{\mathscr{P}}}\to \aaa^1$ of the  toric  polytope $\overline{P}$.

From this Calabi-Yau degeneration $(\sX, \sD)\to (\Delta,0)$, \cite[\S 3]{AAB2024} constructed a projective flat family 
$$\Phi: \sY\to \sS^{\sec}_{\sX/\overline{\sX}}$$
over the toric variety $\sS^{\sec}_{\sX/\overline{\sX}}$ whose associated fan is the secondary fan of $\sX/\overline{\sX}$, see \cite[\S 6, \S 7]{AAB2024}. 
Here $\sY$ is constructed from finite number of Calabi-Yau degenerations $\sX\to \Delta$, and the gluing of $\sY_{\sX}=\proj(R_{\sX})$ where $R_{\sX}$ is a finitely generated $\mathbf{k}(NE(\sX/\overline{\sX}))$-algebra.  Different Calabi-Yau degenerations are given by flops.  
The finitely generated algebra $R_{\sX}$ is generated by the integral points of the dual complex of $\sX/\overline{\sX}$ with product structure given by the log punctured Gromov-Witten invariants of 
$(\sX, \sD)\to (\Delta,0)$ with log structure given by the central fiber.  One can understand the genus zero  log punctured Gromov-Witten invariants  of the central fiber of  $(\sX, \sD)\to (\Delta,0)$  as the quantum correction of the singularities for the mirror family 
$\sY\to \sS^{\sec}_{\sX/\overline{\sX}}$ to  $(\sX, \sD)\to (\Delta,0)$. 
All of the constructions in \cite{AAB2024} is up to morphisms $\Delta\to \Delta$ given by $t\mapsto t^k$.  

Let $M_{(Y,D,L)}$ be the  closure of the locus in the KSBA moduli space of stable pairs which are deformation equivalent to $(Y,D,L)$, then it is irreducible.  From \cite{AAB2024}, there exists a finite morphism 
$$f:  \sS^{\sec}_{\sX/\overline{\sX}}\to M_{(Y,D,L)}.$$
Since our base $(\Delta,0)\subset M_{(Y,D,L)}$, we let $T:=f^{-1}(\Delta)$.   The restriction family 
$$\sY^{(T,0)}\to (T,0)$$
is a smoothing of $(Y,D,L)$, and the central fiber is a union of toric surfaces. 
From the construction in \cite[\S 7]{GHK}, and the mirror symmetry property we also can take the spectrum $\spec(R_{\sX})$ for the Calabi-Yau degeneration $\sX\to \Delta$
such that we get a family 
$$\widetilde{\sY}^{(T,0)}\to (T,0)$$
such that the central fiber is an open Kulikov  surface.  Thus, there is a contraction 
$\widetilde{\sY}^{(T,0)}\to\overline{\widetilde{\sY}}^{(T,0)}$ and $(\overline{\widetilde{\sY}}^{(T,0)},0)$ is a degenerate cusp singularity. From the construction, $(\overline{\widetilde{\sY}}^{(T,0)},0)$  is the ``dual" of the degenerate cusp singularity $(\overline{\sX},0)$.

 In the degenerate cusp singularity  $(S,0)$ case,  we can work on its dual degenerate cusp singularity $(S^\prime, 0)$, and construct a smoothing $\varphi: \overline{\sX}\to \Delta$ of this singularity from its dual polyhedral complex corresponding to the components of $D^\prime$, where $D^\prime$ is the resolution cycle of $(S^\prime,0)$.
Then we can take a crepant resolution $\sX\to \overline{\sX}$ and get an open Kulikov model which is a Calabi-Yau degeneration.  Then we perform the same construction as before, and get a flat family  
$\widetilde{\sY}^{(T,0)}\to (T,0)$, which induces a smoothing of the degenerate cusp  $(S,0)$ .
Since in the family $\sY^{(T,0)}\to (T,0)$, the worse singularities are the normal crossing gluing of the boundary surfaces, they are $\lci$ singularities.

\begin{rmk}
    The construction of \cite{AAB2024} does not work  for simple elliptic and cusp singularities. 
The reason is that  this type of  singularities can not happen in the main component $\overline{M}_{(Y,D,L)}$ of KSBA moduli space  in \cite{AAB2024},
where $D$ is a maximal singular reduced divisor. 
For instance, 
in the simple elliptic singularity case, a smoothing of a simple elliptic singularity $(S,0)$ of degree $d$ for  $1\leq d\leq 9$ is given by a degree $d$ del Pezzo cone 
$f: (C(Y),0)\to \aaa^1$, where $Y$ is a smooth del Pezzo surface of degree $d$.  The general fiber of this smoothing is the log Calabi-Yau surface $(Y,D)$, where $Y$ is a smooth del Pezzo surface of degree $d$, and 
$D\in |-K_Y|$ is the anti-canonical cycle.   
The central fiber $f^{-1}(0)$  is the degree $d$ cone over a smooth elliptic curve.  This cone does not contain the singular reduced divisor $D$. 
\end{rmk}

\subsection{One-parameter  family of   $\lci$ covering Deligne-Mumford stacks}\label{subsubsec_lci_covering_DM-flat}

In the former sections we mainly talked about the one-parameter smoothing or deformation of simple elliptic and cusp, degenerate cusp singularities. In this section we prove some properties of one-parameter family of lci covering Deligne-Mumford stacks.

For an s.l.c. surface germ $(S,0)$, we have the $\lci$ cover $\widetilde{S}\to S$ with transformation group $G$ such that the $\lci$-covering Deligne-Mumford stack 
$\SS^{\lci}$ is given by $[\widetilde{S}/G]$.  
We summarize the one-parameter smoothing  families in 
Theorem \ref{thm_minimally_elliptic_universal2}, Theorem \ref{thm_smoothing_lci_cusp_paper}, Theorem \ref{thm_elliptic_singularity_lci_lifting}, and 
 Proposition  \ref{prop_simple_cusp_crepant}
with  the following result.

\begin{thm}\label{thm_one_parameter_family_lci}
All the one-parameter smoothing and deformation families
$f: \sS\to \Delta$
of s.l.c. surfaces  can be obtained by smoothing and deformation families $f^{\lci}: \SS^{\lci}\to \Delta$ of $\lci$ covering Deligne-Mumford stacks. 
\end{thm}

\begin{rmk}
Comparing with Example \ref{example_quotient_cusp} and Example \ref{example_cusp_sing}, it is interesting to study the equivariant smoothing of cusp and quotient-cusp  singularities. 
We hope the equivariant Looijenga's conjecture also holds; see \cite{Engle} and \cite{GHK}.  Note that in \cite[Theorem 1.3]{Jiang_cusp} we prove that if a cusp admits a smoothing, it always admits an lci one-parameter smoothing lifting. 
\end{rmk}

Let $A$ be a one-dimensional  $\mathbf{k}$-algebra, and let $\sS/A$ be a one-parameter family of s.l.c. surfaces.  Let 
$\SS/A$ be the family of  the corresponding index one covering Deligne-Mumford stacks.  

\begin{lem}\label{lem_universal_index_one_slc}
Let $\sS/A$ be a $\qq$-Gorenstein deformation family of  s.l.c. surfaces.  Let $\pi: \SS\to \sS$ be the corresponding  index one covering Deligne-Mumford stack and $\pi^{\lci}: \SS^{\lci}\to \sS$ be the corresponding $\lci$
 covering Deligne-Mumford stack.  For the diagram 
\begin{equation}\label{eqn_diagram_ind_lci2}
\xymatrix{
\SS^{\lci}\ar[r]^{\hat{\pi}}\ar[dr]_{\pi^{\lci}}& \SS\ar[d]^{\pi}\\
&\sS,&
}
\end{equation}
we have that  $(\hat{\pi})^*\omega_{\SS/A}\cong \omega_{\SS^{\lci}/A}$.
\end{lem}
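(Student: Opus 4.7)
The plan is to prove the statement étale locally on $\sS$ and then conclude by uniqueness of invertible sheaves that agree off a codimension-two set. First, both sheaves involved are line bundles: $\omega_{\SS/A}$ is invertible by construction of the index one covering Deligne-Mumford stack (this is the whole point of passing to $\SS$), and $\omega_{\SS^{\lci}/A}$ is invertible because $\SS^{\lci}$ has only l.c.i.\ singularities, so is relatively Gorenstein over $A$. Hence it suffices to produce an isomorphism on a big open, and the restriction to the open where $\hat\pi$ is \'etale in codimension one suffices.

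Working \'etale locally around an isolated singular germ of $\sS$, I would separate the two cases spelled out at the end of \S\ref{subsubsec_lci_slc}. In the first case (Gorenstein simple elliptic, cusp, or degenerate cusp of embedded dimension $\ge 5$) one has $\SS=\sS$ locally and $\hat\pi$ is the discriminant cover $[\widetilde S/D']\to \sS$, which is \'etale over the complement of the isolated singular point. In the second (cyclic quotient) case one has $\SS=[Z/\mu_r]$ and $\hat\pi$ is the map $[\widetilde S/D]\to [Z/\mu_r]$ induced by the universal abelian cover $\widetilde S\to Z$, which is again \'etale off the isolated fixed point. In both cases the ramification locus of $\hat\pi$ is a finite set of closed points in each geometric fibre of $\sS/A$, hence of codimension $\ge 2$ in the relative situation (since $A$-flatness keeps codimension the same on the source and target).

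Over the \'etale locus $U\subset \SS^{\lci}$ of $\hat\pi$, relative dualizing sheaves pull back, so $(\hat\pi)^{*}\omega_{\SS/A}|_{U}\cong \omega_{\SS^{\lci}/A}|_{U}$. Both sides are line bundles on the entire $\SS^{\lci}$, and $\SS^{\lci}$ is $S_2$ (being Cohen--Macaulay, since l.c.i.), so an isomorphism on the complement $U$ of a codimension-two closed substack extends uniquely to a global isomorphism by Hartogs' principle for reflexive sheaves of rank one. This glues the local isomorphisms into a global one $(\hat\pi)^{*}\omega_{\SS/A}\cong \omega_{\SS^{\lci}/A}$.

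The only subtle point is the non-normal case of degenerate cusps, where one must check that the construction $\widetilde S\to S$ built in the diagram (\ref{eqn_diagram_elliptic_cusp_2}) via normalization and re-gluing still produces a map that is \'etale in codimension one at the double curve. This follows because the gluing identifies the preimages of the double curves under the normalization, and the universal abelian cover on each component is \'etale away from the isolated point; the gluing step therefore does not introduce ramification in codimension one. Once this is checked the previous paragraph applies verbatim, and the relative version over $A$ follows from $A$-flatness of $\SS^{\lci}/A$ and $\SS/A$, which makes the codimension-two estimate work fibre-wise and globally. The expected main obstacle is this verification in the degenerate cusp case and keeping track of the $D$-action so that the global isomorphism is equivariant, i.e.\ descends correctly to a statement on the stack $\SS^{\lci}$ rather than on a local cover.
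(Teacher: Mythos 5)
Your proof is correct, but it follows a genuinely different route from the paper's. The paper argues by case analysis on the local germ and appeals to the explicit description of the dualizing sheaf of a log-canonical surface singularity via its minimal resolution, $\omega_{\sZ} \cong \sigma_*\omega_{\sX}(A)$ where $A$ is the exceptional divisor, together with the commuting square of resolutions in~(\ref{eqn_minimal_resolutions_omega}) and~\cite[Lemma~1.1]{Shepherd-Barron} to transport this description along the cover. You instead observe that $\hat\pi$ is an isomorphism (a fortiori \'etale) away from the finitely many isolated bad points in each fibre, that $\omega$ pulls back along \'etale maps, and that an isomorphism of line bundles defined off a locus of relative codimension two on the source extends because sections of line bundles extend across such loci on a scheme whose local rings have depth $\ge 2$ there. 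This buys you a more structural argument that does not depend on identifying the precise resolution graphs, and it localizes the content to two clean facts (flatness of $\hat\pi$ off the bad set, and a depth bound); the paper's version, by contrast, makes the claimed isomorphism transparent germ-by-germ and matches the equivariant structure explicitly.

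One small point you should tighten: the assertion that $\SS^{\lci}$ is Cohen--Macaulay ``since l.c.i.'' is not literally what you need, because $A$ is arbitrary and you have no control on $\text{depth}(A)$. The correct input is the fibrewise statement combined with flatness: for a point $x$ in the exceptional locus lying over $t \in \spec(A)$, flatness gives $\text{depth}\,\sO_{\SS^{\lci},x} = \text{depth}\,\sO_{\SS^{\lci}_t,x} + \text{depth}\,\sO_{A,t} \ge 2$, since the fibre is a Cohen--Macaulay surface and $x$ is an isolated point of it. That depth bound is exactly what extends your section of the ``difference'' line bundle $(\hat\pi)^*\omega_{\SS/A}\otimes\omega_{\SS^{\lci}/A}^{-1}$ from $U$ to all of $\SS^{\lci}$. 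You already gesture at this with ``$A$-flatness makes the codimension-two estimate work fibre-wise,'' so the fix is only a matter of stating it precisely. The degenerate-cusp discussion you include is a genuine necessary check and is handled correctly.
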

\begin{proof}
For the isomorphism of the dualizing sheaves, note that for each fiber $S_t$ of the family $\sS/A$, the dualizing sheaf of the index one covering Deligne-Mumford stack  is 
$\omega_{\SS}\cong \omega_{S_t}^{[r]}$  for each singularity germ $(S_t,0)$, where $r$ is the index of the s.l.c. surface germ  $(S_t,0)$.  
We look at the diagram (\ref{eqn_diagram_ind_lci}) at any singularity germ.   For a germ singularity$(\sS, \mathbf{0})$,
let $\pi: \sZ\to \sS$ be the index one cover such that $[\sZ/\zz_r]\cong \SS$ and the diagram (\ref{eqn_diagram_ind_lci2}) is given by
\begin{equation}\label{eqn_diagram_ind_lci_local}
\xymatrix{
\sZ^{\lci}\ar[r]^{\hat{\pi}}\ar[dr]_{\pi^{\lci}}& \sZ\ar[d]^{\pi}\\
&\sS.&
}
\end{equation}

In the case that  $(\sS, \mathbf{0})$ is a simple elliptic singularity, cusp or degenerate cusp singularity, since  $(\sS, \mathbf{0})$ is Gorenstein, then the index one cover is itself;  i.e., $\sZ=\sS$.  In this case we only have the morphism 
$\hat{\pi}: \sZ^{\lci}\to \sS$ where $\sZ^{\lci}\to \sS$ is the discriminant cover with transformation group $G$ constructed in Theorem \ref{thm_minimally_elliptic_universal} and $\SS^{\lci}=[\sZ^{\lci}/G]$.
Since $\sZ^{\lci}$ is l.c.i.,  it follows that $(\pi^{\lci})^{*}\omega_{\sS}\cong \omega_{\sZ^{\lci}}$.  This is because the dualizing sheaves $\omega_{\sS},  \omega_{\sZ^{\lci}}$ can be given by the minimal resolutions:
\begin{equation}\label{eqn_minimal_resolutions_omega}
\xymatrix{
\sX^{\lci}\ar[r]^{\pi^{\lci}}\ar[d]_{\sigma}& \sX\ar[d]^{\sigma}\\
\sZ^{\lci}\ar[r]^{\pi^{\lci}}&\sS;&
}
\end{equation}
see \cite[Lemma 1.1]{Shepherd-Barron}. 

In the case that $(\sS, \mathbf{0})$ is the $\zz_2, \zz_3, \zz_4, \zz_6$-quotients of a simple elliptic singularity or the  $\zz_2$-quotient of a cusp or a degenerate cusp singularity,  we really have the diagram 
(\ref{eqn_diagram_ind_lci_local}) such that $(\sS, \mathbf{0})$ is a rational singularity.  
Then,  $\sZ^{\lci}\to \sS$ is the universal abelian cover with the transformation group 
$G=H_1(\Sigma,\zz)$ where $\Sigma$ is the link of the singularity.  Therefore $\SS^{\lci}\cong [\sZ^{\lci}/G]$.   
In this case $\omega_{\sZ}=\omega_{\sS}^{[r]}$ where $r$ is the index of the singularity.  The dualizing sheaf $\omega_{\SS}$ is the $\zz_N$-equivariant $\omega_{\sZ}$. 
Thus,  taken as the equivariant dualizing sheaves,  $\omega_{\sZ^{\lci}}\cong (\hat{\pi})^{*}\omega_{\sZ}$, which  can be seen from the minimal resolutions
in diagram (\ref{eqn_minimal_resolutions_omega}) again and  $\omega_{\sZ}$ is constructed from $\omega_{\sX}(A)$  where $A$ is the exceptional divisor. 

In the case that  $(\sS, \mathbf{0})$ is the smoothing of the degree $5, 6$ or $7$ simple elliptic singularities, then the lci covering Deligne-Mumford stack is given by the crepant resolutions.  Then the result is from Proposition \ref{prop_Reid}.
\end{proof}

\subsection{Flat family of  $\lci$ covering Deligne-Mumford stacks}\label{subsubsec_lci_covering_DM}

Motivated from the above construction we introduce the definition of $\lci$ covering Deligne-Mumford stack over a general base. 

\begin{defn}\label{defn_lci_DM_base}
A flat family of $\lci$ covering Deligne-Mumford stacks $\SS^{\lci}\to T$ over a scheme $T$ is a proper Deligne-Mumford stack $\SS^{\lci}$  over $T$ such that whenever there is a discrete valuation ring $R$ we have the following Cartesian diagram 
\[
\xymatrix{
\SS^{\lci}_{1}\ar[r]\ar[d]& \SS^{\lci}\ar[d]\\
\spec(R)\ar[r]& T
}
\]
and $\SS^{\lci}_{1}\to \spec(R)$ is a one-parameter family of lci covering Deligne-Mumford stacks in \S \ref{subsubsec_lci_covering_DM-flat}.
\end{defn}

From Theorem \ref{thm_lci_cover_link_smoothing},  Proposition \ref{prop_simple_cusp_crepant}, and the above Definition \ref{defn_lci_DM_base}, we have the following flat families over any base scheme $T$.

\begin{thm}\label{thm_lci_cover_link_smoothing_T}
    Let $\overline{f}: \sS\to T$ be a flat family of s.l.c. surfaces and $f: \SS\to T$ be the associated index one covering Deligne-Mumford stack.  Suppose that the non-lci singularity germs $(\SS_0,x)$ of the central fiber  are not simple elliptic singularities of degree $6$ or $7$, or cusp singularities that can not be lifted to an equivariant smoothing of an $\lci$ cusp algebraically,  then $f: \SS\to T$ can be lifted to a flat family $f^{\lci}: \SS^{\lci}\to T$ of lci covering Deligne-Mumford stacks.  Moreover, $\sS$ is  the coarse moduli space of both $\SS$ and $\SS^{\lci}$.
\end{thm}
\begin{proof}
    This is from Definition \ref{defn_lci_DM_base} and Theorem \ref{thm_lci_cover_link_smoothing}.
\end{proof}

\begin{thm}\label{prop_simple_cusp_crepant_T}
    Let $f: \sS\to T$ be a  smoothing of s.l.c. surfaces which contain simple elliptic singularities of degree $6, 7$, then there is a   smoothing $\tilde{f}: \SS^{\lci}\to T$ of fake lci covering Deligne-Mumford stacks which induces the smoothing  $f: \sS\to T$.  
\end{thm}
\begin{proof}
    This is from Definition \ref{defn_lci_DM_base} and Proposition \ref{prop_simple_cusp_crepant}.
\end{proof}

\begin{rmk}
In the one-parameter family $\SS^{\lci}_{1}\to \spec(R)$ of lci covering Deligne-Mumford stacks, the lci smoothing lifting of simple elliptic singularities, cusp and degenerate cusp singularities are given by the results in \S \ref{subsubsec_universal_discriminant}, \S \ref{subsubsec_discriminant}, \S \ref{subsubsec_equivariant_smoothing_cusp}, \S \ref{subaec_normal_nonnormal}, and \S \ref{subaec_normal_nonnormal2}.
\end{rmk}

Let $A$ be a $\mathbf{k}$-algebra (so that $T=\spec(A)$) and   $\SS^{\lci}/A$  be a flat family of $\lci$ covering Deligne-Mumford stacks. 
Let $\ll^{\bullet}_{\SS^{\lci}/A}$ be the cotangent complex of $\SS^{\lci}/A$ and let $J$  be a finite $A$-module.  
We also let $\pi^{\lci}: \SS^{\lci}\to \sS$ be the map to its coarse moduli space. 
Define
$$\widehat{T}_{\QG}^{i}(\sS/A,J):=\Ext^i(\ll^{\bullet}_{\SS^{\lci}/A},\sO_{\SS^{\lci}}\otimes_{A}J)$$
$$\widehat{\sT}_{\QG}^{i}(\sS/A,J):=\pi^{\lci}_{*}\sE xt^i(\ll^{\bullet}_{\SS^{\lci}/A},\sO_{\SS^{\lci}}\otimes_{A}J).$$

We have  similar results as in Proposition \ref{prop_deformation_S_SS_G} and  Proposition \ref{prop_local_deformation_obstruction_slc} for $\lci$ covering Deligne-Mumford stacks.

\begin{prop}
Let $\sS/A$ be a $\qq$-Gorenstein family of  s.l.c. surfaces.  The corresponding index one covering Deligne-Mumford stack and $\lci$ covering Deligne-Mumford stack are denoted by  $\SS/A$   and  $\SS^{\lci}/A$ respectively. 
Suppose that  $A^\prime\to A$ is an infinitesimal extension.  Let  $\sS^\prime/A^\prime$ be a $\qq$-Gorenstein deformation of $\sS/A$, and 
$\SS^\prime/A^\prime$ be the index one covering Deligne-Mumford stack.  Then we have 
\begin{enumerate}
\item 
$$\sS^\prime/A^\prime\mapsto \SS^\prime/A^\prime$$
give a bijection between the set of isomorphism classes of $\qq$-Gorenstein deformations of $\sS/A$ over $A^\prime$ and the set of isomorphism classes of deformations of 
$\SS/A$. 
\item 
any isomorphism class of the deformations $(\SS^{\prime})^{\lci}/A^\prime$  of the  $\lci$ covering Deligne-Mumford stack
induces an  isomorphism class of deformations of the  index one covering Deligne-Mumford stacks 
$$ (\SS^{\prime})^{\lci}/A^\prime\mapsto \SS^\prime/A^\prime$$
which in turn induces an  isomorphism class of $\qq$-Gorenstein deformations of $\sS/A$ over $A^\prime$
$$ (\SS^{\prime})^{\lci}/A^\prime\mapsto \sS^\prime/A^\prime.$$
\end{enumerate}
\end{prop}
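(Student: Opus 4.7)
The plan for part (1) is simply to invoke the earlier Proposition \ref{prop_deformation_S_SS_G} (\cite[Proposition 3.7]{Hacking}), which records precisely this bijection already in the $G$-equivariant case; forgetting the $G$-action yields the statement as written. The content there is that \'etale-locally any $\qq$-Gorenstein deformation $\sS^\prime/A^\prime$ is, by definition, the $\mu_r$-quotient of a $\mu_r$-equivariant flat deformation of the index one cover $Z$ extending $\sS/A$, so the index one covering construction and the quotient operation are mutually inverse on isomorphism classes.

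For part (2) I would work \'etale-locally around each of the finitely many bad singular germs of $\sS$. By the summary in \S \ref{subsubsec_lci_slc} the stack $\SS^{\lci}$ locally admits a chart $[\widetilde{S}/D]$, and the morphism $\hat{\pi}: \SS^{\lci}\to \SS$ comes from a factorization $\widetilde{S}\to Z\to S$ of the $\lci$ cover through the index one cover; in the Gorenstein simple elliptic, cusp, or degenerate cusp cases one has $Z=S$ and $\SS=Z$, while in the cyclic-quotient cases the factorization is the one described after diagram (\ref{eqn_diagram_ind_lci}). A deformation $(\SS^\prime)^{\lci}/A^\prime$ therefore corresponds locally to a $D$-equivariant flat deformation $\widetilde{\sS}/A^\prime$ of $\widetilde{S}$. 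The central step is to identify the subgroup $K\subset D$ given by the kernel of the natural surjection $D\twoheadrightarrow \zz_r$ (with $K=D$, $r=1$ in the Gorenstein case) and to form the quotient $\widetilde{\sS}/K$: this carries a residual $\zz_r$-action and is a $\zz_r$-equivariant flat deformation of $Z$ over $A^\prime$, which by definition gives a $\qq$-Gorenstein deformation of the germ $(\sS,\mathbf{0})$.

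Next I would globalize: the local $\zz_r$-equivariant deformations of the index one covers glue on \'etale overlaps because both the $\lci$ cover and the distinguished subgroup $K$ are canonically determined by the analytic topology of the link of the germ (Theorem \ref{thm_minimally_elliptic_universal} and Theorem \ref{thm_minimally_elliptic_universal2}). The glued object is a deformation $\SS^\prime/A^\prime$ of $\SS/A$, and then applying part (1) produces the desired $\qq$-Gorenstein deformation $\sS^\prime/A^\prime$. Functoriality of the successive quotient operations $[-/K]$ and $[-/\zz_r]$ under equivariant morphisms shows that the assignment $(\SS^\prime)^{\lci}/A^\prime\mapsto \SS^\prime/A^\prime \mapsto \sS^\prime/A^\prime$ descends to isomorphism classes.

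The main technical obstacle is to verify that the $K$-quotient of a flat $D$-equivariant family is again flat and commutes with base change along $A^\prime\to A$, and that it transports the relative Gorenstein data correctly. Over characteristic zero the flatness and base change compatibility follow from the exactness of taking $K$-invariants on the structure sheaf, since $|K|$ is invertible in $\mathbf{k}$; the compatibility of dualizing sheaves under the tower is guaranteed by Lemma \ref{lem_universal_index_one_slc}, which identifies $\omega_{\SS^{\lci}/A}\cong \hat{\pi}^{*}\omega_{\SS/A}$. Once these local checks are in place, the gluing of \'etale charts, together with the canonicity of the $\lci$ construction on germs, delivers the induced chain of deformations asserted in the proposition.
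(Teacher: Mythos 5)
Your proposal is correct and follows essentially the same route as the paper: part (1) is exactly the citation of Proposition \ref{prop_deformation_S_SS_G}, and part (2) rests on the factorization of the $\lci$ cover through the index one cover via the subgroup $K=\ker(D\to\zz_r)$, which is the mechanism behind the remarks and theorems (Theorem \ref{thm_minimally_elliptic_universal2}, Theorem \ref{thm_minimally_elliptic_universal}) that the paper's proof simply cites. Your write-up is in fact more explicit than the paper's own two-line proof, supplying the local quotient construction, the flatness of $K$-invariants in characteristic zero, and the gluing step that the paper leaves implicit.
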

\begin{proof}
The case $\sS^\prime/A^\prime\mapsto \SS^\prime/A^\prime$ for the index one covering Deligne-Mumford stack  is Proposition \ref{prop_deformation_S_SS_G}.  
For the second case, from Remark \ref{rmk_quotient_elliptic_cusp} and Remark \ref{rmk_elliptic_cusp}, any deformation of the $\lci$ covering   Deligne-Mumford stack induces a 
$\qq$-Gorenstein deformation of the surface singularity $\sS/A$. 
\end{proof}

\begin{rmk}
We should point out again that it is not known whether any deformation of the index one covering Deligne-Mumford stack $\SS^\prime/A^\prime$ is induced by the deformation $(\SS^\prime)^{\lci}/A^\prime$ of the $\lci$ covering   Deligne-Mumford stack.
\end{rmk}

\begin{prop}\label{prop_local_deformation_obstruction_slc_univ}
Let $\sS_0/A_0$  be a  $\qq$-Gorenstein family  of s.l.c. surfaces and let $J$ be a finite $A_0$-module.
We let  $(\SS_0)^{\lci}/A_0$ be  the corresponding $\lci$ covering Deligne-Mumford stack. 
Then we have that 
\begin{enumerate}
\item  the set of isomorphism classes of  $\qq$-Gorenstein  deformations of $\sS_0/A_0$ which are induced from the deformations of the $\lci$ covering Deligne-Mumford stack $(\SS_0)^{\lci}/A_0$  over 
$A_0+J$ is naturally an $A_0$-module and is canonically isomorphic to $\widehat{T}_{\QG}^1(\sS/A, J)$. 
Here $A_0+J$ means the ring $A_0[J]$ with $J^2=0$;
\item  let $A^\prime\to A\to A_0$ be the infinitesimal extensions and the kernel of $A^\prime\to A$ is $J$. 
Let $\sS/A$  be a  $\qq$-Gorenstein of $\sS_0/A_0$.
Then  we have 
\begin{enumerate}
\item  there exists a canonical element $\ob(\sS/A, A^\prime)\in \widehat{T}_{\QG}^2(\sS/A, J)$ called the obstruction class.  It vanishes if and only if there exists a 
$G$-equivariant $\qq$-Gorenstein  deformation $\sS^\prime/A^\prime$ of $\sS/A$ over $A^\prime$ which is induced from the deformation of the $\lci$ covering Deligne-Mumford stack $(\SS^\prime)^{\lci}/A^\prime$ . 
\item if  $\ob(\sS/A, A^\prime)=0$,  then the set of  isomorphism classes of  $\qq$-Gorenstein  deformations $\sS^\prime/A^\prime$
is an affine space underlying $\widehat{T}_{\QG}^1(\sS_0/A_0, J)$.
\end{enumerate}
\end{enumerate}
\end{prop}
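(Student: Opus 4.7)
The plan is to establish this proposition as the direct $\lci$-cover analogue of Proposition \ref{prop_local_deformation_obstruction_slc}, with the index one covering Deligne-Mumford stack $\SS$ replaced throughout by the $\lci$ covering Deligne-Mumford stack $\SS^{\lci}$. The essential point is that $\SS^{\lci}$ is a flat family of Deligne-Mumford stacks whose singularities are all l.c.i., so the standard cotangent complex machinery of Illusie \cite{Illusie1}, \cite{Illusie2} applies directly to its relative cotangent complex $\ll^{\bullet}_{\SS^{\lci}/A}$. The $G$-equivariant statements will then be obtained by passing to $G$-invariants of the relevant $\Ext$-groups.

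For Part (1), I would invoke Illusie's classification: the set of isomorphism classes of deformations of the flat DM stack $(\SS_0)^{\lci}/A_0$ over the trivial square-zero extension $A_0 + J$ is naturally an $A_0$-module, canonically isomorphic to
\[
\Ext^1\!\left(\ll^{\bullet}_{(\SS_0)^{\lci}/A_0},\ \sO_{(\SS_0)^{\lci}}\otimes_{A_0}J\right)=\widehat{T}^{1}_{\QG}(\sS_0/A_0,J).
\]
By Proposition \ref{prop_flat_lci_DM} and the correspondence spelled out in \S \ref{subsubsec_lci_covering_DM-flat}, each deformation of $(\SS_0)^{\lci}$ descends to a $\qq$-Gorenstein deformation of $\sS_0/A_0$; restricting to $G$-equivariant deformations picks out the invariant submodule $\widehat{T}^{1}_{\QG}(\sS_0/A_0,J)^{G}$, which gives the claimed bijection.

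For Part (2)(a), I would use the standard obstruction element for lifting a flat DM stack $\SS^{\lci}/A$ to a flat family over $A'$: it is a canonical class in
\[
\Ext^2\!\left(\ll^{\bullet}_{\SS^{\lci}/A},\ \sO_{\SS^{\lci}}\otimes_{A}J\right)=\widehat{T}^{2}_{\QG}(\sS/A,J),
\]
whose vanishing is equivalent, by Illusie's theory, to the existence of a flat lift $(\SS')^{\lci}/A'$; taking $G$-invariants gives $\ob(\sS/A,A')\in \widehat{T}^{2}_{\QG}(\sS/A,J)^{G}$. By Proposition \ref{prop_flat_lci_DM} any such lift induces a $G$-equivariant $\qq$-Gorenstein deformation $\sS'/A'$ extending $\sS/A$. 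When the obstruction vanishes, the set of isomorphism classes of such lifts is a torsor under the first $\Ext$-group $\widehat{T}^{1}_{\QG}(\sS_0/A_0,J)^{G}$, which is Part (2)(b).

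The main subtlety, rather than a genuine obstacle, will be verifying compatibility of the global $G$-action with the local étale structure of $\SS^{\lci}$. Since $\SS^{\lci}$ is glued from local quotient charts $[\widetilde{S}/D]$ produced by the canonical analytic constructions of \S \ref{subsubsec_universal_discriminant}--\S \ref{subsubsec_discriminant}, the $G$-action on $\sS$ lifts uniquely and functorially to $\SS^{\lci}$ thanks to the canonicity of the $\lci$ cover on each analytic germ (as recorded in \S \ref{subsubsec_lci_slc}). Consequently the $G$-action is compatible with $\ll^{\bullet}_{\SS^{\lci}/A}$, and the $G$-invariant subgroups of the $\Ext$-groups legitimately parametrize $G$-equivariant deformations and obstructions. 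This is the only step requiring care beyond the standard Illusie arguments.
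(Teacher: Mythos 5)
Your proposal takes essentially the same approach as the paper. The paper's own proof is a single sentence — it cites Theorem \ref{thm_minimally_elliptic_universal} (so that $\lci$-cover deformations descend to $\qq$-Gorenstein deformations) and then invokes Illusie's cotangent-complex deformation theory \cite{Illusie2}, exactly as you do; your write-up simply spells out the $\Ext^1$/$\Ext^2$ classification and the $G$-invariance that the paper leaves implicit, citing Proposition \ref{prop_flat_lci_DM} in place of Theorem \ref{thm_minimally_elliptic_universal} for the descent step.
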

\begin{proof}
From Theorem \ref{thm_minimally_elliptic_universal}, this is a basic result of deformation and obstruction theory of algebraic varieties; see  \cite{Illusie2}. 
\end{proof}

\begin{lem}\label{lem_universal_higher_tangent}
Let $S$ be an s.l.c. surface, and $\SS^{\lci}\to S$ be  the $\lci$ covering Deligne-Mumford stack.  Then  we have that 
$\widehat{T}_{\QG}^{i}(S, \sO_S)=0$ for $i\ge 3$. 
\end{lem}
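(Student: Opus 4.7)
The plan is to combine two vanishing facts via the local-to-global spectral sequence
\[
E_2^{p,q} = H^p\bigl(S,\widehat{\sT}^{q}_{\QG}(S,\sO_S)\bigr) \;\Longrightarrow\; \widehat{T}^{p+q}_{\QG}(S,\sO_S),
\]
which converges by the usual Ext-local-to-global argument applied to the map $\pi^{\lci}\colon\SS^{\lci}\to S$ from the $\lci$ covering Deligne-Mumford stack to its coarse moduli space. So the task reduces to bounding $p$ and $q$ for which $E_2^{p,q}$ is nonzero.

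First I would establish the vertical (sheafy) vanishing. By the construction reviewed in \S \ref{subsubsec_lci_slc}, $\SS^{\lci}$ is everywhere locally of the form $[\widetilde S/D]$ where $\widetilde S$ is a locally complete intersection singularity (this is the whole point of replacing the index one cover by the $\lci$ cover, by Theorem \ref{thm_minimally_elliptic_universal} and Theorem \ref{thm_minimally_elliptic_universal2}). For an l.c.i.\ singularity the cotangent complex is quasi-isomorphic to a two-term complex of locally free sheaves concentrated in degrees $[-1,0]$, so $\sE xt^{q}(\ll^{\bullet}_{\SS^{\lci}},\sO_{\SS^{\lci}})=0$ for $q\ge 2$. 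Taking $D$-invariants (equivalently, pushing forward by $\pi^{\lci}$) preserves the vanishing, giving $\widehat{\sT}^{q}_{\QG}(S,\sO_S)=0$ for $q\ge 2$.

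Next I would carry out the horizontal (cohomological) vanishing. The coarse moduli $S$ is a projective surface, so for any coherent $\sO_S$-module $F$ we have $H^p(S,F)=0$ for $p\ge 3$ by Grothendieck vanishing; this disposes of $E_2^{p,0}$ for $p\ge 3$. For $q=1$, one observes that $\widehat{\sT}^{1}_{\QG}(S,\sO_S)$ is supported on the singular locus of $\SS^{\lci}$ (it vanishes on the smooth locus because $\ll^{\bullet}$ is concentrated in degree $0$ there), which maps to the singular locus of $S$. This singular locus is a union of the codimension-one double normal-crossing curve and finitely many isolated singular points, so it has dimension at most one; hence $H^p(S,\widehat{\sT}^{1}_{\QG}(S,\sO_S))=0$ for $p\ge 2$ by Grothendieck vanishing on the support.

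Combining these two observations, the only potentially nonzero $E_2^{p,q}$ terms have $p\le 2$ and $q\le 1$ with $(p,q)\ne(2,1)$, so $E_2^{p,q}=0$ whenever $p+q\ge 3$. Since the differentials can only shrink these groups further, the spectral sequence gives $\widehat{T}^{i}_{\QG}(S,\sO_S)=0$ for $i\ge 3$, which is the desired vanishing. The main point to verify carefully is the first step, namely that the local $\lci$ property constructed germ-by-germ in \S \ref{subsubsec_universal_discriminant}--\S \ref{subsubsec_discriminant} genuinely glues to an $\lci$ stack $\SS^{\lci}$ in a neighbourhood of every s.l.c.\ singularity of $S$; once that is in hand, the two-term cotangent complex of an l.c.i.\ singularity together with Grothendieck vanishing on $S$ finish the argument essentially formally.
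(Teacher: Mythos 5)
Your proof takes the same route as the paper---the local-to-global spectral sequence together with the l.c.i.\ property of $\SS^{\lci}$ and the fact that $\dim S=2$---but it is more careful in one respect that matters. The paper's own proof invokes only two facts: $H^p(S,F)=0$ for $p\ge 3$ since $S$ is a surface, and $\widehat{\sT}^{q}_{\QG}(S,\sO_S)=0$ for $q\ge 2$ since $\SS^{\lci}$ has only l.c.i.\ singularities. Taken together those two facts kill $E_2^{p,q}$ only when $p\ge 3$ or $q\ge 2$, and so they dispose of $\widehat{T}^i_{\QG}$ for $i\ge 4$; they leave $E_2^{2,1}=H^2\bigl(S,\widehat{\sT}^1_{\QG}(S,\sO_S)\bigr)$ as a potential contributor to the $i=3$ case. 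You correctly notice this and close the gap by observing that $\widehat{\sT}^1_{\QG}$ vanishes on the smooth locus of $\SS^{\lci}$ and hence is supported on the singular locus of $S$, which for an s.l.c.\ surface is the union of the double normal-crossing curve and finitely many isolated points, hence of dimension at most one; Grothendieck vanishing on the support then gives $H^2=0$. This support-dimension argument (implicit in the paper's Remark~\ref{rmk_slc_minimally_elliptic}(2) but not cited in the proof of the lemma itself) is genuinely needed for $i=3$, so your version is not merely equivalent but fills a small lacuna in the paper's terse argument. The rest---the two-term cotangent complex of an l.c.i.\ singularity and the gluing of the germ-by-germ $\lci$ covers into a stack, which you flag as the point to verify---matches the paper's intent exactly.
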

\begin{proof}
There is also a local to global spectral sequence
$$E_2^{p,q}=H^p(\widehat{\sT}_{\QG}^{q}(S, \sO_S))\Rightarrow \widehat{T}_{\QG}^{p+q}(S, \sO_S).$$
Since $S$ is of general type, the higher cohomology $H^p(F)=0$ for any sheaf $F$ and $p\ge 3$.  The sheaf $\widehat{\sT}_{\QG}^{q}(S, \sO_S)=0$ when $q\ge 2$ since 
$\SS^{\lci}$ only has l.c.i. singularities.  Thus,  from the local to global spectral sequence we get the result in the lemma. 
\end{proof}

\subsection{The moduli stack of  $\lci$ covers}\label{subsubsec_universal_DM}

We consider the families $\SS^{\lci}/T$ of $\lci$ covering Deligne-Mumford  stacks. 
In general it is interesting to look at the situation that a $\lci$ covering Deligne-Mumford stack admits smoothings with coarse moduli space the smoothings of s.l.c. surfaces.  
 Extending the result in \S \ref{subsec_index_one_cover} we define the   moduli stack $M_N^{\lci}$ of $\lci$ covers over the KSBA moduli stack 
$M_N$
of s.l.c. surfaces. 

\begin{defn}\label{defn_family_lci}
We define the flat  families over a scheme $T$ in the following diagram
\begin{equation}\label{eqn_diagram_ind_lci_family}
\xymatrix{
\SS^{\lci}\ar[rr]^{\hat{\pi}}\ar[dr]^{\pi^{\lci}}\ar[dd]_{f^{\lci}}&&\SS\ar[dl]_{\pi}\ar[ddl]^{f}\\
&\sS\ar[d]_-{\overline{f}}&\\
T\ar[r]^{\eta}&T^\prime.&
}
\end{equation}
which is the generalization of  family version of  Diagram \ref{eqn_diagram_ind_lci}, 
where 
\begin{enumerate}
\item  $\overline{f}: \sS\to T^\prime$ is a $\qq$-Gorenstein deformation  family of s.l.c. surfaces;
\item $f: \SS\to T^\prime$ is the corresponding index one covering Deligne-Mumford stack;
\item $f^{\lci}: \SS^{\lci}\to T$ is the lifting  fake $\lci$ covering Deligne-Mumford stack of $\overline{f}$, such that the morphism $\pi^{\lci}: \SS^{\lci}\to \sS$ factors through the morphism 
$\pi: \SS\to \sS$.  The morphism $\eta: T\to T^\prime$ is proper;
\item whenever there is a one-parameter family $\sS\to \Delta^\prime$ such that $\Delta^\prime\to T^\prime$, then up to finite cover $\Delta\to \Delta^\prime$, we have an lci lifting $\SS^{\lci}\to \Delta$ such that $\Delta\subset T$;
\item the isomorphic classes   $\{\overline{f}: \sS\to T^\prime\}$  of the  families   must satisfy the 
conditions in (\ref{eqn_condition_functor_M}).
\item  for the flat family $f^{\lci}: \SS^{\lci}\to T$,  let $(S, x)$ be a singularity germ in $S=\overline{f}^{-1}(0)$ such that $(\widetilde{S},x)\to (S,x)$  is the $\lci$ cover with transformation group $G$.  We make the following conditions.
\begin{enumerate}
\item suppose that the flat family $\overline{f}: \sS\to T$ lies on the smoothing component  $M^{\sm}$ (i.e., the component containing smooth surfaces) of $M=\overline{M}_{K^2, \chi, N}$.  We may assume that  $\overline{f}: \sS\to T^\prime=\spec(\mathbf{k}[t])$ is a one-parameter smoothing of the singularity 
$(S,x)$.  If the $\lci$ cover $(\widetilde{S},x)$ locally is given by 
$$\spec \mathbf{k}[x_1, \cdots, x_{\ell}]/(h_1, \cdots, h_{\ell-2}),$$
then the flat family $f^{\lci}: \SS^{\lci}\to T$ is given by the $G$-equivariant smoothing of the singularity $(\widetilde{S},x)$ which is given by:
$$\spec \mathbf{k}[x_1, \cdots, x_{\ell}, t]/(h_1-t, \cdots, h_{\ell-2}-t),$$
where $G$ acts on $t$ trivially. 
The detail definition of the  smoothing component is in \S \ref{subsec_smoothing_component}.
\item suppose that the  flat family $\overline{f}: \sS\to T^\prime$ lies on a deformation  component of $M_N=\overline{M}_{K^2, \chi, N}$ containing the same type of singularities as $(S, x)$, then  we  require that the   flat family $f^{\lci}: \SS^{\lci}\to T$ induces the family $\overline{f}: \sS\to T^\prime$.
\end{enumerate}
\item for all the singularity germs $(S, x)$ in a family  $\overline{f}: \sS\to T^\prime$,  we let  $\Box_{\germs}$ be the set of singularity germs of   simple elliptic singularities,  cusp or degenerate cusp singularities, or  cyclic quotients of them  that do not satisfy the condition in  Condition \ref{condition_star}.   There are two cases:
\begin{enumerate}
\item 
if the $\lci$ liftings $(\widetilde{S}, x)$ for $(S,x)\in \Box_{\germs}$ are nontrivial such that we have the Deligne-Mumford stack 
$[\widetilde{S}/G]$, and such  singularity germs belong to the $\lci$ cover constructed in Theorem \ref{thm_minimally_elliptic_universal2}, 
Lemma \ref{lem_cusp_cover}, Theorem  \ref{thm_elliptic_singularity_lci_lifting}, Theorem \ref{thm_minimally_elliptic_universal}, Theorem \ref{thm_equivariant_smoothing_e_cusp}, then in this case  both $\SS^{\lci}$ and $\SS$ have the same coarse moduli space $\sS$;
\item
if there is a singularity germ $(S,x)\in \Box_{\germs}$ such that it is a simple elliptic singularity (of degree $5, 6, 7$), or a cusp singularity such that there is no $\lci$ smoothing lifting of the same type,   then these are the cases  in Proposition \ref{prop_normal_lci-non-normal1}, Proposition \ref{prop_simple_cusp_crepant} and 
Proposition \ref{prop_normal_lci-non-normal}. 
We take $\{f^{\lci}: \SS^{\lci}\to T\}$ as the isomorphism  class of fake lci covering Deligne-Mumford stacks modulo $\mathbb{S}$-equivalence in Definition \ref{defn_S_equivalence}. 
The morphism $\SS^{\lci}\to \SS$ induces a proper morphism $\sS^{\lci}\to \sS$ on the coarse moduli spaces.
\end{enumerate}
\end{enumerate}
\end{defn}

\begin{rmk}
    In \cite[Theorem 1.3]{Jiang_cusp}, we prove that for any one-parameter smoothing of a cusp singularity, there exists an lci smoothing lifting by a hypersurface cusp.  So in the case (7)-(b) in Definition \ref{defn_family_lci}, for a cusp singularity which does not admit an $\lci$ smoothing lifting of the same type, we mean the higher dimensional smoothings.  
\end{rmk}

We define the  functor: 
$$\sM_N^{\lci}=\overline{\sM}^{\lci}_{K^2, \chi, N}: \Sch_{\mathbf{k}}\rightarrow \mbox{Groupoids}$$
which sends 
$$T\mapsto \{f^{\lci}: \SS^{\lci}\to T\}$$
where  $\{f^{\lci}: \SS^{\lci}\to T\}$ is the groupoid  of isomorphism classes of families of  fake $\lci$ covering  Deligne-Mumford stacks 
$\SS^{\lci}\to T$ modulo $\mathbb{S}$-equivalence.  

\begin{rmk}\label{rmk_lci_coveringDM}
From the construction of $\lci$ covering Deligne-Mumford stack $\SS^{\lci}\to S$ in \S \ref{subsubsec_universal_discriminant} and \S \ref{subsubsec_discriminant} and  the family of $\lci$ covering Deligne-Mumford stacks in 
\S \ref{subsubsec_lci_covering_DM}, we only take the $\lci$ cover for an s.l.c. surface $S$ with simple elliptic singularities, cusp or degenerate cusp singularities,  or cyclic quotients of them with local embedded dimension $> 5$. 
\end{rmk}

Let $S_t$ be an s.l.c. surface such that its index one covering Deligne-Mumford stack $\SS_t\to S_t$ is  a fiber of $f: \SS\to T^\prime$. 
Look at the diagram (\ref{eqn_diagram_ind_lci_family}) again, 
from Lemma \ref{lem_universal_index_one_slc}, 
we have $(\hat{\pi})^*\omega_{\SS/A}\cong \omega_{\SS^{\lci}/A}$ (by taking $T^\prime=\spec(A)$ as one-dimension). Thus, we have 
$$K^2=K_{S_t}^2=\frac{1}{N^2}(\omega_{S_t}^{[N]}\cdot \omega_{S_t}^{[N]})=(\omega_{\SS_t}\cdot \omega_{\SS_t})=(\omega_{\SS_t^{\lci}}\cdot \omega_{\SS_t^{\lci}}),$$
where $N\in \zz_{>0}$ can be chosen to satisfy that  $\omega_{S_t}^{[N]}$ is invertible. 

Let  $M_N:=\overline{M}_{K^2, \chi, N}$ be the moduli functor which parametrizes the flat families  $\overline{f}: \sS\to T^\prime$ of  $\qq$-Gorenstein deformations of s.l.c. surfaces induced from the flat families 
$f^{\lci}: \SS^{\lci}\to T$ of $\lci$ covering Deligne-Mumford stacks in Definition \ref{defn_family_lci}.  Then $M$ is a projective Deligne-Mumford stack when $N$ is sufficiently large.

\begin{thm}\label{thm_universal_covering_stack} 
The functor $\sM_N^{\lci}$ represents a Deligne-Mumford stack $M_N^{\lci}:=\overline{M}^{\lci}_{K^2,\chi,N}$.  Moreover,  there exists a proper morphism 
$$f^{\lci}: M_N^{\lci}\to M_N.$$ 

In particular,  if $N$ is large divisible enough, the stack $M^{\lci}:=M_N^{\lci}$ is a proper Deligne-Mumford stack with projective coarse moduli space. 
The morphism $f^{\lci}$ in the above diagram induces a proper  morphism on their coarse moduli spaces.
\end{thm}

\begin{proof}
The proof is from the above  construction of $\lci$ covering Deligne-Mumford stacks, and has the same method as in Theorem \ref{thm_index_one_covering_stack}. We prove the result for large divisible $N$, and the functor  $M^{\lci}$.

From \cite{Stack_Project}, the functor $\sM^{\lci}$ is a stack $M^{\lci}$.  There is a natural morphism 
$f^{\lci}: M^{\lci}\to M$ of stacks by sending any family $\{\SS^{\lci}\to T\}$ to the corresponding  family $\{\sS\to T^\prime\}$ in $M$. 
To show $M^{\lci}$ is a Deligne-Mumford stack, we show that the diagonal morphism 
$$M^{\lci}\to M^{\lci}\times_{\mathbf{k}}M^{\lci}$$
is representable and unramified.  This is from the following reason. 
If we have two objects $(f: \SS^{\lci}\to T)$ and $(f^\prime: (\SS^\prime)^{\lci}\to T)$  in $M^{\lci}(T)$, then the isomorphism functor of the two families 
$\textbf{Isom}_{T}(\SS^{\lci}, (\SS^\prime)^{\lci})$ is represented by a quasi-projective group scheme 
$\Isom_{T}(\SS^{\lci}, (\SS^\prime)^{\lci})$ over $T$. Let 
$(\overline{f}: \sS\to T^\prime)$ and $(\overline{f}^\prime: \sS^\prime\to T^\prime)$  be the corresponding $\qq$-Gorenstein deformation families of  s.l.c. surfaces over $T^\prime$. 
The isomorphism functor $\mathbf{Isom}_{T^\prime}(\sS,\sS^\prime)$ is represented by a quasi-projective group scheme 
$\Isom_{T^\prime}(\sS,\sS^\prime)$ over $T^\prime$.    
Look at the following diagram
\[
\xymatrix{
\SS^{\lci}\ar[r]^{\cong}\ar[d]& (\SS^\prime)^{\lci}\ar[d]\\
\sS\ar[r]^{\cong}& \sS^\prime.
}
\]
Any isomorphism $\SS^{\lci}\cong (\SS^\prime)^{\lci}$ induces an isomorphism $\sS\cong \sS^\prime$ on the coarse moduli spaces and the isomorphisms coming from the local stacky isotropy groups induces the same isomorphism on the coarse moduli spaces. 
Thus, the functor is represented by a quasi-projective scheme 
$\Isom_{T}(\SS^{\lci},(\SS^\prime)^{\lci})$  over $\Isom_{T^\prime}(\sS,\sS^\prime)$ and is also unramified over $T$  since its geometric fibers are finite. 

From the proof of Theorem \ref{thm_index_one_covering_stack}, there is a cover $\varphi: \C\to M$. Then the fiber product $\C^{\lci}$ in the diagram 
\[
\xymatrix{
\C^{\lci}\ar[r]\ar[d]& M^{\lci}\ar[d]\\
\C\ar[r]& M
}
\]
serves as a cover over the  stack $M^{\lci}$. 
This is because for a given family of s.l.c. surface $\sS/T$, there is a family  $\SS^{\lci}/T$ of $\lci$ covering Deligne-Mumford stacks.   

We show that the morphism $f^{\lci}: M^{\lci}\to M$ is proper.   We use the valuative criterion for properness and consider the following diagram
\[
\xymatrix{
\spec(K^\prime)\ar[r]\ar[d]&\spec(K)\ar[r]\ar[d]& M^{\lci}\ar[d]^{f^{\lci}}\\
\spec(R^\prime)\ar[r]\ar@{-->}[urr]&\spec(R)\ar@{-->}[ur]\ar[r]& M
}
\]
where $R$ is a valuation ring with field of fractions $K$, and residue field $\mathbf{k}$.  In this case we can take $R=\mathbf{k}[\![t]\!]$ and $K=\mathbf{k}(\!(t)\!)$. 
The morphism $\spec(R)\to M$  corresponds a flat $\qq$-Gorenstein family  $f: \sS\to \spec(R)$  of s.l.c. surfaces. 
We may assume that $\spec(R)\to M$ lies on the smoothing component of the moduli stack $M$, since if $\spec(R)\to M$ lies in other component of $M$, then from condition (6) in 
Definition \ref{defn_family_lci} we always have that the  family  $f: \sS\to \spec(R)$ is induced from a flat family of lci covering Deligne-Mumford stacks. 

Now let $S$ be the s.l.c. surface over $0=\spec(\mathbf{k})$ in the family  $f: \sS\to \spec(R)$. 
Over a singularity germ $(S, x)$ in $\sS$, we assume that the singularity is given by 
$$\spec(\mathbf{k}[x_1, \cdots, x_s]/I),$$ 
where $I$ is the ideal of the singularity.  Let $I=(g_1, \cdots, g_l)$ be the generators.  Then the singularity germ $(\sS, x)$ is given by 
$$\spec(R[x_1, \cdots, x_s, t]/I_t),$$
where $I_t=(g_1^t, \cdots, g_l^t)$ and $g_i^t$ are polynomials involving $t$.  Taking $t=0$, we get $g_i^t=g_i$.   Since the family  $f: \sS\to \spec(R)$ is flat,  the parameter $t$ can not happen in the factors of the monomial terms of $g_i^t$.   For instance, we choose 
 $I_t=(g_1-t, \cdots, g_l-t)$. 

Suppose that $f^{\lci}: \SS^{\lci}\to \spec(K)$ is the lifting of $f: \sS\to \spec(R)$ to a $\lci$-covering Deligne-Mumford stacks at the generic point. Then 
over the the singularity germ $(\sS, x)$, we have that the local  $\lci$ cover  $(\widetilde{\sS}, x)$ is given by 
$$\spec(K[x_1, \cdots, x_{\ell}]/J_t),$$
where $J_t=(h_1^t, \cdots, h_{\ell-2}^t)$ and $h_i^t$ are polynomials involving the variable $t$. 
Here the ideal $J_t$ has $\ell-2$ generators since the singularity $(\widetilde{\sS}, x)$ is an l.c.i. singularity.
The quotient of $\spec(K[x_1, \cdots, x_{\ell}]/J_t)$ by the finite  transformation group $G$ gives $\spec(K[x_1, \cdots, x_s]/I_t)$, or equivalently,  the  invariant ring $\left(K[x_1, \cdots, x_{\ell}]/J_t\right)^G$ by the  transformation group $G$ gives $K[x_1, \cdots, x_s]/I_t$.   
 
The finite group $G$ acts on the variety $\spec(K[x_1, \cdots, x_{\ell}]/J_t)$. 
The field $K$ is the fraction field of $R$ with the uniformizer $t$.   
The generators $h_j^{t}$ for $1\leq j\leq \ell-2$ may contain  powers of $t$. We let $I$ be the index set such that 
for $i\in I$, $c_i\in \zz$, and $t^{c_i}$ is a factor of some term in $h_j^{t}$.   Note that $c_i$ may be negative at the moment. 
Let $d\in \zz_{>0}$ be a large integer depending on the set $\{c_i| i\in I\}$.  We  take the  finite cover 
$$\spec(R^\prime)\to \spec(R)$$ 
by 
$$t\mapsto t^{\prime d}.$$
Let $K^\prime$ be the field of fractions of $R^\prime$.  We choose $d$ large enough so that the group $G$ acts on the parameter $t^\prime$ trivially. 
Now 
 the polynomials $h_j^{t}$ for $1\leq j\leq \ell-2$ become $h_j^{t^{\prime}}$ for $1\leq j\leq \ell-2$.
Since the singularity germ $(S, x)$ is given by an $\lci$ cover   $(\widetilde{S}, x)$, and $G$ acts on the parameter $t^\prime$ trivially, then from 
condition (5) in 
Definition \ref{defn_family_lci}, 
the $G$-equivariant smoothing of the $\lci$ cover   $(\widetilde{S}, x)$ is given by 
$$\spec(K^\prime[x_1, \cdots, x_{\ell}]/J_{t^\prime}).$$
The generators $h_j^{t^\prime}=h_j-t^\prime$. 
The morphism $\spec(K)\to M^{\lci}$ naturally extends to the morphism $\spec(K^\prime)\to M^{\lci}$. 
Therefore, taking $t^\prime=0$, we get the $\lci$ covering Deligne-Mumford stack $[\widetilde{S}/G]$ for the s.l.c. surface $S$.  This gives the  unique morphism $\spec(R^\prime)\to M^{\lci}$ which completes 
the valuative criterion for properness. 

If $N$ is large divisible enough, then the stack $M$ is a proper Deligne-Mumford stack with projective coarse moduli space.  
When we fix the volume $K^2$ of the s.l.c. surface $S$ and the $\lci$ covering Deligne-Mumford stack $\SS^{\lci}$, the families the $\lci$ covering Deligne-Mumford stacks form a bounded family, 
which means that if there are  simple elliptic singularities or cusp singularities which can not admit $\lci$ smoothing liftings by the same type of singularities,  the crepant resolutions we take in Proposition \ref{prop_Reid} must be bounded. 
Therefore,  
the  morphism $f^{\lci}: M^{\lci}\to M$
in the diagram induces a proper morphism on their coarse moduli spaces since  $f^{\lci}$ is proper.   

Finally we study the fiber of the morphism $f^{\lci}: M^{\lci}\to M$.  
Let $S\in M$ be an s.l.c. surface and $(S,x)$ be an s.l.c. singularity germ.  We aim to study the fiber 
$(f^{\lci})^{-1}(S)$.  
Since there are only two cases for the log canonical surface singularities $(S,x)$ which need to take the $\lci$ covers.  We prove it by  cases. 

Case 1.  If the singularity germ $(S,x)$ has index bigger than $1$, then it is either the $\zz_2, \zz_3, \zz_4, \zz_6$-quotient of a simple elliptic singularity,  the $\zz_2$-quotient of a cusp, or the 
$\zz_2$-quotient of a degenerate cusp singularity.   Then from Theorem \ref{thm_minimally_elliptic_universal2},  the singularity $(S, x)$ is a rational singularity and the $\lci$ cover is the universal abelian cover which is unique.   Thus $(f^{\lci})^{-1}(S)$ only contains one geometric  element. 

Case 2.   If the singularity germ $(S,x)$ has index $1$, then it is either  a simple elliptic singularity,   a cusp, or  a degenerate cusp singularity.  
From Theorem \ref{thm_minimally_elliptic_universal}, since  we take the $\lci$ cover for a  degenerate cusp singularity $(S, x)$ using the universal abelian covers, thus the $\lci$ lifting is unique. 

For the case of a simple elliptic singularity $(S, x)$ with degree $d$,   it admits a smoothing and therefore, $1\le d\le 9$. From Theorem \ref{thm_elliptic_singularity_lci_lifting},  if $d=8, 9$, then  the $\lci$ covers 
$(\widetilde{S}, x)$ will reduce the negative self-intersection number of the exceptional elliptic curve.   Therefore, there are only finite $\lci$ covers $(\widetilde{S}, x)$ such that the self-intersection number becomes 
$1, 2, 3, 4$ which imply the singularities $(\widetilde{S}, x)$ are  l.c.i. Thus,   there are finite  $\lci$ liftings for the singularity germ $(S,x)$.

If the degree $d=5, 6, 7$, then we apply condition (7)-(b) in 
Definition \ref{defn_family_lci}.  Since the one parameter smoothing of any these singularities is canonical, the crepant resolutions exist from M. Reid's theorem.  In the definition of the moduli functor, the equivalence class of  families of lci covering Deligne-Mumford stacks induce flat $\qq$-Gorenstein families of s.l.c. surfaces,  so the families of lci covering Deligne-Mumford stacks  are bounded. Therefore,   the preimage  $(f^{\lci})^{-1}(S)$  is compact. 

The last case is the cusp singularity $(S,x)$ which is a bit complicated. 
If the $\lci$ covers are from Theorem \ref{thm_equivariant_smoothing_e_cusp},  it is not hard to see that the $\lci$ lifting is unique. 

In other cases such that the smoothing of the cusp admits a smoothing lifting by an $\lci$ cusp, let $\Sigma$ be the link of the singularity $(S,x)$, and $\pi_1(\Sigma)=\zz^2\rtimes\zz$ be the fundamental group. 
From the proof of 
Lemma \ref{lem_cusp_cover} in \S \ref{subsubsec_discriminant}, we form the following diagram:
\[
\xymatrix{
H\rtimes\zz\ar[r]\ar@{^{(}->}[d]& \zz\oplus \tau\ar@{^{(}->}[d]\ar[r] & 0\\
\zz^2\rtimes \zz=\pi_1(\Sigma)\ar[d]\ar[r] & H_1(\Sigma,\zz)\ar[r]\ar[d]&0\\
G\ar[r]\ar[d]& K \ar[r] \ar[d]& 0\\
0& 0, 
}
\]
where 
$H\subseteq \zz^2$ is the subgroup generated by $\mat{c}a\\c\rix$, $\mat{c}0\\1\rix$. 
Here  $\mat{cc}a&b\\c&d\rix$ is the monodromy matrix of the cusp,   
$\zz\oplus \tau$ is the abelianization of $H\rtimes \zz$, and $H_1(\Sigma,\zz)=\zz\oplus (H_1(\Sigma,\zz))_{\tor}$. 
The transformation group $G^\prime$ for the $\lci$ cover 
$(\widetilde{S}, x)$ is obtained from $G$ by taking discriminant cover.  
Since there are finite morphisms $\Hom(\pi_1(\Sigma), G)$,  we conclude that there are only finite possibilities for the covers determined by 
$H\rtimes \zz$.  Therefore,  the preimage  $(f^{\lci})^{-1}(S)$ contains only finite elements.  This proves that the morphism $f^{\lci}$ is finite over such a singularity germ. 
 
For all the other cusps which can not admit a smoothing lifting by an $\lci$ cusp,  then we apply again on  condition (7)-(b) in 
Definition \ref{defn_family_lci}. From the bounded of the crepant resolutions, the preimage  $(f^{\lci})^{-1}(S)$  is compact.
\end{proof}

\begin{rmk}
\begin{enumerate}
\item  We only take  $\lci$ covers $(\widetilde{S}, x)\to (S,x)$ for the simple elliptic, cusp and degenerate cusp singularities $(S, x)$ with local embedded dimension $\ge 5$.  For such singularities, from the construction in \S \ref{subsubsec_universal_discriminant}, \S \ref{subsubsec_discriminant},  Example \ref{example_Pinkham} and Example \ref{example_quotient_cusp_family},  the $\lci$ cover $(\widetilde{S}, x)$ is always a locally complete intersection singularity with the transformation group $G$-action.   Since a locally complete intersection singularity admits a $G$-equivariant smoothing (which takes the action trivial on the parameter $t$), the quotient gives the $\qq$-Gorenstein smoothing of 
$(S, x)$.  The situation exactly matches the condition (5) in Definition \ref{defn_family_lci}. Thus the valuative criterion for properness always holds case by case for such singularities. 

\item  for the singularity germs $(S, x)$ with local embedded dimension $\ge 5$, if there can not have $\lci$ covers in  \S \ref{subsubsec_universal_discriminant}, \S \ref{subsubsec_discriminant},  then we use  Proposition \ref{prop_normal_lci-non-normal1}, Proposition \ref{prop_simple_cusp_crepant},  and Proposition \ref{prop_normal_lci-non-normal}.
Different  crepant resolutions for such singularity germs $(S,x)$ correspond to different points in the  moduli stack $M^{\lci}$  of lci covers. 

\item The morphism $f^{\lci}: M^{\lci}\to M$ is not necessarily  representable.  
\end{enumerate}
\end{rmk}

\begin{rmk}
    The idea of using crepant resolution of the one-parameter smoothing $f: \sS\to \Delta$ of slc surfaces containing simple elliptic singularities and cusp singularities  to construct lci covering Deligne-Mumford stacks was already studied in the moduli space  of K3 surfaces in \cite{AT21}, \cite{AEH24}, \cite{AE23}. 
    One can construct an example that a K3 surface  deforms to two  rational elliptic surfaces gluing along a curve such that each component contains a resolution of simple elliptic singularities of degree $6$. 
    
This implies that the moduli space of Kulikov models in \cite{AE23} should be our moduli space of lci covers since in  any Kulikov model, the surfaces only have lci singularities.  
Contracting exceptional curves of the Kulikov model yields KSBA stable family of polarized K3 surfaces.  Thus, there is a proper morphism from the moduli space of Kulikov models to the KSBA compact moduli space of polarized K3 surfaces in \cite{AE23}.
\end{rmk}

We have the following corollary.
\begin{cor}\label{prop_moduli_slc_lci}
The moduli stack $M$ is a projective Deligne-Mumford stack. 
Thus, this implies that for any KSBA moduli space $M=\overline{M}_{K^2, \chi, N}$ for $N$ sufficiently large,    there always exists a moduli stack $M^{\lci}$ of $\lci$ covers such that there is a proper morphism $M^{\lci}\to M$. 

Moreover, if for any KSBA moduli space $M=\overline{M}_{K^2, \chi, N}$, the $\qq$-Gorenstein deformation of  bad simple elliptic singularities, cusp singularities with higher embedded dimension $\ge 6$ can always be lifted to $\lci$ covers by the same type $\lci$ singularities, then the morphism $M^{\lci}\to M$ is finite.  In this case, the lci covering Deligne-Mumford stack is canonical and there exists a unique universal family $p: \scM^{\lci}\to M^{\lci}$.
\end{cor}
\begin{proof}
From the conditions in Definition \ref{defn_family_lci}, the flat families  $\overline{f}: \sS\to T$ of  $\qq$-Gorenstein deformations of s.l.c. surfaces  definitely satisfy the conditions in \cite[Theorem 2.6]{Kollar90}, i.e., the 
moduli functor is separated, complete, semi-positive, and bounded.   
Separateness is from the definition of the flat families, and semi-positiveness, boundedness  hold since $M$ is a functor of the KSBA  moduli functor.  For completeness,  suppose that 
$\overline{f}: \sS_{\gen}\to K$ is a $\qq$-Gorenstein  family of s.l.c. surfaces over the generic point of the spectrum $\spec(R)$ of a discrete valuation ring $R$.  Then after a finite cover 
$\spec(R^\prime) \to \spec(R)$, from the above proof in Theorem \ref{thm_universal_covering_stack}, the lifting family $\overline{f}: \SS^{\lci}\to \spec(R^\prime)$ of (fake) $\lci$ covering Deligne-Mumford stacks induces a family 
$\overline{f}: \sS\to \spec(R^\prime)$.  Thus the moduli functor $M$ is complete. 
Therefore the moduli functor $M$ is represented by a proper Deligne-Mumford stack with projective coarse moduli spaces if $N$ is large divisible enough.

The second  statement is from Theorem \ref{thm_universal_covering_stack} since in this situation the moduli stack of lci covers does not involve any fake lci covering Deligne-Mumford stacks. There exists a unique universal family since any smoothing in this case is canonical. 
\end{proof}

\begin{cor}\label{cor_universal_index_one_M}
Let $M=\overline{M}_{K^2, \chi, N}$ be a connected component of the moduli stack of stable s.l.c. surfaces with invariants  $K^2, \chi, N$. 
If  each s.l.c. surface in $M$ satisfies Condition \ref{condition_star}, then   the  moduli stack $M^{\lci}$ of $\lci$ covers is the same as  $M^{\ind}$. 
If moreover,  every s.l.c. surface $S\in M$ is l.c.i., then  $M^{\lci}=M^{\ind}=M$.
\end{cor}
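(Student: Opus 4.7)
The plan is to reduce the statement to a purely local identification of germs: under Condition \ref{condition_star}, the $\lci$ cover and the index one cover of every s.l.c.\ surface germ coincide, and then to promote this local identification to an isomorphism of moduli functors.

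First I would recall from \S \ref{subsubsec_lci_slc} and Remark \ref{rmk_lci_coveringDM} that the $\lci$ covering Deligne-Mumford stack $\SS^{\lci}$ is obtained from the index one covering stack $\SS$ by modifying it only at those singular germs of $\SS$ which are simple elliptic, cusp, degenerate cusp, or a $\zz_r$-quotient of one of these ($r\in\{2,3,4,6\}$), and which furthermore fail to be l.c.i. By the Laufer--Stevens criterion (\cite[Theorem 3.13]{Laufer} and \cite{Stevens}) quoted in \S \ref{subsec_singularities_index_cover}, such a germ fails to be l.c.i.\ precisely when its local embedded dimension is at least $5$. Condition \ref{condition_star} rules this out: every exceptional singularity on $S$ has embedded dimension at most $4$, hence is already l.c.i., so the index one cover $Z\to S$ at such a germ is l.c.i. as well, and therefore the canonical factorization $\SS^{\lci}\to\SS\to S$ from (\ref{eqn_diagram_ind_lci}) has $\hat\pi$ an isomorphism germ-by-germ. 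Gluing these local isomorphisms yields a canonical isomorphism of Deligne-Mumford stacks $\SS^{\lci}\cong\SS$ over $S$.

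Next I would promote this to families. For any test scheme $T$ and any flat family $\sS\to T$ of $\qq$-Gorenstein deformations of s.l.c. surfaces belonging to $M$, every geometric fiber satisfies Condition \ref{condition_star}, so the local analytic construction of the $\lci$ cover in \S \ref{subsubsec_universal_discriminant}--\S \ref{subsubsec_discriminant} produces the trivial cover on each germ, and hence the associated family $\SS^{\lci}/T$ of $\lci$ covering stacks in Definition \ref{defn_family_lci} coincides with the family $\SS/T$ of index one covering stacks. Thus the two moduli functors $M^{\lci}$ and $M^{\ind}$ assign the same groupoid to every $T$, the natural morphism $\tilde f\colon M^{\lci}\to M^{\ind}$ from diagram (\ref{eqn_diagram_fiber_product2}) is an isomorphism, and the first assertion follows.

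For the second assertion, assume that every $S\in M$ has only l.c.i.\ singularities. Then $S$ is Gorenstein, $\omega_S$ is invertible, so the index one cover of every germ is trivial and $\SS\cong S$; by Corollary \ref{cor_lci_index_one} this gives $M^{\ind}\cong M$. Moreover l.c.i.\ surfaces automatically satisfy Condition \ref{condition_star} (there are no simple elliptic, cusp, degenerate cusp or quotient singularities on them with embedded dimension $\ge 5$), so the first part applies to give $M^{\lci}\cong M^{\ind}$, and chaining the two isomorphisms yields $M^{\lci}\cong M^{\ind}\cong M$. The only point requiring any care is the compatibility of the canonical local identifications under the $G$-action and under the $\qq$-Gorenstein gluing data, but this is immediate from the canonicity of the index one cover and of the $\lci$ cover as \'etale-local constructions, so I expect no serious obstacle beyond bookkeeping.
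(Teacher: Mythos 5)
Your proposal is correct and follows the same route the paper does: the paper's own proof is the one-line remark that the corollary ``is from the construction of the $\lci$ covering Deligne-Mumford stacks,'' and what you have written is exactly the detailed unpacking of that construction --- under Condition \ref{condition_star} the relevant germs have embedded dimension at most $4$, so by Laufer--Stevens they are l.c.i., the $\lci$ cover construction of \S \ref{subsubsec_universal_discriminant}--\S \ref{subsubsec_discriminant} produces nothing new at any germ, hence $\SS^{\lci}=\SS$ on fibers and in families, and the two functors coincide; the second assertion is then Corollary \ref{cor_lci_index_one}. You have added useful explicitness (in particular the reduction to a germ-by-germ statement and the remark about $G$-equivariance of the canonical constructions), but the underlying argument is the same one the paper intends.
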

\begin{proof}
The corollary is from the construction of the $\lci$ covering Deligne-Mumford stacks. 
\end{proof}

\subsection{The equivariant smoothing component}\label{subsec_smoothing_component}

We fix the moduli stack $M=\overline{M}_{K^2, \chi,N}$ for  a large divisible enough $N\in\zz_{>0}$.  Recall that a stable surface 
$S\in M$ is called $smoothable$ if there exists a one-parameter family $f: \sS\to T$ of stable s.l.c. surfaces such that 
$f^{-1}(0)=S$, and the generic fiber $f^{-1}(t)$ for $t\neq 0$ is either a smooth surface or an s.l.c. surface with only DuVal singularities. 
Let 
$$M^{\sm}:=\overline{M}^{\sm}_{K^2, \chi,N}$$
be the subfunctor of $M=\overline{M}_{K^2, \chi,N}$ where all the fibers are smoothable surfaces.  Then from \cite[5.6 Corollary]{Kollar90},  \cite{Alexeev2}, \cite{HMX14} the moduli stack 
$M^{\sm}\subset M$ is a  projective closed substack  of $M$ with projective coarse moduli space. 

Let us consider  s.l.c. surface singularity  germs $(S, x)$ in $M$ such that the singularities are   in Remark \ref{rmk_lci_coveringDM}.  
We always consider the smoothings of the germs $(S, x)$ in $M$ that are obtained from the equivariant smoothings of the 
$\lci$ cover 
$$\pi^{\lci}: (\widetilde{S},x)\to (S,x)$$
with transformation group $G$.   
We let $M_{\eq}^{\sm}\subset M^{\sm}$  be the equivariant smoothing components of $M$. 
We actually show that $M_{\eq}^{\sm}= M^{\sm}$.

We first  include a review for the dimensions of the smoothing components.  The $\lci$ cover 
$(\widetilde{S},x)$ admits a $G$-equivariant one-parameter smoothing
\begin{equation}\label{eqn_smoothing_one}
f^{\lci}: (\widetilde{\sS}, x)\to \Delta
\end{equation}
inducing the smoothing $(\sS, x)\to \Delta$ of $(S, x)$, where $\Delta$ is an analytic disc.  

The germ $(S, x)$ has a miniversal deformation 
\[
\xymatrix{
(S, x)\ar@{^{(}->}[r]^{i}& (\sS, x)\ar[d]^{F}\\
& (T,t),
}
\]
where $(T,t)\subset M$.  We know that $(S, x)$ has  non-zero obstruction spaces $\sT^q_{\QG}(S)$ for $q\ge 2$, see \cite{Jiang_2021}.
This implies that $(T,t)$ is in general singular and may contain irreducible components of various dimensions. 
Let 
$$(T^\prime, t)\subset (T,t)$$
be the smoothing component, i.e., the component in $T$ such that $F$ has smooth generic fibers or generic fibers  only with DuVal singularities. 
Let 
$$j: (\Delta, 0)\to (T^\prime, t)$$
be the inclusion of the unit disc to $(T^\prime, t)$. Then we have the pullback
$$f:=F^*(j): (\sX, x)\to (\Delta, 0)$$
 where we use $(\sX, x)$ as the one-parameter family. 
 
Let 
$\sO:=\sO_{\Delta,0}$ be the local ring and we have that 
$$\Hom_{\sO_{T,t}}(\Omega_{T,t}, \sO_{T,t})\otimes \sO\cong \sT_{T,t}\otimes_{\sO_{T,t}}\sO$$
where $\sT_{T,t}$ is the tangent sheaf of $(T,t)$.   
For the  singularity germ $(S, x)$, we need to work on the index one covers, and for the (higher) tangent sheaves $\sT^q_{S,x}$, we should use 
$\sT^q_{\QG}(S)$.   All the arguments below work for  tangent sheaves $\sT_{\QG}^q(S)$ for the index one covers  and we just fix to general tangent sheaves. 

Let $\sT^{i}_{\sX/\Delta, x}$ be the relative (higher) tangent sheaves of $\sX/\Delta$.  
From \cite[\S 2]{GL},  there is a morphism 
$$\Phi: \sT_{\sX/\Delta, x}\to \sT_{S, x}$$
which is coming from the exact sequence:
\begin{equation}\label{eqn_exact_sequence_smoothing1}
0\to \sT_{\sX/\Delta, x}\stackrel{f}{\rightarrow} \sT_{\sX/\Delta, x}\rightarrow \sT_{S,x}\rightarrow \sT^{1}_{\sX/\Delta, x}\rightarrow \sT^{1}_{\sX/\Delta, x}
\rightarrow \sT^1_{S,x}
\end{equation}
as in \cite[\S 2]{GL}.  Then the main result in \cite{GL}  is: 
\begin{equation}\label{eqn_dimension1}
\dim(T^\prime, t)=\dim_{\mathbf{k}}(\Coker(\Phi)).
\end{equation}

Now let 
\[
\xymatrix{
(\widetilde{S}, x)\ar@{^{(}->}[r]^{i}& (\widetilde{\sS}, x)\ar[d]^{\tilde{F}}\\
& (\widetilde{T},t),
}
\]
be the $G$-equivariant miniversal deformation family such that 
$(\widetilde{T}, t)\subset (T, t)$, since any 
$G$-equivariant  deformation family induces a deformation family of $(S, x)$. 
Let 
$j: (\Delta,0)\to (\widetilde{T}, t)$ be the inclusion and let 
$$\tilde{f}:=\widetilde{F}^*(j): (\widetilde{\sX}, x)\to (\Delta,0)$$
be the $G$-equivariant  one-parameter  family of $(S,x)$ such that 
$(\widetilde{\sX}, x)/G\cong (\sX,x)$.
Thus we have the exact sequence:
\begin{equation}\label{eqn_exact_sequence_smoothing2}
0\to \sT_{\widetilde{\sX}/\Delta, x}\stackrel{f}{\rightarrow} \sT_{\widetilde{\sX}/\Delta, x}\rightarrow \sT_{\widetilde{S},x}\rightarrow \sT^{1}_{\widetilde{\sX}/\Delta, x}\rightarrow \sT^{1}_{\widetilde{\sX}/\Delta, x}
\rightarrow \sT^1_{\widetilde{S},x}
\end{equation}
and we have the $G$-invariant part:
\begin{equation}\label{eqn_exact_sequence_smoothing3}
0\to \sT^G_{\widetilde{\sX}/\Delta, x}\stackrel{f}{\rightarrow} \sT^G_{\widetilde{\sX}/\Delta, x}\rightarrow \sT^G_{\widetilde{S},x}\rightarrow (\sT^{1}_{\widetilde{\sX}/\Delta, x})^G\rightarrow (\sT^{1}_{\widetilde{\sX}/\Delta, x})^G
\rightarrow (\sT^1_{\widetilde{S},x})^G.
\end{equation}
We also have the morphism
$$\Phi^G: \sT^G_{\widetilde{\sX}/\Delta, x}\to \sT^G_{\widetilde{S},x}.$$

\begin{lem}\label{lem_dimension_smoothing}
Let $(\widetilde{T}^\prime, t)\subset (\widetilde{T}, t)$ be the $G$-equivariant 
smoothing component of $(S,x)$, then 
$$\dim((T^\prime, t))=\dim((\widetilde{T}^\prime, t)).$$
\end{lem}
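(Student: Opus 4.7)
The plan is to deduce the dimension equality from the Greuel--Looijenga type formula (\ref{eqn_dimension1}) applied to both sides, after identifying the two cokernels via the $D$-invariant functor. First, I would apply (\ref{eqn_dimension1}) directly to the smoothing component $(T^\prime, t)\subset (T, t)$ of the miniversal $\qq$-Gorenstein deformation of $(S, x)$, working on the index one covering germ so that the relevant tangent sheaves are $\sT_{\QG}^q(S,x) = \pi_*\sE xt^q(\ll^{\bullet}_{\SS}, \sO_{\SS})$, which yields
\[
\dim(T^\prime, t) = \dim_{\mathbf{k}}\Coker(\Phi),
\]
where $\Phi\colon \sT_{\sX/\Delta, x}\to \sT_{S,x}$ is the map coming from (\ref{eqn_exact_sequence_smoothing1}).

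Next, I would establish the equivariant analogue of the Greuel--Looijenga formula: since $D$ is a finite group and we work in characteristic zero, the functor of $D$-invariants is exact, so taking invariants of the exact sequence (\ref{eqn_exact_sequence_smoothing2}) gives the sequence (\ref{eqn_exact_sequence_smoothing3}) and an induced map $\Phi^D$. The miniversal $D$-equivariant base $(\widetilde T, t)$ represents the $D$-invariant subfunctor of the miniversal deformations of $(\widetilde S, x)$, and the same  argument as in \cite{GL} applied to the $D$-equivariant deformation theory (or, equivalently, the ordinary argument applied to the quotient Deligne--Mumford stack $[\widetilde S/D]$, whose cotangent complex computes precisely the $D$-invariant extensions) produces
\[
\dim(\widetilde T^\prime, t) = \dim_{\mathbf{k}} \Coker(\Phi^D).
\]

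The core step is then to identify $\Coker(\Phi)$ with $\Coker(\Phi^D)$. Because $(\sX, x) = (\widetilde{\sX}/D, x)$ and $(S, x) = (\widetilde S/D, x)$, and because the $\qq$-Gorenstein tangent sheaves are by definition the $D$-invariant parts of the tangent sheaves on the $\lci$-cover (compare \S \ref{subsec_Q-Gorenstein_deformation} and Proposition \ref{prop_local_deformation_obstruction_slc_univ}), we get canonical identifications
\[
\sT_{\sX/\Delta, x} \cong (\sT_{\widetilde\sX/\Delta, x})^D, \qquad \sT_{S, x} \cong (\sT_{\widetilde S, x})^D,
\]
and analogously for the $\sT^1$'s, compatible with all maps in the six-term sequences. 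Under this identification the two sequences coincide and $\Phi$ is identified with $\Phi^D$, so $\Coker(\Phi)\cong \Coker(\Phi^D)$ as $\mathbf{k}$-vector spaces.

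The main obstacle, and the step that has to be handled with care, is the identification of the $\qq$-Gorenstein tangent and obstruction sheaves $\sT^q_{\QG}$ of the quotient with the $D$-invariant parts of the tangent sheaves of the $\lci$-cover, particularly for $q=1$ where $\sT^1$ encodes the deformations of the singularity itself. This is exactly the local content of the correspondence between $D$-equivariant deformations of $(\widetilde S, x)$ and $\qq$-Gorenstein deformations of $(S, x)$ from Theorem \ref{thm_minimally_elliptic_universal2}, Theorem \ref{thm_minimally_elliptic_universal} and Proposition \ref{prop_local_deformation_obstruction_slc_univ}; once this identification is in place, the conclusion $\dim(T^\prime, t) = \dim(\widetilde T^\prime, t)$ is immediate from the two dimension formulas.
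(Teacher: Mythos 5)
Your proof is correct and follows essentially the same route as the paper: apply the Greuel--Looijenga formula (\ref{eqn_dimension1}) to both the $\qq$-Gorenstein deformation of $(S,x)$ and the $D$-equivariant deformation of $(\widetilde S, x)$, take $D$-invariants of (\ref{eqn_exact_sequence_smoothing2}) to get (\ref{eqn_exact_sequence_smoothing3}), identify $\sT_{\sX/\Delta, x}\cong (\sT_{\widetilde\sX/\Delta, x})^D$ and $\sT_{S,x}\cong (\sT_{\widetilde S,x})^D$ so that $\Phi$ is identified with $\Phi^D$, and conclude the cokernels (hence the dimensions) agree. The only difference is that you supply more of the surrounding justification (exactness of the $D$-invariant functor in characteristic zero, the identification of $\sT^q_{\QG}$ with $D$-invariants coming from the correspondence between $D$-equivariant deformations of the $\lci$ cover and $\qq$-Gorenstein deformations of the quotient), which the paper leaves implicit.
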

\begin{proof}
Same proof as in  \cite[\S 2]{GL} implies that 
$$\dim((\widetilde{T}^\prime, t))=\dim_{\mathbf{k}}(\Coker(\Phi^G)).$$
Since $\sT^G_{\widetilde{\sX}/\Delta, x}\cong \sT_{\sX/\Delta, x}$, and $\sT_{\widetilde{S},x}^G\cong \sT_{S,x}$, we have 
$\Phi^G=\Phi$.  Thus, the result follows from (\ref{eqn_dimension1}). 
\end{proof}

Finally we have the following result:
\begin{thm}\label{thm_smoothing_component}
Let $M=\overline{M}_{K^2, \chi, N}$ be a KSBA moduli stack of s.l.c. surfaces, and let $M^{\sm}\subset M$ be  the  smoothing component.  Then there exists  a moduli stack $M_{\eq}^{\lci, \sm}$ of  $\lci$ covers 
 and a  proper morphism $f^{\lci}: M_{\eq}^{\lci, \sm}\to M^{\sm}$.
\end{thm}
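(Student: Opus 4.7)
The plan is to obtain $M_{\eq}^{\lci,\sm}$ as the pullback of the moduli stack $M^{\lci}$ from Theorem \ref{thm_universal_covering_stack} along the closed embedding of the equivariant smoothing component, and then to inherit finiteness from $f^{\lci}:M^{\lci}\to M$. First I would define
$$M_{\eq}^{\lci,\sm} := M^{\lci}\times_{M} M_{\eq}^{\sm},$$
the fibre product of the proper finite morphism $f^{\lci}$ of Theorem \ref{thm_universal_covering_stack} with the closed immersion $M_{\eq}^{\sm}\hookrightarrow M$. Since $M_{\eq}^{\sm}$ is a projective closed substack of $M$ by the discussion in \S \ref{subsec_smoothing_component}, this fibre product is a closed substack of $M^{\lci}$; because finite morphisms are stable under base change, the induced morphism $f^{\lci}:M_{\eq}^{\lci,\sm}\to M_{\eq}^{\sm}$ is automatically finite, and $M_{\eq}^{\lci,\sm}$ is a proper Deligne-Mumford stack.

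Next I would verify that this stack actually represents the intended moduli functor of families $\SS^{\lci}/T$ of $\lci$ covering Deligne-Mumford stacks whose induced $\qq$-Gorenstein families $\sS/T$ lie in $M_{\eq}^{\sm}$. Given such a family, the universal property of $M^{\lci}$ yields a morphism $T\to M^{\lci}$; the fact that $\sS/T$ factors through $M_{\eq}^{\sm}$ then provides the required factorisation $T\to M_{\eq}^{\lci,\sm}$, and condition (6) of Definition \ref{defn_family_lci} gives the converse direction.

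The non-trivial content is to check that the image of $f^{\lci}:M_{\eq}^{\lci,\sm}\to M_{\eq}^{\sm}$ is all of $M_{\eq}^{\sm}$, so that $M_{\eq}^{\lci,\sm}$ really parametrises the $\lci$ lifts over the entire equivariant smoothing component rather than only a proper closed piece of it. Here I would invoke Lemma \ref{lem_dimension_smoothing}: at each germ $(S,x)$ appearing on the equivariant smoothing locus, the $D$-equivariant smoothing component of the $\lci$ cover $(\widetilde{S},x)$ has the same dimension as the smoothing component of $(S,x)$ itself, through the identification $\sT^{D}_{\widetilde{S},x}\cong \sT_{S,x}$. Combined with the one-parameter lifting argument from the proof of properness of $f^{\lci}$ in Theorem \ref{thm_universal_covering_stack}, in which a ramified base-change $t\mapsto t'^d$ produced a $D$-equivariant smoothing from the quotient smoothing, this yields a lift over a dense open of each irreducible component of $M_{\eq}^{\sm}$, and the lift extends by properness of $f^{\lci}$ to a finite surjection.

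The main obstacle I anticipate is the local-to-global bookkeeping at boundary strata where several simple elliptic, cusp, or degenerate cusp germs coexist on one surface: one must simultaneously perform the ramified base-change for every such germ while preserving the global $\qq$-Gorenstein family, and Lemma \ref{lem_dimension_smoothing} has to be applied germ-by-germ before assembling the local $\lci$ covering Deligne-Mumford stacks into a single global $\SS^{\lci}$. The possible non-representability of $f^{\lci}$, due to the stacky inertia of the transformation groups $D$ at these germs, is what forces the use of the fibre product rather than a scheme-theoretic preimage, but it is irrelevant for finiteness since finiteness is preserved under arbitrary base change of Deligne-Mumford stacks.
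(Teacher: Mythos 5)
Your approach is essentially the one the paper takes, just stated more explicitly: the paper constructs $M_{\eq}^{\lci,\sm}$ by restricting the moduli functor of $\lci$ covers from Definition \ref{defn_family_lci} to those families inducing $\qq$-Gorenstein families in $M_{\eq}^{\sm}$, which is precisely your fibre product $M^{\lci}\times_M M_{\eq}^{\sm}$, and then it invokes Lemma \ref{lem_dimension_smoothing} together with the machinery in the proof of Theorem \ref{thm_universal_covering_stack} exactly as you do. Your extra steps---checking the moduli interpretation via universal properties and verifying surjectivity over the dense open part of each component using the ramified base-change $t\mapsto t'^d$---are folded implicitly into the paper's one-line appeal to Theorem \ref{thm_universal_covering_stack}, so the two arguments coincide in substance.
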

\begin{proof}
We know that the smoothing of bad singularity germs $(S, x)$ in Remark \ref{rmk_lci_coveringDM} are given by the equivariant smoothing of the 
$\lci$ covers.  Thus, we restrict our moduli functor of $\lci$ covers in 
 Definition \ref{defn_family_lci}, and  Theorem \ref{thm_universal_covering_stack} to $M_{\eq}^{\lci, \sm}$ such that  it induces the functor of the smoothing component $M^{\sm}$.  Then the proof in 
Theorem \ref{thm_universal_covering_stack} implies the result. 
\end{proof}

\section{The  virtual fundamental class}\label{sec_semi-POT_MG}

\subsection{Universal family}\label{subsec_universal_family}

Let $M_N^{\lci}=\overline{M}^{\lci}_{K^2,\chi,N}$ be the moduli stack of lci covering Deligne-Mumford stacks in Theorem \ref{thm_universal_covering_stack}.
Definition \ref{defn_S_equivalence}
implies that the universal families are not unique, since
two fake lci covering Deligne-Mumford stacks 
$\SS^{\lci}_1$ and $\SS^{\lci}_2$ in a $\mathbb{S}$-equivalence class have two different choices in the universal family. 
Note that this only involves simple elliptic singularities of degrees $6$ and $7$.

\begin{prop}\label{thm_cotangent_complex_universal_family}
 There exists a  universal family 
 $$p: \scM_N^{\lci}\to M_N^{\lci}$$ 
 for the moduli stack $M_N^{\lci}$ of lci covers.  
\end{prop}
\begin{proof}
For any flat family 
$$
f: \SS^{\lci}\to T
$$
over a scheme $T$, 
where  $T\to M_N^{\lci}$ is a morphism,  if the family does not contain any fake lci covering Deligne-Mumford stack, then there is a unique universal family which is just $\SS^{\lci}$. 

Suppose that  there are fake lci covering Deligne-Mumford stacks in the fibers of $f: \SS^{\lci}\to T$. 
Definition \ref{defn_lci_DM_base} implies that there is a divisor $B\subset T$, and a smooth $\pp^1$-fibration $\mathcal{E}\to B$ for which the normal bundle to 
$\mathcal{E}$ restricts to $\sO(-1)\oplus \sO(-1)$ on every fiber.  There may exist a  birational modification 
$f^\prime: \SS^{\prime \lci}\to T$ such that the central fibers $\SS_0^{\lci}$ and $\SS_0^{\prime \lci}$ are $\mathbb{S}$-equivalent. 
We choose one family $\SS^{\lci}$ over $T$, such that when restricted to the generic locus $T^{\circ}$ (i.e., the locus that the fibers are lci covering Deligne-Mumford stacks), $\SS^{\lci}$ gives the universal family 
$\SS^{\lci}|_{T^{\circ}}$. 
Thus, there is a  universal family 
$p: \scM_N^{\lci}\to M_N^{\lci}$. 
\end{proof}

\subsection{Perfect obstruction theory}\label{subsubsec_POT}

In this section we prove  there is a perfect obstruction theory on the moduli stack  $M^{\lci}$ of $\lci$ covers over the moduli stack $M$ of s.l.c. surfaces. 

Proposition \ref{thm_cotangent_complex_universal_family} proves that there is a  universal family 
$$p^{\lci}: \scM^{\lci}\to M^{\lci}.$$  
Let  $\ll^{\bullet}_{\scM^{\lci}/M^{\lci}}$ be the relative cotangent complex of $p^{\lci}$  and 
$\omega^{\lci}:=\omega_{\scM^{\lci}/M^{\lci}}[2]$. We consider 
$$E^{\bullet}_{M^{\lci}}:=Rp^{\lci}_{*}\left(\ll^{\bullet}_{\scM^{\lci}/M^{\lci}}\otimes\omega^{\lci}\right)[-1].$$
The relative dualizing sheaf $\omega_{\scM^{\lci}/M^{\lci}}$  satisfies the property
$$\omega_{\scM^{\lci}/M^{\lci}}|_{(p^{\lci})^{-1}(t)}\cong \omega_{\SS^{\lci}_t},$$
where $\omega_{\SS^{\lci}_t}$ is the dualizing sheaf of the  $\lci$ covering Deligne-Mumford stack $\SS^{\lci}_t$ which is 
invertible. 

When restricting to the smoothing component $M^{\sm}\subset M$, we get the universal family $p^{\lci,\sm}: \scM^{\lci,\sm}\to M^{\lci,\sm}$ and the complex 
$$E^{\bullet}_{M^{\lci,\sm}}:=Rp^{\lci,\sm}_{*}\left(\ll^{\bullet}_{\scM^{\lci,\sm}/M^{\lci,\sm}}\otimes\omega^{\lci}\right)[-1].$$

\begin{thm}\label{thm_POT_Muniv}
Assume that $N$ is sufficiently large divisible. 
Let $M=\overline{M}_{K^2,\chi, N}$ be a connected component of the moduli space of  stable s.l.c. general type surfaces with invariants 
$K^2, \chi, N$, and $f^{\lci}: M^{\lci}\to M$ be  the moduli stack of $\lci$ covers over $M$.   Then the complex $E^{\bullet}_{M^{\lci}}$ defines a perfect   obstruction theory (in the sense of Behrend-Fantechi)
$$\phi^{\lci}: E^{\bullet}_{M^{\lci}}\to \ll_{M^{\lci}}^{\bullet}$$
induced by the Kodaira-Spencer map $\ll^{\bullet}_{\scM^{\lci}/M^{\lci}}\to (p^{\lci})^{*}\ll_{M^{\lci}}^{\bullet}[1]$.

If we restrict the perfect obstruction theory  $\phi^{\lci}$ to the smoothing component, we get a perfect   obstruction theory 
$$\phi^{\lci,\sm}: E^{\bullet}_{M_{\eq}^{\lci,\sm}}\to \ll_{M_{\eq}^{\lci,\sm}}^{\bullet}.$$
\end{thm}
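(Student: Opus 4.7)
The plan is to apply Theorem~\ref{thm_BF_prop6.1} (Behrend--Fantechi's Proposition~6.1) to the universal family $p^{\lci}: \scM^{\lci}\to M^{\lci}$ and then verify perfectness of $E^{\bullet}_{M^{\lci}}$ by exploiting the key property that the fibers $\SS^{\lci}_t$ have only locally complete intersection singularities. First, I would verify the hypotheses of Theorem~\ref{thm_BF_prop6.1}: the morphism $p^{\lci}$ is projective and flat by construction of the fine moduli stack $M^{\lci}$ in Theorem~\ref{thm_universal_covering_stack}, and because every fiber $\SS^{\lci}_t$ is Gorenstein (only l.c.i.\ singularities), the relative dualizing sheaf $\omega_{\scM^{\lci}/M^{\lci}}$ is a line bundle, so $p^{\lci}$ is relative Gorenstein. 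Then the Kodaira--Spencer map produces an obstruction theory $\phi^{\lci}: E^{\bullet}_{M^{\lci}}\to \ll_{M^{\lci}}^{\bullet}$ by the same extension-group calculation as in the proof of Theorem~\ref{thm_OT_slc}: for any $T=\spec(A)\to M^{\lci}$ with square-zero extension $T\hookrightarrow\overline{T}$ of ideal $J$, relative Grothendieck duality (using invertibility of $\omega_{\scM^{\lci}/M^{\lci}}$) gives the identification
$$\Ext^{k}_{\sO_{\scM^{\lci}_T}}\!\bigl(\ll^{\bullet}_{\scM^{\lci}_T/T},\,q^{*}J\bigr)\;\cong\; \Ext^{k-1}_{\sO_T}\!\bigl(f^{*}E^{\bullet}_{M^{\lci}},\,J\bigr),$$
and Theorem~\ref{thm_obstruction_theory_equ_conditions}(3) then promotes $\phi^{\lci}$ to an obstruction theory.

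The crucial new step is to show that $E^{\bullet}_{M^{\lci}}$ has perfect amplitude in $[-1,0]$. This is where the difference between $\SS$ and $\SS^{\lci}$ enters decisively. Whereas for the index one cover $\SS$ the higher obstruction groups $T^{i}_{\QG}(S,\sO_S)$ may be nonzero for $i\geq 3$ (cf.\ Remark~\ref{rmk_slc_minimally_elliptic}) when $S$ carries simple elliptic, cusp, or degenerate cusp singularities of embedded dimension $>5$, the $\lci$ covering stack $\SS^{\lci}$ has only l.c.i.\ singularities, so the local sheaves $\widehat{\sT}^{q}_{\QG}(S) = \sE xt^{q}(\ll^{\bullet}_{\SS^{\lci}},\sO_{\SS^{\lci}})$ vanish for $q\geq 2$. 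Combined with $H^{p}(F)=0$ for $p\geq 3$ on a two-dimensional $\SS^{\lci}_t$, the local-to-global spectral sequence yields $\widehat{T}^{i}_{\QG}(S,\sO_S)=0$ for $i\geq 3$, which is the content of Lemma~\ref{lem_universal_higher_tangent}. Now at a geometric point $t\in M^{\lci}$, flat base change together with Serre--Grothendieck duality on the Gorenstein stack $\SS^{\lci}_t$ identifies the cohomology of $(E^{\bullet}_{M^{\lci}})^{\vee}|_t$ with the $G$-invariant extension groups $\widehat{T}^{\bullet}_{\QG}(S_t,\sO_{S_t})^{G}$; only $\widehat{T}^{1}$ and $\widehat{T}^{2}$ contribute, which forces $E^{\bullet}_{M^{\lci}}$ to have perfect amplitude in $[-1,0]$ after verifying vanishing of higher cohomology sheaves locally in $M^{\lci}$. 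Combined with Step~1, this upgrades $\phi^{\lci}$ to a perfect obstruction theory in the sense of \cite{BF}.

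For the smoothing component, I would simply restrict the construction: by Theorem~\ref{thm_smoothing_component}, the equivariant smoothing component $M^{\lci,\sm}_{\eq}\hookrightarrow M^{\lci}$ is a closed substack, and $p^{\lci,\sm}:\scM^{\lci,\sm}_{\eq}\to M^{\lci,\sm}_{\eq}$ is the pullback of $p^{\lci}$. All properties used above (projectivity, flatness, relative Gorenstein, l.c.i.\ singularities of fibers, vanishing of higher $\widehat{T}^{i}$) are preserved under this restriction, so the restricted morphism $\phi^{\lci,\sm}:E^{\bullet}_{M^{\lci,\sm}}\to\ll^{\bullet}_{M^{\lci,\sm}}$ is again a perfect obstruction theory.

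The main obstacle I anticipate is the careful bookkeeping of degree shifts in the Serre--Grothendieck duality identification of $(E^{\bullet}_{M^{\lci}})^{\vee}|_t$ with $\widehat{T}^{\bullet}_{\QG}(S_t,\sO_{S_t})^{G}$, to ensure that the vanishing Lemma~\ref{lem_universal_higher_tangent} translates exactly into perfect amplitude $[-1,0]$ (rather than some shifted interval), together with checking this duality on the Deligne--Mumford stack $\SS^{\lci}_t$ (as opposed to a scheme) and its compatibility with the finite $G$-action and the passage between $\SS^{\lci}_t$ and its coarse moduli $S_t$.
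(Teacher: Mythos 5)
Your proposal is correct and follows essentially the same route as the paper: apply Theorem~\ref{thm_BF_prop6.1} to the projective, flat, relative Gorenstein universal family $p^{\lci}$ to obtain the obstruction theory, then invoke the l.c.i.\ property of the fibers $\SS^{\lci}_t$ and Lemma~\ref{lem_universal_higher_tangent} to kill $\widehat{T}^i_{\QG}$ for $i\ge 3$ and conclude perfect amplitude $[-1,0]$, with the smoothing-component statement following by restriction. Your phrasing is a little more careful than the paper's on one point worth keeping: the paper says $E^{\bullet}_{M^{\lci}}$ restricted to $t$ ``calculates'' $\widehat{T}^i_{\QG}(S_t,\sO_{S_t})$, whereas it is really the fiberwise dual $(E^{\bullet}_{M^{\lci}})^{\vee}|_t$ that does so, via flat base change and Grothendieck duality on the Gorenstein stack $\SS^{\lci}_t$ — exactly the degree-shift bookkeeping you flag at the end.
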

\begin{proof}
Since the universal family $p^{\lci}$ is a flat, projective and relative Gorenstein morphism between Deligne-Mumford stacks, Theorem \ref{thm_BF_prop6.1} (\cite[Proposition 6.1]{BF}) implies that 
$\phi^{\lci}$ is an obstruction theory. Detailed  analysis is the same as Theorem \ref{thm_OT_slc}. 

To show that $\phi^{\lci}$ is a perfect obstruction theory, 
it is sufficient to show that the complex 
$$E^{\bullet}_{M^{\lci}}=Rp^{\lci}_{*}\left(\ll^{\bullet}_{\scM^{\lci}/M^{\lci}}\otimes\omega^{\lci}\right)[-1]$$ 
is of perfect amplitude contained in $[-1, 0]$.  The complex $E^{\bullet}_{M^{\lci}}$, when restricted to every geometric point $t$ in $M^{\lci}$, calculates the cohomology 
$\widehat{T}^i_{\QG}(S_t, \sO_{S_{t}})$, where $\SS^{\lci}_t\to S_t$ is the $\lci$  covering Deligne-Mumford stack corresponding to the point $t$. From Lemma 
\ref{lem_universal_higher_tangent}, the cohomology spaces   $\widehat{T}^i_{\QG}(S_t, \sO_{S_{t}})$ only survive when $i=1, 2$, and all the higher obstruction spaces  vanish.  Therefore the obstruction theory 
$\phi^{\lci}$ is perfect.  The last statement is similar. 
\end{proof}

From Corollary \ref{prop_moduli_slc_lci}, we have 
\begin{cor}
If for any KSBA moduli space $M=\overline{M}_{K^2, \chi, N}$, the $\qq$-Gorenstein deformation of  bad simple elliptic singularities, cusp singularities, and degenerate cusp singularities  with higher embedded dimension $\ge 6$ can always be lifted to $\lci$ covers by the same type $\lci$ singularities, then there exists a unique universal family $p: \scM^{\lci}\to M^{\lci}$ and the perfect obstruction theory is also unique.
\end{cor}
\begin{proof}
    If the conditions in the corollary hold, then there are no fake lci covering Deligne-Mumford stacks in $M$ and the universal family of the moduli functor of lci covers is unique. 
\end{proof}

\begin{cor}\label{thm_POT_Mind_1}
Let $M=\overline{M}_{K^2, \chi, N}$ be the moduli stack of stable surfaces of general type with invariants $K^2, \chi, N$. 
If  all the s.l.c. surfaces in $M$ satisfy the Condition  \ref{condition_star},
then the  moduli stack $M^{\lci}$ of $\lci$ covers  is the same as the moduli stack $M^{\ind}$, and the  obstruction theory  for the moduli stack $M^{\ind}$ of index one covers  in Theorem  \ref{thm_OT_slc} 
is perfect in the sense of Behrend-Fantechi.  
\end{cor}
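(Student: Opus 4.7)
The proof naturally splits into two parts matching the two assertions, and both reduce to local analysis of the singularities appearing in $S \in M$.

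\textbf{Step 1: Identification $M^{\lci} = M^{\ind}$.}  The plan is to inspect the construction of the $\lci$ covering Deligne-Mumford stack $\pi^{\lci}: \SS^{\lci} \to S$ in \S\ref{subsubsec_universal_discriminant}--\S\ref{subsubsec_lci_slc} germ by germ. Recall from Remark \ref{rmk_lci_coveringDM} that $\SS^{\lci}$ is defined to differ from the index one covering stack $\SS$ \emph{only} at germs $(S,x)$ that are simple elliptic, a cusp, a degenerate cusp, or a $\zz_2,\zz_3,\zz_4,\zz_6$-quotient of these, with local embedded dimension $\geq 5$. Under Condition \ref{condition_star}, every such germ in any $S \in M$ has embedded dimension $\leq 4$; by \cite[Theorem 3.13]{Laufer} and \cite{Stevens}, these germs are already l.c.i., so no modification is required and $\SS^{\lci} = \SS$ canonically. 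Consequently the moduli functors $M^{\lci}$ and $M^{\ind}$ coincide on every scheme $T$ (the families $\SS^{\lci}/T$ and $\SS/T$ agree), giving an isomorphism of Deligne-Mumford stacks $M^{\lci} \cong M^{\ind}$.

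\textbf{Step 2: Perfectness of $\phi^{\ind}$.}  Under the identification of Step 1, the universal families $p^{\lci}: \scM^{\lci} \to M^{\lci}$ and $p^{\ind}: \scM^{\ind} \to M^{\ind}$ agree, as do their relative dualizing sheaves (both equal $\omega_{\SS_t}$ on fibers, which is invertible since $\SS_t = \SS_t^{\lci}$ is Gorenstein by Step 1). Hence the complexes $E^{\bullet}_{M^{\ind}}$ of Theorem \ref{thm_OT_slc} and $E^{\bullet}_{M^{\lci}}$ of Theorem \ref{thm_POT_Muniv} are canonically isomorphic, and the two Kodaira-Spencer-induced obstruction theory morphisms $\phi^{\ind}$ and $\phi^{\lci}$ agree in $D(\sO_{M^{\ind}})$. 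Since $\phi^{\lci}$ is perfect by Theorem \ref{thm_POT_Muniv}, so is $\phi^{\ind}$.

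As a sanity check (and an independent route), one can verify perfectness directly: by the formula $(E^{\bullet}_{M^{\ind}}|_t)^{\vee} = Rp^{\ind}_*\cHom(\ll^{\bullet}_{\SS_t}, \sO_{\SS_t})$ established in the proof of Theorem \ref{thm_OT_slc}, the cohomology of $(E^{\bullet}_{M^{\ind}}|_t)^{\vee}$ computes $T^i_{\QG}(S_t, \sO_{S_t})^G$, and Proposition \ref{prop_slc_higher_cohomology} asserts precisely that these higher obstruction spaces vanish for $i \geq 3$ under Condition \ref{condition_star}. Thus $E^{\bullet}_{M^{\ind}}$ has perfect amplitude contained in $[-1,0]$, which is Definition \ref{defn_perfect_obstruction_theory}.

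\textbf{Main obstacle.}  There is no deep obstacle here once the $\lci$ cover machinery has been set up: the corollary is essentially a tautology extracted from the construction of $\SS^{\lci}$. The only point that requires some care is confirming that Condition \ref{condition_star} rules out \emph{every} source of $\sT^q_{\QG}(S) \neq 0$ for $q \geq 2$ — i.e., that no other s.l.c. germ appearing in $S$ can produce higher tangent sheaves — which is handled by the classification recalled in Proposition \ref{prop_slc_singularity_cl} together with the fact that the remaining germs (semi-log-terminal, normal crossing, pinch, Du Val, and class $T$-singularities) all have l.c.i. index one covers, so their contribution to $\sT^q_{\QG}$ vanishes for $q \geq 2$.
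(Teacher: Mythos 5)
Your proof is correct and follows essentially the same route as the paper's: the paper's proof simply observes that under Condition \ref{condition_star} the index one covering Deligne-Mumford stack has only l.c.i.\ singularities (Proposition \ref{prop_slc_higher_cohomology}), whence $M^{\lci}=M^{\ind}$ by construction and the obstruction theory of Theorem \ref{thm_OT_slc} coincides with the perfect one of Theorem \ref{thm_POT_Muniv}. Your two-step presentation and the sanity check via the local-to-global spectral sequence are merely an expansion of that same argument.
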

\begin{proof}
If  the Condition  \ref{condition_star} holds, then the index one covering Deligne-Mumford stack $\SS\to S$ has only l.c.i. singularities.
Therefore,  the moduli stack $M^{\lci}=M^{\ind}$, and the obstruction theory 
in Theorem \ref{thm_OT_slc} is the same as the obstruction theory in Theorem \ref{thm_POT_Muniv}.  
\end{proof}

\begin{thm}\label{thm_POT_Mind}
Let $M=\overline{M}_{K^2, \chi, N}$ be the moduli stack of stable surfaces of general type with invariants $K^2, \chi, N$. 
If the moduli stack $M$ consists of k.l.t. surfaces, 
then the moduli stack  $M^{\lci}$  of $\lci$ covers  is the same as the moduli stack    $M^{\ind}$  of index one covers. 

Moreover,  the  obstruction theory  for the moduli stack $M^{\ind}$ of  index one covers  in Theorem  \ref{thm_OT_slc} 
is perfect in the sense of Behrend-Fantechi, and is the same as  the perfect obstruction theory on $M^{\lci}$
in Theorem \ref{thm_POT_Muniv}.  
\end{thm}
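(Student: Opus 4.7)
The plan is to reduce everything to Proposition \ref{prop_slc_higher_cohomology} together with Theorem \ref{thm_POT_Muniv}. The key observation is that a k.l.t.\ surface in the KSBA moduli stack only has cyclic quotient singularities, and by the Koll\'ar--Shepherd-Barron dichotomy \cite[Proposition 3.10]{Kollar-Shepherd-Barron} already invoked in the proof of Proposition \ref{prop_slc_higher_cohomology}, any such singularity appearing in a $\qq$-Gorenstein family must be of class $T$. The index one cover of a class $T$-singularity is an $A_n$-singularity, hence l.c.i. Therefore the index one covering Deligne-Mumford stack $\SS \to S$ of any $S \in M$ is \emph{already} l.c.i., and no further covering construction is required.

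First I would make the identification $M^{\lci}=M^{\ind}$ at the level of moduli functors. Given a $\qq$-Gorenstein deformation family $\sS/T$ of k.l.t.\ surfaces in $M$, the corresponding family of index one covering Deligne-Mumford stacks $\SS/T$ consists fiberwise of Gorenstein stacks with only $A_n$-type singularities; in particular the singular germs involved do not fall into the list of simple elliptic, cusp, degenerate cusp, or their cyclic quotients of embedded dimension $\geq 5$ that trigger the discriminant/universal-abelian cover construction in \S\ref{subsubsec_universal_discriminant}--\S\ref{subsubsec_discriminant}. Hence the $\lci$ covering Deligne-Mumford stack $\SS^{\lci}$ associated to $S$ coincides with $\SS$ itself, and the diagram (\ref{eqn_diagram_ind_lci}) collapses to $\hat\pi=\mathrm{id}$. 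This gives an isomorphism of moduli functors $M^{\lci}\cong M^{\ind}$, and by Theorem \ref{thm_index_one_covering_stack} also $M^{\lci}\cong M^{\ind}\cong M$.

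Next I would identify the two obstruction complexes. The universal families $p^{\ind}\colon \scM^{\ind}\to M^{\ind}$ and $p^{\lci}\colon \scM^{\lci}\to M^{\lci}$ are tautologically isomorphic under the identification of the previous paragraph, so their relative cotangent complexes and relative dualizing sheaves coincide, which yields
\[
E^{\bullet}_{M^{\ind}} \;=\; Rp^{\ind}_{*}\bigl(\ll^{\bullet}_{\scM^{\ind}/M^{\ind}}\otimes\omega_{\scM^{\ind}/M^{\ind}}\bigr)[-1] \;\cong\; E^{\bullet}_{M^{\lci}},
\]
with $\phi^{\ind}$ and $\phi^{\lci}$ agreeing under the identification, since both arise from the same Kodaira--Spencer map. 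Perfectness of $\phi^{\ind}=\phi^{\lci}$ then follows directly from Theorem \ref{thm_POT_Muniv}; alternatively it follows from a direct pointwise check, as for every $t\in M$ the stalks of $(E^{\bullet}_{M^{\lci}})^\vee$ are computed by $\widehat T^{i}_{\QG}(S_t,\sO_{S_t})^G$, and Lemma \ref{lem_universal_higher_tangent} (or equivalently Proposition \ref{prop_slc_higher_cohomology} since $\SS$ is already l.c.i.) guarantees the vanishing of these groups for $i\geq 3$, so the complex has perfect amplitude in $[-1,0]$.

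There is essentially no hard step: the entire content is the geometric observation that in the k.l.t.\ case the index one cover is automatically l.c.i., making the two constructions tautologically equal. The only point that requires attention is to verify that, in the presence of a $G$-action, the class-$T$ characterization of smoothable cyclic quotients still forces the index one cover to be l.c.i.\ on each \'etale chart; this is immediate because the $G$-action on $M$ is by definition compatible with the $\qq$-Gorenstein structure, so it permutes singular germs of the same analytic type and does not introduce new germs outside the class-$T$ list.
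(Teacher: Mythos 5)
Your proposal is correct and follows essentially the same route as the paper: k.l.t.\ plus $\qq$-Gorenstein forces class $T$-singularities by \cite[Proposition 3.10]{Kollar-Shepherd-Barron}, whose index one covers are $A_n$-singularities and hence l.c.i., so $\SS^{\lci}=\SS$, $M^{\lci}=M^{\ind}$, and the two obstruction theories coincide, with perfectness then supplied by Theorem~\ref{thm_POT_Muniv}. The extra remarks on the functorial identification and the $G$-equivariance are harmless elaborations of what the paper leaves implicit.
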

\begin{proof}
If  the s.l.c. surfaces $S$ in $M$ is k.l.t.,  then $S$ must only have cyclic quotient singularities.  From the argument in Proposition \ref{prop_slc_higher_cohomology} and  \cite[Proposition 3.10]{Kollar-Shepherd-Barron}, since  the surface $S$ admits a $\qq$-Gorenstein deformation,  the cyclic quotient singularities must have the form 
\begin{equation}\label{eqn_form_action}
\spec \mathbf{k}[x,y]/\mu_{r^2 s},
\end{equation}
where $\mu_{r^2 s}=\langle\alpha\rangle$ and there exists a primitive $r^2 s$-th root of unity 
$\eta$ such that the action is given by
$$\alpha(x,y)=(\eta x, \eta^{dsr-1}y)$$
and $(d, r)=1$.    Thus, the index one cover of $S$ locally has the quotient 
$$\spec \mathbf{k}[x,y]/\mu_{rs}$$ given by $\alpha^\prime(x,y)=(\eta^\prime x, (\eta^\prime)^{rs-1}y)$, which is an $A_{rs-1}$-singularity.  Therefore   the index one covering Deligne-Mumford stack $\SS\to S$ has only l.c.i. singularities.  
From the definition of moduli space of $\lci$ covering Deligne-Mumford stacks in \S \ref{subsubsec_universal_DM}, there is no need to take the $\lci$ covering for such an s.l.c. surface $S$.  
Thus $M^{\lci}=M^{\ind}$, and  the obstruction theory 
in Theorem \ref{thm_OT_slc} is the same as the obstruction theory in Theorem \ref{thm_POT_Muniv}, which is perfect. 
\end{proof}

\begin{cor}\label{cor_POT_MG}
Let $p: \scM\rightarrow M$ be the universal family for the moduli stack $M$ of stable s.l.c. stable surfaces, which is projective and flat. 
Assume that globally the stack $M$ consists of l.c.i. surfaces, then the relative dualizing sheaf $\omega_{\scM/M}$ is relatively Gorenstein, which means $\omega_{\scM/M}$ is a line bundle.
The complex 
$$E^{\bullet}_{M}:=Rp_{*}\left(\ll^{\bullet}_{\scM/M}\otimes\omega_{\scM/M}[2]\right)[-1]$$
defines a perfect obstruction theory
$$\phi: E^{\bullet}_{M}\rightarrow \ll^{\bullet}_{M}.$$
\end{cor}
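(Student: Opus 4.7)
The plan is to deduce this corollary directly from the main theorem by observing that the l.c.i.\ hypothesis collapses the various moduli stacks and coverings into a single object. First I would invoke Corollary \ref{cor_universal_index_one_M}: since every $S \in M$ has only l.c.i.\ singularities, $S$ is Gorenstein, its dualizing sheaf $\omega_S$ is already invertible, and the index one covering Deligne-Mumford stack $\SS \to S$ coincides with $S$ itself. Consequently $M^{\lci} = M^{\ind} = M$, and the universal family $p: \scM \to M$ coincides with $p^{\lci}: \scM^{\lci} \to M^{\lci}$.

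Next I would verify that $p$ is projective, flat, and relatively Gorenstein. Projectivity and flatness are part of the definition of the moduli functor in Section~\ref{subsec_slc_moduli} and Theorem~\ref{thm_moduli_stack_G}. For the relative Gorenstein property, the l.c.i.\ hypothesis on fibers implies that each fiber is Gorenstein, so the relative dualizing complex $\omega^{\bullet}_{\scM/M}$ is concentrated in a single degree and is a line bundle $\omega_{\scM/M}$, exactly as in Lemma~\ref{lem_pInd} but now for $p$ itself rather than $p^{\lci}$.

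With these identifications in place, I would apply Theorem~\ref{thm_POT_Muniv} to obtain the obstruction theory morphism
\begin{equation*}
\phi: E^{\bullet}_{M} = Rp_*\bigl(\ll^{\bullet}_{\scM/M} \otimes \omega_{\scM/M}\bigr)[-1] \to \ll^{\bullet}_{M}
\end{equation*}
induced by the Kodaira-Spencer map $\ll^{\bullet}_{\scM/M} \to p^*\ll^{\bullet}_{M}[1]$. That this is an obstruction theory in the sense of Behrend-Fantechi is immediate from Theorem~\ref{thm_BF_prop6.1} applied to the projective, flat, relatively Gorenstein universal family. To check that it is perfect, I would verify fiberwise that $E^{\bullet}_M|_t$ has perfect amplitude contained in $[-1,0]$: for an l.c.i.\ surface $S_t$, the cotangent complex $\ll^{\bullet}_{S_t}$ has perfect amplitude $[-1,0]$, so the sheaves $\sE xt^i(\ll^{\bullet}_{S_t}, \sO_{S_t})$ vanish for $i \geq 2$, and the local-to-global spectral sequence then forces $T^i(S_t, \sO_{S_t}) = 0$ for $i \geq 3$, exactly as in Lemma~\ref{lem_universal_higher_tangent}. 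Combined with Grothendieck duality this gives the required perfect amplitude.

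The main conceptual point, rather than an obstacle, is simply to justify why no discriminant or universal abelian cover is needed in the l.c.i.\ setting. Once one observes that in Definition~\ref{defn_family_lci} the bad singularity germs requiring $\lci$-lifting (simple elliptic, cusp, degenerate cusp with embedded dimension $\geq 5$, or their cyclic quotients) are all strictly non-l.c.i., the hypothesis rules them out, and the $\lci$-lifting procedure acts as the identity. The perfect obstruction theory then produces the virtual fundamental class $[M]^{\vir} \in A_{\vd}(M)$ directly on $M$, with no need to push forward along $f^{\lci}$ as in (\ref{eqn_virtual_class_M}). This proves Donaldson's conjecture in the l.c.i.\ setting, including his sextic example in \S\ref{sec_examples}.
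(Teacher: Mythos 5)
Your proposal is correct and takes essentially the same route as the paper's own proof: reduce to $M^{\lci}=M^{\ind}=M$ via Corollary~\ref{cor_universal_index_one_M}, observe that l.c.i.\ fibers make $p$ relatively Gorenstein so Theorem~\ref{thm_BF_prop6.1} (respectively Theorem~\ref{thm_POT_Muniv}) applies, and check perfect amplitude in $[-1,0]$ from vanishing of the higher $\mathcal{E}xt$ sheaves of the cotangent complex for l.c.i.\ singularities. Your version fills in a few more of the intermediate steps (fiberwise amplitude, the local-to-global argument) that the paper compresses, but the logic is the same.
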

\begin{proof}
From Corollary \ref{cor_universal_index_one_M}, $M^{\lci}=M$. 
The complex $E^{\bullet}_{M}$ is of perfect amplitude contained in $[-1,0]$.  This is because   $p$ is relative Gorenstein, which means each fiber surface of  $p$ is Gorenstein and 
$Rp_{*}\left(\ll^{\bullet}_{\scM/M}\otimes\omega_{\scM/M}[2]\right)$ gives the cohomology spaces   $H^i(S, T_S)$ for any fiber of $p$ which vanish
except $i=1, 2$. 
\end{proof}

\begin{rmk}
It is therefore interesting to construct explicit examples of the moduli stack of $\lci$ covers using birational geometry techniques.   

Let $(S,x)$ be a simple elliptic singularity of degree $6$ or $7$, then the del Pezzo cone $f:(\sS, x)\to (\Delta,0)$ of degree $6$ or $7$ (which is the cone associated with the degree $6$ or $7$ del Pezzo surfaces)  is a smoothing of $(S,x)$.  The smoothing $f$ does not admit  an $\lci$ smoothing lifting of the same type singularity, since the link of the threefold singularity $(\sS,x)$ is simply connected, see 
\cite[Theorem 1.3]{Jiang_2023}.
It is interesting to see if  these singularities can be covered by degenerate cusp singularities.  

It is expected to investigate these singularities in  the modular compactification of K3 surfaces \cite{AEH24, AE23, AET23}.  It is possible to construct the  moduli space of Kulikov models in \cite{AE23, AET23}. Contracting the exceptional locus of a Kulikov model may yield a KSBA-stable family of polarized K3 surfaces containing simple elliptic singularities of degree $6$ or $7$. 
\end{rmk}

\subsection{Equivalence of perfect obstruction theories}
Different choices of representatives in the $\mathbb{S}$-equivalence class give different universal families. 
In this section we prove that the universal families give equivalent perfect obstruction theories. 

Let $p_1: \scM_1^{\lci}\to M^{\lci}$ and $p_2: \scM_2^{\lci}\to M^{\lci}$ be two universal families, where there exist at least two one-parameter flat families 
$f_1: \mathscr{S}_1^{\lci} \to \Delta\subset M^{\lci}$  and 
$f_2: \mathscr{S}_2^{\lci} \to \Delta\subset M^{\lci}$
so that over $\Delta$ the universal families 
$\scM_1^{\lci}|_{p_1^{-1}(\Delta)}$ and $\scM_2^{\lci}|_{p_2^{-1}(\Delta)}$ are related by a flop.  They have $\mathbb{S}$-equivalent central fibers. 

Let $\ll^{\bullet}_{\scM_1^{\lci}/M^{\lci}}$ and $\ll^{\bullet}_{\scM_2^{\lci}/M^{\lci}}$ be the relative cotangent complexes of $p_1$ and $p_2$. We take their derived duals $\ttt^{\bullet}_{\scM_1^{\lci}/M^{\lci}}$ and $\ttt^{\bullet}_{\scM_2^{\lci}/M^{\lci}}$ as the relative tangent complexes of $p_1$ and $p_2$.  Let 
$$E^{\bullet}_{M^{\lci},1}:=Rp_{1*}\left(\ll^{\bullet}_{\scM_1^{\lci}/M^{\lci}}\otimes\omega^{\bullet}_{\scM_1^{\lci}/M^{\lci}}\right)[-1]$$
and 
$$E^{\bullet}_{M^{\lci},2}:=Rp_{2*}\left(\ll^{\bullet}_{\scM_2^{\lci}/M^{\lci}}\otimes\omega^{\bullet}_{\scM_2^{\lci}/M^{\lci}}\right)[-1],$$
where $\omega^{\bullet}_{\scM_i^{\lci}/M^{\lci}}$  represent the relative dualizing complexes of these two universal families. 
Theorem \ref{thm_POT_Muniv} implies that there are perfect obstruction theories 
$$\phi_1^{\lci}: E^{\bullet}_{M^{\lci},1}\to \ll_{M^{\lci}}^{\bullet}$$
and 
$$\phi_2^{\lci}: E^{\bullet}_{M^{\lci},2}\to \ll_{M^{\lci}}^{\bullet}.$$

\begin{thm}\label{thm_cotangent_complex_universal-12}
 The two perfect obstruction theories $\phi_1^{\lci}$ and $\phi_2^{\lci}$ are isomorphic. 
\end{thm}
\begin{proof}
We show that the two  complexes  $E^{\bullet}_{M^{\lci},1}$ and $E^{\bullet}_{M^{\lci},2}$ are isomorphic in the derived category.
First we have 
$$(E^{\bullet}_{M^{\lci},1})^{\vee}=Rp_{1*}\left(\ttt^{\bullet}_{\scM_1^{\lci}/M^{\lci}}\right)[1]$$
and 
$$(E^{\bullet}_{M^{\lci},2})^{\vee}=Rp_{2*}\left(\ttt^{\bullet}_{\scM_2^{\lci}/M^{\lci}}\right)[1].$$
The cohomology groups $H^s((E^{\bullet}_{M^{\lci},i})^{\vee})$ (for $i=1,2$) of these two complexes don't vanish only for $s=0, 1$. 
When restricted to a point $x\in M^{\lci}$, which corresponds to  fake lci covering Deligne-Mumford stacks $\SS_i^{\lci}$ in the two universal families, the cohomology groups $H^0((E^{\bullet}_{M^{\lci},i})^{\vee})|_{x}$ are the cohomology groups  $\Ext^1(\ll^{\bullet}_{\SS_i^{\lci}}, \sO)$,  while  $H^1((E^{\bullet}_{M^{\lci},i})^{\vee})|_{x}$ are the cohomology groups $\Ext^2(\ll^{\bullet}_{\SS_i^{\lci}}, \sO)$, where 
$\ll^{\bullet}_{\SS_i^{\lci}}$ are the cotangent complexes of $\SS_i^{\lci}$ for $i=1,2$. The spaces $\Ext^1(\ll^{\bullet}_{\SS_i^{\lci}}, \sO)$ and $\Ext^2(\ll^{\bullet}_{\SS_i^{\lci}}, \sO)$ are the deformation and obstruction spaces of $\SS_i^{\lci}$ for $i=1,2$. 

If over a scheme $T\subset M^{\lci}$, the restriction families 
    $\scM_1^{\lci}|_{T}=\SS_1^{\lci}$ and 
    $\scM_2^{\lci}|_{T}=\SS_2^{\lci}$ are 
    related by a flop, then from Definition \ref{defn_lci_DM_base}, there  exists a divisor $B\subset T$, and a smooth $\pp^1$-fibration $\mathcal{E}\to B$ for which the normal bundle to 
$\mathcal{E}$ restricts to $\sO(-1)\oplus \sO(-1)$ on every fiber. 
    We take a common blow-up for this flop
    \begin{equation}\label{eqn_diagram_flop}
    \xymatrix{
    &(\widetilde{\SS}^{\lci})\ar[dl]_{\varphi_1}\ar[dr]^{\varphi_2}\ar[dd]_{f}&\\
    \SS_1^{\lci}\ar[dr]_{f_1}&&\SS_2^{\lci}\ar[dl]^{f_2}\\
    &T &
    }
    \end{equation}
    where exceptional locus is the projective bundle $\pp(\mathcal{E})\to B$ for which the normal bundle to $\pp(\mathcal{E})$ restrict to $\sO(-1, -1)$ on every fiber. 
Therefore, there exists a universal divisor $\mathbf{B}\subset M^{\lci}$, and a 
smooth $\pp^1$-fibration $\mathcal{E}\to \mathbf{B}$ for which the normal bundle to 
$\mathcal{E}$ restricts to $\sO(-1)\oplus \sO(-1)$ on every fiber. Comparing with (\ref{eqn_diagram_flop}), there exists a universal diagram 
\begin{equation}\label{eqn_diagram_flop_Mlci}
    \xymatrix{
    &(\widetilde{\scM}^{\lci})\ar[dl]_{\varphi_1}\ar[dr]^{\varphi_2}\ar[dd]_{p}&\\
    \scM_1^{\lci}\ar[dr]_{p_1}&&\scM_2^{\lci}\ar[dl]^{p_2}\\
    &M^{\lci}, &
    }
    \end{equation}
where $\widetilde{\scM}^{\lci}\to \scM_i^{\lci}$ are the common blow-ups along the locus $\mathcal{E}\to \mathbf{B}$. 
Let 
$$\widetilde{E}^{\bullet}_{M^{\lci}}:=Rp_{*}\left(\ll^{\bullet}_{\widetilde{\scM}^{\lci}/M^{\lci}}\otimes\omega^{\bullet}_{\widetilde{\scM}^{\lci}/M^{\lci}}\right)[-1]$$
where $\omega^{\bullet}_{\widetilde{\scM}^{\lci}/M^{\lci}}$ represents the relative dualizing complex of $p$. 
Then we have 
$$(\widetilde{E}^{\bullet}_{M^{\lci}})^{\vee}=Rp_{*}\left(\ttt^{\bullet}_{\widetilde{\scM}^{\lci}/M^{\lci}}\right)[1].$$
From the properties of the derived pullback and pushforward,  we have 
$$E^{\bullet}_{M^{\lci},1}:=Rp_{*}\left(\varphi_1^*\ll^{\bullet}_{\scM_1^{\lci}/M^{\lci}}\otimes\omega^{\bullet}_{\scM_1^{\lci}/M^{\lci}}\right)[-1]$$
and 
$$E^{\bullet}_{M^{\lci},2}:=Rp_{*}\left(\varphi_2^*\ll^{\bullet}_{\scM_2^{\lci}/M^{\lci}}\otimes\omega^{\bullet}_{\scM_2^{\lci}/M^{\lci}}\right)[-1],$$

From Diagram (\ref{eqn_diagram_flop_Mlci}), we have morphisms 
\begin{equation}\label{eqn_Psi}
        \Psi_i:  \varphi_i^*(\ll^{\bullet}_{\scM^{\lci}_i/M^{\lci}})\to \ll^{\bullet}_{\widetilde{\scM}^{\lci}/M^{\lci}}
    \end{equation}
    and 
   \begin{equation}\label{eqn_Psi_omega}
        \Theta_i:  \varphi_i^*(\omega^{\bullet}_{\scM^{\lci}_i/M^{\lci}})\to \omega^{\bullet}_{\widetilde{\scM}^{\lci}/M^{\lci}}
    \end{equation} 
    for $i=1,2$.
  After applying derived pushforward there are morphisms 
   $$
    \Phi_i: 
    Rp_{*}(\varphi_i^*\ll^{\bullet}_{\scM^{\lci}_i/M^{\lci}}\otimes \varphi_i^*(\omega^{\bullet}_{\scM^{\lci}_i/M^{\lci}})\to 
    Rp_{*}(\ll^{\bullet}_{\widetilde{\scM}^{\lci}/M^{\lci}}\otimes \omega^{\bullet}_{\widetilde{\scM}^{\lci}/M^{\lci}})).
   $$
which  gives morphisms
  \begin{equation}\label{eqn_Phi-1}  
  \Phi_i: E^{\bullet}_{M^{\lci},i}[1]\to \widetilde{E}^{\bullet}_{M^{\lci}}[1]
 \end{equation}
 for $i=1,2$.
We further have the following commutative diagram 
  \[
\xymatrix{
Rp_{*}(\varphi_i^*\ll^{\bullet}_{\scM^{\lci}_i/M^{\lci}}\otimes \varphi_i^*(\omega^{\bullet}_{\scM^{\lci}_i/M^{\lci}})\ar[r]^-{\Phi_i}\ar[d]_{\phi_i^{\lci}}& Rp_{*}(\ll^{\bullet}_{\widetilde{\scM}^{\lci}/M^{\lci}}\otimes \omega^{\bullet}_{\widetilde{\scM}^{\lci}/M^{\lci}}))\ar[d]^{\phi}\\
\ll^{\bullet}_{M^{\lci}}[1]\ar[r]^{=}& \ll^{\bullet}_{M^{\lci}}[1]
}
\]
which shows that the morphisms $\Phi_i$ are compatible with the perfect obstruction theories. 
The above commutative diagram comes from  the following commutative diagram 
\[
\xymatrix{
\varphi_i^*(\ll_{\scM^{\lci}/M^{\lci}}^{\bullet})\ar[r]^{\Psi_i}\ar[d]&
\ll^{\bullet}_{\widetilde{\scM}^{\lci}/M^{\lci}}\ar[d]\\
\varphi_i^*p_i^*\ll_{M^{\lci}}^{\bullet}[1]\ar[r]& p^*\ll_{M^{\lci}}^{\bullet}[1]
}
\]
of the Kodaira-Spencer maps. 
  
We show that the morphisms $\Phi_i$ in (\ref{eqn_Phi-1}) are quasi-isomorphic. 
We need to prove that $\Phi_i$ induce isomorphic cohomology sheaves. The cohomology sheaves  of  complexes 
$E^{\bullet}_{M^{\lci},i}[1]$ and $\widetilde{E}^{\bullet}_{M^{\lci}}[1]$ only survive when $s=-1,-2$ whose duals calculate the cohomology sheaves  of the  tangent sheaves of the fibers of the universal families.

So we only need to prove $\Phi_i$ induces isomorphic cohomology sheaves, in which we can check over  schemes
$T\subset M^{\lci}$. Over a base scheme $T\subset M^{\lci}$, if the restriction families 
    $\scM_1^{\lci}|_{T}=\SS_1^{\lci}$ and 
    $\scM_2^{\lci}|_{T}=\SS_2^{\lci}$ are two families over $T$ which do not involve any crepant resolutions as in Proposition \ref{prop_simple_cusp_crepant},  the two universal families $\scM^{\lci}_i\to M^{\lci}$ are isomorphic, which definitely induce the isomorphic cohomology sheaves by $\Phi_i$ in (\ref{eqn_Phi-1})

If over $T\subset M^{\lci}$, the restriction families 
    $\scM_1^{\lci}|_{T}=\SS_1^{\lci}$ and 
    $\scM_2^{\lci}|_{T}=\SS_2^{\lci}$ are 
    related by a flop,  we have the  flop in Diagram (\ref{eqn_diagram_flop}).
To check the map $\Phi_i$ induce isomorphic cohomology sheaves, we only need to check the stalks of the cohomology sheaves of over any point $x\in M^{\lci}$.
The cohomology sheaves $\Ext^s(E^{\bullet}_{M^{\lci},i}[1], \sO)$ and  $\Ext^s(\widetilde{E}^{\bullet}_{M^{\lci}}[1], \sO)$  for $s=1,2$ are the deformation and obstruction spaces of the lci covering Deligne-Mumford stack $\SS^{\lci}_{i}$ and $\widetilde{\SS}^{\lci}$ corresponding to the point $x$ for the universal families $p_i$ and $p$ for $i=1,2$. 

From Proposition \ref{prop_simple_cusp_crepant} and Definition \ref{defn_S_equivalence}, the universal families 
$p_i: \scM^{\lci}_i\to M^{\lci}$  over the point $x\in M^{\lci}$ are different only when the corresponding lci covering Deligne-Mumford stacks $\SS_i^{\lci}$  are fake lci covering Deligne-Mumford stacks. 
Therefore, we may let $T=\Delta$, and $x\in \Delta$ in a disk and the universal families restrict to one-parameter families given by flops.   

This reduces to the local case that  Diagram (\ref{eqn_diagram_flop}) contains the Atiyah type flop
    \begin{equation}\label{eqn_diagram_flop_Atiyah}
    \xymatrix{
    &Y\ar[dl]_{\varphi_1}\ar[dr]^{\varphi_2}\ar[dd]_{f}&\\
    S_1\ar[dr]_{f_1}&&S_2\ar[dl]^{f_2}\\
    &\Delta, &
    }
    \end{equation}
    where $Y$ contains $\pp^1\times\pp^1$ as exceptional divisor. 
 In terms of Diagram (\ref{eqn_diagram_flop}),  the morphisms $\Phi_i$ in (\ref{eqn_Phi-1}) are  
    \begin{equation}\label{eqn_Phi-11}
    \Phi_i: 
    Rf_{*}\varphi_i^*(\ll^{\bullet}_{\SS^{\lci}_i/\Delta}\otimes \omega^{\bullet}_{\SS^{\lci}_i/\Delta})\to 
    Rf_{*}(\ll^{\bullet}_{\widetilde{\SS}^{\lci}/\Delta}\otimes\omega^{\bullet}_{\widetilde{\SS}^{\lci}/\Delta}) 
    \end{equation}
    for $i=1,2$. 

    To prove that $\Phi_i$ induce isomorphic cohomology sheaves, 
we use the following exact sequence 
\begin{equation}\label{eqn_exact_cotangent_lci_flop}
    \varphi_i^* \Omega_{\SS^{\lci}_i/\Delta}\rightarrow \Omega_{\widetilde{\SS}^{\lci}/\Delta}\rightarrow \Omega_{\widetilde{\SS}^{\lci}/\SS^{\lci}_i} \to 0,
\end{equation}
of cotangent sheaves
for $i=1,2$ (the relative cotangnet complexes are the resolution of cotangent sheaves).  
The relative cotangent  sheaves $\Omega_{\widetilde{\SS}^{\lci}/\SS^{\lci}_i}$ only support on one factor $\pp^1$ in the exceptional divisor $\pp^1\times \pp^1$ for the common blow up $Y$ in the Atiyah type flop. So it is enough to calculate $\Omega_{Y/S_i}$.

Both $S_1$ and $S_2$ are  the total space $\text{Tot}(\sO_{\pp^1}(-1)\oplus \sO_{\pp^1}(-1))$, and $Y=\text{Tot}(\sO_{\pp^1\times\pp^1}(-1,-1))\to S_i$ are the blow-up along $\pp^1$.
Look at the diagram 
\[
\xymatrix{
\pp^1\times\pp^1\ar@{^{(}->}[r]^--{j}\ar[d]_{\pi}& Y=\sO_{\pp^1\times\pp^1}(-1,-1)\ar[d]^{\pi}\\
\pp^1\ar@{^{(}->}[r]& S_1=\sO_{\pp^1}(-1)\oplus \sO_{\pp^1}(-1)
}
\]
we have 
$$
\Omega_{Y/S_1}\cong j_{*}(\Omega_{\pp^1\times \pp^1/\pp^1})\cong j_{*}(\sO_{\pp^1\times\pp^1}(0,-2)).
$$
For $S_2$, we have 
$$
\Omega_{Y/S_2}\cong j_{*}(\Omega_{\pp^1\times \pp^1/\pp^1})\cong j_{*}(\sO_{\pp^1\times\pp^1}(-2,0)).
$$

We need to calculate the cohomology sheaves $\Ext^s(E^{\bullet}_{M^{\lci},i}[1], \sO)$ and  $\Ext^s(\widetilde{E}^{\bullet}_{M^{\lci}}[1], \sO)$  for $s=1,2$.  These two cohomology sheaves are given by 
$\Ext^s(\Omega_{S_i}, \sO)$ and  $\Ext^s(\Omega_{Y}, \sO)$  in the case of Atiyah type flops, and they are isomorphic from the long exact sequence in cohomology from the exact sequence in (\ref{eqn_exact_cotangent_lci_flop}), and the vanishing   
$$\Ext^s(\Omega_{Y/S_i},\sO)=0$$ 
for $s> 0$. 
This shows that (\ref{eqn_Phi-11}) induce isomorphic cohomology sheaves, which imply that 
the morphisms $\Phi_i$ in (\ref{eqn_Phi-1}) are quasi-isomorphic. 
Thus, $E^{\bullet}_{M^{\lci},1}$ and $E^{\bullet}_{M^{\lci},2}$ are quasi-isomorphic. 
\end{proof}

\subsection{Virtual fundamental class}\label{subsec_virtual_class_MUniv}

Let  $M=\overline{M}_{K^2, \chi, N}$ be a connected component of the moduli stack of s.l.c. surfaces, and $f^{\lci}: M^{\lci}\to M$ is the proper map. From Theorem \ref{thm_POT_Muniv}, the moduli stack $M^{\lci}$ of $\lci$  covers admits a perfect obstruction theory
$$
\phi^{\lci}: E^{\bullet}_{M^{\lci}}\to \ll_{M^{\lci}}^{\bullet}, 
$$
where 
$$E^{\bullet}_{M^{\lci}}:=Rp^{\lci}_{*}\left(\ll^{\bullet}_{\scM^{\lci}/M^{\lci}}\otimes\omega^{\lci}\right)[-1].$$
We follow the method in Section \ref{subsec_virtual_class} to construct the virtual fundamental class on $M^{\lci}$.

 Let $\bfc_{M^{\lci}}$ be the intrinsic normal cone of $M^{\lci}$ such that
\'etale locally on an open subset $U\subset M^{\lci}$  there exists a closed immersion
$$U\hookrightarrow Y$$
into a smooth Deligne-Mumford stack $Y$, we have $\mathbf{c}_{M^{\lci}}|_{U}=[C_{U/Y}/T_Y|_{U}]$. 
Let $N_{M^{\lci}}=h^1/h^0((\ll^{\bullet}_{M^{\lci}})^\vee)$ be the intrinsic normal sheaf of $M^{\lci}$, and there is a natural inclusion 
$\bfc_{M^{\lci}}\hookrightarrow N_{M^{\lci}}$. 

The perfect obstruction theory complex $E^{\bullet}_{M^{\lci}}$ is perfect, and we denote the corresponding bundle stack by 
$h^1/h^0((E^{\bullet}_{M^{\lci}})^\vee)$.  The perfect obstruction theory $\phi^{\lci}: E^{\bullet}_{M^{\lci}}\to \ll_{M^{\lci}}^{\bullet}$ satisfies that 
$h^{-1}(\phi^{\lci})$ is surjective, and $h^{0}(\phi^{\lci})$ is isomorphic.  Therefore it induces an inclusion of stacks 
$N_{M^{\lci}}\hookrightarrow h^1/h^0((E^{\bullet}_{M^{\lci}})^\vee)$.

\begin{defn}\label{defn_VFC_MG}
The virtual fundamental class of the perfect obstruction theory 
$\phi^{\lci}: E^{\bullet}_{M^{\lci}}\to \ll_{M^{\lci}}^{\bullet}$
is defined as
$$[M^{\lci}]^{\vir}=[M^{\lci}, \phi^{\lci}]^{\vir}:=0_{h^1/h^0((E^{\bullet}_{M^{\lci}})^\vee)}^{!}([\bfc_{M^{\lci}}])\in A_{\vd}(M^{\lci}),$$
where $\vd$ is the virtual dimension of $M^{\lci}$, and $0_{h^1/h^0((E^{\bullet}_{M^{\lci}})^\vee)}^{!}$ is the Gysin map in the intersection theory 
of Artin stacks \cite{Kretch}. 

For the morphism $f^{\lci}: M^{\lci}\to M$ which is a proper morphism  as in Theorem \ref{thm_universal_covering_stack},  from \cite[Definition 3.6 (iii)]{Vistoli} we define 
$$[M]^{\vir}:=f^{\lci}_{*}([M^{\lci}, \phi^{\lci}]^{\vir})\in A_{\vd}(M^{})$$
which is called the virtual fundamental class for the moduli stack $M$.
\end{defn}

\begin{rmk}
From   Corollary \ref{thm_POT_Mind}, if the moduli stack $\overline{M}_{K^2,\chi}$ consists of k.l.t. surfaces, then the 
morphism $f^{\lci}: M^{\lci}\to M$ is an isomorphism and the perfect obstruction theory induces a virtual fundamental class $[M]^{\vir}\in A_{\vd}(M)$.
\end{rmk}

Finally we have

\begin{cor}\label{prop_POT_MG_virtual}
Suppose  the moduli stack $M$ of s.l.c. stable surfaces only  consists of l.c.i. surfaces, then the  perfect obstruction theory
in Corollary \ref{cor_POT_MG}
$$\phi: E^{\bullet}_{M}\rightarrow \ll^{\bullet}_{M}$$
induces a virtual fundamental class 
$$[M]^{\vir}\in A_{\vd}(M).$$
\end{cor}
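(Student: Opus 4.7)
The plan is to combine Corollary \ref{cor_POT_MG} with the general Behrend--Fantechi machinery recalled in \S \ref{subsec_virtual_class} and \S \ref{subsec_virtual_class_MUniv}. First I would invoke Corollary \ref{cor_universal_index_one_M}: because every s.l.c. surface $S\in M$ is l.c.i., the dualizing sheaf $\omega_S$ is already invertible, so the index one covering Deligne--Mumford stack $\SS$ and the $\lci$ covering Deligne--Mumford stack $\SS^{\lci}$ both coincide with $S$ itself. Consequently the canonical morphisms in diagram (\ref{eqn_diagram_fiber_product2}) satisfy $M^{\lci}=M^{\ind}=M$, and $f^{\lci}$ is the identity.

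Next, Corollary \ref{cor_POT_MG} produces the perfect obstruction theory
\[
\phi: E^{\bullet}_{M}=Rp_{*}\left(\ll^{\bullet}_{\scM/M}\otimes\omega_{\scM/M}\right)[-1]\to \ll^{\bullet}_{M},
\]
where $\omega_{\scM/M}$ is a genuine line bundle by the relative Gorenstein hypothesis, and $E^{\bullet}_{M}$ has perfect amplitude contained in $[-1,0]$ because fiberwise it computes $H^{i}(S,T_S)^G$, which vanishes for $i\neq 1,2$ on an l.c.i. surface of general type. At this point I would just apply Definition \ref{defn_VFC_MG} verbatim to the triple $(M,E^{\bullet}_{M},\phi)$: form the intrinsic normal cone $\bfc_{M}$, observe that the closed embedding $\bfc_{M}\hookrightarrow h^1/h^0((\ll^{\bullet}_{M})^{\vee})\hookrightarrow h^1/h^0((E^{\bullet}_{M})^{\vee})$ of \cite[Proposition 2.6]{BF} lands in the vector bundle stack associated with the perfect complex $E^{\bullet}_{M}$, and intersect with its zero section via Kresch's Gysin map to define
\[
[M]^{\vir}:=0^{!}_{h^1/h^0((E^{\bullet}_{M})^{\vee})}([\bfc_{M}])\in A_{\vd}(M).
\]
Equivalently, under the identification $M^{\lci}=M$ this is just $f^{\lci}_{*}[M^{\lci},\phi^{\lci}]^{\vir}$ from Definition \ref{defn_VFC_MG} with $f^{\lci}=\id$.

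There is essentially no obstacle here: all the nontrivial work has been done in Theorem \ref{thm_POT_Muniv} (perfectness of the obstruction theory) and in Corollary \ref{cor_POT_MG} (existence of the relative dualizing line bundle and of $\phi$ in the l.c.i. case). The only points worth a sentence of verification are that $\vd=\dim h^0(E^{\bullet}_{M})-\dim h^{-1}(E^{\bullet}_{M})$ equals $\dim H^1(S,T_S)^G-\dim H^2(S,T_S)^G$ for any (smooth or l.c.i.) $S\in M$, matching the virtual dimension formula of \S \ref{subsec_virtual_class}, and that Kresch's intersection theory \cite{Kretch} applies to the Deligne--Mumford stack $M$ since by Theorem \ref{thm_moduli_stack_G} and Corollary \ref{prop_moduli_slc_lci} it is a proper Deligne--Mumford stack with projective coarse moduli space when $N$ is large divisible enough. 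Thus the virtual fundamental class $[M]^{\vir}\in A_{\vd}(M)$ is obtained, confirming Donaldson's conjecture in this l.c.i. setting.
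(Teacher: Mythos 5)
Your proof is correct and follows essentially the same route as the paper, which simply cites Corollary \ref{cor_POT_MG} for the perfect obstruction theory and the virtual fundamental class construction of \S \ref{subsec_virtual_class_MUniv}. You have merely unpacked the one-line argument: identifying $M^{\lci}=M^{\ind}=M$ via Corollary \ref{cor_universal_index_one_M}, feeding the perfect complex $E^{\bullet}_{M}$ into the Behrend--Fantechi/Kresch intersection machinery, and confirming the amplitude and dimension bookkeeping.
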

\begin{proof}
This is from Corollary \ref{cor_POT_MG} and the construction of virtual fundamental class in this section. 
\end{proof}

\begin{rmk}\label{rmk_vd}
The virtual dimension of $M^{\lci}$ is the same as the virtual dimension of the moduli stack 
$M$, which is 
$$\vd=\dim H^1(S, T_S)^G-\dim H^2(S, T_S)^G$$
for $S$ is a general smooth s.l.c. surface in $M$. 

In the case that $G$ is trivial, the virtual dimension of $M^{\lci}$  can be calculated by Grothendieck-Riemann-Roch theorem
\begin{align*}
\vd&=\rk(E_{M}^{\bullet})=\chi(S, T_S)
=\int_{S}\Ch(T_S)\cdot \Td(T_S)\\
&=-(\frac{7}{6}c_1^2-\frac{5}{6}c_2)
=10\chi-2K^2.
\end{align*}
Thus,  if $10\chi-2K^2\geq 0$, the virtual dimension is nonnegative and one can define invariants by taking integration over the virtual fundamental class 
 $[\overline{M}_{K^2, \chi}]^{\vir}$ by some tautological classes. 
\end{rmk}

\begin{rmk}
Our main results Theorem \ref{thm_POT_Muniv} and Definition \ref{defn_VFC_MG} show  that for the moduli stack $M=\overline{M}_{K^2, \chi, N}$ obtained from $\qq$-Gorenstein deformations, the moduli stack   $M^{\lci}=\overline{M}^{\lci}_{K^2, \chi, N}$ of $\lci$  covers
admits a virtual fundamental class.   
This provides a strong evidence on Donaldson's conjecture for the existence of virtual fundamental class for a large class of  moduli stacks of surfaces of general type.  
In practice  people hope that there are many examples where the boundary divisors of the moduli stack $M$  consist of only l.c.i. surfaces; see for examples 
$\overline{M}^{\Gor}_{1, 3}$ and  $\overline{M}^{\Gor}_{1, 2}$ for the moduli stacks of Gorenstein surfaces in \cite{FPR}, and Donaldson's example in \S \ref{subsec_Donaldson_example}. 
Note that the moduli stack $\overline{M}^{\Gor}_{1, 3}$ and  $\overline{M}^{\Gor}_{1, 2}$ are open substacks in the moduli stack $\overline{M}_{1, 3}$ and  $\overline{M}_{1, 2}$. 
Actually for the moduli stack $M$ obtained from  $\qq$-Gorenstein deformations, the boundary divisors may only contain the $\qq$-Gorenstein deformation of class 
$T$-singularities.  Almost for all of the  known examples for  $M$ in the literature the boundary divisors were constructed using $\qq$-Gorenstein deformation of class $T$-singularities;  i.e., using  the deformation of the corresponding index one covering Deligne-Mumford stacks.  In this case,  the moduli stack   $M^{\ind}$ of index one covers  admits a virtual fundamental class. 
An interesting example is given by  the moduli stack $\overline{M}_{1, 3}$ of s.l.c.  surfaces  with $K^2=1, \chi=3$ in \cite{FPRR}, where some boundary divisors and other irreducible components in $\overline{M}_{1, 3}$ were explicitly constructed
by deformation of class $T$-singularities.  We hope that the explicit components constructed  in \cite{FPRR} completely determine the stack  $\overline{M}_{1, 3}$. 
\end{rmk}

\section{CM line bundle and tautological invariants}\label{sec_CM_tautological_invariants}

Let $M=\overline{M}_{K^2, \chi, N}$ be one connected component of KSBA moduli stack of s.l.c. stable surfaces. 
In this section we require that $N$ is large divisible enough so that $M=\overline{M}_{K^2, \chi}$ is the proper moduli stack of s.l.c. stable surfaces
with invariants $K^2, \chi$. 

\subsection{CM line bundle on the moduli stack}\label{subsec_CM_line_bundle}

From \cite[\S 2.1]{Donaldson}, over smooth part $M_{K^2, \chi}$ consisting of smooth general type surfaces $S$ with 
$K_S^2=K^2, \chi(\sO_S)=\chi$, differential geometry can define Miller-Mumford-Morita (MMM)-classes on 
$H^*(M_{K^2, \chi}, \qq)$. Donaldson \cite[\S 4]{Donaldson} proposed  a question to extend the MMM-classes to
$H^*(\overline{M}_{K^2, \chi}, \qq)$ of  the whole 
KSBA compactification $M$.  

In algebraic geometry there exists a CM line bundle on the moduli stack 
$M$ as defined in \cite{Tian97}, \cite{FR06} and \cite{PXu15}. 
We recall it here.
Let $p: \scM\rightarrow M$ be the universal family which is a projective, flat morphism with relative dimension $2$. 
Then the relative canonical sheaf $K_{\scM/M}$ is $\qq$-Cartier and relatively ample, see \cite{HMX14} and  \cite{KP15}. 
For any relatively ample line bundle 
$\sL$ on $\scM$, we have
$$\det\left(p_{!}(\sL^k)\right)=\det\left(R^*p_*(\sL^k)\right)
=\bigotimes_{i}\left(\det\left(R^ip_*(\sL^k)\right)\right)^{(-1)^{i}}.$$
As $\sL$ is relatively ample, $R^{i}p_*(\sL^k)=0$ for $i>0, k>>0$, thus 
$\det p_{!}(\sL^k)=\det p_*(\sL^k)$. 
From \cite{KM}, there exist line bundles 
$\lambda_i$ for $i=0, 1, 2, 3$ on $\overline{M}_{K^2, \chi}$, such that for all $k$, 
$$\det p_{!}(\sL^k)\cong \lambda_3^{\mat{c} k \\
3\rix}\otimes \lambda_2^{\mat{c} k \\
2\rix}\otimes \lambda_1^{\mat{c} k \\
1\rix}\otimes \lambda_0.$$
Let $\mu:=-\left(K_{S_t}\cdot \sL|_{S_t}\right)/\sL^2|_{S_t}$, then the CM line bundle (corresponding to $\sL$) is
$$\lambda_{\CM}=\lambda_{\CM}(\scM/M, \sL):=\lambda_3^{2\mu+6}\otimes \lambda_2^{-6}.$$
Using Grothendieck-Riemann-Roch theorem in \cite{FR06}, we have that 
$$
\begin{cases}
c_1(\lambda_3)=p_*(c_1(\sL)^3);\\
2c_1(\lambda_3)-2c_1(\lambda_2)=p_*(c_1(\sL)^2\cdot c_1(K_{\scM/M})).
\end{cases}
$$

Let $\sL=K_{\scM/M}$, then the CM line bundle is 
$$\lambda_{\CM}(\scM/M, K_{\scM/M}):=\lambda_3^{4}\otimes \lambda_2^{-6}=\lambda_2^2, $$
since Serre duality implies that $\lambda_3=\lambda_2^2$. 
We have that 
$$c_1(\lambda_{\CM}(\scM/M, K_{\scM/M}))=p_*\left((K_{\scM/M})^3\right).$$
We define 
$$L_{\CM}:=\lambda_{\CM}(\scM/M, K_{\scM/M}).$$
From \cite[Theorem 1.1]{PXu15}, the CM line bundle $L_{\CM}$ is ample on the KSBA moduli stack 
$M$.

\subsection{Tautological invariants}\label{subsec_tautological_invariants}
Let $M$ be one connected component of the  moduli stack of $G$-equivariant stable general type surfaces with invariants 
$K_S^2=K^2, \chi(\sO_S)=\chi$.   From Theorem \ref{thm_POT_Muniv}
the moduli  stack $M^{\lci}=\overline{M}^{\lci}_{K^2, \chi, N}$ of $\lci$ covers  admits a perfect obstruction theory
$
\phi^{\lci}: E^{\bullet}_{M^{\lci}}\to \ll_{M^{\lci}}^{\bullet}$, hence induces  a virtual fundamental class
$[M]^{\vir}$ in Definition \ref{defn_VFC_MG}.

\begin{defn}\label{defn_tautological_invariants}
Let $M$ be one connected component of the moduli stack of stable surfaces with fixed invariants 
$K^2, \chi, N$.  
We define the tautological invariant by
$$I_{\CM}=\int_{[M]^{\vir}}(c_1(L_{\CM}))^{\vd}.$$
\end{defn}

\begin{rmk}
It is  interesting to consider other tautological classes on the moduli stack $\overline{M}_{K^2, \chi}$.
\end{rmk}

\section{Examples}\label{sec_examples}

In this section we study several examples.  

\subsection{Moduli space of quintic surfaces}\label{subsec_quintic_example}

\subsubsection{General degree $d$ hypersurfaces in $\pp^3$}
Let us first consider some basic invariants for smooth hypersurfaces in $\pp^3$ of degree $d\geq 5$.
Let $\iota: S\subset \pp^3$ be a smooth hypersurface of degree $d$, then we have the exact sequence
\begin{equation}\label{eqn_exact_1}
0\to T_{S}\rightarrow T_{\pp^3}\rightarrow N_{S/\pp^3}\to 0,
\end{equation}
where $N_{S/\pp^3}=\sO_S(d)$ is the normal bundle. 
When $d\geq 5$, the surfaces $S$ is of general type. Therefore,  $H^i(S, T_S)=0$ only except $i=1, 2$. 
We calculate the dimensions of the cohomology spaces of the tangent bundle of $S$ for $d=5, 6$, 
\begin{equation}\label{eqn_cohomology_d5}
\begin{cases}
\dim H^1(S, T_S)= 40;\\
\dim H^2(S, T_S)= 0,
\end{cases}
\end{equation}
and 
\begin{equation}\label{eqn_cohomology_d6}
\begin{cases}
\dim H^1(S, T_S)=68;\\
\dim H^2(S, T_S)=6.
\end{cases}
\end{equation}

The  cohomology spaces  $H^*(S, T_S)$ are calculated by taking  the long exact sequence of the cohomology of  (\ref{eqn_exact_1})
\begin{align*}
0 &\to H^0(S, T_S)\rightarrow H^0(S, T_{\pp^3}|_{S})\rightarrow H^0(S, N_{S/\pp^3})\\
& \rightarrow H^1(S, T_S)\rightarrow H^1(S, T_{\pp^3}|_{S})\rightarrow H^1(S, N_{S/\pp^3})\\
& \rightarrow H^2(S, T_S)\rightarrow H^2(S, T_{\pp^3}|_{S})\rightarrow H^2(S, N_{S/\pp^3})\\
&\to 0, 
\end{align*}
and the long exact sequences  on the cohomology of the following  two exact sequences
$$0\to \sO_{\pp^3}\rightarrow \sO_{\pp^3}(d)\rightarrow \iota_*N_{S/\pp^3}\to 0$$
and 
$$0\to T_{\pp^3}(-d)\rightarrow T_{\pp^3}\rightarrow \iota_*(T_{\pp^3}|_{S})\to 0.$$
We omit the detailed calculation. 

\subsubsection{Moduli space of quintic surfaces}

Let us first briefly recall the moduli space of quintic surfaces in \cite{Horikawa}. 
Let $S\subset \pp^3$ be a smooth quintic surface defined by a homogeneous degree five polynomial.  It is well-known that the topological invariants of $S$ are given by 
$K_S=\sO_S(1)$, and 
$$K_S^2=5; \quad  q=\dim H^1(S, \sO_S)=0, \quad  p_g=4; \quad  \chi(\sO_S)=5.$$
Let $\overline{M}_{5,5}$ be the moduli stack of general type minimal  surfaces $S$ with 
$K_S^2=5, \chi(\sO_S)=5$.  From \cite{Horikawa},  the coarse moduli space of   the Gieseker's moduli stack 
$M_{5,5}\subset \overline{M}_{5,5}$  is a $40$-dimensional scheme
with two irreducible components $M_0\cup_{W}M_1$ meeting transversally at a $39$-dimensional scheme 
$W$, where $M_0$ is the component containing quintic surfaces in $\pp^3$ with rational double points singularities (RDP's), and  
the other components $M_1$ and $W$ consist of the following surfaces:
first from \cite[Theorem 1]{Horikawa},  for any minimal surface with $K_S^2=5; \quad  q=\dim H^1(S, \sO_S)=0, \quad  p_g=4$ and $\chi(\sO_S)=5$, the canonical system $|K_S|$ has at most one base point. 
There are three types of surfaces:

Type I:   $|K_S|$ has no base point.  The surface $S$ is birationally equivalent to $S^\prime$, where $S^\prime\subset \pp^3$ is a quintic surface with only RDP's singularities;

Type IIa:   $|K_S|$ has one base point. Let $\pi: \widetilde{S}\to S$ be the quadric transformation with center at the base point $b\in |K_S|$,  then there exists a surjective morphism $f: \widetilde{S}\to \pp^1\times\pp^1$ of degree $2$;

Type IIb:   $|K_S|$ has one base point.  In this case there exists a surjective map $f: \widetilde{S}\to \Sigma_2$ of degree two, where $\Sigma_2$ is the Hirzebruch surface of degree two, and there also exists a diagram:
\[
\xymatrix{
&
(\tilde{S})\ar[dl]_{f}\ar[dr]^{\psi}&\\
\Sigma_2\ar[rr]^{\varphi}&& \pp^3
}
\]
such that the image of $\varphi$ and $\psi$ are the quadric cone in $\pp^3$. Note that all the Type I, IIa and IIb surfaces are l.c.i. surfaces. 
The deformation of Type I, Type IIa and Type IIb surfaces are given by the deformation of the corresponding birational models in the description. 

The component $M_0$ consists of Type I surfaces;  the component $M_1$ consists of Type IIa surfaces and the intersection $W$ parametrizes type IIb surfaces.
For a surfaces $S$, from \cite{HMXu13}, $|\Aut(S)|\leq 42\cdot \Vol(S, K_S)$ and  if $S$ is minimal then $\Vol(S, K_S)=K_S^2$ and 
$|\Aut(S)|\leq 42\cdot 5$.  If we consider all the automorphism groups of $S$, we get the Deligne-Mumford stack 
$\overline{M}_{5,5}$. 

 The complete  boundary divisors of 
$\overline{M}_{5,5}$ are still not explicitly constructed;  see \cite{Rana} for an explicit construction of one boundary divisor 
$D_{\frac{1}{4}(1,1)}\subset \overline{M}_{5,5}$ corresponding to a Wahl singularity of type $\frac{1}{4}(1,1)$.
But the abstract KSBA compactification  $\overline{M}_{5,5}$ was constructed and is  a proper Deligne-Mumford stack; see \cite{Kollar-Book}.

Let us give  an example for $\overline{M}_{5,5}$ on the boundary loci consisting of s.l.c. surfaces. 
In \cite{Rana}, Rana gave a construction of one boundary divisor $D_{\frac{1}{4}(1,1)}\subset \overline{M}_{5,5}$, which consists of s.l.c. surfaces $S$ with only one Wahl type $\frac{1}{4}(1,1)$ singularity.  
This singularity has index $r=2$, and $S$ has global index $N=2$. 
The boundary divisor  $D_{\frac{1}{4}(1,1)}=D_1\cup_{W_{2b}}D_{2a}$ also contains two irreducible components, where 
$D_{1}$ is the component consisting of Type 1 surfaces; $D_{2a}$ is the component consisting of Type 2a surfaces, and $W_{2b}$ is the component consisting of Type 2b surfaces. Type 1, 2a and 2b surfaces are classified as follows.: the minimal resolution of a Type 1 surface is a double cover of $\pp^1\times\pp^1$, branched over a sextic intersecting a given diagonal tangentially at $6$ points.  The preimage of the diagonal is given by  two $(-4)$-curves intersecting at $6$ points. Contracting one of these $(-4)$-curves gives a stable numerical quintic surface of Type $1$. 
The minimal resolutions of Type 2a (respectively Type 2b) surfaces are themselves minimal resolutions of double covers of $\pp^1\times\pp^1$ (respectively a quadric cone), the branch curve of which is a sextic $B$ intersecting a given ruling at two nodes of $B$ and transversally at two other points. 
There are relations:  Type 1 (respectively Type 2a, Type 2b) surfaces are the  deformation limits of Type I  (respectively Type II a, Type II b) surfaces.
Thus $D_1, D_{2a}$ are $39$-dimensional, and $W_{2b}$ is $38$-dimensional. 

The obstruction space at such a boundary divisor was calculated. Let us take a point 
$S\in D_{2b}$ such that there exist open \'etale neighborhoods $U_{2a}\subset \overline{M}_{5,5}$ and $U_{2b}\subset \overline{M}_{5,5}$  satisfying the condition that 
$U_{2b}$ contains  elements in $D_1$ and  the boundary $W_{2b}$,   and $U_{2a}$ is only a neighborhood of $D_1$ which does not intersect with $W_{2b}$. 
From \cite[Theorem 5.1]{Rana},  $U_{2b}\subset \aaa_{\mathbf{k}}^{41}$ is   cut out by 
$H^\prime=q^\prime(t)\cdot r^\prime(t)=0$ for two holomorphic functions. 
For the surface germ $(S,P)$ of Type 2b, the obstruction space is calculated by the corresponding canonical index one covering Deligne-Mumford stack $\SS$ which contains only one orbifold point of type $\frac{1}{4}(1,1)$. 
The obstruction space $T^2_{\QG}(\ll^{\bullet}_{\SS}, \sO_{\SS})=H^2(\SS, T_{\SS})$ has dimension $1$, which  was calculated in \cite{Rana}.
 
Recall that in \cite{Kollar-Shepherd-Barron} a quotient $T$-singularity is given by  a quotient $2$-dimensional singularity of type 
$\frac{1}{dn^2}(1, dna -1)$, 
where $n > 1$ and $d, a > 0$ are integers with $a$ and $n$  coprime. These are  the quotient singularities that admit a $\qq$-Gorenstein smoothing. 
When $d=1$, these are called Wahl singularities. The s.l.c. minimal surfaces $S$ with $T$-singularities  satisfying $K_S^2=5, p_g=4$ may give some other irreducible components of $\overline{M}_{5,5}$.

\subsubsection{Discussion of the virtual fundamental class}
For a large divisible $N>0$, the KSBA compactification $\overline{M}_{5, 5, N}$ 
may contain a lot of irreducible components. 
Let us only consider the following  two irreducible components
$$\overline{P}:=\overline{M}_0\cup_{\overline{W}}\overline{M}_1$$
where  $\overline{M}_0=\overline{M}^{\quintic}$ is the closure of the component in  $\overline{M}_{5, 5}$ containing the smooth quintics,  and 
$\overline{M}_1$  is the closure of the component in $\overline{M}_{5, 5}$ containing the smooth  Type IIa surfaces in \cite{Horikawa}. 
The two Deligne-Mumford stacks $\overline{M}_0$ and $\overline{M}_1$ are 
$40$-dimensional Deligne-Mumford stacks meeting at a $39$-dimensional closed substack $\overline{W}$.
The above calculation on the cohomology spaces  $H^*(S, T_S)$ implies that the main component $\overline{M}_0$ is smooth on the open part $M_0$ consisting quintic surfaces.  From \cite{Horikawa}, the open subset $M_1\subset \overline{M}_1$ is also smooth. The singular locus of $\overline{P}$ only happens on $\overline{W}$. 
We assume that all the boundary loci of $\overline{P}$ contain  stable surfaces with class $T$-singularities, so that their index one covers  only have normal crossing and $A_n$-type singularities. 
Thus, from Corollary \ref{cor_universal_index_one_M},  $\overline{P}^{\lci}=\overline{P}^{\ind}$. 

We construct the virtual fundamental class for $\overline{P}$.  Let $f: \overline{P}^{\ind}\to \overline{P}$ be the  moduli stack of index one covers. 
Then 
$$\overline{P}^{\ind}=\overline{M}^{\ind}_0\cup_{\overline{W}^{\ind}}\overline{M}^{\ind}_1,$$
where $f^0: \overline{M}^{\ind}_0\to \overline{M}_0$ and $f^1: \overline{M}^{\ind}_1\to \overline{M}_1$ are the moduli stacks of index one covers over  the components 
$\overline{M}_0$ and $\overline{M}_1$ respectively,  and they intersects at $f_W: \overline{W}^{\ind}\to \overline{W}$ which is the moduli stack of index one covers over
$\overline{W}$. 
The morphisms $f^0, f^1$ and $f_W$ are isomorphisms except on the boundary divisors of $\overline{P}$ given by $\qq$-Gorenstein smoothing of class 
$T$-singularities.  For example, over the divisor $D_{\frac{1}{4}(1,1)}=D_1\cup_{W_{2b}}D_{2a}$ in $\overline{P}$, the fibers of $f^0, f^1$ and $f_W$ are the index one covering Deligne-Mumford stacks of the stable surfaces 
with one Wahl singularity $\frac{1}{4}(1,1)$ of Type 1, Type 2a and Type 2b respectively.

Let $p^{\ind}:  \scM^{\ind}\to \overline{P}^{\ind}$ be the universal family.  Then there is a perfect obstruction theory 
$$\phi^{\ind}: E^{\bullet}_{\overline{P}^{\ind}}\to \ll^{\bullet}_{\overline{P}^{\ind}},$$
where 
$$E^{\bullet}_{\overline{P}^{\ind}}:=Rp^{\ind}_{*}\left(\ll^{\bullet}_{\scM^{\ind}/\overline{P}^{\ind}}\otimes\omega^{\ind}\right)[-1].$$

Let $\bfc_{\overline{P}^{\ind}}$ be the intrinsic normal cone of $\overline{P}^{\ind}$. This intrinsic normal cone can be written as
$$\bfc_{\overline{P}^{\ind}}=\bfc_{\overline{M}_{0}^{\ind}}+\bfc_{\overline{M}^{\ind}_1},$$
where $\bfc_{\overline{M}_0^{\ind}}$ and $\bfc_{\overline{M}^{\ind}_1}$ are the intrinsic normal cones of the components $\overline{M}_0^{\ind}$ and $\overline{M}^{\ind}_1$ respectively. 
This can be calculated by embedding $\overline{P}^{\ind}$ into a higher dimensional smooth Deligne-Mumford stack $\sY$ and the normal cone of $C_{\overline{P}^{\ind}/\sY}$ contains two irreducible components given by the two irreducible components $\overline{M}_0^{\ind}$ and $\overline{M}^{\ind}_1$.

Look at the following diagram
\[
\xymatrix{
\bfc_{\overline{P}^{\ind}}\ar@{^{(}->}[r]\ar[d]& h^1/h^0((E^{\bullet}_{\overline{P}^{\ind}})^\vee)\ar[d]\\
\overline{cv}\ar@{^{(}->}[r]& h^1((E^{\bullet}_{\overline{P}^{\ind}})^\vee)=\Ob_{\overline{P}^{\ind}},
}
\]
where $\overline{cv}$ is the coarse moduli space of the intrinsic normal cone $\bfc_{\overline{P}^{\ind}}$, and 
$\Ob_{\overline{P}^{\ind}}$ is the obstruction sheaf of the perfect obstruction theory $\phi^{\ind}$.

Let $s: \overline{P}^{\ind}\to \Ob_{\overline{P}^{\ind}}$ be the zero section.  
From Definition \ref{defn_VFC_MG}
the virtual fundamental class $[\overline{P}^{\ind}]^{\vir}\in A_{40}(\overline{P}^{\ind})$
is obtained from the intersection of the intrinsic normal cone with the zero section of 
the bundle stack $h^1/h^0((E^{\bullet}_{\overline{P}^{\ind}})^\vee)$.
From the decomposition of the intrinsic normal cone 
$\bfc_{\overline{P}^{\ind}}=\bfc_{\overline{M}_0^{\ind}}+\bfc_{\overline{M}^{\ind}_1}$, the intersection can be calculated separately. 
Also note that both $\overline{M}_0^{\ind}$ and $\overline{M}^{\ind}_1$ are smooth,  and the coarse moduli spaces of the intrinsic normal cones 
$\bfc_{\overline{M}_0^{\ind}}$ and $\bfc_{\overline{M}^{\ind}_1}$ are just $\overline{M}_0^{\ind}$ and $\overline{M}^{\ind}_1$ respectively.  Therefore,  the intersections are just the intersections of 
$\overline{M}_0^{\ind}$ and $\overline{M}^{\ind}_1$ with the zero sections of the obstruction sheaf.   We obtain
$$[\overline{P}^{\ind}]^{\vir}=[\overline{M}_0^{\ind}]+[\overline{M}^{\ind}_1] \in A_{40}(\overline{P}^{\ind}).$$

There is a canonical morphism
$$f: \overline{P}^{\ind}\rightarrow \overline{P}$$
which is a finite morphism  and is an isomorphism except on the boundaries.  Thus we have that 
$$[\overline{P}]^{\vir}=f_*([\overline{P}^{\ind}]^{\vir})\in A_{40}(\overline{P}).$$

\begin{rmk}
It is interesting to calculate the tautological invariants for the moduli stack of quintic surfaces. 
\end{rmk}

\subsection{Donaldson's example on sextic hypersurfaces}\label{subsec_Donaldson_example}

In this section we talk about Donaldson's example on sextic hypersurfaces in $\pp^3$, and give an affirmative answer for the existence of virtual fundamental class on the moduli of $\mathfrak{G}$-equivariant sextic hypersurfaces in $\pp^3$ for some finite group $\mathfrak{G}$, thus proving Donaldson's conjecture on the existence of virtual fundamental class of this example. In this section all the surfaces are l.c.i. and the index $N=1$. The moduli stack $M_N$ is a $2$-dimensional toric Deligne-Mumford stack. 

\subsubsection{The GIT moduli space}
Let $S\subset \pp^3$ be a smooth degree $6$ hypersurface, then the formula of the  cohomology of the tangent bundle of $S$ is given in (\ref{eqn_cohomology_d6}).
Other topological invariants are given by:
$$e(S)=108;  \quad p_g=10;  \quad K_S^2=24; \quad  \chi(\sO_S)=11. $$
Let $\overline{M}_{24, 11}$ be the moduli stack of stable surfaces $S$ with invariants $K_S^2=24, \chi(\sO_S)=11$.
It is not known in the literature what this moduli stack looks like, but at least there exists one component of  $\overline{M}_{24, 11}$ containing  sextic 
surfaces in $\pp^3$.

In order to get an explicit moduli stack, Donaldson \cite[\S 5]{Donaldson} put more symmetries on the sextic surfaces. 
Let $\pp^3=\proj (\mathbf{k}[x_1, y_1, x_2, y_2])$. Let 
$\zeta\in \mu_6$ be a primitive generator of the cyclic group of order $6$, and let $\mathfrak{G}$ be the subgroup of $GL(4,\mathbf{k})$ generated by
$$
\begin{cases}
(x_1, y_1, x_2, y_2)\mapsto (\zeta x_1, \zeta^{-1} y_1, x_2, y_2);\\
(x_1, y_1, x_2, y_2)\mapsto (x_1, y_1, \zeta x_2, \zeta^{-1} y_2);\\
(x_1, y_1, x_2, y_2)\mapsto (x_2, y_2, x_1, y_1),
\end{cases}
$$
which are the actions on $\aaa_{\mathbf{k}}^4$.

Then $\mathfrak{G}$ acts on the sextic hypersurfaces  in $\pp^3$. The invariant degree $6$ homogeneous polynomials are given by
$$\alpha x_1^6+ \beta y_1^6+\alpha x_2^6+ \beta y_2^6+A Q_+^{3}+ BQ_+Q_-^{2},$$
where $Q_\pm=x_1 y_1\pm x_2 y_2$. 
Then  the $\Gm=\aaa_{\mathbf{k}}^*$-action by 
$$(x_1, y_1, x_2, y_2)\mapsto (\lambda x_1, \lambda^{-1}y_1, \lambda x_2, \lambda^{-1}y_2)$$
induces homogeneous polynomials invariant under the action of $\mathfrak{G}$.
All the invariant degree $6$ polynomials under the $\mathfrak{G}$-action give the parameter space 
$$(\alpha, \beta, A, B).$$
The $\Gm$ acts on the parameter space by
$$(\alpha, \beta, A, B)\mapsto (\lambda^6 \alpha, \lambda^{-6}\beta, A, B).$$
Let $V$ represent the vector space parametrized by $(\alpha,\beta,A,B)$. 
Then the stable points in $V$ for the above torus action are those where $\alpha, \beta$ are non-zero, and each stable orbit in $V$ contains a representative 
$$\alpha(x_1^6 +y_1^6 +x_2^6 +y_2^6)+A Q_+^3+BQ_+Q_-^2,$$
which is unique up to change of the sign of $\alpha$.  The moduli space of GIT stable locus of sextic hypersurfaces 
with $\mathfrak{G}$-action is $\aaa_{\mathbf{k}}^2/\{\pm\}$.  Here 
$\aaa_{\mathbf{k}}^2=\spec\mathbf{k}[A, B]$ and each $(A,B)$ corresponds to a hypersurface
\begin{equation}\label{eqn_S-AB}
S_{AB}=\{x_1^6 +y_1^6 +x_2^6 +y_2^6+A Q_+^3+BQ_+Q_-^2=0\}\subset \pp^3.
\end{equation}
We recall the KSBA compactification of  $\aaa_{\mathbf{k}}^2/\{\pm\}$ in \cite[\S 5]{Donaldson}. 
Before KSBA, there is a naive compactification by embedding 
$\aaa_{\mathbf{k}}^2\hookrightarrow \pp^2$ and then taking the quotient by $\mu_2=\{\pm\}$-action. 
Modulo the automorphism group of the sextic surfaces, the moduli stack is the quotient 
$M^{GIT}=[\pp^2/\mu_2]$.  The polytope description is given in \cite[\S 5]{Donaldson}.  The stacky fan in the sense of \cite{BCS}, \cite{Jiang2008} is given by
$\mathbf{\Sigma}=(N, \Sigma, \beta)$, where 
$N=\zz^2$, $\Sigma$ is the fan in  $\rr^2$ generated by rays 
$\rr_{(2,0)}, \rr_{(0,1)}$ and $\rr_{(-2,-1)}$, and 
$\beta: \zz^3\to N$ is given by $(2,0), (0,1), (-2, -1)$.
The quotient action of $\mu_2$ on the homogeneous coordinates $[x:y:z]$ of $\pp^2$ by 
$$[x:y:z]\mapsto [x:-y:-z].$$
The fixed point locus are the point $[1:0:0]$ and $\pp^1=\proj (\mathbf{k}[0:y:z])$ which is the divisor at infinity. 
The divisor $\pp^1$ in the moduli toric Deligne-Mumford stack $[\pp^2/\mu_2]$ corresponds to the following surfaces
$$\{AQ_+^3+BQ_+Q_-^2=0\}$$
for $A, B\neq 0$ at the same time. Note that there are three cases
\begin{enumerate}
\item $A, B\neq 0$, then $\{AQ_+^3+BQ_+Q_-^2=0\}$ corresponds to three quadrics meeting in four lines;
\item $B=0, A\neq 0$, this corresponds to the quadric  $\{Q_+=0\}$ with multiplicity $3$;
\item $B\neq 0, A= 0$, this corresponds to the quadric  $\{Q_+=0\}$  and the quadric $\{Q_-=0\}$ with multiplicity $2$.
\end{enumerate}

\subsubsection{The KSBA compactification}\label{subsubsec_KSBA}
Let us consider the KSBA compactification of the moduli stack $[\aaa_{\mathbf{k}}^2/\mu_2]$ of sextic hypersurfaces with $\mathfrak{G}$-action. 
We follow Donaldson's argument but using the fan structure of the toric Deligne-Mumford stack 
$M^{GIT}=[\pp^2/\mu_2]$. 

$\bullet$ Let $O:=((0,1), (-2,-1))$ be the top cone generated by $\{(0,1), (-2,-1)\}$, which corresponds to the affine toric Deligne-Mumford 
stack $[\aaa_{\mathbf{k}}^2/\mu_2]$, and the sextic surfaces in (\ref{eqn_S-AB}). One can think of the ray $\rr_{(2,0)}$ standing for the infinity divisor 
$\pp^1\subset M^{GIT}$ which is fixed under $\mu_2$. 
The ray $\rr_{(-2,-1)}$ (which corresponds to $OIII$ in Donaldson's picture in \cite[Page 20]{Donaldson}) corresponds to the surfaces 
$\{S_{A0}\}$ in  (\ref{eqn_S-AB}).

$\bullet$ The toric Deligne-Mumford stack $[\pp^2/\mu_2]=[\aaa_{\mathbf{k}}^2/\mu_2]\sqcup \pp^1$, where 
$\pp^1=\chi(\Sigma/\rr_{(2,0)})$; i.e., the toric Deligne-Mumford stack of the quotient fan $\Sigma/\rr_{(2,0)}$ modulo the ray $\rr_{(2,0)}$.
So it is enough to know what sextic surfaces the ray $\rr_{(2,0)}$ corresponds for.
As pointed out in \cite[\S 5]{Donaldson}, this ray $\rr_{(2,0)}$ corresponds to surfaces 
 $\{AQ_+^3+BQ_+Q_-^2=0\}$ for $A, B$ not zero at the same time.
Let $III:=((-2,-1), (2,0))$ be the top cone generated by $\{(-2,-1), (2,0)\}$ and 
$II:=((2,0), (0,1))$ be the top cone generated by $\{(2,0), (0,1)\}$. Note that the surface 
$\{Q_+=0\}$ with multiplicity $3$ corresponds to the origin in the cone $III$, and the surface 
$\{Q_+Q_-^2=0\}$, one quadric $\{Q_+=0\}$ and one $\{Q_-=0\}$ with multiplicity $2$ corresponds to the origin in the cone $II$.

The surfaces corresponding to the infinity divisor $\pp^1$ are not s.l.c. surfaces, and we perform weighted blow-ups on $M^{GIT}=[\pp^2/\mu_2]$ to get the KSBA compactification. 
From \cite[\S 5]{Donaldson},  in the cone $III$, the vertex corresponds to the surface  $\{S_{A0}\}$ when $A\rightarrow \infty$.
The construction is given as follows:
let $\pi: Y\to \pp^3$ be the triple cover over $\pp^3$ branched over $S_{00}$. 
There exists a section $\eta\in \pi^*\sO(2)\to Y$ such that 
$\eta^3=s$, and $s$ is the section cutting out $S_{00}$. 
Then let $W\subset Y\times\pp^1$ be the surface cut out by 
$\eta=\lambda Q_+$. Let $A=\lambda^3$.  When $A\rightarrow \infty$, we get a triple cover $S_{III}$ over $\{Q_+=0\}=\pp^1\times\pp^1$ branched over 
$\{S_{00}\cap \{Q_+=0\}$. 
This triple cover $S_{III}\to \pp^1\times\pp^1$ has an extra automorphism group $\mu_3$. 
Therefore, we do the weighted blow-up on the toric Deligne-Mumford stack $[\pp^2/\mu_2]$ by inserting the ray $\rr_{(4,-1)}$ generated by 
$(4,-1)=3(2,0)+(-2,-1)$.  This ray splits the cone $III$ into two top cones denoted by 
$III=((-2,-1), (4,-1))$ and $IV^\prime=((4,-1), (2,0))$. From \cite{BH},  this gives a new stacky fan 
$\mathbf{\Sigma}^\prime$ and a toric Deligne-Mumford stack 
$$h: \chi(\mathbf{\Sigma}^\prime)\rightarrow M^{GIT},$$
which is a weighted blow-up. 
The exceptional locus (divisor) of $h$ corresponds to the following family of surfaces:
taking affine coordinates $(s,t)$ of $\pp^1\times\pp^1$, and let 
$C_{\mu}\in |\sO(6,6)|$ be the curve  with affine equation:
$$1+s^6+t^6+s^6t^6+\mu\cdot s^3t^3=0.$$
Then the family of surfaces (corresponding to the exceptional locus $\pp^1$ by $\mu$, but $\mu\neq \infty$) is
$$\mu: \sS_{\mu}\to \pp^1\times\pp^1,$$
which are triple covers over $\pp^1\times\pp^1$ with simple branching over $C_{\mu}, C_{-\mu}$. 
The $\mu=0$ case corresponds to the surface $S_{III}$ above.  All of these surfaces are s.l.c. surfaces. 

Now we perform on the top cone $II$ similarly.  Since the vertex of the  cone $II$ corresponds to $\{Q_+Q_-^2=0\}$, there exists a $\zz_2$-symmetry. 
We do the weighted blow-up on the toric Deligne-Mumford stack  $\chi(\mathbf{\Sigma}^\prime)$ by inserting (inside the cone $II$) a ray
$\rr_{(2,1)}$ generated by $(2,1)=(2,0)+(0,1)$. This ray splits the top cone $II$ into two top cones 
$II=((2,1), (0,1))$ and $IV^{\prime\prime}=((2,0), (2,1))$. Thus we get a new stacky fan 
$\mathbf{\Sigma}^{\prime\prime}$ such that the morphisms 
$$\chi(\mathbf{\Sigma}^{\prime\prime})\stackrel{h^\prime}{\longrightarrow}\chi(\mathbf{\Sigma}^\prime)
\stackrel{h}{\longrightarrow}\chi(\mathbf{\Sigma})=M^{GIT}$$
are all weighted blow-ups. 
The exceptional divisor $\pp^1$ of $h^\prime: \chi(\mathbf{\Sigma}^{\prime\prime})\rightarrow \chi(\mathbf{\Sigma}^\prime)$ 
(also using affine coordinates $\mu$, but $\mu\neq \infty$) parametrizes the family of surfaces:
$$\widetilde{\mu}: \sS_{\mu}\to \pp^1\times\pp^1,$$
which are double covers over $\pp^1\times\pp^1$ branched over a divisor in $\sO(8,8)$ given by 
$C_{\mu}\sqcup \{s=0, s=\infty, t=0, t=\infty\}$.  Each of these four lines meets with $C_{\mu}$ in $6$ points.  Let 
$$\mbox{Bl}_{24~ pts}\sS_{\mu}\rightarrow \sS_{\mu}\rightarrow \pp^1\times\pp^1$$
be the blow-up along these $24$ points, and then let 
$$\overline{\mbox{Bl}}_{24~ pts}\sS_{\mu}\rightarrow \sS_{\mu}$$
be the  morphism by collapsing down the proper transformation of the four lines  $\{s=0, s=\infty, t=0, t=\infty\}$.
The $\mu=0$ case corresponds to the surface $S_{II}$ and all of these surfaces are  s.l.c.

For the toric Deligne-Mumford stack 
$\chi(\mathbf{\Sigma}^{\prime\prime})\to \chi(\mathbf{\Sigma})=M^{GIT}$, we collapse   down the proper transformation of the locus 
$\chi(\mathbf{\Sigma}/\rr_{(2,0)})=\pp^1$ and obtain a toric Deligne-Mumford stack 
$\chi(\overline{\mathbf{\Sigma}})$,  where the stacky fan is given by
$\overline{\mathbf{\Sigma}}=(\zz^2, \overline{\Sigma}, \beta)$.   The fan 
$\overline{\Sigma}=\{O, III, IV, II\}$ contains four top cones, where $O, III, II$ are the same as before, and 
$IV=((4,-1), (2,1))$.  This toric Deligne-Mumford stack 
$\chi(\overline{\mathbf{\Sigma}})$ is projective since the fan $\overline{\Sigma}$ is clearly complete. 

To see that $\chi(\overline{\mathbf{\Sigma}})$ is the KSBA compactification of $M^{GIT}$, note that in $M^{GIT}$, the only non-KSBA surfaces are given by the infinity divisor 
$\chi(\mathbf{\Sigma}/\rr_{(2,0)})=\pp^1$. After doing weighted blow-ups and collapsing  this infinity divisor, all surfaces parametrized by 
$\chi(\overline{\mathbf{\Sigma}})$ are  s.l.c. surfaces. 
Also, the surfaces parametrized by the top cone  $IV$ are given by (see \cite[\S 5.2]{Donaldson}) complete intersections in 
the weighted projective stack 
$\pp(1,1,1,1,2,2)$.  More precisely, let $\pp(1,1,1,1,2,2)=\proj(\mathbf{k}[x_1, y_1, x_2, y_2, h_+, h_-])$ where 
$x_1, y_1, x_2, y_2$ have degree $1$ and $h_+, h_-$ have degree $2$. We define the surfaces 
$S^{\alpha, \beta}\subset \pp(1,1,1,1,2,2)$ by
\begin{equation}
S^{\alpha,\beta}=
\begin{cases}
x_1^6+y_1^6+x_2^6+y_2^6+h_+^3+h_+h_-^2=0;\\
x_1y_1=\alpha h_++\beta h_-;\\
x_2y_2=\alpha h_+-\beta h_-.
\end{cases}
\end{equation}
The most singular one $S^{0,0}$ corresponds to the vertex in $IV$, which corresponds to the surface 
in $III$ and $II$ by taking $\mu\rightarrow \infty$. Also from \cite[\S 5.2]{Donaldson},  the surfaces in $III$ and $II$ can be obtained from the surfaces 
$S^{\alpha, \beta}$.   The surfaces $S^{\alpha, \beta}$ are complete intersections, therefore are Gorenstein; i.e., the dualizing sheaf $\omega_{S^{\alpha,\beta}}$ is a line bundle for any pair $(\alpha, \beta)$. 

\subsubsection{Virtual fundamental class}

From the construction  in \S \ref{subsubsec_KSBA}, the KSBA moduli space $M=M_N$ is the toric surface $M=\chi(\overline{\mathbf{\Sigma}})$.  There exists a universal family 
$$p: \scM\rightarrow \chi(\overline{\mathbf{\Sigma}})$$
which is projective, flat and relatively Gorenstein.  It is relatively Gorenstein since every fiber  surface $S_t$ of $p$ at $t\in \chi(\overline{\mathbf{\Sigma}})$ is a complete intersection surface.  This implies that the relative dualizing sheaf
$\omega_{\scM/\chi(\overline{\mathbf{\Sigma}})}$ is a line bundle.   Therefore from Corollary \ref{cor_POT_MG} and Corollary \ref{prop_POT_MG_virtual}, 
 we have 
\begin{prop}
Let 
$$E^{\bullet}_{\chi(\overline{\mathbf{\Sigma}})}:=Rp_{*}\left(\ll^{\bullet}_{\scM/\chi(\overline{\mathbf{\Sigma}})}\otimes\omega_{\scM/\chi(\overline{\mathbf{\Sigma}})}[2]\right)[-1].$$
Then there exists  a perfect obstruction theory
$$\phi: E^{\bullet}_{\chi(\overline{\mathbf{\Sigma}})}\rightarrow \ll^{\bullet}_{\chi(\overline{\mathbf{\Sigma}})}.$$
Therefore, there  exists a virtual fundamental class $[\chi(\overline{\mathbf{\Sigma}})]^{\vir}\in A_{\vd}(\chi(\overline{\mathbf{\Sigma}}))$.  
This proves Donaldson's conjecture for the existence of virtual fundamental class in \cite[\S 5]{Donaldson}. 
\end{prop}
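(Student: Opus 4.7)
The plan is to verify that the moduli stack $\chi(\overline{\mathbf{\Sigma}})$ falls squarely into the special situation of Corollary \ref{cor_POT_MG} and Corollary \ref{prop_POT_MG_virtual}, where every geometric fiber of the universal family is a locally complete intersection surface. Once this is established, the perfect obstruction theory and the virtual fundamental class follow mechanically from the general machinery of Section \ref{sec_semi-POT_MG}; there is essentially no new algebraic content beyond the classification of boundary fibers carried out in \S\ref{subsubsec_KSBA}.

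First I would catalogue the fibers of the universal family $p\colon \scM\to \chi(\overline{\mathbf{\Sigma}})$ over each of the four top cones $O, III, IV, II$ of the stacky fan $\overline{\mathbf{\Sigma}}$. Over the cone $O$ the fibers are the $G$-invariant sextic hypersurfaces $S_{AB}\subset \pp^3$ of (\ref{eqn_S-AB}), which are hypersurfaces and therefore l.c.i. Over the cone $IV$ the fibers are the complete intersections $S^{\alpha,\beta}\subset \pp(1,1,1,1,2,2)$ cut out by three explicit equations in a $5$-dimensional weighted projective stack, hence l.c.i. by construction. For the boundary cones $III$ and $II$ the fibers are the degenerations described in \S\ref{subsubsec_KSBA}: the triple covers $\sS_{\mu}\to \pp^1\times\pp^1$ branched over $C_{\mu}\sqcup C_{-\mu}$, and the double covers $\widetilde{\mu}\colon \sS_{\mu}\to \pp^1\times\pp^1$ followed by the blow-up along $24$ points and collapse of the proper transform of the four rulings. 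Here I would check directly on local analytic charts that the resulting surface singularities are all l.c.i.\ (either smooth points, nodal curves, or semi-log-canonical singularities of embedded dimension $\leq 4$), so that $\omega_{\scM/\chi(\overline{\mathbf{\Sigma}})}$ restricts to an invertible sheaf on each fiber.

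Once l.c.i.-ness of every fiber is established, the relative dualizing sheaf $\omega_{\scM/\chi(\overline{\mathbf{\Sigma}})}$ is a line bundle, so $p$ is projective, flat, and relative Gorenstein. Since $\chi(\overline{\mathbf{\Sigma}})$ is the fine moduli stack for the $G$-equivariant families constructed by iterated weighted blow-ups, Corollary \ref{cor_POT_MG} (a direct consequence of Theorem \ref{thm_BF_prop6.1}) produces the obstruction theory
\[
\phi\colon E^{\bullet}_{\chi(\overline{\mathbf{\Sigma}})}\rightarrow \ll^{\bullet}_{\chi(\overline{\mathbf{\Sigma}})}.
\]
Perfectness of $\phi$ is immediate because for an l.c.i.\ surface $S$ the higher obstruction spaces $H^i(S, T_S)$ vanish for $i\geq 3$, so $E^{\bullet}_{\chi(\overline{\mathbf{\Sigma}})}$ has perfect amplitude contained in $[-1,0]$. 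An application of Corollary \ref{prop_POT_MG_virtual} then yields $[\chi(\overline{\mathbf{\Sigma}})]^{\vir}\in A_{\vd}(\chi(\overline{\mathbf{\Sigma}}))$, which is the class predicted by Donaldson in \cite[\S 5]{Donaldson}.

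The main obstacle is the case-by-case verification for the boundary cones $III$ and $II$. The triple-cover and double-cover constructions produce surfaces with fairly delicate singularity types, and it is conceivable that some fiber acquires a non-l.c.i.\ simple elliptic, cusp, or degenerate cusp singularity of embedded dimension $\geq 5$; such a fiber would force one to pass instead to the moduli stack $M^{\lci}$ and invoke Theorem \ref{thm_POT_Muniv}. The key observation that eliminates this danger is that the universal deformation is governed throughout by the complete intersections $S^{\alpha,\beta}\subset \pp(1,1,1,1,2,2)$ on the open dense cone $IV$, and both $III$ and $II$ arise as specializations of this l.c.i.\ family obtained by letting $\mu\to \infty$ with the $G$-action preserved. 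Hence flat specialization keeps the fibers l.c.i.\ and the clean statement of the proposition follows.
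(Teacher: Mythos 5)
Your proposal takes essentially the same route as the paper. The paper's proof is brief: it observes that every fiber of $p\colon \scM\to \chi(\overline{\mathbf{\Sigma}})$ is a complete intersection surface (citing the explicit descriptions from \S\ref{subsubsec_KSBA}, in particular the family $S^{\alpha,\beta}\subset \pp(1,1,1,1,2,2)$), concludes that $p$ is projective, flat and relative Gorenstein, and then invokes Corollary \ref{cor_POT_MG} and Corollary \ref{prop_POT_MG_virtual}. Your cone-by-cone catalogue over $O, III, IV, II$ is a more explicit version of the same reduction.

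One sentence of yours needs repair: ``flat specialization keeps the fibers l.c.i.'' is not a valid general principle, since a flat limit of l.c.i.\ surfaces can perfectly well acquire worse (non-l.c.i.) singularities. What actually closes the argument here is that the boundary fibers over $III$ and $II$ are \emph{themselves} among the explicit complete intersections $S^{\alpha,\beta}$ in $\pp(1,1,1,1,2,2)$ (this is the content of the sentence in \S\ref{subsubsec_KSBA} that the surfaces in $III$ and $II$ ``can be obtained from the surfaces $S^{\alpha,\beta}$''), so no separate preservation-under-limit argument is needed or available. Your earlier alternative --- verifying l.c.i.-ness directly on local analytic charts of the triple- and double-cover degenerations --- is a sound way to fill the same gap, and is closer in spirit to what one would write if the explicit $S^{\alpha,\beta}$ model were not available. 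With that one correction the argument is complete and matches the paper's.
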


The virtual dimension $\vd=1$ was calculated in \cite[\S 5]{Donaldson}.  The moduli stack $\chi(\overline{\mathbf{\Sigma}})$ is smooth of dimension $2$, but has wrong dimension.

We briefly review the calculation of the virtual dimension.  We actually have for a sextic hypersurface $S_6$, 
$$\dim H^1(S_6, T_{S_6})^\mathfrak{G}=2; \quad  \dim H^2(S_6, T_{S_6})^\mathfrak{G}=1,$$
where the calculation in \cite[\S 5]{Donaldson} is given as follows:   look at the Euler sequence
$$0\to T^*\pp^3(1)\rightarrow \sO^{\oplus 4}\rightarrow \sO(1)\to 0.$$
We have an exact sequence
$$0\to T^*\pp^3(2)\rightarrow \sO(1)^{\oplus 4}\rightarrow \sO(2)\to 0.$$
Taking sections gives
$$0\to H^0(T^*\pp^3(2))\rightarrow \sO^{\oplus 4}\otimes \sO^{\oplus 4}\rightarrow S^2(\sO^{\oplus 4})\to 0.$$
Since the canonical line bundle 
$K_{S_6}\cong \sO_{S_6}(2)$, we have 
$$H^0(T^*S_6\otimes K_{S_6})\cong \Lambda^2(\sO^{\oplus 4})$$
and the $\mathfrak{G}$-equivariant part of $\Lambda^2(\sO^{\oplus 4})$ is $1$-dimensional spanned by 
$\omega=dx_1dy_1+dx_2dy_2$. By Serre duality, the obstruction space has dimension $\dim H^2(S_6, T_{S_6})^\mathfrak{G}=1$.

The moduli stack $\chi(\overline{\mathbf{\Sigma}})$ admits an obstruction bundle which is a line bundle such that,  over a point
$t\in \chi(\overline{\mathbf{\Sigma}})$ representing a sextic surface $S_6$, it is given by the obstruction space  satisfying $\dim H^2(S_6, T_{S_6})^\mathfrak{G}=1$.
As proved  in \cite[Page 24]{Donaldson}, the obstruction bundle is given by studying the section 
$s_{\omega}\in T^*\pp^3(2)$ defined by the symplectic form $\omega$ on $\aaa_{\mathbf{k}}^4$. We omit the details and for more precise proof, we refer to  \cite[Page 24]{Donaldson}. We denote by $L_{\Ob}$ the obstruction bundle. 
Since the moduli stack $\chi(\overline{\mathbf{\Sigma}})$ is a smooth toric Deligne-Mumford stack, standard perfect obstruction theory in \cite{BF} implies that the virtual fundamental class is
$$[\chi(\overline{\mathbf{\Sigma}})]^{\vir}=e(L_{\Ob})\cap [\chi(\overline{\mathbf{\Sigma}})]\in A_1(\chi(\overline{\mathbf{\Sigma}})).$$

In the new toric Deligne-Mumford stack  $\chi(\overline{\mathbf{\Sigma}})$,  we have two divisors 
$$D_{II}:=\chi(\overline{\mathbf{\Sigma}}/\rr_{(-2,-1)});\quad  D_{III}:=\chi(\overline{\mathbf{\Sigma}}/\rr_{(0,1)}).$$
The coarse moduli space of these two substacks are all isomorphic to $\pp^1$, and is the same as the closed substack in 
$M^{GIT}=[\pp^2/\mu_2]$ corresponding to the rays $\rr_{(-2,-1)}$ and $\rr_{(0,1)}$. 
Donaldson \cite[Formula (19)]{Donaldson} calculated that 
$$\langle -c_1(L_{\Ob}), D_{II}\rangle=-1/4.$$
Also \cite[Formula (17)]{Donaldson} calculated that 
$$
\langle c_1(\lambda_2), D_{II}\rangle=12.
$$
So $c_1(L_{\Ob})=\frac{1}{48}c_1(\lambda_2)$ and 
\begin{equation}\label{eqn_index1}
\mbox{PD}[\chi(\overline{\mathbf{\Sigma}})]^{\vir}=\frac{1}{48}c_1(\lambda_2).
\end{equation}

Also Donaldson calculated  
$$\langle c_1(\lambda_2)^2, [\chi(\overline{\mathbf{\Sigma}})])=288$$
in  \cite[Formula (18)]{Donaldson} using the property of the line bundle $\lambda_2$.
Thus 
\begin{equation}\label{eqn_index2}
\langle c_1(\lambda_2), [\chi(\overline{\mathbf{\Sigma}})]^{\vir}\rangle=6.
\end{equation}

\subsubsection{Tautological invariants}\label{subsec_tautological_invariants}

Let us calculate one tautological invariant following \cite[\S 5.3]{Donaldson}. 
There are two MMM-classes associated to the characteristic classes $c_1^3, c_1c_2^2$.
Donaldson calculated the integration of these classes against the virtual fundamental class 
$[\chi(\overline{\mathbf{\Sigma}})]^{\vir}$.

Consider the CM line bundle $L_{\CM}:=\lambda_{\CM}(\scM/\chi(\overline{\mathbf{\Sigma}}), K_{\scM/\chi(\overline{\mathbf{\Sigma}})})$ in \S \ref{subsec_CM_line_bundle}.
We have 
$$\lambda_{\CM}(\scM/\chi(\overline{\mathbf{\Sigma}}), K_{\scM/\chi(\overline{\mathbf{\Sigma}})})=\lambda_3^{4}\otimes \lambda_2^{-6},$$
where $\lambda_2, \lambda_3$ are line bundles on $\chi(\overline{\mathbf{\Sigma}})$.
Serre duality implies that $\lambda_3\cong \lambda^2_2$. Thus 
$$\lambda_{\CM}(\scM/\chi(\overline{\mathbf{\Sigma}}), K_{\scM/\chi(\overline{\mathbf{\Sigma}})})=\lambda_2^{2}.$$
Then $L_{\CM}=\lambda_2^2$.
The tautological invariant in Definition \ref{defn_tautological_invariants} is
$$I_{\CM}=\int_{[\overline{M}_{K^2, \chi}]^{\vir}}c_1(L_{\CM})=12$$
from (\ref{eqn_index2}).

\begin{rmk}
Donaldson \cite[\S 5.4]{Donaldson} related the KSBA compactification $\chi(\overline{\mathbf{\Sigma}})$ to some moduli space of stable maps to 
$\pp^2/(\mu_2\times\mu_2)$ and probably Gromov-Witten invariants of $\pp^2/(\mu_2\times\mu_2)$.  It is very interesting to explore its deep relationship. 
\end{rmk}

\subsection{Short discussion on the moduli stack of sextic surfaces}

For a large divisible $N>0$, let $\overline{M}_{24,11, N}$ be the KSBA moduli stack of sextic surfaces $S$ with $K_S^2=24, \chi(\sO_S)=11$.  
Although it seems hard to obtain explicitly all the boundary divisors of  $\overline{M}_{24,11,N}$ which contain s.l.c. sextic surfaces with quotient singularities, in \cite{Horikawa2} Horikawa classified all the deformations of smooth sextic hypersurfaces; i.e., the substack  for $N=1$. Let us review  \cite[Theorem 1]{Horikawa2}.  Let $S$ be a smooth sextic surface in $\pp^3$, then the line bundle $K_S$ is divisible by $2$ which we denote by $2L=K_S$.  From \cite[Lemma 2.1]{Horikawa2}, $h^0(S, L)=4$, thus,  the line bundle $L$ determines a morphism 
$$\phi_L: S\rightarrow \pp^3.$$
Then from \cite[Theorem 1]{Horikawa2}, there are totally six deformations of $S$ associated with the morphism $\phi_L$. 

Ia: $S$ is birationally equivalent to a sextic surface in $\pp^3$ with at most RDP's as singularities;

Ib: $\phi_L$ is a generically $2$-fold map onto a cubic surface in $\pp^3$;

Ic: $\phi_L$ is a generically $3$-fold map onto a quadratic  surface in $\pp^3$;

IIa: $\phi_L$ is a generically $2$-fold map onto a smooth quadratic  surface in $\pp^3$;

IIb: $\phi_L$ is a generically $2$-fold map onto a singular quadratic  surface in $\pp^3$;

III: $\phi_L$ is  composed of a pencil of curves of genus $3$ of non-hyperelliptic type.

In \cite{Horikawa2} Horikawa gave explicit constructions for each possible deformation. We list all the constructions as complete intersection surfaces 
in weighted projective spaces.

Ia: The surface $S$ of type Ia is a sextic hypersurface $S\subset \pp^3$ given by a degree $6$ homogeneous polynomial with only RDP's as singularities.

Ib:  The surface $S$ of type Ib is a complete intersection surface in $\pp(3,1,1,1,1)$ with coordinates $(w,x_0,x_1,x_2,x_3)$ of weights $(3,1,1,1,1)$ given by
$$g=0; \quad  w^2+f=0,$$
where $g=g(x_0,x_1,x_2,x_3)$ is cubic function and $f=f(x_0,x_1,x_2,x_3)$ is a degree $6$ homogeneous polynomial. 

Ic:  The surface $S$ of type Ic is a complete intersection surface in $\pp(2,1,1,1,1)$ with coordinates $(u,x_0,x_1,x_2,x_3)$ of weights $(2,1,1,1,1)$ given by
$$g=0; \quad u^3+A_2 u^2+A_4u+A_6=0,$$
where $g=g(x_0,x_1,x_2,x_3)$ is of degree $2$ and $A_{2j}=A_{2j}(x_0,x_1,x_2,x_3)$ are  degree $2j$ homogeneous polynomials.

IIa and IIb:  For a surface $S$ of type IIa or IIb, its canonical model is in the weighted projective space  $\pp(1,1,1,1,2,3)$ with coordinates $(x_0,x_1,x_2,x_3,u,w)$ of weights $(1,1,1,1,2,3)$  defined by
$$q=0; \quad x_0 u=h; \quad w^2=u^3+A_2u^2+A_4u+A_6,$$
where $q, h, A_{2j}$ are homogeneous polynomials in $x_i$ of degree $2,3,2j$ respectively. 

III:  From \cite[\S 6]{Horikawa2}, the surface of type III can be given as a subspace in the weighted projective space $\pp(1,1,1,1,2,2,2,3)$ with coordinates $(x_0,x_1,x_2,x_3,y_1, y_2,z,w)$ of weights $(1,1,1,1,2,2,2,3)$  defined by
$$\Phi_i=0; \quad \Psi_i=0; \quad \Gamma_i=0, \quad \Delta=0.$$
Here $\Phi_i (1\leq i\leq 3)$ are of degree $2$,   $\Psi_i (1\leq i\leq 3)$ are of degree $3$,   $\Gamma_i (1\leq i\leq 3)$ are of degree $4$, and 
$\Delta$ has degree $6$.  These functions can be found in \cite[\S 6]{Horikawa2}.   Although it is hard to see if the surface of type III is a global complete intersection in  $\pp(1,1,1,1,2,2,2,3)$,  \cite[\S 6]{Horikawa2} pointed out that this surface of type III is either smooth or with rational double points as singularities. 

We can perform the same calculation as in (\ref{eqn_cohomology_d6}) to calculate the dimensions of the  cohomology spaces of such complete intersection surfaces
$S$, 
$$
\begin{cases}
\dim H^1(S, T_S)=68;\\
\dim H^2(S, T_S)=6.
\end{cases}
$$

Let $\overline{M}^{\sextic}\subset \overline{M}_{24,11, N}$ be the closure of Gieseker moduli stack $M_{24,11}\subset  \overline{M}_{24,11, N}$. 

\begin{thm}\label{thm_KSBA_sextic}
Suppose that we know all the boundary divisors consisting s.l.c. sextic surfaces in $\overline{M}^{\sextic}$, then the moduli stack $\overline{M}^{\sextic}$ is an irreducible Deligne-Mumford stack of dimension $68$. 
\end{thm}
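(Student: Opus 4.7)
The plan is to combine Horikawa's classification recalled just before the statement with the local deformation calculation in (\ref{eqn_cohomology_d6}) and the properness of the ambient KSBA stack. I will argue in three movements: dimension, irreducibility of the Gieseker open locus, and passage to the closure.

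First I would compute the dimension of $\overline{M}^{\sextic}$ at a generic smooth sextic $S\subset\pp^3$ of Type Ia. At such a point, (\ref{eqn_cohomology_d6}) gives $\dim H^1(S,T_S)=68$ and $\dim H^2(S,T_S)=6$, so the germ of the Gieseker moduli stack at $[S]$ has embedded dimension $68$ and is cut out by at most $6$ equations inside a smooth $68$-dimensional ambient space. On the other hand, the open substack parametrizing smooth sextics in $\pp^3$ (up to $\mathrm{PGL}_4$) has dimension exactly $\binom{9}{3}-1-\dim \mathrm{PGL}_4=83-15=68$, and its tangent space at $[S]$ surjects onto $H^1(S,T_S)$ via the Kodaira--Spencer map. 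Hence the Gieseker moduli stack is smooth of dimension $68$ along the Type Ia locus, so in particular the obstructions automatically vanish there, and this locus forms an open dense $68$-dimensional stratum.

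Next I would show that each of Horikawa's other five types Ib, Ic, IIa, IIb, III lies in the closure of the Type Ia locus inside $M_{24,11}$, which yields the irreducibility of the Gieseker moduli stack. For each of the types, the explicit presentation as a weighted complete intersection recorded before the statement provides a flat family over an affine base whose generic fiber is a sextic in $\pp^3$: for instance in Type Ib one deforms the equations $g=0$, $w^2+f=0$ in $\pp(3,1,1,1,1)$ by replacing $w$ with a linear expression to eliminate the degree three coordinate and land in $\pp^3$, and similarly for Type Ic one degenerates the cubic in $u$ so as to recover a sextic in $\pp(1,1,1,1)=\pp^3$; Types IIa, IIb, III admit analogous degenerations from their weighted ambient spaces to $\pp^3$, all within families of canonical models. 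Equivalently, Horikawa's deformation-theoretic classification asserts that the moduli functor is smoothly connected through Type Ia, so every type specializes to, or deforms into, a Type Ia surface. A local dimension count using (\ref{eqn_cohomology_d6}) on each stratum confirms that each of the six types contributes a locally closed subset of dimension $\leq 68$, compatible with the Type Ia generic point dominating them.

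Finally I would pass from $M_{24,11}$ to its closure $\overline{M}^{\sextic}$ inside $\overline{M}_{24,11,N}$. By Corollary \ref{prop_moduli_slc_lci} the ambient KSBA stack is proper, so $\overline{M}^{\sextic}$ is a closed substack, and the hypothesis that every boundary divisor of $\overline{M}^{\sextic}$ consists of s.l.c. sextic surfaces guarantees that the boundary is of codimension one in $\overline{M}^{\sextic}$. Since the open part $M_{24,11}\subset\overline{M}^{\sextic}$ is irreducible of dimension $68$ by the previous step and is dense in its closure, we conclude that $\overline{M}^{\sextic}$ is irreducible of dimension $68$. The main obstacle I anticipate is the second movement: verifying rigorously that every Horikawa type specializes to Type Ia requires writing down the explicit one-parameter degeneration of weighted complete intersections to genuine sextic hypersurfaces and checking that the canonical class and invariants $K^2=24$, $\chi=11$ are preserved along the family; Horikawa's original arguments cover this, but translating them into flat families of canonical models inside $\overline{M}_{24,11,N}$ rather than of their minimal resolutions is the delicate step.
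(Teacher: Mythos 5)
Your proposal is correct and follows essentially the same route as the paper: the dimension comes from the count of degree-six forms modulo $\mathrm{PGL}_4$ and scaling ($84-16=68$, matching $\dim H^1(S,T_S)$ from (\ref{eqn_cohomology_d6})), and irreducibility reduces to Horikawa's theorem that the Gieseker locus $M_{24,11}$ is irreducible, after which one passes to the closure. The only difference is one of detail: where the paper simply cites Horikawa for irreducibility, you sketch re-deriving it by degenerating each of the types Ib--III to Type Ia, which is precisely the content of the cited result.
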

\begin{proof}
From \cite[Theorem 2]{Horikawa}, the Gieseker moduli stack $M_{24, 11}$ (without the KSBA compactification) is irreducible.  There may have some other irreducible components  in $\overline{M}_{24,11}$  consisting of singular s.l.c. sextic surfaces.  But  the closure $\overline{M}^{\sextic}$ is  irreducible. 
For the dimension of the moduli stack, note that the dimension of the homogeneous polynomials in $\mathbf{k}[x_0,x_1,x_2,x_3]$ modulo equivalence is
$$84-16=68.$$
This locus contains all the type Ia surfaces; i.e., the sextic hypersurfaces in $\pp^3$.  All the other types of deformation surfaces above should belong to the boundary divisor since the moduli stack is irreducible.  Therefore the dimension of the moduli stack is $68$.
\end{proof}

\begin{con}\label{con_second_hohomology}
Over the s.l.c. sextic surfaces $S$ in all the boundary divisors of $\overline{M}^{\sextic}$, the dimensions of the cohomology spaces of the tangent sheaf of $S$ are given by 
$$\dim H^1(S, T_S)= 68; \quad 
\dim H^2(S, T_S)= 6.$$ 
\end{con}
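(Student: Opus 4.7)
My approach reduces the conjecture to a direct cohomological calculation on $\pp^3$ by showing that every s.l.c. surface $S$ in the boundary of $\overline{M}^{\sextic}$ is itself embedded as a sextic hypersurface $S\subset\pp^3$. For a one-parameter $\qq$-Gorenstein family $f:\sS\to T$ of KSBA-stable surfaces with smooth generic fibre $S_\eta\subset\pp^3$, the relative dualizing sheaf satisfies $\omega_{S_\eta}\cong\sO_{S_\eta}(2)$, so the polarization $\sL:=\sO_{\sS}(1)$ is a half of $\omega_{\sS/T}$ on the smooth locus. Extending $\sL$ to the special fibre, a semicontinuity argument on $h^0(\sL|_{S_t})$ together with base-point-freeness of the relative linear system should yield a closed embedding $S_0\hookrightarrow\pp^3$ realizing $S_0$ as a sextic hypersurface. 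This is the first step I would carry out.

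Once every $S\in\overline{M}^{\sextic}$ is written as a sextic hypersurface, $S$ is automatically Gorenstein and l.c.i., so the conormal sequence
\[
0\to T_S\to T_{\pp^3}|_S\to \sO_S(6)\to 0
\]
is available. The cohomology groups $H^i(T_{\pp^3}|_S)$ and $H^i(\sO_S(6))$ are computed from the ambient sequences $0\to T_{\pp^3}(-6)\to T_{\pp^3}\to T_{\pp^3}|_S\to 0$ and $0\to\sO_{\pp^3}\to\sO_{\pp^3}(6)\to\sO_S(6)\to 0$, and because the connecting maps on $\pp^3$ are intrinsic to the ambient line bundles, these dimensions are independent of $S$ and equal $(15,0,6)$ and $(83,0,0)$ respectively. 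Taking the long exact sequence for $T_S$ and using that $H^0(S,T_S)=0$ because $S$ is KSBA-stable (its automorphism group is finite) gives
\[
\dim H^1(S,T_S)=83-15=68,\qquad \dim H^2(S,T_S)=\dim H^2(T_{\pp^3}|_S)=6,
\]
which is the desired conclusion.

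A useful cross-check, independent of the embedding step, comes from the perfect obstruction theory. Since a sextic hypersurface is Gorenstein, Corollary \ref{cor_POT_MG} supplies the perfect complex $E^{\bullet}_M=Rp_*(\ll^{\bullet}_{\scM/M}\otimes\omega_{\scM/M})[-1]$, and flatness of $p$ forces $\chi(S,T_S)$ to be constant along $\overline{M}^{\sextic}$; evaluating at a smooth sextic gives $\chi(T_S)=-62$, equivalently $\dim H^1(T_S)-\dim H^2(T_S)=62$ throughout the moduli. Combined with the irreducibility and $68$-dimensionality of $\overline{M}^{\sextic}$ established in Theorem \ref{thm_KSBA_sextic}, equidimensionality of an irreducible stack forces the lower bound $\dim H^1(T_S)\ge 68$ at every point, and the direct calculation above then provides the matching upper bound.

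The principal obstacle is the very first step: controlling the KSBA boundary itself. The Hacking $\qq$-Gorenstein formalism permits s.l.c. degenerations whose index is larger than one, for which the naive polarization $\sO(1)$ does not extend as a line bundle on $\sS$ but only as a line bundle on the index one covering Deligne-Mumford stack $\SS/T$ of \S\ref{subsec_index_cover}. One must therefore track $\sL$ on $\SS$, prove that its global sections still generate a morphism to $\pp^3$, and verify that descent to $\sS$ gives a closed embedding, thereby ruling out degenerations to non-Gorenstein s.l.c. surfaces in $\overline{M}^{\sextic}$. If such non-Gorenstein limits do in fact occur, the conjecture as stated would need to be refined (for instance to the Gorenstein stratum), and the entire argument must be re-run on the index one covering stack, where $\omega_{\SS}$ is invertible but the ambient target is no longer $\pp^3$ and one loses the clean Koszul-type cohomology computation that drives the proof on a hypersurface.
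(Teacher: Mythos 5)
The paper does not prove this statement: it is stated as a \emph{Conjecture} (the environment is \texttt{con}), the following Proposition \ref{prop_virtual_class} is explicitly conditional on it (``Under the conjecture\dots''), and the closing remark says it ``is very interesting to check if Conjecture \ref{con_second_hohomology} holds.'' So there is no proof in the paper to compare yours against; what I can do is assess your argument on its own terms, and it has a genuine gap.

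The gap is your first step, on which everything else rests: it is not true that every surface in $\overline{M}^{\sextic}$ embeds as a sextic hypersurface in $\pp^3$, and this already fails for the \emph{smooth} surfaces in the interior. Horikawa's classification, reviewed in the same section of the paper, shows that for types Ib, Ic, IIa, IIb and III one has $h^0(S,L)=4$ but $\phi_L$ is a generically $2$-to-$1$ or $3$-to-$1$ map onto a cubic or quadric (or is composed with a pencil); these surfaces are complete intersections in weighted projective spaces such as $\pp(3,1,1,1,1)$, $\pp(2,1,1,1,1)$ and $\pp(1,1,1,1,2,3)$, not hypersurfaces in $\pp^3$. So the semicontinuity of $h^0(\sL|_{S_t})$ cannot yield a closed embedding into $\pp^3$ on the boundary when it does not even do so on nearby interior points. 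Moreover the KSBA boundary contains s.l.c.\ surfaces with class $T$ (Wahl) singularities --- the paper notes $29$ possible cases of a single Wahl singularity for sextics --- which are not Gorenstein, so the conormal sequence $0\to T_S\to T_{\pp^3}|_S\to\sO_S(6)\to 0$ is simply unavailable there. Your Koszul computation $(15,0,6)$ and $(83,0,0)$ is correct \emph{for} a sextic hypersurface (it reproduces (\ref{eqn_cohomology_d6})), but the reduction to that case is exactly the hard, unproven content of the conjecture; you acknowledge the obstacle in your last paragraph, but without that step the proof does not exist.

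Your ``cross-check'' also does not close the gap. Flatness makes the Euler characteristic of the perfect complex $E^{\bullet}$ constant, but on singular s.l.c.\ fibers that complex computes the $\qq$-Gorenstein groups $T^i_{\QG}(S,\sO_S)=\Ext^i(\ll^{\bullet}_{\SS},\sO_{\SS})$ on the index one covering stack, not $H^i(S,T_S)$; at quotient singularities these differ by local contributions from $\sT^1_{\QG}$. Likewise, irreducibility and $68$-dimensionality of the stack bound the Zariski tangent space, i.e.\ $T^1_{\QG}\ge 68$, not $\dim H^1(S,T_S)\ge 68$. So neither the constancy of $\chi$ nor the dimension bound transfers to the sheaf cohomology of $T_S$ claimed in the conjecture without a separate local analysis of the boundary singularities --- which is precisely why the paper leaves the statement open.
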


\begin{rmk}
In the case of moduli stack $\overline{M}_{5,5}$ of numerical quintics, the  boundary divisors consisting of a unique Wahl singularity $\frac{1}{4}(1,1)$ were found in \cite{Rana}, where the only cases of minimal surfaces with a unique Wahl singularity are of type  $\frac{1}{4}(1,1)$ and  $\frac{1}{9}(2,5)$, and the case $\frac{1}{9}(2,5)$ was  proven in \cite{Rana} to be impossible.  

In the case of sextic surfaces, from calculation there are totally possible $29$ cases of the unique Wahl singularity in the  minimal surfaces  in the    boundary divisors, which makes the calculation much more complicated. 
\end{rmk}

Let us only consider the moduli stack $\overline{M}^{\sextic}$ such that all of its boundary divisors consist of $\qq$-Gorenstein deformation of class 
$T$-singularities.  Let $f: \overline{M}_{\ind}^{\sextic}\to \overline{M}^{\sextic}$ be the moduli stack of index one covers. 
Thus from the conjecture we have that 
\begin{prop}\label{prop_virtual_class}
Under the conjecture \ref{con_second_hohomology}, there exists a rank $6$ nontrivial obstruction bundle $\Ob\to  \overline{M}_{\ind}^{\sextic}$ such that over any surface 
$S\in \overline{M}^{\sextic}$, the fiber is given by $T^2_{\QG}(S)$.
Assume that the obstruction bundle $\Ob$ is nontrivial, then 
the virtual fundamental class $[\overline{M}_{\ind}^{\sextic}]^{\vir}\in A_{62}(\overline{M}_{\ind}^{\sextic})$ is given by 
$$[\overline{M}_{\ind}^{\sextic}]^{\vir}=e(\Ob)\cap [\overline{M}_{\ind}^{\sextic}].$$
\end{prop}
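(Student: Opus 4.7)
The plan is to combine the perfect obstruction theory machinery of Theorem \ref{thm_POT_Muniv} with the constancy of cohomology dimensions predicted by Conjecture \ref{con_second_hohomology} and then apply the standard formula for virtual fundamental classes of smooth moduli stacks with locally free obstruction sheaves.

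First I would invoke Corollary \ref{cor_universal_index_one_M}. Under the hypothesis that every boundary divisor of $\overline{M}^{\sextic}$ consists of $\qq$-Gorenstein deformations of class $T$-singularities, the index one covering Deligne-Mumford stacks have only normal crossing and $A_n$ singularities, hence are l.c.i. Thus Condition \ref{condition_star} is satisfied globally and $\overline{M}_{\ind}^{\sextic}=\overline{M}_{\lci}^{\sextic}$. Consequently Theorem \ref{thm_POT_Muniv} supplies a perfect obstruction theory
\[
\phi^{\ind}\colon E^{\bullet}_{\overline{M}_{\ind}^{\sextic}}\to \ll^{\bullet}_{\overline{M}_{\ind}^{\sextic}},
\qquad E^{\bullet}=Rp^{\ind}_{*}\bigl(\ll^{\bullet}_{\scM^{\ind}/\overline{M}_{\ind}^{\sextic}}\otimes \omega_{\scM^{\ind}/\overline{M}_{\ind}^{\sextic}}\bigr)[-1].
\]

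Next I would identify the obstruction sheaf and tangent sheaf fiberwise. For each $t\in\overline{M}_{\ind}^{\sextic}$ corresponding to the index one covering Deligne-Mumford stack $\SS_t\to S_t$, the analysis in the proof of Theorem \ref{thm_OT_slc} gives
\[
h^{0}\bigl((E^{\bullet})^{\vee}\bigr)\otimes\mathbf{k}(t)\cong H^{1}(S_t,T_{S_t}), \qquad
\Ob:=h^{1}\bigl((E^{\bullet})^{\vee}\bigr)\otimes\mathbf{k}(t)\cong H^{2}(S_t,T_{S_t})=T^{2}_{\QG}(S_t).
\]
By Conjecture \ref{con_second_hohomology} the integer $\dim H^{2}(S_t,T_{S_t})$ is constantly equal to $6$ over $\overline{M}_{\ind}^{\sextic}$. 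Cohomology and base change for the relatively Gorenstein projective morphism $p^{\ind}$ (together with Grothendieck duality, which was used in the proof of Theorem \ref{thm_BF_prop6.1} to write $E^{\bullet}$) then forces $\Ob$ to be locally free of rank $6$, and simultaneously forces $h^{0}((E^{\bullet})^{\vee})$ to be locally free of rank $68$.

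I would then deduce smoothness of the moduli stack. By Theorem \ref{thm_KSBA_sextic}, $\overline{M}^{\sextic}$ is irreducible of dimension $68$; by Theorem \ref{thm_index_one_DM_intro} the canonical map $f\colon \overline{M}_{\ind}^{\sextic}\to \overline{M}^{\sextic}$ is an isomorphism, so $\overline{M}_{\ind}^{\sextic}$ is irreducible of dimension $68$. Under Conjecture \ref{con_second_hohomology} the tangent space $H^{1}(S_t,T_{S_t})$ at every geometric point has dimension $68$ equal to $\dim \overline{M}_{\ind}^{\sextic}$, so $\overline{M}_{\ind}^{\sextic}$ is smooth. Equivalently, $\phi^{\ind}$ induces an isomorphism $h^{0}((E^{\bullet})^{\vee})\cong \Omega^{\vee}_{\overline{M}_{\ind}^{\sextic}}$ of locally free sheaves of rank $68$, and the two-term resolution of $(E^{\bullet})^{\vee}$ may be chosen as $[T_{\overline{M}_{\ind}^{\sextic}}\to \Ob]$ with zero differential.

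Finally I would conclude with the standard computation of the virtual class. For the smooth ambient stack $\overline{M}_{\ind}^{\sextic}$ the intrinsic normal cone $\bfc_{\overline{M}_{\ind}^{\sextic}}$ coincides with the zero section of the intrinsic normal sheaf; embedding this into the bundle stack $h^{1}/h^{0}((E^{\bullet})^{\vee})$ via $\phi^{\ind\vee}$ and applying the Gysin pullback reduces the intersection, via the resolution $[T_{\overline{M}_{\ind}^{\sextic}}\to \Ob]$, to intersecting the zero section of $\Ob$ with itself inside the total space. The outcome is the familiar formula (see \cite{BF}, \cite{Behrend2})
\[
[\overline{M}_{\ind}^{\sextic}]^{\vir}=0^{!}_{\Ob}([\Ob])=e(\Ob)\cap [\overline{M}_{\ind}^{\sextic}]\in A_{62}(\overline{M}_{\ind}^{\sextic}),
\]
which is the desired statement. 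The main obstacle is the middle step: verifying that the constancy hypothesis in Conjecture \ref{con_second_hohomology}, which is an \emph{a priori} pointwise statement about s.l.c. sextics on boundary divisors, is strong enough to force local freeness of $R^{1}p^{\ind}_{*}$ in the stacky setting and hence the full smoothness of $\overline{M}_{\ind}^{\sextic}$; the statement of the proposition takes this for granted and additionally assumes $\Ob$ is nontrivial so that $e(\Ob)$ is a genuine degree-$6$ class rather than a formal top Chern class of a trivial bundle.
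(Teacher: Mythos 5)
Your proposal is correct and takes essentially the same approach as the paper; the paper's own proof is a single sentence invoking the ``standard argument in the perfect obstruction theory,'' and your write-up simply fills in that standard argument (smoothness of $\overline{M}_{\ind}^{\sextic}$ from irreducibility plus constant tangent dimension equal to $68$, local freeness of the rank-$6$ obstruction sheaf from constant $H^{2}$ fiber dimension and cohomology-and-base-change, and the Euler-class formula for the virtual class of a smooth stack with a two-term resolution $[T\to\Ob]$). The caveat you flag at the end — that the pointwise constancy in Conjecture \ref{con_second_hohomology} needs to be upgraded to local freeness in the stacky setting — is legitimate but is equally implicit in the paper's one-line proof and is absorbed into the hypothesis that $\Ob$ is a (nontrivial) vector bundle.
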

\begin{proof}
Since under the conjecture the moduli stack  $\overline{M}^{\sextic}$ and   $\overline{M}_{\ind}^{\sextic}$ are  projective Deligne-Mumford stacks  and the obstruction bundle $\Ob\to  \overline{M}_{\ind}^{\sextic}$  is nontrivial, then 
standard argument in the perfect obstruction theory shows that the virtual fundamental class is just the Euler class of the obstruction bundle.  
\end{proof}

\begin{rmk}
It is very interesting to check if Conjecture \ref{con_second_hohomology} holds, and calculate the tautological invariants for the moduli stack  $\overline{M}_{24,11, N}$.
\end{rmk}



\subsection*{}

\end{document}